\newtheorem{lemma}{Lemma}[subsection]
\Crefname{lemma}{Lemma}{Lemmas}
\newtheorem{prop}[lemma]{Proposition}
\Crefname{prop}{Proposition}{Propositions}
\newtheorem{thm}[lemma]{Theorem}
\Crefname{thm}{Theorem}{Theorems}
\newtheorem{thmA}{Theorem}
\Crefname{thmA}{Theorem}{Theorems}
\newtheorem{coro}[lemma]{Corollary}
\Crefname{coro}{Corollary}{Corollary}
\theoremstyle{definition}
\newtheorem{rk}[lemma]{Remark}
\Crefname{rk}{Remark}{Remarks}
\newtheorem*{notation}{Notation}
\newtheorem{eg}[lemma]{Example}
\Crefname{eg}{Example}{Examples}
\newtheorem{dfn}[lemma]{Definition}
\Crefname{dfn}{Definition}{Definitions}
\newcommand{\TZ}{TX(-\text{log}\,Z)}
\newcommand{\TZc}{T^*X(\text{log}\,Z)}
\newcommand{\Tsig}{T\Sigma(-\text{log}\,\partial \Sigma)}
\newcommand{\Z}{\mathbb{Z}}
\newcommand{\R}{\mathbb{R}}
\newcommand{\C}{\mathbb{C}}
\newcommand{\mi}{\setminus}
\newcommand{\M}{\mathcal{M}}
\newcommand{\J}{\mathcal{J}}
\newcommand{\N}{\mathbb{N}}
\newcommand{\NN}{\mathcal{N}}
\newcommand{\1}{^{-1}}
\newcommand{\uu}{{\mathbf{u}}}
\newcommand{\ind}{\text{ind}}
\title{Holomorphic curves in log-symplectic manifolds}
\author{Davide Alboresi}
\date{}
\begin{document}

\maketitle

\begin{abstract}
    Log-symplectic structures are Poisson structures that are determined by a symplectic form with logarithmic singularities. We construct moduli spaces of curves with values in a log-symplectic manifold. Among the applications, we classify symplectically ruled log-symplectic $4$ manifolds (both orientable and non-orientable), and obstruct the existence of contact boundary components, in analogy with well-known theorems by McDuff. Moreover, we study certain log-symplectically ruled surfaces, using tools from symplectic field theory.
\end{abstract}

\tableofcontents
\section{Introduction}
A log-symplectic structure on a manifold $X^{2n}$ is a Poisson tensor $\pi\in\Gamma(\bigwedge^2TX)$ for which $\pi^{\wedge n}\in\Gamma(\bigwedge^\text{2n}TX)$ vanishes transversely. Morally this means that a log-symplectic structure is very close to being a symplectic structure, as the definition implies that $\pi$ is invertible in the complement of the codimension-$1$ submanifold $Z=({\pi^{\wedge n}})\1(0)$. Hence $\omega=\pi\1$ is a symplectic form on $X\mi Z$. The singular behaviour in a neighbourhood of $Z$ is also very much constrained by the transverse vanishing requirement: there exist local coordinates such that $\omega=\frac{dx}{x}\wedge dy_1+dy_2\wedge dy_3+\dots + dy_{2n-2}\wedge dy_{2n-1}$.\\
Log-symplectic structures were introduced by Radko on orientable surfaces in \cite{radko2002}, and a general definition first appeared in \cite{guilmirpir}. A crucial observation of \cite{guilmirpir} is that one can view a log-symplectic structure as a symplectic form on a Lie algebroid, called the ``log-tangent bundle". That is a vector bundle, denoted by $\TZ$, with a Lie bracket on the space of its sections, and a compatible infinitesimal action on $X$ (this bundle is often also denoted by $^bTX$, and called the ``$b$-tangent bundle"). The Lie algebroid picture is very useful as it allows us to apply some techniques of symplectic geometry to log-symplectic structures: this is the case for Moser's argument and its consequences like the Darboux theorem, the existence of some cohomological constraints to the existence (\cite{ionut}, \cite{gil}), and also for more sophisticated results like Gompf's construction (\cite{gompfstipsicz}) of symplectic forms on Lefschetz fibrations (\cite{ralphgil1}).\\

In this paper we go further in this direction, extending the use of pseudoholomorphic curves to log-symplectic structures. We study what we call ``log-holomorphic maps", meaning maps from a surface with boundary $\Sigma$ to a log-symplectic manifold $X$ with singular locus $Z$, satisfying a holomorphicity condition. To be more precise, the log-tangent bundle $\Tsig$ of a surface relative to the boundary, as well as the log-tangent bundle $\TZ$ of a log-symplectic manifold, both carry complex structures. We study moduli spaces of maps $u:\Sigma\longrightarrow X$ such that there is an induced complex linear map $du: \Tsig\longrightarrow \TZ$. These moduli spaces turn out to behave well enough. As in symplectic geometry there are natural compactifications, and we give criteria for them to be smooth. In particular, we construct moduli spaces of spheres, and of Riemann surfaces with non-empty boundary of arbitrary genus. In order to prove compactness and smoothness we translate everything back to the symplectic world, and borrow from the general theory (especially Symplectic Field Theory).\\

As applications we deduce several results on the (symplectic) topology of log-symplectic manifolds, especially in dimension $4$, in which case the theory is more powerful. To start with, we use an argument of McDuff (\cite{mcduff}) to prove a list of non-existence results for log-symplectic manifolds (possibly with boundary). The key point is that McDuff's argument relies on a maximum principle for holomorphic curves near a contact hypersurface, which also holds in a neighbourhood of the singular locus in a log-symplectic manifold. In dimension $4$ the results can be phrased as follows.
\begin{thmA}\label{A}
	Let $(X,\,Z,\,\omega)$ be a closed $4$-dimensional log-symplectic manifold. If one component of $Z$ contains a symplectic sphere, then all components are diffeomorphic to $S^1\times S^2$. If $(X,\,Z,\,\omega)$ has non-empty boundary of contact type. Then none of the components of $Z$ contains a symplectic sphere. Moreover, the boundary cannot be contactomorphic to the standard sphere.
\end{thmA}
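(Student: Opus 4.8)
The plan is to follow McDuff's strategy for the classification of rational and ruled symplectic 4-manifolds, adapting it to the log-symplectic setting via the "translation back to the symplectic world" alluded to in the introduction. The key structural input is that a neighbourhood of a singular component $Z_i\cong S^1\times S^2$ (or more generally a mapping torus) carries a model symplectic form with an asymptotically cylindrical/contact-type end, so that McDuff's maximum principle applies to log-holomorphic curves approaching $Z$. Concretely, I would first fix a tame almost complex structure $J$ on $\TZ$ that is adapted near $Z$ (invariant in the normal direction, making $Z$ pseudo-holomorphically convex), and exploit the model $\omega=\frac{dx}{x}\wedge dy_1+dy_2\wedge dy_3$ to set up the comparison with a symplectic cobordism having contact boundary.

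First I would establish the positive direction. Suppose a component $Z_0$ contains a symplectic sphere $S$. Since $Z$ is a contact-type/stable hypersurface, $S$ is an embedded sphere of trivial or small self-intersection inside the contact manifold, and I would use it as the initial curve for a moduli space of $J$-log-holomorphic spheres. The heart of the argument is the maximum principle: any such sphere cannot touch $Z$ transversally from the wrong side, so the moduli space stays in a compact region away from blow-up, and by the compactness theory constructed earlier in the paper (SFT-type compactness) the space of spheres is compact. Positivity of intersection in dimension $4$ then forces the spheres to foliate, exhibiting $X$ (or its relevant piece) as an $S^2$-bundle. Running McDuff's dichotomy, the only way the foliation is compatible with the normal geometry of each $Z_i$ is that every component is itself swept out by spheres, i.e.\ diffeomorphic to $S^1\times S^2$; the non-orientable case is handled by passing to the orientation double cover and checking equivariance.

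Next I would prove the non-existence statement for contact boundary. Here I argue by contradiction: given a contact-type boundary $\partial X$ together with a symplectic sphere in some $Z_i$, I would produce a $J$-holomorphic sphere (or a plane/finite-energy curve in the SFT sense) that is simultaneously constrained by the maximum principle near $Z$ and forced toward $\partial X$ by its homological/energy bound. The two applications of the maximum principle — one at $Z$, one at the contact boundary — are incompatible: the curve would have to achieve an interior maximum of the relevant plurisubharmonic function, contradicting convexity. This yields the non-existence of a symplectic sphere in any $Z_i$. The final assertion, that $\partial X$ cannot be the standard contact sphere $S^3$, follows from the same mechanism together with the known fact (à la Gromov--McDuff) that a symplectic filling of standard $S^3$ is diffeomorphic to a blow-up of the ball, which cannot carry a log-symplectic structure with a nonempty singular locus of the required type; equivalently, the filling would admit a foliation by holomorphic planes that cannot coexist with the contact-convex boundary near $Z$.

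The main obstacle, as I see it, will be getting the compactness and the maximum principle to interact cleanly at the interface between $Z$ and $\partial X$. In ordinary McDuff theory one has a single contact boundary, whereas here the curves must simultaneously respect the asymptotic/convexity behaviour at the genuine contact boundary and at the singular locus $Z$, which is a stable hypersurface of a different flavour. I expect the delicate point to be ensuring that finite-energy log-holomorphic curves do not break or escape in uncontrolled ways along $Z$ — i.e.\ that the SFT compactness borrowed from the general theory genuinely applies to the adapted $J$ near $Z$ — and that positivity of intersection continues to hold for the limit buildings. Once the moduli space is shown to be a compact manifold foliating a neighbourhood of $Z$, the diffeomorphism type and the obstruction both follow by transcribing McDuff's arguments verbatim.
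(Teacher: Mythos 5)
Your strategy for the first two assertions is essentially the paper's: McDuff's evaluation-map argument for spheres of Chern number $2$, with the maximum principle at the singular locus and at the contact boundary playing the role of convexity, and the energy identity ruling out non-constant spheres in exact regions. Two of your steps, however, contain genuine gaps.

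First, the compactness you assert (``the space of spheres is compact \dots\ the spheres foliate, exhibiting $X$ as an $S^2$-bundle'') is not available. \Cref{A} assumes no minimality, so the class $A$ can split and bubbling into pairs of $(-1)$-spheres does occur: blow up any genus-$0$ example at a point of $X\mi Z$ to see that the moduli space is then non-compact and $X$ is not an $S^2$-bundle. The fibration conclusion is \Cref{B}, which genuinely requires minimality; it is neither available nor needed here. What the argument needs, and what the paper uses instead, is semipositivity (automatic in dimension $4$): for generic cylindrical $J$ the images of nodal curves in the class $A$ fill a subset of codimension at least $2$, so the evaluation map $\text{ev}:\M^*_{0,1}(A,J)\longrightarrow X\mi Z$ is proper away from that subset and its degree is well defined. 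The theorem then follows by computing this degree twice: it equals $1$ near the explicit regular spheres sitting in the leaves close to an $S^1\times S^2$ component (positivity of intersection gives uniqueness of the sphere through a point), and it equals $0$ near an aspherical component of $Z$ (maximum principle plus asphericity), respectively near the contact boundary, where a sphere trapped in the exact collar has zero energy and hence is constant, contradicting $c_1(A)=2$. Your phrase ``forced toward $\partial X$'' is exactly this degree computation, and it requires the properness-mod-codimension-$2$ statement, not compactness.

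Second, and more seriously, your proof of the last assertion is circular. The Gromov--McDuff classification of fillings of the standard $S^3$ cannot be invoked: $X$ is not a symplectic filling, since its interior contains the singular locus $Z\neq\emptyset$, and no version of that classification applies to log-symplectic manifolds. Moreover, your claim that a blow-up of the ball ``cannot carry a log-symplectic structure with a nonempty singular locus of the required type'' is precisely equivalent to the statement being proved. The paper imports the \emph{proof} of the filling theorem rather than its statement: since $\partial X$ is the standard contact $S^3$, it also bounds the complement of a Darboux ball in $S^2\times S^2$, so one glues this complement to $X$ along $S^3$, obtaining a log-symplectic manifold with the same singular locus $Z$ that now contains an embedded symplectic sphere $S^2\times\{p\}$ of square zero and Chern number $2$. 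Running the degree argument in the glued manifold shows that every component of $Z$ must be diffeomorphic to $S^1\times S^2$, hence contains symplectic spheres; since $Z\neq\emptyset$ by the paper's definition of log-symplectic, this contradicts the second assertion of the theorem. Without this gluing step you have produced no holomorphic spheres at all, and the ``foliation by holomorphic planes'' you appeal to does not exist in this setting.
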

A finer analysis of the moduli spaces of spheres allows to prove a classification theorem up to diffeomorphism for a certain class of log-symplectic $4$-manifolds. To be more precise, we prove that if the singular locus of a log-symplectic manifold contains a symplectic sphere, then the moduli space of holomorphic spheres can be compactified adding nodal curves a $(-1)$-spheres as irreducible components. In particular the moduli space is compact whenever the log-symplectic manifold is \textit{minimal}, i.e. its symplectic locus is not a blow-up of another symplectic manifold. In that case we say that the manifold is a \textit{ruled surface}. This is the log-symplectic analogue of a famous result of McDuff (\cite{mcduffruled}). 
\begin{thmA}\label{B}
	Let $(X,\,Z,\,\omega)$ be a minimal log-symplectic $4$-manifold. Assume that $Z$ contains a copy of $S^1\times S^2$. Then $X$ supports an $S^2$-fibration over a (not necessarily orientable) closed surface $B$, with symplectic fibers, and such that $Z$ is a union of fibers. Moreover, the log-symplectic form depends only on the diffeomorphism type, up to deformations.
\end{thmA}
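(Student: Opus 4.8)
The plan is to run McDuff's ruled-surface argument inside the log-symplectic category, treating the moduli spaces constructed earlier in the paper as a black box. Fix a symplectic sphere $S=\{p\}\times S^2$ inside the component $S^1\times S^2\subset Z$ and set $A=[S]\in H_2(X)$. Since $S$ is pushed off itself within $Z$ by translating the $S^1$-factor, $A\cdot A=0$, and adjunction for the embedded sphere gives $c_1(A)=2$. First I would consider the moduli space $\M=\M(A,J)$ of unparametrised log-holomorphic spheres in class $A$, for a generic admissible almost complex structure $J$ adapted to the log-symplectic structure in the sense set up above. By the regularity results already established, $\M$ is a smooth manifold of the expected dimension $2c_1(A)-2=2$, which will turn out to be the base $B$.

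Next, compactness. By the compactification result quoted in the introduction, the only possible degenerations are nodal curves whose components are $(-1)$-spheres; minimality of $(X,\,Z,\,\omega)$ forbids symplectic $(-1)$-spheres in $X\mi Z$ (and the analogous phenomenon near $Z$), so no such bubbling occurs and $\M$ is a closed surface. The McDuff mechanism then yields the fibration: positivity of intersections forces two distinct curves in class $A$ with $A\cdot A=0$ to be disjoint, while an open-and-closed argument (existence through a point is open by the implicit function theorem and closed by compactness) shows that exactly one curve of $\M$ passes through each point of $X$. Adjunction with $c_1(A)=2$ and $A\cdot A=0$ forces every such curve to be embedded, so the curves foliate $X$ and the map $\pi\colon X\to B$ assigning to a point the curve through it is a locally trivial $S^2$-fibration with symplectic fibers.

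It remains to locate $Z$ and to treat the base, and this is where I expect the main difficulty. The maximum principle valid near $Z$ (the same one powering \Cref{A}) implies that a log-holomorphic sphere lies either entirely in a component of $Z$ or entirely in $X\mi Z$; hence the $S^1$-family $\{p\}\times S^2$ of curves contained in $S^1\times S^2$ consists of genuine members of $\M$, flanked on both sides by curves sitting in $X\mi Z$. The crux is to prove that this family lies in the interior of $B$ and that $\pi$ extends smoothly across it — that the symplectic fibers limit onto the $Z$-fibers with no change of local model — which is precisely the neighbourhood-of-$Z$ compactness analysis (via the SFT machinery near $Z$) underlying the moduli theory. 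Granting this, $\pi(Z)$ is a disjoint union of embedded circles in $B$ and $Z=\pi\1(\pi(Z))$ is a union of fibers. Since a log-symplectic $X$ need not be orientable, $B$ need not be orientable either; the argument never uses orientability of the total space, so it applies verbatim in the non-orientable case.

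Finally, uniqueness up to deformation. Using $\pi$, any admissible log-symplectic form in this situation can be normalised to a fiberwise area form twisted over $B$ together with the standard logarithmic model $\tfrac{dt}{t}\wedge\theta$ transverse to each component of $Z$. I would show that the set of log-symplectic forms compatible with a fixed such fibration is connected (convexity after fixing the cohomological and Radko-type data, which vary continuously), and then invoke the Lie-algebroid Moser argument alluded to in the introduction to conclude that cohomologous compatible forms are diffeomorphic, so that any two log-symplectic forms of the same diffeomorphism type are joined by a path of log-symplectic forms. The main obstacle throughout is the transition across $Z$: both the smoothness of $\pi$ where symplectic fibers degenerate into $Z$-fibers, and the relative Moser step matching the logarithmic singularities, require controlling curves and forms in the non-compact collar around $Z$ — exactly the regime the SFT compactness theory is built to handle.
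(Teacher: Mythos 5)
Your proposal is correct and follows essentially the same route as the paper: McDuff's ruled-surface argument run through the log moduli spaces already built in \Cref{4dtrans}, \Cref{localmodelN} and \Cref{compactnessclosedlog} (openness/closedness of the swept-out set, positivity of intersection and adjunction forcing embedded disjoint fibers, the local model $(-\varepsilon,\varepsilon)\times_{\Z_2}S^1$ giving the smooth extension of the fibration across $Z$), with minimality excluding the nodal degenerations into pairs of $(-1)$-spheres --- the one ingredient you cite from the introduction rather than prove, and which the paper establishes as \Cref{minusonecurveslog} by combining Wendl's closed-case theorem with the absence of bubbling near $Z$. The only cosmetic difference is the uniqueness step, where the paper interpolates between forms supported by the same Lie algebroid Lefschetz fibration (Wendl's linear-algebraic argument), while you phrase the same mechanism as normalization plus convexity plus a Moser step.
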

Moreover, the non-minimal case can be characterized in terms of Lefschetz fibrations.
It might also be worth pointing out that in the Theorems A and B above, as well as in the general construction of the moduli spaces of curves, $X$ is not assumed to be orientable.\\

The results above only use maps of spheres. As an application of our study of surfaces with boundary, we can prove a result analogous to \Cref{B}, constructing a ``log-symplectic fibration", instead of a symplectic one. We state here a simplified version of the result (for a precise statement we refer to \Cref{loglogruled}).

\begin{thmA}\label{C}
	Let $(X,\,Z,\,\omega)$ be log-symplectic, such that $Z\cong \underset{i}{\sqcup} S^1\times \Sigma_{g_i}$. Assume there exists a holomorphic sphere $S\subset X$ with $S\pitchfork Z$ and $S\cdot S=0$. Then
	\begin{itemize}
		\item $g_i=g_j$ for all $i,\,j$
		\item there is a continuous fibration $f:X\longrightarrow \Sigma_{g_i}$ with fiber $S^2$, such that $Z$ is a union of sections.
	\end{itemize} 
\end{thmA}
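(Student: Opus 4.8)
The plan is to mimic McDuff's proof of \Cref{B}, but to work with punctured curves in the symplectization picture rather than with closed spheres, so that the base of the resulting fibration is the space of Reeb orbits in $Z$ rather than a single quotient sphere. I would begin by translating the log-holomorphic sphere $S$ into the symplectic/SFT model already used earlier in the excerpt: removing a neighbourhood of $Z$ turns $(X,\,Z,\,\omega)$ into a symplectic manifold whose end is the symplectization of the cosymplectic manifold $Z$, and the transverse intersections $S\pitchfork Z$ become positive punctures asymptotic to closed Reeb orbits. For a component $Z_i\cong S^1\times\Sigma_{g_i}$ with its induced cosymplectic structure the closed Reeb orbits are precisely the circles $S^1\times\{p\}$, $p\in\Sigma_{g_i}$, so the asymptotic data of each end on $Z_i$ is recorded by a single point of $\Sigma_{g_i}$.

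Next I would study the moduli space $\M$ of unparametrised log-holomorphic spheres in the homotopy class of $S$ with the same asymptotic behaviour. Since the curves have genus $0$ and the ambient dimension is $4$, I expect automatic transversality to make every such curve regular, and the index computation together with $S\cdot S=0$ to give $\dim\M=2$; the smoothness and compactness results quoted earlier then guarantee that $\M$ is a smooth compact surface. The condition $S\cdot S=0$ is the crucial input: by the Siefring/SFT version of positivity of intersections, two distinct curves in $\M$ have non-negative, and in fact vanishing, algebraic intersection, so they are disjoint, with controlled non-intersecting behaviour even along a shared asymptotic orbit. A standard McDuff-type argument then shows that the curves of $\M$ foliate $X\mi Z$ and limit onto the Reeb orbits, so that sending a point to the unique curve through it defines a continuous fibration $f:X\longrightarrow\M$ whose fibers are $2$-spheres.

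It remains to identify the base and to describe $Z$. Evaluating the asymptotic orbit of each end gives a map from $\M$ to the space of Reeb orbits of $Z_i$, i.e. to $\Sigma_{g_i}$; the foliation property forces this to be a homeomorphism, so $\M\cong\Sigma_{g_i}$ and $f:X\longrightarrow\Sigma_{g_i}$ is the desired $S^2$-fibration. For a fixed component $Z_i$, the set of $b\in\M$ whose fiber $f\1(b)$ meets $Z_i$ is both open and closed (transversality persists, and not meeting $Z_i$ is also open), hence all of $\M$ since $Z_i\neq\emptyset$; therefore every fiber meets $Z_i$ in exactly one Reeb circle. Thus $f|_{Z_i}:S^1\times\Sigma_{g_i}\longrightarrow\Sigma_{g_i}$ is a circle fibration whose fibers are the product circles $S^1\times\{p\}$, exhibiting $Z_i$ as the union of the constant sections $p\mapsto(t,p)$, $t\in S^1$. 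Since any section of a fiber bundle maps diffeomorphically onto the base, and each such section is a slice $\cong\Sigma_{g_i}$, the base $\Sigma_{g_i}$ is diffeomorphic to every $\Sigma_{g_j}$, giving $g_i=g_j$ for all $i,\,j$ and completing the proof.

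I expect the main obstacle to be the compactness and intersection-theoretic step of the second paragraph: controlling the SFT compactification so that no breaking into nontrivial holomorphic buildings or multiply-covered limits occurs, and upgrading positivity of intersections to the punctured setting so that the fibers are genuinely disjoint and sweep out all of $X$. This is exactly where one must invest in the asymptotic analysis at the Reeb orbits $S^1\times\{p\}$ and in Siefring's intersection theory, rather than in the comparatively formal index and evaluation-map computations.
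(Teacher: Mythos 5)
Your overall strategy is the one the paper follows (translate to the SFT picture, treat the intersections $S\pitchfork Z$ as punctures asymptotic to the Reeb orbits $S^1\times\{p\}$, use Siefring-type intersection theory, and identify the base with the space of Reeb orbits via a McDuff-style evaluation argument), but there is a genuine gap at the central step. You invoke ``the smoothness and compactness results quoted earlier'' for the moduli space $\M$ of log-holomorphic spheres homotopic to $S$. Those results (\Cref{thmtransversalitylogopen}, \Cref{thmlogsftcompactness}, \Cref{theoremmoduliboundary0intersection}) apply only to punctured curves mapping into a \emph{single} completed component $\widehat{W}$ of $X\mi Z$. A log-holomorphic sphere with $S\pitchfork Z\neq\emptyset$ is not such an object: removing $Z$ cuts it into several punctured spheres lying in \emph{different} components of $X\mi Z$, coupled by the condition that along each component of $S\cap Z$ the adjacent pieces are asymptotic to the same Reeb orbit. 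There is no off-the-shelf Fredholm or compactness theory for these coupled configurations, and constructing their moduli space is exactly the content of \Cref{thmmodulilogspheres}: one forms the moduli spaces $\M_x$, one per component of $X\mi Z$ met by $S$, proves that each Reeb-orbit evaluation map $r_{xy}:\M_x\longrightarrow R_{xy}\cong\Sigma_{g}$ is a diffeomorphism (\Cref{spacespuncturedspheres} -- this is where your intersection-theoretic injectivity argument actually lives), and then realizes $\M$ as a fiber product of the $\M_x$ glued along the maps $r_{xy}$. The consistency of this gluing rests on a combinatorial fact you never address: the graph whose vertices are the components of $S\mi(S\cap Z)$ and whose edges are the components of $S\cap Z$ must be a tree, which the paper establishes for spheres by a $\pi_1$ argument (\Cref{lemma1connected}); without $1$-connectedness the matching conditions around a loop could be inconsistent and the fiber product ill-behaved.

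Two further points where your expectations are too strong. First, the glued moduli space with a marked point can only be given a $C^0$ structure along the locus corresponding to $Z$, because one cannot control the infinite jets of the punctured spheres along their shared asymptotic orbits (\Cref{thmmodulimarkedlogcurves}); this is precisely why the theorem asserts a \emph{continuous} fibration, smooth and symplectic only on $X\mi Z$, whereas you expect smoothness throughout. Second, ruling out nodal degenerations of the fibers (your ``no breaking'' step) is not automatic: by \Cref{theoremmoduliboundary0intersection} limits can be nodal curves with two $(-1)$-components, and excluding these requires the minimality hypothesis that appears in the precise statement \Cref{loglogruled} and should appear in your argument as well.
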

In fact, one can show that $f$ is smooth on $X\mi Z$, and the fibers of $f|_{X\mi Z}$ are symplectic (in this sense the fibration is ``log-symplectic"). There is also a ``non-orientable" version of this statement, producing a fibration with fiber $\R P^2$, out of a single holomorphic $\R P^2$ (holomorphic in the logarithmic sense). \Cref{C} is proven using the intersection theory of punctured holomorphic spheres (\cite{siefringwendl}, \cite{wendlpunctured}).

\paragraph{Outline of the paper.}
In section \ref{1} we quickly review the general theory of holomorphic curves. This can be skipped by the readers that are already familiar with the subject. In section \ref{2} we introduce log-symplectic manifolds, and discuss the relevant notion of holomorphicity in the log setup. In section \ref{3} we define the spaces of holomorphic curves in log-symplectic manifolds. In section \ref{4} we study compactness and smoothness of the moduli space of closed curves, and explain a few application, including \Cref{A} and \Cref{B}. In section \ref{5} we study curves with boundary and their moduli spaces, and prove \Cref{C}. In the appendix we recall some facts on the intersection theory of punctured holomorphic curves, and carry out some computations needed in section \ref{5}.

\paragraph{Acknowledgements.}
The author is thankful to Gil Cavalcanti and Chris Wendl for useful conversations, and to Marius Crainic for a question that started this project. This research was supported by the VIDI grant 639.032.221 from NWO, the Netherlands Organisation for Scientific Research.

\section{Holomorphic curves in symplectic manifolds}\label{1}

We review here some important aspects of the theory of holomorphic curves in symplectic manifolds. We will recall the relevant setup, the construction of the moduli spaces of curves, and their compactness and smoothness properties. We distinguish two main subtopics: closed curves in compact manifolds, and possibly punctured curves in open manifolds. For the former we refer to the original paper of Gromov \cite{gromov85} and the treatise \cite{mcsa}. For the latter we use the language of Symplectic Field Theory, as introduced in \cite{introsft} and \cite{compactness}; we refer to the lecture notes by Chris Wendl \cite{wendl} for an account of the relevant results.\\
We mention here once and for all that all the complex structures mentioned in the paper are \textit{not} assumed to be integrable, unless explicitely mentioned. In other words, by an (almost) complex structure on a vector bundle we always mean a fiberwise complex structure, whereas a complex structure on the tangent bundle induced by a holomorphic atlas is called integrable complex structure.

\subsection{Closed holomorphic curves}

\paragraph{Definitions and first properties.}
Let $(M,\,\omega,\,J)$ be a symplectic manifold with a compatible almost compex structure $J$. Given a closed Riemann surface $(\Sigma,\,j)$, one can consider maps 
\[
u:\Sigma\longrightarrow M
\]
such that
\[du\circ j= Jdu\] 
These maps are called \textit{holomorphic curves}. One can associate a non-negative real number to any map $u:\Sigma\longrightarrow M$, called the \textit{energy} of the curve, defined as
\[
E(u):=\int_\Sigma |du|^2 d\text{vol}_\Sigma
\]
where the norm of $du$ is defined using the Riemannian metrics on $\Sigma$ and $M$ constructed using the complex structures and respectively the symplectic forms $d\text{vol}_\Sigma$ and $\omega$. A key fact in symplectic topology is the \textit{energy identity}, stating that if $u$ is holomorphic, one has
\begin{equation}\label{energyidentity}
E(u)=\int_\Sigma u^*\omega
\end{equation}
It implies that the energy of a holomorphic curve only depends on its homology class $u_*([\Sigma])\in H_2(M;\,\Z)$ (in particular it does not depend on the choice of tame almost complex structure). As a consequence a non-constant holomorphic curve is homologically non-trivial.

\paragraph{Moduli spaces.}

One can collect holomorphic curves of the same genus in moduli spaces, i.e. one can give a geometric structure to the set of holomorphic maps from a surface with a fixed genus (up to reparametrization). More precisely, we say that two holomorphic curves $u:(\Sigma,\,j)\longrightarrow (M,\,J)$, $u':(\Sigma',\,j')\longrightarrow (M,\,J)$ are equivalent if there is a biholomorphism $\phi:(\Sigma,\,j)\longrightarrow(\Sigma',\,j')$ such that $u'\circ\phi=u$. The set of self equivalences of a holomorphic curve is called \text{automorphism group}. 
We consider
\[
\M_g(J):=\{(j,\,u): u:\Sigma_g\longrightarrow M \, \text{smooth such that } du\circ j= Jdu\}/\sim 
\]
with the topology induced by the (Fréchet) topology on the set of all smooth maps. To give a smooth structure to this space, one views the operator $\overline{\partial}_J$, defined as $\overline{\partial}_Ju=\dfrac{1}{2}(du+Jdu\circ j)$, as a section of an infinite dimensional vector bundle. This section is Fredholm. If it is transverse (i.e. its vertical component at solutions is surjective) then the zero set is smooth, as a consequence of the implicit function theorem, and finite dimensional. 
\begin{dfn}
A holomorphic curve $u$ is \textit{(Fredholm) regular} if the linearization at $u$ of the operator $\overline{\partial}_J$ is surjective.
\end{dfn}
Let $A\in H_2(M;\,\Z)$ be a singular homology class, let $u:\Sigma\longrightarrow M$. Define
\[
\M_g(A,J):=\{(j,\,u)\in \M_g(J)\,:\,[u]=A \}
\]
Given $u\in \M_g(A,J)$, we define the \textit{index of $u$} as $\text{ind}u=(n-3)(2-2g)+2c_1(A)$. This integer is also called the \textit{virtual dimension} of $\M_g(A,J)$, denoted by $\text{vdim}\M_g(A,J)$, and coincides with the Fredholm index at $u$ of the operator $\overline{\partial}_J$. By the implicit function theorem, the set $\M_g(A,J)^\text{reg}\subset\M_g(A,J)$ of Fredholm regular curves with trivial automorphism group is a smooth manifold of dimension $\text{vdim}\M_g(A,\,J)$.\\
In general one can prove regularity without abstract perturbations only for \textit{simple} curves, i.e., curves that do not factor as $u=v\circ c$, for a (non-trivial) branched cover $c:\Sigma\longrightarrow \Sigma'$. For closed simply covered curves one has the following result. Denote with $\J(\omega)$ a set of $\omega$-compatible almost complex structures on $M$. Denote with $\M^*_g(A,J)$ the set of genus $g$ simple $J$-holomorphic curves in the homology class $A$. 
\begin{thm}\label{transversalityclosed}
There exists a comeager\footnote{A countable intersection of open dense sets.} subset of complex structures $\J^\text{reg}\subset \J(\omega)$ such that for all $J\in\J^\text{reg}$ all simple $J$-curves are regular. In particular for all $J\in\J^\text{reg}$ the space $\M^*_g(A,J)$ is a smooth manifold of dimension $\text{vdim}\M_g(A,J)=(n-3)(2-2g)+2c_1(A)$
\end{thm}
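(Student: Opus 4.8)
The plan is to exhibit $\M^*_g(A,J)$ as the zero set of a Fredholm section and to run a parametric (Sard--Smale) transversality argument with $J$ as the parameter. First I would fix $k,p$ with $kp>2$ and set up the configuration space $\mathcal{B}$ of pairs $(u,j)$, where $u:\Sigma_g\to M$ has Sobolev class $W^{k,p}$ and lies in the class $A$, and $j$ ranges over a Teichm\"uller slice; this is a Banach manifold. Over it sits the Banach bundle $\mathcal{E}$ whose fiber at $(u,j)$ is the $W^{k-1,p}$ complex-antilinear forms $\Omega^{0,1}(u^*TM)$, and $\overline{\partial}_J$ is a smooth Fredholm section of $\mathcal{E}$ whose index (incorporating the variation of $j$ and the automorphism quotient) equals $\text{ind}\,u=(n-3)(2-2g)+2c_1(A)$. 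By definition, $u$ is regular precisely when the linearization $D_u$ of this section is surjective. Since $\J(\omega)$ with its $C^\infty$ topology is not a Banach manifold, I would enlarge the parameter space to the $C^\varepsilon$-completion $\J^\varepsilon$ using Floer's $C^\varepsilon$ norms, which \emph{is} a Banach manifold.

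The crux is to show that the universal moduli space $\widetilde{\M}=\{(u,j,J)\in\mathcal{B}\times\J^\varepsilon:\overline{\partial}_Ju=0\}$, restricted to simple curves, is a smooth Banach manifold, i.e.\ that the universal linearization
\[
(\xi,Y)\;\longmapsto\;D_u\xi+\tfrac12\,Y(u)\circ du\circ j
\]
is surjective at every simple solution, where $\xi$ captures the variations of $u$ and $j$ and $Y\in T_J\J^\varepsilon$. As $D_u$ is Fredholm its image is closed of finite codimension, so surjectivity reduces to showing that no nonzero $\eta$ in the $L^2$-cokernel of $D_u$ can be orthogonal to every $J$-variation term. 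Here the essential geometric input enters: a simple curve possesses a somewhere-injective point $z_0$ with $du(z_0)\neq0$ and $u\1(u(z_0))=\{z_0\}$, and cokernel elements satisfy a first-order elliptic equation, so by unique continuation their zeros are isolated and $\eta$ is nonzero on a neighborhood of $z_0$. One can then handcraft an admissible $C^\varepsilon$-small variation $Y$ supported in a small ball around $u(z_0)$ in $M$ (which, by injectivity, meets the curve only near $z_0$) for which $\int\langle\eta,\,Y(u)\,du\circ j\rangle\neq0$, contradicting orthogonality. This surjectivity, together with the implicit function theorem, makes $\widetilde{\M}$ a smooth Banach manifold.

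With $\widetilde{\M}$ smooth I would apply Sard--Smale to the projection $\pi:\widetilde{\M}\to\J^\varepsilon$. Its differential at $(u,j,J)$ has kernel and cokernel naturally identified with those of $D_u$, so $J$ is a regular value of $\pi$ exactly when $D_u$ is surjective for every simple $(u,j)$ over $J$, that is, when all simple $J$-curves are regular. Sard--Smale yields a comeager set $\J^{\varepsilon,\text{reg}}\subset\J^\varepsilon$ of such $J$, and over any such $J$ the fiber $\M^*_g(A,J)$ is a smooth manifold of the asserted dimension $(n-3)(2-2g)+2c_1(A)$; elliptic regularity guarantees that the a priori $W^{k,p}$ solutions over a smooth $J$ are genuinely smooth, so the choice of $k,p$ does not affect the space.

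The final step, which I expect to be the genuine remaining obstacle, is to upgrade comeagerness from the $C^\varepsilon$ category back to the smooth category: $C^\infty$ with its Fr\'echet topology is not open in $C^\varepsilon$, so residual sets do not transfer for free. The standard remedy is Taubes's trick, filtering simple curves by an energy (equivalently homology-and-genus) bound; on each piece of the filtration the corresponding regular set is open and dense within $\J(\omega)$ (openness from the Fredholm and compactness structure, density since $\J^\infty$ is dense in $\J^\varepsilon$), and a countable intersection over the filtration produces the desired comeager $\J^{\text{reg}}\subset\J(\omega)$. Beyond this passage, the only other subtlety worth flagging is that the somewhere-injectivity of simple closed curves is itself a nontrivial input, which I would cite from \cite{mcsa} rather than reprove.
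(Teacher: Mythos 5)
The paper does not prove this statement: Section~\ref{1} is explicitly a review, and \Cref{transversalityclosed} is quoted as standard background with references to \cite{gromov85} and \cite{mcsa}, so there is no in-paper argument to compare against. What you have written is a faithful reconstruction of the standard proof in the literature (McDuff--Salamon, with Floer's $C^\varepsilon$ spaces as the Banach completion): the universal moduli space over a Banach parameter space of almost complex structures, surjectivity of the universal linearization $(\xi,Y)\mapsto D_u\xi+\tfrac12 Y(u)\circ du\circ j$ at simple curves via a somewhere-injective point, unique continuation for cokernel elements, Sard--Smale applied to the projection, and Taubes's trick to return to the Fr\'echet category. You also correctly isolate the two places where real content lives (surjectivity at simple curves, and the passage from $C^\varepsilon$ back to $C^\infty$), and correctly treat somewhere-injectivity of simple closed curves as an input to be cited.

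One imprecision is worth fixing, precisely in the step you flag as the remaining obstacle. For fixed genus $g$ and fixed class $A$ the energy is constant ($\omega\cdot A$), so ``filtering by an energy bound'' is vacuous there, and more importantly an energy bound alone does not give the compactness needed for openness: a sequence of simple $J_n$-curves with bounded energy can Gromov-converge to a nodal or multiply covered limit, so the set of $J$ for which all simple curves of energy $\le E$ are regular need not be open. Taubes's filtration is instead by quantified pointwise data --- a uniform bound $\|du\|_{L^\infty}\le K$ together with a quantified injectivity modulus --- chosen so that these bounds pass to $C^\infty$-limits, force the limit to be simple, and hence make each set $\J^{\text{reg}}(A;K)$ open; density of each such set then comes from the Sard--Smale step, and the comeager set is the countable intersection over $K$ (and over the countably many pairs $(g,A)$, since the theorem as stated concerns all simple curves). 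With that substitution your outline is the complete standard argument.
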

There is also a version of this with \textit{marked points}. Define
\[
\M_{g,k}(A,J):=\{(j,\,u,\,(z_1,\dots,z_k)): z_j\in \Sigma_g\,:\,[u]=A \}/\sim
\]
the virtual dimension of which is $\text{vdim}\M_{g,k}(A,J)=\M_g(A,J)+2k$, and the transversality theorem applies as well.\\
On a compact manifold the moduli space of curves can be compactified adding limiting objects called \textit{nodal holomorphic curves}. Nodal curves are reducible holomorphic curves. They can be realized as maps of disconnected Riemann surfaces, with possibly singular components, and connected image. The appeareance of irreducible spherical components is called \textit{bubbling}. For a precise definition see \cite{mcsa}. Gromov's compactness theorem (\cite{gromov85}) states the following.
\begin{thm}[Gromov compactness]\label{compactnessclosed}
Let $M$ be a compact manifold, and let $J_k$ be a sequence of compatible almost complex structures. Let $u_k$ be a sequence of $J_k$-holomorphic curves satisfying a uniform energy bound $E(u_k)\leq C$. Then there exists a subsequence converging to a nodal curve $u_\infty$. If all the curves have the same genus, then also does the limit. If $[u_k]= A$, then $[u_\infty]=A$.
\end{thm}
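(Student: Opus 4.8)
The statement is Gromov's compactness theorem, and my plan is the standard bubble-tree argument, driven by the energy identity \eqref{energyidentity} together with local elliptic estimates for the operator $\overline{\partial}_J$. The starting point is an $\epsilon$-regularity (mean value) inequality: there is a universal constant $\hbar>0$ such that whenever a $J$-holomorphic curve $u$ has energy less than $\hbar$ on a ball $B_r$, its gradient at the center is controlled by $C r^{-2}E(u|_{B_r})$. I would derive this from the elliptic estimate for the linearized operator combined with the conformal invariance of the energy, following \cite{mcsa}. The crucial consequence is a quantization of energy: every non-constant $J$-holomorphic sphere carries at least $\hbar$, so the total bound $E(u_k)\le C$ will limit the number of limiting components.

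Next I would dichotomize on the behavior of $\sup|du_k|$. If the gradients are uniformly bounded, elliptic bootstrapping upgrades weak convergence to $C^\infty_{\mathrm{loc}}$ convergence along a subsequence, producing a smooth limit with no bubbling; simultaneously I would invoke Deligne--Mumford compactness to extract a convergent subsequence of the domain complex structures $j_k$, allowing the domain to degenerate along nodes. If instead $\sup|du_k|\to\infty$, the $\epsilon$-regularity inequality forces the gradient to concentrate at a finite set of points, finite because each concentration point absorbs at least $\hbar$ while the total is at most $C$. Away from these points the gradient stays bounded, so the previous argument yields local smooth convergence to a \emph{principal component}.

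The heart of the proof is the bubbling analysis at each concentration point. Rescaling the domain so as to normalize the gradient to $1$, I would pass to a limit (again via the elliptic estimates) to obtain a non-constant $J$-holomorphic plane, and then apply the removable singularity theorem to compactify it into a holomorphic sphere, a \emph{bubble}. Iterating at points where energy still concentrates produces a bubble tree, and the quantization $\hbar\le E\le C$ guarantees that the process terminates after finitely many steps, bounding the number of components. Assembling the principal component with the bubbles glued at the concentration points yields a candidate nodal curve $u_\infty$.

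The main obstacle, and the step demanding the most care, is showing that \emph{no energy is lost in the necks} connecting the bubbles to the principal component: one must prove that the energy carried by the long annuli separating successive rescaling scales tends to zero, so that the total energy is distributed exactly among the limiting components. I would control this via the standard annulus (isoperimetric/long-cylinder) estimate, which additionally forces the necks to converge to points and thereby produces the nodal structure. Combined with the energy identity \eqref{energyidentity}, the exact distribution of energy gives $[u_\infty]=A$, while the combinatorics of the bubble tree, spherical bubbles contributing arithmetic genus zero and Deligne--Mumford controlling any degeneration of the domain, gives preservation of the genus. The full details follow \cite{mcsa} and \cite{gromov85}.
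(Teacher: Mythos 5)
Your proposal cannot be compared against a proof in the paper, because the paper does not prove this statement: \Cref{compactnessclosed} appears in the review section and is quoted as a known result, with the proof deferred to \cite{gromov85} and \cite{mcsa}. Taken on its own terms, your sketch is a faithful outline of exactly the standard bubble-tree argument in those references: $\epsilon$-regularity and energy quantization, the gradient dichotomy, Deligne--Mumford compactness for the degenerating domain complex structures, rescaling at concentration points plus removal of singularities to produce sphere bubbles, termination of the bubble tree by quantization, and the neck/long-cylinder estimate ruling out energy loss. So the approach is correct and is the one the paper implicitly relies on.

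One step is stated more loosely than it should be: you claim that ``the exact distribution of energy gives $[u_\infty]=A$'' via \eqref{energyidentity}. Energy equality only gives $\langle[\omega],[u_\infty]\rangle=\langle[\omega],A\rangle$, which does not pin down the homology class, since distinct classes can have equal symplectic area. The correct deduction uses the convergence structure you have already built: Gromov convergence (local smooth convergence away from nodes together with the neck estimate forcing the necks to collapse to points) implies that, after reparametrization, the images of $u_k$ are $C^0$-close to the image of the glued nodal map, and $C^0$-close maps of the (connect-summed) domain are homologous; the no-energy-loss statement is what guarantees no component of the limit is invisible to this identification. With that substitution the argument is complete; the genus statement follows, as you say, from the arithmetic genus of the nodal domain being preserved under Deligne--Mumford degeneration and bubbles being spheres.
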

In particular if the homology class $A$ cannot be split as a sum $A=A_1+\dots+A_N$, with $A_i$ representable by a holomorphic map then $\M_g(A,\,J)$ is compact.

\subsection{Punctured holomorphic curves}
For the study of holomorphic curves in log-symplectic manifolds we will use the language of symplectic field theory (SFT). Here we introduce the main notions from the theory, mainly with the aim of proving that one can use SFT in log-symplectic manifolds.
\paragraph{Stable Hamiltonian structures and cylindrical ends.}\label{paragraphstablehamiltonian}
The target manifolds in SFT are manifolds with \textit{cylindrical ends}, or completions of manifolds with stable Hamiltonian boundary.
\begin{dfn}\label{stablehamiltonian}
A \textit{stable Hamiltonian structure} on a $(2n-1)$-dimensional manifold $Z$ is a pair $(\alpha,\,\beta)$ consisting of a $1$-form $\alpha$ and a $2$-form $\beta$ such that
\begin{itemize}
    \item $d\beta=0$
    \item $\alpha\wedge\beta^{n-1}$ is a volume form 
    \item $\ker\beta\subset \ker d\alpha$
\end{itemize}
The \textit{Reeb vector field} of a stable Hamiltonian structure is the unique vector field $R$ such that $\iota_R\beta=0$ nad $\alpha(R)=1$.
\end{dfn}
\begin{eg}\label{examplecosymplectic}
If $\alpha$ and $\beta$ are both closed, with $\alpha\wedge\beta^{n-1}\neq 0$, then $(\alpha,\,\beta)$ is stable Hamiltonian. In this case $d(s\alpha)+\beta=ds\wedge\alpha+\beta$ is a symplectic form on $\R\times Z$. These stable Hamiltonian structures are called \textit{cosymplectic structures}. Another example is given by contact forms and their differential.
\end{eg}
\begin{dfn}\label{stablehamiltonianboundary}
	A symplectic manifold $(W,\,\omega)$ has \textit{stable Hamiltonian boundary} $\partial W=Z^+\sqcup Z^-$ if there exists a vector field $V$ (defined in a neighbourhood of the boundary), which points outwards at $Z^+$ and inwards at $Z^-$, such that $((\iota_V\omega)_{\vert T\partial W},\,\omega_{\vert T\partial W})$ is a stable Hamiltonian structure on $\partial W$, inducing the boundary orientation on $Z^+$m and the opposite orientation on $Z^-$.
\end{dfn}
Let us assume for simplicity that $Z^-=\emptyset$.
A collar neighbourhood of a stable Hamiltonian boundary is symplectomorphic to $(-\varepsilon,\,0]\times \partial W$ with symplectic form $\omega=d(s\alpha)+\beta$ (Moser argument). The symplectomorphism is obtained by realizing the vector field $V$ as $V=\dfrac{\partial}{\partial s}$. This naturally endowes $\widehat{W}:=W\cup [0,\,\infty)\times \partial W$ with a smooth structure. Moreover, the form $\omega$ extends smoothly as a closed form $\hat{\omega}$ on $\widehat{W}$, by the formula $\hat{\omega}_{\vert [0,\,\infty)\times \partial W}:=d(s\alpha)+\beta$. Assuming that $\hat{\omega}$ is symplectic (which is the case for example when $d\alpha=0$), one calls the manifold $(\widehat{W},\,\hat{\omega})$ the \textit{completion} of $(W,\,\omega)$

\paragraph{Cylindrical complex structures and holomorphic maps.}

\begin{dfn}\label{defcylindricalcomplex}
A complex structure $J$ on $\widehat{W}$ is \textit{cylindrical} if it is $\hat{\omega}$-tame, and on the cylindrical end it satisfies
\begin{itemize}
    \item $J$ is $s$-invariant.
    \item $J\partial_s=R$
    \item $J(\ker \alpha)=\ker \alpha$
    \item $J$ is $\beta$-compatible on $\ker\alpha$
\end{itemize}
\end{dfn}
Let $\dot{\Sigma}$ be a punctured Riemann surface. Let $u:\dot{\Sigma}\longrightarrow \widehat{W}$ be a holomorphic map. 
\begin{dfn}\label{sftenergy}
The \textit{energy} of a punctured holomorphic curve is the quantity $E(u):=\underset{\varphi}{\text{sup}}\int_\Sigma \hat{\omega}_\varphi$. The supremum is taken over all $\varphi:[0,\,+\infty)\longrightarrow [0,\,\varepsilon)$ with $\varphi'>0$.
\end{dfn}
Finite energy curves enjoy special geometric properties, under the following non-degeneracy assumption on the stable Hamiltonian boundary.
\begin{dfn}\label{defmorsebott}
A stable Hamiltonian structure $(\alpha,\,\beta)$ on $Z$ is \textit{Morse-Bott} if for all $T>0$ 
\begin{itemize}
    \item the set $Z_T\subset Z$ of points belonging to a $T$-periodic Reeb orbit is a closed submanifold
    \item $\text{rank } d\alpha_{\vert M_T}$ is locally constant
    \item $T_pZ_T=\ker (d_p\phi^T-1)$ for all $p\in Z_T$, where $\phi^T$ is the time-$T$ map of the Reeb flow.
\end{itemize}
\end{dfn}
\begin{eg}\label{cosymplecticmorsebott}
Take a closed disc $(D^2,\,\omega_{\text{std}})$ and a symplectic manifold $(M,\,\eta)$. The product $(D^2\times M,\,\omega_{\text{std}}+\eta)$ is a symplectic manifold whose boundary is $S^1\times M$. The boundary inherits a stable Hamiltonian structure of cosymplectic type, namely $(dt,\,\eta)$ ($t$ is the coordinate on the circle $\R/\Z$). Such structure is Morse-Bott. Indeed, the Reeb vector field is $\partial_t$, thus the Reeb orbits have integer period, and are iterations of the standard cover of the circle, with image $S^1\times {p}$. In the notation of \Cref{defmorsebott} $Z_N=S^1\times M$ for all $N\in \N$, hence it is a closed manifold. Also the second condition in \Cref{defmorsebott} is satisfied, as $\alpha=dt$ is closed. Also the Reeb flow is the identity map, so the third condition is trivially satisfied.
\end{eg}
Finite energy curves in manifolds with Morse-Bott boundary are asymptotic to Reeb orbits, meaning that for each puncture $p$ there are coordinates $(r,\,\theta)$ centered at $p$, and a Reeb orbit
$\gamma$, such that $u(re^{i\theta})\rightarrow \gamma(e^{i\theta})$ for $r\rightarrow 0$ (see \cite{bourgeoisthesis}). If $\dot{\Sigma}=\Sigma\mi \Gamma$, we say that a puncture $z\in\Gamma$ is \textit{positive} is it is asymptotic to a Reeb orbit in $Z^+$, \textit{negative} otherwise. We split the set of punctures accordingly as $\Gamma=\Gamma^+\cup\Gamma^-$.

\paragraph{Asymptotic operators.}
Let $(Z,\,\alpha,\,\beta)$ be stable Hamiltonian, with $\xi:=\ker\alpha$, and let $\gamma:S^1\longrightarrow Z$ be a Reeb orbit of period $T$. Let $J$ be a $\beta$-compatible complex structure on $\xi$.
\begin{dfn}\label{defasymptoticoperator}
	The \textit{asymptotic operator} $A_\gamma:\Gamma(\gamma^*\xi)\longrightarrow\Gamma(\gamma^*\xi)$ is defined as
	\begin{equation}\label{formulaasymptoticoperator}
	A_\gamma\eta=-J(\nabla_t-T\nabla_\eta R)
	\end{equation}
	for some symmetric connection $\nabla$ on $TZ$. We say that $A_\gamma$ is \textit{non-degenerate} if $\ker(A_\gamma)=0$. The Reeb orbit $\gamma$ is \textit{non-degenerate} if $A_\gamma$ is.
\end{dfn}
\begin{rk}
	Given a unitary trivialization $\tau:\gamma^*\xi\cong S^1\times \R ^{2n}$, asymptotic operators are exactly the ones of the form
	\begin{equation}
		A^\tau=-J_0\frac{d}{dt}-S(t)
	\end{equation}
	for a loop $S(t)$ of symmetric matrices.
\end{rk}
Asymptotic operators play a crucial role in the index theory and in the intersection theory of punctured holomorphic curves. They can be seen as limits of the linearized Cauchy-Riemann operator at the punctures.\\

Any asymptotic operator has discrete real spectrum, accumulating at $+\infty$ and $-\infty$ (\cite{wendl}).\\
If $\dim\xi=2$, each eigenvector $e_\lambda$ of a trivialized asymptotic operator (relative to an eigenvalue $\lambda$) has a well-defined winding number $\text{wind}(e_\lambda)$. One can show (\cite{Hofer1995}) that the winding number only depends on the eigenvalue, hence $\text{wind}(\lambda):=\text{wind}(e_\lambda)$ is well a defined integer. Thus, given a trivialization $\tau$, one can define the \textit{extremal winding numbers} as
\begin{equation}\label{equationextremalwinding}
\begin{aligned}
\alpha_+^\tau(A)&:=\text{min}\{\text{wind}(\lambda):\,\lambda\in\sigma(A^\tau)\cap (0,\,+\infty)\}\\
\alpha_-^\tau(A)&:=\text{max}\{\text{wind}(\lambda):\,\lambda\in\sigma(A^\tau)\cap (-\infty,\,0)\}
\end{aligned}
\end{equation} 
One defines the \textit{parity} of $A$ as
\begin{equation}\label{equationparity}
p(A):=\alpha_+^\tau(A)-\alpha_-^\tau(A)
\end{equation}
The number $p(A)$ does not depend on the trivialization $\tau$, and if $A$ is non-degenerate, is either $0$ or $1$. 
\begin{dfn}
A non-degenerate Reeb orbit is \textit{even/odd} if $p(A_\gamma)$ is $0/1$.
\end{dfn}
One can use the extremal winding numbers to define the \textit{Conley-Zehnder index} of an orbit.
\begin{dfn}\label{thmconleyzehnderwinding}
\[
\mu_{CZ}^\tau(\gamma):=\alpha_+^\tau(A_\gamma)-\alpha_-^\tau(A_\gamma)=2\alpha_-^\tau(A_\gamma)+p(A_\gamma)=2\alpha_+^\tau(A_\gamma)-p(A_\gamma)
\]
\end{dfn}
The Conley-Zehnder index can be defined for non-degenerate asymptotic operators (and hence for non-degenerate Reeb orbits) in any dimension (see \cite{salamonnotesfloer}, \cite{wendl}, \cite{Hofer1995}).

\paragraph{Moduli spaces.}\label{subsectionmodulisft}
We want to state here the dimension formula for the moduli space of punctured holomorphic curves -- this wil depend on the so-called \textit{relative Chern number} and on the Conley-Zehnder index (for possibly degenerate orbits). Consider a punctured surface $\dot{\Sigma}=\Sigma\mi\Gamma$, and let $\mathcal{R}=(\mathcal{R}_1,\,\dots, \mathcal{R}_k)$ be a $k$-tuple of connected sets of Reeb orbits. We say that a set of asymptotic constraints $\mathfrak{c}$ is the datum of
\begin{itemize}
	\item a partition $\Gamma=\Gamma^\pm_C\sqcup\Gamma^\pm_U$ into (positive or negative) constrained and unconstrained punctures;
	\item for each $z\in \Gamma_C$, a Reeb orbit $\gamma_z$;
	\item for each $z\in \Gamma_U$, a connected manifold $\mathcal{S}_z\subset \partial W$ of points belonging to a family $\mathcal{S}_z$ of Reeb orbits.
\end{itemize}
We say that a punctured holomorphic curve $u:\dot{\Sigma}\longrightarrow \widehat{W}$ satisfies the constraints $\mathfrak{c}$ if each puncture $z\in\Gamma_C$ is asymptotic to $\gamma_z$, and each puncture $z\in\Gamma_U$ is asymptotic to a Reeb orbit belonging to $\mathcal{R}_z$.\\

A curve with $k$ punctures, subject to a constraint $\mathfrak{c}$, determines a relative homology class $A\in H_2(W,\underset{z\in\Gamma_C}{\bigcup}\gamma_z\cup\underset{z\in\Gamma_U}{\bigcup}\mathcal{S}_z)$. Define
\begin{equation*}
\M_g(A,J)^\mathfrak{c}
\end{equation*}
as the set of equivalence classes of punctured genus $g$ $J$-holomorphic curves, subject to the constraint $\mathfrak{c}$, in the homology class $A$. As in the closed case, one has a generic transversality result, and a dimension formula for the moduli space.\\ 

In order to give the dimension formula, take a holomorphic map $u$. Fix a trivialization $\tau$ of $u^*TW$ in a neighbourhood of the punctures, and define $c_1^\tau(u):=c_1^\tau(u^*TW)$, the first Chern number relative to the trivialization $\tau$, as follows. For line bundles it is defined as the number of zeroes of a transverse section extending a section constant around the punctures, with respect to $\tau$. The relative Chern number is extended additively to higher rank bundles, and the result only depends on the relative homology class of $u$.\\

The Conley-Zehnder index can be defined as follows. Given a puncture $z$, and $\varepsilon>0$ suitably small, define
\begin{flalign}
\delta_z=
\left\{\begin{aligned}
& \varepsilon  &\text{if }z\in\Gamma_C\\
-& \varepsilon &\text{if }z\in\Gamma_U
\end{aligned}\right.
\end{flalign}
Define the \textit{total Conley-Zehnder index} as 
\begin{equation}\label{deftotalconleyzehnder}
\mu^\tau(\mathfrak{c}):=\sum_{z\in\Gamma^+}^{}\mu^\tau_{CZ}(\gamma_z+\delta_z)-\sum_{z\in\Gamma^-}\mu^\tau_{CZ}(\gamma_z-\delta_z)
\end{equation}
The Conley-Zehnder index of a non-degenerate asymptotic operator is stable under small perturbation, thus the above number is well-defined.\\

Define 
\begin{equation}\label{indexformulasft}
\text{vdim}\M_g(A,J)^\mathfrak{c}:=(n-3)\chi(\dot{\Sigma})+2c_1^\tau(A)+\mu(\mathfrak{c})
\end{equation} 
(if $u\in \M_g(A,J,S)$, this is also called the \textit{index of $u$}, and denoted by $\text{ind}(u)$).
\begin{thm}[\cite{wendl}, \cite{wendlpunctured}]\label{generictransversalitysft}
	Let $\widehat{W}$ be the completion of a manifold with Morse-Bott stable Hamiltonian boundary. Let $V$ be an open subset. Fix a complex structure $J_\text{fix}$ on $\widehat{W}\setminus V$, and consider the subset $\M^*_g(A,J;\,V)^\mathfrak{c}\subset \M_g(A,J;\,V)^\mathfrak{c}$ of simple curves with a point mapped to $V$. Let $\J(\omega,J_\text{fix})$ be the set of compatible almost complex structures that coincide with $J_\text{fix}$ on $\widehat{W}\setminus V$. Then there exists a comeager subset $\J(\omega,J_\text{fix})^\text{reg}\subset \J(\omega,J_\text{fix})$ such that for all $J\in \J(\omega,J_\text{fix})^\text{reg}$ the space $\M^*_g(A,J;\,V)^\mathfrak{c}$ is smooth of dimension $\text{vdim}\M_g(A,J;\,V)^\mathfrak{c}$.
\end{thm}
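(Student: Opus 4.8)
\section*{Proof proposal}

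The plan is to run the standard universal-moduli-space and Sard--Smale argument (as in \cite{mcsa} for the closed case, and \cite{wendl}, \cite{wendlpunctured} in the punctured Morse--Bott setting), the only genuinely new feature being that the almost complex structure may be varied \emph{only} on $V$, so that one must locate a somewhere-injective point of each curve whose image lies in $V$. First I would replace the Fréchet space $\J(\omega,J_\text{fix})$ by a Banach manifold of Floer $C^\varepsilon$-regular compatible almost complex structures agreeing with $J_\text{fix}$ on $\widehat{W}\setminus V$, and the space of maps by an exponentially weighted Sobolev space $W^{k,p,\delta}$ with small weight $\delta$ adapted to the asymptotic orbits. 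Because the boundary is Morse--Bott, I would augment the domain by the finite-dimensional data recording the asymptotic direction within each orbit family; the shifts $\pm\delta_z$ appearing in \eqref{deftotalconleyzehnder} correspond precisely to replacing each degenerate $A_{\gamma_z}$ by the perturbed non-degenerate operator $A_{\gamma_z}\pm\varepsilon$, which renders the linearized operator $D_u$ Fredholm between these weighted spaces. Since the perturbation of $J$ is confined to $V$ (an interior region away from the cylindrical ends), the asymptotic operators are untouched and the Fredholm framework of \cite{wendl} applies verbatim.

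Next I would form the universal moduli space $\{(u,J):J\in\J(\omega,J_\text{fix}),\ u\in\M^*_g(A,J;\,V)^\mathfrak{c}\}$ and show it is a smooth Banach manifold. This amounts to surjectivity of the full linearization $(\eta,Y)\mapsto D_u\eta+\tfrac12\,Y(u)\circ du\circ j$, where $Y$ runs over admissible infinitesimal variations of $J$ supported in $V$ (i.e. $YJ+JY=0$). As $D_u$ is Fredholm its image is closed with finite-dimensional cokernel, so it suffices to show that any $\zeta$ annihilating the image must vanish. Such a $\zeta$ satisfies the formal adjoint equation $D_u^*\zeta=0$, hence is smooth by elliptic regularity and rigid under the Aronszajn unique-continuation principle, and in addition satisfies $\langle\zeta,\,Y(u)\,du\,j\rangle=0$ for all admissible $Y$ supported in $V$.

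The crux is the third step: producing an injective point over $V$. For a simple punctured curve the set of injective points, where $du\neq 0$ and $u\1(u(z))=\{z\}$, is open and dense in $\dot{\Sigma}$; this is the punctured analogue of the McDuff--Salamon result, relying on $u$ being non-multiply-covered together with the asymptotic control near the punctures that rules out accumulation of non-injectivity there. By the defining condition of $\M^*_g(A,J;\,V)^\mathfrak{c}$ some point of $u$ lies in the open set $V$, so $u\1(V)$ is a nonempty open set and therefore contains an injective point $z_0$ with $u(z_0)\in V$. A small neighbourhood of $u(z_0)$ then sits inside $V$ and meets the image of $u$ only near $z_0$; choosing $Y$ supported there so that $\tfrac12\,Y(u(z_0))\,du(z_0)\,j(z_0)$ points along $\zeta(z_0)$ forces $\zeta(z_0)=0$, and density of injective points in $u\1(V)$ together with continuity propagates this to $\zeta\equiv 0$. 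This establishes surjectivity, hence smoothness of the universal moduli space. This is the step I expect to be the main obstacle, since one must verify both that density of injective points survives the Morse--Bott asymptotics and that localizing $Y$ near $u(z_0)$ genuinely keeps the perturbation inside $V$ without disturbing the cylindrical and compatibility constraints.

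Finally I would apply the Sard--Smale theorem to the projection $\pi$ from the universal moduli space to $\J(\omega,J_\text{fix})$, which is Fredholm of index $\ind(D_u)$. Its regular values form a comeager subset $\J(\omega,J_\text{fix})^\text{reg}$, and for each such $J$ the fibre $\pi\1(J)=\M^*_g(A,J;\,V)^\mathfrak{c}$ is a smooth manifold of dimension equal to the Fredholm index. A Riemann--Roch computation in the weighted setting, after accounting for deformations of $j$ on the domain and the automorphisms, identifies this index with $(n-3)\chi(\dot{\Sigma})+2c_1^\tau(A)+\mu(\mathfrak{c})$, which is exactly $\text{vdim}\,\M_g(A,J;\,V)^\mathfrak{c}$ from \eqref{indexformulasft}. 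A standard Taubes intersection argument then upgrades the comeager set obtained in the $C^\varepsilon$ topology to one that is comeager in the $C^\infty$ topology, completing the proof.
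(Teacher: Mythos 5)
Your proposal is correct, and it reconstructs essentially the argument that the paper implicitly relies on: the paper gives no proof of \Cref{generictransversalitysft} at all, quoting it directly from \cite{wendl} and \cite{wendlpunctured}, and those sources prove it exactly by your route --- weighted Sobolev/$C^\varepsilon$ Banach setup adapted to Morse--Bott asymptotics, smoothness of the universal moduli space via an injective point mapped into $V$ plus unique continuation, and Sard--Smale with the Taubes trick to pass from $C^\varepsilon$ to $C^\infty$. The one place to tighten the write-up is the final propagation step: density of injective points only gives $\zeta\equiv 0$ on $u\1(V)$, and it is the Aronszajn unique-continuation property of $D_u^*\zeta=0$ (which you invoked earlier) that extends the vanishing to all of $\dot{\Sigma}$.
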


\paragraph{Compactness.}
As in the closed case, when considering sequences of holomorphic curves bubbling of spheres might occur; moreover, another phenomenon, called \textit{breaking}, might occur. The objects resulting from bubbling and breaking are called \textit{holomorphic buildings}.
\begin{thm}[\cite{compactness}]\label{thmsftcompactness}
A sequence $u_k$ of punctured holomorphic curves with Morse-Bott asymptotics and uniformly bounded energy admits a subsequence converging to a holomorphic building.
\end{thm}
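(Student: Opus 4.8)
The plan is to adapt the proof of Gromov compactness (\Cref{compactnessclosed}) to the punctured setting, incorporating the new phenomenon of breaking along the cylindrical ends. The starting point is the uniform energy bound $E(u_k)\leq C$. By \Cref{sftenergy} this controls both the $\hat\omega$-energy of the curves inside $W$ and the amount by which the images stretch into the cylindrical end $[0,\,\infty)\times\partial W$; in particular it bounds the total $\beta$-area as well as the total period of the Reeb orbits to which the curves can be asymptotic. The first step is to fix the conformal structures $j_k$ on $\dot{\Sigma}$ and, after passing to a subsequence, understand their degeneration via the Deligne--Mumford compactification, so that the domains converge to a nodal punctured surface carrying a conformal structure with possibly long necks.

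Next I would carry out the standard bubbling-off analysis. Away from points where $|du_k|$ blows up, elliptic regularity for the nonlinear Cauchy--Riemann equation yields uniform $C^\infty_\text{loc}$ bounds, hence (after a further subsequence) $C^\infty_\text{loc}$ convergence to a holomorphic map off a finite set. At a point where the gradient blows up, rescaling produces in the limit a non-constant finite-energy holomorphic sphere or plane in $\widehat{W}$ or in the symplectization $\R\times\partial W$. A monotonicity lemma provides a uniform lower bound $\hbar>0$ on the energy of any such non-constant bubble, so the bound $C$ forces only finitely many bubbles and isolates the bubbling points. This controls the interior behaviour of the sequence.

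The main obstacle is the neck analysis, which is genuinely new compared with the closed case and is where the Morse--Bott hypothesis enters decisively. In the cylindrical ends, and in the long necks produced by the degeneration of the domains, the images of $u_k$ can stretch an unbounded amount in the $\R$-direction; one must show that on such a long cylinder of bounded energy the map is $C^0$-close to a trivial cylinder $\R\times\gamma$ over a Reeb orbit $\gamma$. The key estimate is that energy cannot concentrate in the neck, another consequence of monotonicity together with the stability condition $\ker\beta\subset\ker d\alpha$, so the restriction to each circle in the neck converges to a Reeb orbit. In the Morse--Bott case the asymptotic operator $A_\gamma$ of \Cref{defasymptoticoperator} has non-trivial kernel, so the limiting orbit is determined only up to its Morse--Bott family; pinning down to which orbit the curve converges requires a gradient-flow analysis along the central directions of the family, and the uniform exponential-decay estimates near $\gamma$ (which degenerate as the orbit does) are what make this step delicate. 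This produces finitely many breaking levels, each a finite-energy holomorphic curve in $\R\times\partial W$, glued along matching Reeb orbits.

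Finally I would assemble the limit. The $C^\infty_\text{loc}$ limit of the main part, the finitely many bubbles, and the finitely many symplectization levels fit together into a holomorphic building: positive punctures of each level are asymptotic to the same Reeb orbits as the negative punctures of the level above, and the nodes record the bubbling. One then checks that the resulting building is stable and that $u_k$ converges to it in the SFT topology, i.e.\ uniformly after reparametrization on the main part and on each level, with matching asymptotic orbits. This is precisely the content of \cite{compactness}, whose Morse--Bott version we invoke here.
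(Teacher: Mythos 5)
This theorem is not proved in the paper at all: it is imported as a black box from \cite{compactness} (the SFT compactness theorem of Bourgeois--Eliashberg--Hofer--Wysocki--Zehnder), so there is no internal proof to compare your attempt against. Your outline is a faithful summary of the strategy of that reference, and it correctly isolates what is genuinely new relative to \Cref{compactnessclosed}: the uniform energy bound controlling both the $\hat{\omega}$-area and the asymptotic periods, Deligne--Mumford degeneration of the domains, bubbling controlled by an energy quantum, and above all the neck analysis, where the Morse--Bott hypothesis enters because the asymptotic operator of \Cref{defasymptoticoperator} has nontrivial kernel and the limiting orbit is a priori only determined up to its Morse--Bott family. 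However, as written your text is a plan rather than a proof: the hard analytic ingredients --- uniform gradient bounds, exponential convergence to Reeb orbits in the degenerate (Morse--Bott) setting, the definition of the topology on the space of buildings and stability of the limit --- are named but not established, and your final sentence defers to \cite{compactness} itself, which makes the argument circular if it is meant to replace the citation. As an explanation of why the theorem holds and where each hypothesis is used, it is accurate and serves the same expository role as the paper's bare citation; as a self-contained proof it is incomplete.
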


We will not explain the definition of buildings in full generality, but restrict to a simpler setting which is enough for our purposes. Consider the symplectic manifold $\R\times Z$, and consider spheres with two punctures (i.e. cylinders), with one puncture asymptotic to $\{-\infty\}\times \gamma_-$ (negative puncture), and the other asymptotic to $\{+\infty\}\times \gamma_+$ (positive puncture).\\ 
Let $\text{pr}_Z:\R\times Z\longrightarrow Z$ be the projection onto $Z$. \textit{Broken cylinders} with values in $\R\times Z$ can be described by the following data: a finite set $u=(u_1,\,\dots,\,u_N)$ of cylinders with values in $\R\times Z$, such that: $u_1(-\infty,\,t)=\{-\infty\}\times \gamma_-$, $u_N(+\infty,\,t)=\{+\infty\}\times \gamma_+$, and $\text{pr}_Z\circ u_j(+\infty,\,t)=\text{pr}_Z\circ u_{j+1}(-\infty,\,t)$. The integer indexing the cylinders is called the \textit{level}.\\ 
For a general $\widehat{W}$, the behaviour is similar, except that $u_1$ (the ``main level") takes values in $\widehat{W}$, while $u_j$ takes values in $\R\times Z$ for all $j>1$.\\ 
A general building is, roughly, a set of curves as above, that additionally allows for the curves $u_j$ to be nodal curves.\\
When one considers curves with a bigger number of punctures and positive genus, the notation becomes more elaborate, but the behaviour of sequences is similar. See \cite{compactness}, \cite{wendl} for details.

\section{Log-symplectic manifolds}\label{2}

In this section we define log-symplectic manifolds and list some well-known properties. The properties are formulated in a way which is useful for the purpose of studying holomorphic curves.

\paragraph{Definition and first properties.}
\begin{dfn}\label{deflogsympl}
A \textit{log-symplectic manifold} is a manifold $X$ together with a Poisson bivector $\pi$, such that
\begin{itemize}
    \item $\pi^n \pitchfork 0$
    \item ${(\pi^n)}\1(0)\neq \emptyset$
\end{itemize}
\end{dfn}
\begin{rk}
A Poisson bivector satisfying the first but not the second condition in \Cref{deflogsympl} is the inverse of a symplectic form. In the literature people mostly omit the second condition in the definition of a log-symplectic structure. In \cite{gil} and \cite{ralphgil1} the authors call a Poisson bivector satisfying our \Cref{deflogsympl} a \textit{bona fide log-symplectic structure}. Since we will mostly consider log-symplectic manifolds which are not symplectic, we preferred to avoid the use of additional terminology, and put this requirement in the definition.
\end{rk}
\begin{rk}
Log-symplectic manifolds are necessarily even dimensional. We will always denote $\text{dim}X=2n$.
\end{rk}
A log-symplectic manifold contains a distinguished submanifold, namely the locus where the Poisson structure does not have maximal rank.
\begin{dfn}\label{defsingularlocus}
The \textit{singular locus} of a log-symplectic structure $\pi$ on a manifold $X$ is the codimension-$1$ submanifold $Z:={(\pi^n)}\1(0)$.
\end{dfn}
We will always denote the singular locus of a log-symplectic manifold $X$ as $Z_X$, or just $Z$.
\begin{rk}\label{remarkcoorientation}
Since the singular locus $Z$ is the zero set of a generic section of the bundle $\bigwedge^\text{top}TX$, it is coorientable if $X$ is orientable. For the same reason if $X$ is orientable, then $Z$ is trivial in homology. 
\end{rk}

\paragraph{Local forms.}

As a consequence of the Weinstein splitting theorem, for each point in $Z$ there exists a coordinate neighbourhood $V$ with coordinate functions $(x,\,y_1,\,\dots,\,y_{2n-1})$ such that
\begin{itemize}
    \item $V\cap Z=\{x=0\}$
    \item $\pi=x\partial_x\wedge\partial_{y_1}+\partial_{y_2}\wedge\partial_{y_3}+\dots+\partial_{y_{2n-2}}\wedge\partial_{y_{2n-1}}$
\end{itemize}
One could also look at the inverse of the Poisson structure, obtaining a non-smooth symplectic form 
\begin{equation}\label{localnormal}
\dfrac{dx}{x}\wedge dy_1+dy_2\wedge d{y_3}+\dots+d{y_{2n-2}}\wedge d{y_{2n-1}}
\end{equation}
(the existence of such local form is referred to as \textit{Darboux theorem}, and was originally proven in \cite{guilmirpir}).
This can actually be viewed as a smooth form in the \textit{logarithmic tangent bundle}.
\begin{dfn}
Let $(X,\,Z)$ be a manifold with a codimension-$1$ submanifold. The \textit{logarithmic tangent bundle} $\TZ$ is the rank-$2n$ vector bundle whose sheaf of section is the sheaf $\mathfrak{X}_Z(X)$ of vector fields on $X$ tangent to $Z$. We call its dual \textit{logarithmic cotangent bundle}, and we denote it with $\TZc$.
\end{dfn}
This bundle exists and is unique up to isomorphism by the Serre-Swan theorem. The inclusion $\mathfrak{X}_Z \hookrightarrow \mathfrak{X}$ induces a map $\rho:\TZ\longrightarrow TX$, called the \textit{anchor map}, which is an isomorphism almost everywhere (namely, on $X\mi Z$). $\TZ$ is in fact a Lie algebroid.
\begin{dfn}
A \textit{logarithmic differential form} is a section of $\bigwedge^\bullet(\TZc)$. We say that a logarithmic differential form $\omega$ is \textit{closed} if $d\omega=0$, where $d$ denotes the Lie algebroid exterior differential.  
\end{dfn}
\begin{rk}
$d\omega=0$ is equivalent to $d((\rho|_{X\mi Z}\1)^*\omega)=0$, as an ordinary differential form.
\end{rk}
The local form \Cref{localnormal} implies that one can view a log-symplectic structure as a logarithmic $2$-form. This implies in particular that the vector bundle $\TZ$ admits a complex structure. To be more precise, it admits a contractible space of compatible complex structures, and a contractible space of tame complex structures (\cite{mcsa}).\\
The correspondence between log-symplectic structures and logarithmic symplectic forms is in fact one to one (\cite{guilmirpir}). Based on this, from now on we will treat a log-symplectic manifold as a triple $(X,\,Z,\,\omega)$ consisting of:
\begin{itemize}
    \item a smooth manifold $X$
    \item a codimension-$1$ submanifold $Z$
    \item a logarithmic symplectic form $\omega$ on $\TZ$
\end{itemize}

\paragraph{Maps of log-symplectic manifolds.}

\begin{dfn}[\cite{ralphgil1}]\label{logmaps}
Let $(X,\,Z_X),\,(Y,\,Z_Y)$ be pairs of manifold with a codimension-$1$ submanifold. A \textit{(smooth) map of pairs} $f:(Y,\,Z_Y)\longrightarrow (X,\,Z_X)$ is a smooth map $f:Y\longrightarrow X$  such that $f\pitchfork Z_X$, and $f\1(Z_X)=Z_Y$.
\end{dfn}
\begin{eg}\label{Zvuoto}
$f:(Y,\,\emptyset)\longrightarrow (X,\,Z)$ is a smooth map of pairs if and only if $f$ is an ordinary smooth map $f:Y\longrightarrow X\mi Z$.
\end{eg}
The differential of a smooth map of pairs lifts uniquely to the logarithmic tangent bundles, meaning that there is a commutative diagram
\begin{equation}\label{diagramlift}
\begin{tikzcd}
TY(-\log Z_Y) \arrow{r}{\overline{df}} \arrow{d} & TX(-\log Z_X) \arrow{d}\\
TY\arrow{r}{df} & TX
\end{tikzcd}
\end{equation}
The restricted bundle $\TZ|_Z$ carries a ``canonical transverse section" $\xi_X=\xi$, constructed as follows. Choose a local coordinate system $\phi=(x,\,y_1,\,\dots,\,y_{2n-1}))$ on an open set $U$ with $x$ a defining function for $Z\cap U$ and $y_i$ a coordinate chart for $Z$. Define $\xi|_U:=\phi_*(x\partial_x)$. Given another chart $\phi'=(x',\,y'_1,\,\dots,\,y'_{2n-1}))$ as above, it is easy to compute that $\phi_*(x\partial_x)|_Z=\phi'_*(x'\partial_{x'})|_Z$, which ensures that $\xi$ is well defined on $Z$. This canonical section is preserved under maps of pairs:
\begin{prop}\label{bnormal}
Let $f:(Y,\,Z_Y)\longrightarrow (X,\,Z_X)$ be a smooth map of pairs. Then $f_*(\xi_{Z_Y})=\xi_{Z_X}$.
\end{prop}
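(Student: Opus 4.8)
The plan is to verify the identity pointwise along $Z_Y$ by a local computation, where $f_*$ denotes the canonical lift $\overline{df}$ of $df$ from diagram \eqref{diagramlift}. The guiding idea is that $\xi_{Z_Y}$ and $\xi_{Z_X}$ both span the kernel of the respective anchor maps over the singular loci, so commutativity of \eqref{diagramlift} forces $\overline{df}(\xi_{Z_Y})$ to be a scalar multiple of $\xi_{Z_X}$ along $f(Z_Y)\subset Z_X$. The content of the proposition is that this scalar equals $1$, and the cleanest way to pin it down is to pair against a frame of the log-cotangent bundle $\TZc$.

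First I would fix $p\in Z_Y$ and choose adapted charts. Near $f(p)$ take coordinates $(x,\,y_1,\,\dots,\,y_{2n-1})$ with $Z_X=\{x=0\}$, so that $\xi_{Z_X}=x\partial_x$ and $\TZc$ has the local frame $\frac{dx}{x},\,dy_1,\,\dots,\,dy_{2n-1}$; near $p$ take coordinates $(w,\,z_1,\,\dots)$ with $Z_Y=\{w=0\}$ and $\xi_{Z_Y}=w\partial_w$. Since $f\1(Z_X)=Z_Y$, the function $x\circ f$ vanishes exactly on $Z_Y$, while transversality $f\pitchfork Z_X$ forces $d(x\circ f)\neq 0$ along $Z_Y$; hence $x\circ f=g\,w$ for a smooth $g$ with $g|_{Z_Y}\neq 0$. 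This is precisely where the transversality hypothesis enters, and it is what makes $\frac{dg}{g}$ below a genuine (non-logarithmic) smooth form near $Z_Y$.

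It then suffices to check that $\overline{df}(\xi_{Z_Y})$ pairs with the coframe $\frac{dx}{x},\,dy_i$ in the same way as $\xi_{Z_X}$, namely $1$ and $0$ respectively. Dualizing \eqref{diagramlift}, the adjoint $\overline{df}^*$ acts on logarithmic forms as the ordinary pullback on $X\mi Z_X$, smoothly extended across $Z_X$, so
\begin{equation*}
\overline{df}^*\Big(\frac{dx}{x}\Big)=\frac{d(x\circ f)}{x\circ f}=\frac{dw}{w}+\frac{dg}{g},\qquad \overline{df}^*(dy_i)=d(y_i\circ f).
\end{equation*}
Pairing with $\xi_{Z_Y}=w\partial_w$ gives $\big\langle \frac{dw}{w}+\frac{dg}{g},\,w\partial_w\big\rangle=1+\frac{w}{g}\partial_w g$ and $\big\langle d(y_i\circ f),\,w\partial_w\big\rangle=w\,\partial_w(y_i\circ f)$, which restrict to $1$ and $0$ on $\{w=0\}$. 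By the adjunction $\langle \overline{df}^*\eta,\,\xi_{Z_Y}\rangle=\langle \eta,\,\overline{df}(\xi_{Z_Y})\rangle$ these are exactly the pairings of $\overline{df}(\xi_{Z_Y})$ with the coframe, whence $\overline{df}(\xi_{Z_Y})=\xi_{Z_X}$ along $Z_Y$. The only real obstacle is the bookkeeping needed to justify that $\overline{df}^*$ is computed by the smoothly extended ordinary pullback (a standard property of maps of pairs, as in \Cref{logmaps}); once that is in place, the factor $w$ in $\xi_{Z_Y}$ annihilates every correction term after restriction to the singular locus, and the result drops out.
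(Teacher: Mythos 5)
Your proof is correct and follows essentially the same route as the paper's: a local coordinate computation exploiting that, by transversality together with $f^{-1}(Z_X)=Z_Y$, the pullback $x\circ f$ of a defining function for $Z_X$ is a nonvanishing multiple of a defining function for $Z_Y$, after which every correction term carries a factor of the defining function and vanishes upon restriction to the singular locus. The only cosmetic difference is that the paper takes $y=f^*x$ itself as the normal coordinate on $Y$ (so that your unit $g\equiv 1$) and computes the pushforward of $y\partial_y$ directly, while you keep an arbitrary coordinate $w$ and verify the identity dually by pairing $\overline{df}(\xi_{Z_Y})$ against the log-coframe $\frac{dx}{x},\,dy_i$.
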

\begin{proof}
Consider a local defining function $x$ for $Z_X$. By transversality, its pullback $y=f^*x$ via $f$ is a local defining function for $Z_Y$. By definition we can compute $\xi_{Z_X}$ and $\xi_{Z_Y}$ by looking at $x\dfrac{\partial}{\partial x}$ and $y\dfrac{\partial}{\partial y}$. $f_*(y\dfrac{\partial}{\partial y})=x\dfrac{\partial}{\partial x}+xV$, where $V$ is tangent to $Z_X$. Restricting to $Z_X$ gives the desired equality.
\end{proof} 
The following lemma due to Cavalcanti and Klaasse \cite{ralphgil1} is useful in order to define embeddings of pairs, and in particular log-symplectic submanifolds.
\begin{lemma}[\cite{ralphgil1}]
Let $f:(Y,\,Z_Y)\longrightarrow (X,\,Z_X)$ be a smooth map of pairs, such that $f\1(Z_X)=Z_Y$. The anchor map $\rho: TY(-\text{log}\,Z_Y)\longrightarrow TY$ induces an isomorphism $\rho:\ker \overline{df}\longrightarrow \ker df$ at all points, where $\overline{df}$ is the lift of $df$, in the sense of (\ref{diagramlift}). 
\end{lemma}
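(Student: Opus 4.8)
The plan is to establish, at each point, the three facts that together give the asserted isomorphism: that the anchor $\rho_Y$ sends $\ker\overline{df}$ into $\ker df$, that this restriction is injective, and that it is surjective. (I write $\rho_Y,\rho_X$ for the two anchor maps appearing as the vertical arrows of (\ref{diagramlift}).) The first fact is immediate from commutativity of (\ref{diagramlift}): if $v\in\ker\overline{df}$ then $df(\rho_Y(v))=\rho_X(\overline{df}(v))=0$, so $\rho_Y(v)\in\ker df$. Since all the maps involved are fiberwise linear between finite-dimensional spaces, it then suffices to prove injectivity and surjectivity pointwise.

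For injectivity I would use the local normal form of the anchor. Away from $Z_Y$ the map $\rho_Y$ is an isomorphism, so there is nothing to check. At a point $p\in Z_Y$, the kernel of $\rho_Y$ is exactly the line spanned by the canonical section $\xi_{Z_Y}$ (in coordinates $\rho_Y(x\partial_x)=x\partial_x$ vanishes on $\{x=0\}$). Hence it is enough to verify $\xi_{Z_Y}\notin\ker\overline{df}$, and this is precisely the content of \Cref{bnormal}: $\overline{df}(\xi_{Z_Y})=\xi_{Z_X}\neq 0$, the canonical section being nowhere vanishing. Thus $\ker\rho_Y\cap\ker\overline{df}=0$ everywhere.

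Surjectivity is the heart of the argument, and again only the behaviour over $Z_Y$ needs work: over $Y\setminus Z_Y$ both anchors are isomorphisms and $f(p)\notin Z_X$, so for $w\in\ker df_p$ the vector $v:=\rho_Y^{-1}(w)$ satisfies $\rho_X(\overline{df}(v))=df(w)=0$, forcing $\overline{df}(v)=0$. Now fix $p\in Z_Y$, set $q=f(p)\in Z_X$, and take $w\in\ker df_p$. The first step is to show $w\in T_pZ_Y=\operatorname{im}\rho_Y$: choosing a defining function $s$ for $Z_X$, the hypotheses $f\pitchfork Z_X$ and $f^{-1}(Z_X)=Z_Y$ make $f^*s$ a defining function for $Z_Y$, so from $0=ds_q(df_p w)=d(f^*s)_p(w)$ we conclude that $w$ is tangent to $Z_Y$. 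Pick any lift $v$ with $\rho_Y(v)=w$; by commutativity $\rho_X(\overline{df}(v))=df(w)=0$, so $\overline{df}(v)\in\ker\rho_X=\langle\xi_{Z_X}\rangle$, say $\overline{df}(v)=c\,\xi_{Z_X}$. Replacing $v$ by $v-c\,\xi_{Z_Y}$ does not change $\rho_Y(v)=w$ (since $\xi_{Z_Y}\in\ker\rho_Y$), while $\overline{df}(v-c\,\xi_{Z_Y})=c\,\xi_{Z_X}-c\,\overline{df}(\xi_{Z_Y})=0$ by \Cref{bnormal}. Hence $w$ lies in the image.

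The only genuine obstacle is this surjectivity over $Z_Y$, where the anchor degenerates. The two ingredients that resolve it are the transversality hypothesis, which forces $\ker df\subseteq T Z_Y=\operatorname{im}\rho_Y$ so that a lift exists at all, and \Cref{bnormal}, which allows the one-dimensional correction along $\xi_{Z_Y}$ to cancel the unavoidable $\xi_{Z_X}$-component of $\overline{df}(v)$. Everything else is formal diagram-chasing, and injectivity together with surjectivity yields the pointwise isomorphism claimed.
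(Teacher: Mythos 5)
Your proof is correct. One thing to be aware of: the paper itself gives no proof of this lemma --- it is quoted verbatim from Cavalcanti--Klaasse \cite{ralphgil1} --- so there is no internal argument to compare against, and your write-up effectively fills a gap the paper leaves to the reference. On its own terms the argument is sound and complete: commutativity of (\ref{diagramlift}) gives that $\rho_Y$ maps $\ker\overline{df}$ into $\ker df$; injectivity is exactly the statement $\ker\rho_Y\cap\ker\overline{df}=0$, which follows because along $Z_Y$ the kernel of the anchor is the line spanned by $\xi_{Z_Y}$ while \Cref{bnormal} gives $\overline{df}(\xi_{Z_Y})=\xi_{Z_X}\neq 0$; and your surjectivity argument correctly isolates the two genuinely nontrivial inputs, namely transversality (via the pulled-back defining function $f^*s$, which forces $\ker df_p\subseteq T_pZ_Y=\operatorname{im}\rho_Y$ so that a lift exists) and the correction $v\mapsto v-c\,\xi_{Z_Y}$, which kills the residual $\ker\rho_X$-component of $\overline{df}(v)$ without altering $\rho_Y(v)=w$, again by \Cref{bnormal}. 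It is worth noting that your mechanism is the natural continuation of the paper's own proof of \Cref{bnormal} (pull back a defining function, compare $y\,\partial_y$ with $x\,\partial_x$ modulo terms vanishing on $Z_X$), so the proof sits well with the material the paper does develop.
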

This means that an embedding induces an inclusion at the level of the logarithmic tangent bundles. Hence the following definition makes sense.
\begin{dfn}\label{submanifold}
A \textit{log-symplectic submanifold} of $(X,\,Z_X,\,\omega)$ is a pair of manifolds $(Y,\,Z_Y)$ with a smooth embedding $f:(Y,\,Z_Y)\longrightarrow (X,\,Z_X)$ such that $\omega$ restricts to a symplectic form (i.e. nondegenerately) to $f_*(TY(-\text{log}\,Z_Y))$.
\end{dfn}
The same meaning can be given to the expression \textit{complex submanifold} of a manifold with a logarithmic complex structure. The following fact is obvious:
\begin{prop}
Let $J$ be an $\omega$-compatible complex structure on $\TZ$. Any complex submanifold is symplectic. Moreover, for each log-symplectic submanifold $(Y,\,Z_Y)$ there exists a compatible complex structure on $\TZ$ for which $(Y,\,Z_Y)$ is a complex submanifold.
\end{prop}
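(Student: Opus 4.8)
The plan is to reduce both assertions to pointwise linear algebra on the fibres of the honest smooth vector bundle $\TZ$. An $\omega$-compatible complex structure $J$ is simply a fibrewise complex structure on $\TZ$, and $\omega$ is a smooth non-degenerate section of $\bigwedge^2(\TZ)^*$; the logarithmic nature of the set-up enters only through the preceding lemma of Cavalcanti and Klaasse, which guarantees that for an embedding of pairs the lift $\overline{df}$ is injective, so that $W:=\overline{df}(TY(-\log Z_Y))$ is a genuine subbundle of $\TZ$ over $f(Y)$. In particular there is no special behaviour to check along $Z$, since $\TZ$, $\omega$ and $J$ are all smooth there; once the subbundle $W$ is in place the two statements become the fibrewise analogues of the classical facts for symplectic vector bundles.

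For the first statement I would use that compatibility of $J$ makes $g(\cdot,\cdot):=\omega(\cdot,J\cdot)$ a positive-definite metric on each fibre of $\TZ$. Fixing $p\in Y$ and a nonzero $v$ in the $J$-invariant fibre $W_p$, one has $Jv\in W_p$ as well, and $\omega(v,Jv)=g(v,v)>0$, so no nonzero vector of $W_p$ lies in the kernel of $\omega|_{W_p}$. Hence $\omega|_{W_p}$ is non-degenerate for every $p$, which is exactly the condition that $(Y,Z_Y)$ be a log-symplectic submanifold.

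For the converse the cleanest move is to build $J$ from a suitably adapted metric by polar decomposition, which has the advantage of being global at once. Along $f(Y)$ the subspace $W_p$ is symplectic, so $\TZ=W\oplus W^{\omega}$ splits into two symplectic subbundles over $f(Y)$. I would choose a Riemannian metric $g$ on $\TZ$ over all of $X$ which, along $f(Y)$, is block-diagonal for this splitting (first pick metrics on $W$ and on $W^{\omega}$ along $Y$, form their orthogonal sum, then extend arbitrarily to a global metric on $\TZ$). Defining $A$ by $\omega(\cdot,\cdot)=g(A\cdot,\cdot)$ and taking the polar part $J:=(\sqrt{-A^2})^{-1}A$ yields a global $\omega$-compatible complex structure. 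Because $g$ is block-diagonal along $f(Y)$ while $W$ and $W^{\omega}$ are already $\omega$-orthogonal there, the operator $A$, and hence $J$, preserves $W$ along $f(Y)$, so that $(Y,Z_Y)$ is a complex submanifold for this $J$.

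I do not expect a genuine obstacle here: both parts are the fibrewise symplectic-linear-algebra facts transported verbatim to $\TZ$. The only point requiring care is that the $J$ produced in the converse be smooth and globally defined on the whole bundle, and this is precisely what the polar-decomposition construction delivers automatically (it is also reassured by the fact, recalled earlier, that the space of $\omega$-compatible complex structures on $\TZ$ is non-empty and contractible, so no obstruction to such a global choice can arise).
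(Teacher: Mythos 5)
Your proposal is correct, and it is essentially the argument the paper has in mind: the paper states this proposition without proof (introducing it as an obvious fact), precisely because both halves reduce to fibrewise symplectic linear algebra on the smooth bundle $\TZ$ once the Cavalcanti--Klaasse lemma identifies $\overline{df}(TY(-\log Z_Y))$ as a genuine subbundle. Your two steps --- non-degeneracy of $\omega$ on a $J$-invariant subbundle via $\omega(v,Jv)=g(v,v)>0$, and the polar-decomposition construction of a compatible $J$ from a metric that is block-diagonal for the splitting $W\oplus W^{\omega}$ along $Y$ --- are exactly the standard proofs of these facts, transported verbatim as you say.
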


\paragraph{A normal form around the boundary, and the stable Hamiltonian geometry of the singular locus.}
Fix a Riemannian metric on $X$ and consider the function ``distance from $Z$", denoted with $\lambda$. We can write the log-symplectic form in a neighbourhood of $Z$ as
\[
\omega=\dfrac{d\lambda}{\lambda}\wedge a + b
\]
for $a$ and $b$ respectively a $1$- and a $2$-form (in the ordinary sense) on a neighbourhood of $Z$. It follows easily from $\omega$ being symplectic that $\alpha:=a|_{TZ}$ and $\beta:=b|_{TZ}$ determine a \textit{cosymplectic structure} on $Z$ (see \Cref{examplecosymplectic}).
The cosymplectic structure on $Z$ completely determines the log-symplectic structure in a neighbourhood of $Z$.
\begin{prop}[\cite{guilmirpir}, \cite{ionut}]\label{normalformaroundz}
Let $(X,\,Z,\,\omega)$ be log-symplectic, with $Z$ compact. There exists a cosymplectic structure $(\alpha,\,\beta)$ on $Z$, and a metric $g$ on $NZ$ such that a tubular neighbourhood of $Z$ is isomorphic to the normal bundle $NZ$ with the log-symplectic form
\begin{equation}\label{normalform0}
\dfrac{d\lambda}{\lambda}\wedge\alpha+\beta
\end{equation}
and $\lambda$ is the $g$-distance from the zero section of $NZ$.
\end{prop}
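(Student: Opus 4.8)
The plan is to run a Moser argument in the logarithmic de Rham complex of the Lie algebroid $\TZ$, just as one proves the Darboux and symplectic tubular neighbourhood theorems in the smooth setting. First I would fix the data: using the metric chosen above, write $\omega=\frac{d\lambda}{\lambda}\wedge a+b$ near $Z$ and set $\alpha:=a|_{TZ}$, $\beta:=b|_{TZ}$; as already observed, $(\alpha,\beta)$ is cosymplectic, so $d\alpha=d\beta=0$ and $\alpha\wedge\beta^{n-1}$ is a volume form on $Z$. Let $p:NZ\longrightarrow Z$ be the (rank one) normal bundle, choose a fibre metric $g$, let $\lambda$ now denote the $g$-distance to the zero section, and define the model
\[
\omega_0:=\frac{d\lambda}{\lambda}\wedge p^*\alpha+p^*\beta .
\]
I would check that $\omega_0$ is a genuine logarithmic symplectic form. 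The form $\frac{d\lambda}{\lambda}$ is globally defined even when $NZ$ is non-orientable, since in a fibre coordinate $x$ it equals $\frac{dx}{x}$, which is invariant under $x\mapsto -x$; it restricts on $Z$ to the log $1$-form dual to the canonical section $\xi$. Closedness of $\omega_0$ then follows from $d(\frac{d\lambda}{\lambda})=0$ and $d\alpha=d\beta=0$, while $\omega_0^n$ is a nonzero multiple of $\frac{d\lambda}{\lambda}\wedge p^*(\alpha\wedge\beta^{n-1})$, a log-volume form, so $\omega_0$ is nondegenerate.

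Next I would invoke the tubular neighbourhood theorem to identify a neighbourhood of $Z$ in $X$ with a neighbourhood $U$ of the zero section in $NZ$ by a map of pairs which is the identity on $Z$, and transport $\omega$ to $U$. Because this identification fixes $Z$ and preserves the log structure, the transported $\omega$ still has residue $\alpha$ along $Z$ and restricts to $\beta$ on $Z$, exactly as $\omega_0$ was built to do. Hence $\tau:=\omega-\omega_0$ is a closed logarithmic $2$-form whose pullback to $\TZ|_Z$ vanishes. Shrinking $U$ so that it retracts radially onto $Z$, a relative Poincaré lemma for the log complex (via the standard homotopy operator associated to the retraction) produces a log $1$-form $\sigma$ with $d\sigma=\tau$ and $\sigma$ vanishing on $\TZ|_Z$.

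The Moser step then finishes the argument. Since $\omega$ and $\omega_0$ agree on $\TZ|_Z$ and nondegeneracy is open, after shrinking $U$ the interpolation $\omega_t:=\omega_0+t\tau$ is log-symplectic for all $t\in[0,1]$. Solving $\iota_{V_t}\omega_t=-\sigma$ yields a time-dependent section $V_t=-\omega_t\1(\sigma)$ of $\TZ$ which, since $\sigma$ vanishes on $\TZ|_Z$, vanishes along $Z$. Its flow $\psi_t$ therefore exists for $t\in[0,1]$ on a neighbourhood of $Z$, is a diffeomorphism of pairs fixing $Z$ pointwise, and the usual computation $\frac{d}{dt}\psi_t^*\omega_t=\psi_t^*(\mathcal{L}_{V_t}\omega_t+\tau)=\psi_t^*(d\iota_{V_t}\omega_t+\tau)=0$ gives $\psi_1^*\omega=\omega_0$, which is the claimed isomorphism of $(X,Z,\omega)$ near $Z$ with $(NZ,Z,\omega_0)$.

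I expect the main obstacle to be the relative Poincaré lemma together with the control it must provide on $\sigma$: one has to verify that $\sigma$ vanishes to sufficient order along $Z$ that the a priori log-tangent section $V_t$ integrates to an honest diffeomorphism which preserves $Z$ and does not escape $U$ over the whole interval. Establishing nondegeneracy of $\omega_t$ uniformly in a neighbourhood of $Z$, and keeping the homotopy operator and the form $\frac{d\lambda}{\lambda}$ well defined in the non-orientable case, where $NZ$ admits no global defining function, are the points that require the most care.
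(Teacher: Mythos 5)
The paper does not prove this proposition at all: it is quoted from \cite{guilmirpir} and \cite{ionut}, so there is no in-paper argument to compare against. Your Moser argument in the logarithmic de Rham complex is precisely the route taken in those references (a relative log/b-Poincar\'e lemma followed by Moser stability relative to $Z$), and most of it is carried out correctly: the model form $\omega_0$ is a genuine log-symplectic form also in the non-orientable case, nondegeneracy of $\omega_t$ near the compact $Z$ is an openness argument, the Moser field is a log vector field vanishing along $Z$, and the flow computation is standard.

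There is, however, one asserted step that does not follow as you state it, and it is exactly where care is needed. For an \emph{arbitrary} tubular neighbourhood identification $\Phi$ fixing $Z$, it is true that $\Phi^*\omega$ has residue $\alpha$ (the residue $\iota_\xi\omega$ along $Z$ is intrinsic, since maps of pairs preserve the canonical section $\xi$, cf.\ \Cref{bnormal}), but it is \emph{not} true that $\Phi^*\omega$ ``restricts to $\beta$'' on the log tangent bundle along $Z$: the $\beta$-part of a log $2$-form along $Z$, unlike the residue, depends on the splitting determined by the $1$-jet of the defining function. Concretely, writing $\lambda_X\circ\Phi=\lambda\,e^{f}$ with $f$ smooth (the ratio of two distance-type defining functions is smooth and positive), one computes
\[
\tau|_Z=\bigl(\Phi^*\omega-\omega_0\bigr)\big|_{TNZ(-\log Z)|_Z}=df|_{TZ}\wedge\alpha ,
\]
which is nonzero in general. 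And $\tau|_Z=0$ is genuinely needed, not just convenient: if a log $1$-form $\sigma$ vanishes on $TNZ(-\log Z)|_Z$, then so does $d\sigma$ (log vector fields are tangent to $Z$, so they differentiate functions vanishing on $Z$ to zero along $Z$), hence no primitive $\sigma$ with $\sigma|_Z=0$ can exist unless $\tau|_Z=0$. The gap is easy to close: take $\Phi$ to be the normal exponential map of the very metric used to define $\lambda$, $\alpha$, $\beta$ (then $\lambda_X\circ\Phi=\lambda$, so $f\equiv 0$), or precompose a given $\Phi$ with the fibrewise rescaling $v\mapsto e^{-f(p(v))}v$. With that repaired, the relative Poincar\'e lemma you flag as the main obstacle does hold in the log category (the Euler vector field $\lambda\partial_\lambda$ is a log vector field, and the associated homotopy operator preserves log forms vanishing along $Z$); this is exactly what is done in the references the paper cites.
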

As a consequence, there exists a vector field $V$ on $X$ -- defined by $\alpha(V)=\beta(V)=0$, $\dfrac{d\lambda}{\lambda}=1$ -- such that $L_V\omega=0$, and $V$ is transverse to the level sets of $\lambda$, for values of $\lambda$ small enough. Let $U$ be a tubular neighbourhood of $Z$, with smooth boundary, so that $V\pitchfork \partial U$. The condition $L_V\omega=0$ implies that $W:=X\mi U$ is a symplectic manifold with stable Hamiltonian boundary $-\partial U$. The induced stable Hamiltonian structure is pulled back from $(-\alpha,\,\beta)$ as above, via the normal bundle projection. Via the change of coordinates $s:=-\text{log}\lambda$, one can write $\omega=ds\wedge (-\alpha)+\beta$, as in \Cref{examplecosymplectic}. Hence one has the following proposition, which is crucial to set up a theory of holomorphic curves.

\begin{prop}\label{stablehamiltonianform}
The complement $X\mi Z$ of the singular locus is symplectomorphic to the completion of the manifold $W:=X\mi U$, with stable Hamiltonian boundary of cosymplectic type. $\partial W$ is a double cover of $Z$ (disconnected if and only if $X$ is orientable) and the cosymplectic structure on the boundary coincides (up to isomorphism) with the pullback via the covering map of the cosymplectic structure on $Z$.
\end{prop}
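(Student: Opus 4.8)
The plan is to assemble the symplectomorphism directly from the normal form of \Cref{normalformaroundz} together with the cylindrical-end description recalled just before the statement. On the deleted tubular neighbourhood $U\mi Z$ the form reads $\omega=\frac{d\lambda}{\lambda}\wedge\alpha+\beta$, and substituting $s:=-\log\lambda$ (so that $\frac{d\lambda}{\lambda}=-ds$, with $\lambda\to 0^+$ corresponding to $s\to+\infty$) turns this into $\omega=ds\wedge(-\alpha)+\beta$. Since $\alpha$ and $\beta$ are closed, this is exactly the cosymplectic completion form $d\big(s(-\alpha)\big)+\beta$ of \Cref{examplecosymplectic}. Thus $U\mi Z$ is symplectomorphic to a half-cylinder $[s_0,+\infty)\times\partial U$, and gluing this identification to $W=X\mi U$ along $\partial U=\partial W$ exhibits $X\mi Z=W\cup(U\mi Z)$ as the completion $\widehat W$, with $\hat\omega$ extending $\omega$. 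The field $V$ of the preceding discussion is $\partial_s$ up to sign, and $L_V\omega=0$ guarantees that the collar has the standard cylindrical form, so no further Moser argument is needed.

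Next I would identify the boundary and its geometry. Because $U$ is a tubular neighbourhood of the codimension-one submanifold $Z$, its boundary $\partial W=\partial U$ is the unit sphere bundle $S(NZ)$ of the real \emph{line} bundle $NZ$; each fibre is an $S^0$, so the bundle projection restricts to a two-to-one covering map $\partial W\longrightarrow Z$. Under the normal form the induced stable Hamiltonian structure on $\partial W$ is, as recalled above, the pullback of $(-\alpha,\,\beta)$ along the normal-bundle projection; since that projection is precisely the covering map, the cosymplectic structure on $\partial W$ is the pullback of the one on $Z$, up to the harmless automorphism $\alpha\mapsto-\alpha$. This settles the last sentence of the statement.

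The only genuinely new point is the equivalence ``$\partial W$ disconnected $\iff X$ orientable'', and this is where I expect the main work. Since $S(NZ)$ is the $S^0$-bundle of a line bundle, it is disconnected exactly when $NZ$ is trivial, i.e.\ when $Z$ is coorientable; so the claim reduces to $Z$ coorientable $\iff X$ orientable, one direction of which is \Cref{remarkcoorientation}. For the converse I would argue with determinant line bundles rather than try to glue the symplectic orientation (which in fact flips across $Z$, as $\omega^n$ is $\frac1x$ times a coordinate volume, so a naive gluing fails). The anchor $\rho\colon\TZ\to TX$ is an isomorphism off $Z$ and its determinant vanishes to first order along $Z$, giving a canonical isomorphism $\det TX\cong\det(\TZ)\otimes\mathcal{O}(Z)$, where $\mathcal{O}(Z)$ is the real line bundle carrying a section vanishing transversally along $Z$. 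As $\TZ$ admits a complex (hence oriented) structure, $w_1(\TZ)=0$, so $w_1(TX)=w_1(\mathcal{O}(Z))$. Finally $\mathcal{O}(Z)$, trivial on $X\mi Z$ with transition functions given by ratios $x_i/x_j$ of local defining functions, is trivial on all of $X$ precisely when these functions can be chosen coherently cooriented, i.e.\ precisely when $NZ$ is trivial. Hence $w_1(TX)=0\iff NZ$ trivial $\iff\partial W$ disconnected. The subtlety to handle with care is exactly this last equivalence, since the restriction $H^1(X)\to H^1(Z)$ need not be injective; the line-bundle/defining-function argument is what circumvents it.
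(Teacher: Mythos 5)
Your first two paragraphs are correct, and they follow essentially the same route as the paper: the paper never gives this proposition a separate proof, but derives it from the discussion immediately preceding it (the normal form of \Cref{normalformaroundz}, the vector field $V$ with $L_V\omega=0$, and the substitution $s=-\log\lambda$), together with the identification $\partial W=S(NZ)$ and the observation, made explicit just before \Cref{simplenormalformorientable}, that triviality of this double cover is equivalent to coorientability of $Z$ via $NZ\cong\bigwedge^{\text{top}}TX|_Z$. So up to and including the pullback statement for the cosymplectic structure, your argument is the paper's argument.

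The genuine gap is in your third paragraph, at exactly the step you flagged as delicate. You claim that $\mathcal{O}(Z)$ is trivial on all of $X$ precisely when $NZ$ is trivial. Only one implication is valid: since $\mathcal{O}(Z)|_Z\cong NZ$, global triviality of $\mathcal{O}(Z)$ forces $NZ$ trivial. The converse fails: a real line bundle can restrict trivially to $Z$ \emph{and} to $X\mi Z$ (the canonical section of $\mathcal{O}(Z)$ trivializes it off $Z$) and still be globally nontrivial, because $w_1(\mathcal{O}(Z))$ is the mod-$2$ Poincar\'e dual of $[Z]$ and can pair nontrivially with loops that cross $Z$. Indeed, your own (correct) identity $w_1(TX)=w_1(\mathcal{O}(Z))$ proves ``$X$ orientable $\iff [Z]=0\in H_{2n-1}(X;\Z_2)$'', which is strictly stronger than coorientability of $Z$; the two conditions differ exactly when some loop meets $Z$ in an odd number of points.

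Moreover, no repair is possible, because the ``only if'' half of the parenthetical is false as stated (this is an error in the paper's statement, not merely in your proof). Take the Klein bottle $K=(\R\times S^1)/\Z$, where $\Z$ is generated by $(t,\theta)\mapsto(t+1,-\theta)$. The bivector $\pi=\cos(\pi t)\,\partial_t\wedge\partial_\theta$ satisfies the required equivariance (since $\cos(\pi(t+1))=-\cos(\pi t)$ and the deck transformation reverses $\partial_t\wedge\partial_\theta$), hence descends to a log-symplectic structure on $K$ whose singular locus is the single fiber circle $Z=\{1/2\}\times S^1$. Here $NZ$ is trivial, so $\partial W$ consists of two circles and is disconnected, yet $K$ is non-orientable; in this example $\mathcal{O}(Z)$ is the orientation bundle of $K$, nontrivial although its restrictions to $Z$ and to $K\mi Z$ are both trivial. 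What is true --- and is all that the paper actually uses later, e.g.\ in \Cref{simplenormalform} and \Cref{simplenormalformorientable} --- is the chain you did establish correctly, ``$\partial W$ disconnected $\iff NZ$ trivial $\iff Z$ coorientable'', together with the single implication ``$X$ orientable $\Rightarrow Z$ coorientable'' of \Cref{remarkcoorientation}.
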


\paragraph{An alternative normal form around the singular locus.}

One could write a more concrete normal model around $Z$, just by writing explicitely what the normal bundle looks like. To this end, note that every real line bundle over $Z$ can be written as a fiber product $\R\times_{\Z_2}\tilde{Z}$, where $c:\tilde{Z}\longrightarrow Z$ is a double cover of $Z$. More precisely, a double cover $\tilde{Z}$ of $Z$ is acted on by an involution $\sigma$, and we define an action of $\Z_2=\{\pm 1\}$ on $\R\times\tilde{Z}$ as
\begin{equation}\label{action}
    -1\cdot (x,\,z):=(-x,\,\sigma(z))
\end{equation}
and define 
\begin{equation}\label{fiberproduct}
    \R\times_{\Z_2}\tilde{Z}:=(\R\times\tilde{Z})/\Z_2
\end{equation}
Given a real line bundle over $Z$, one recovers the manifold $\tilde{Z}$ as the set of vectors of length $1$ with respect to some fiber metric, and the involution $\sigma$ is the multiplication by $-1$.\\
For a cosymplectic $Z$, $\tilde{Z}$ inherits a cosymplectic structure (via pullback along $c$) which is invariant under $\sigma$.
Hence the log-symplectic structure $\omega:=\dfrac{dx}{x}\wedge c^*\alpha+c^*\beta$ on $\R\times\tilde{Z}$ is invariant under the action (\ref{action}). Thus $\omega$ descends to the quotient. The resulting log-symplectic structure on $\R\times_{\Z_2}\tilde{Z}$ has the form
\begin{equation}\label{normalform1}
    \dfrac{dx}{x}\wedge\alpha+\beta
\end{equation}
and is symplectomorphic to the normal form (\ref{normalform0}).
This implies:
\begin{coro}\label{simplenormalform}
Let $(X,\,Z,\,\omega)$ be compact log-symplectic. There exists a double cover $c:\tilde{Z}\longrightarrow Z$ such that a neighbourhood of the singular locus can be written (up to symplectomorphism) as 
\[
((-\varepsilon,\,\varepsilon)\times_{\Z_2}\tilde{Z},\,\dfrac{dx}{x}\wedge c^*\alpha+c^*\beta)
\]
where the fiber product is taken using the deck transformation.
\end{coro}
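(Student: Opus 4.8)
The plan is to combine the Moser-type normal form of \Cref{normalformaroundz} with the description of a real line bundle as a fiber product recalled above. First I would invoke \Cref{normalformaroundz} to identify a tubular neighbourhood of $Z$ with the total space of the normal bundle $NZ$, carrying the log-symplectic form $\frac{d\lambda}{\lambda}\wedge\alpha+\beta$, where $\lambda$ is the $g$-distance from the zero section for a suitable fiber metric $g$, and $(\alpha,\,\beta)$ is the cosymplectic structure on $Z$, pulled back to $NZ$ along the bundle projection.

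Next I would realize $NZ$, a rank-$1$ real vector bundle over $Z$, as a fiber product. Let $\tilde{Z}$ be the unit sphere bundle of $(NZ,\,g)$, with projection $c:\tilde{Z}\longrightarrow Z$ (a double cover), and let $\sigma:\tilde{Z}\longrightarrow\tilde{Z}$ be fiberwise multiplication by $-1$; then $(x,\,z)\mapsto x\cdot z$ descends to a diffeomorphism $\R\times_{\Z_2}\tilde{Z}\xrightarrow{\sim} NZ$ under which the globally defined function $|x|$ corresponds to $\lambda$. Pulling $\alpha$ and $\beta$ back along $c$ yields $\sigma$-invariant forms $c^*\alpha,\,c^*\beta$ on $\tilde{Z}$, as already noted in the discussion preceding the statement.

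The crux is to verify that $\frac{dx}{x}\wedge c^*\alpha+c^*\beta$ on $\R\times\tilde{Z}$ is invariant under the action (\ref{action}), so that it descends to $\R\times_{\Z_2}\tilde{Z}$. Since $c^*\alpha$ and $c^*\beta$ are pullbacks from $Z$ they are $\sigma$-invariant, and the one genuine point is that $\frac{dx}{x}$ is unchanged under $x\mapsto -x$; this is immediate from $\frac{d(-x)}{-x}=\frac{dx}{x}$. Under the identification of the previous step the descended form matches $\frac{d\lambda}{\lambda}\wedge\alpha+\beta$, because $\frac{d|x|}{|x|}=\frac{dx}{x}$ away from the zero section. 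Finally, restricting to the sub-bundle $\{|x|<\varepsilon\}$, that is $(-\varepsilon,\,\varepsilon)\times_{\Z_2}\tilde{Z}$, produces the stated normal form.

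I expect the only subtlety to be the bookkeeping around the sign-ambiguous coordinate $x$: although $x$ is not a globally defined function on the quotient, the combination $\frac{dx}{x}$ is, and checking its compatibility with the radial distance $\lambda$ is the single point that requires care. Everything else is a direct transcription of the fiber-product discussion together with \Cref{normalformaroundz}.
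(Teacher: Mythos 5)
Your proposal is correct and follows essentially the same route as the paper: identify the tubular neighbourhood with $NZ$ via \Cref{normalformaroundz}, realize $NZ$ as $\R\times_{\Z_2}\tilde{Z}$ with $\tilde{Z}$ the unit sphere bundle, check $\Z_2$-invariance of $\frac{dx}{x}\wedge c^*\alpha+c^*\beta$ using $\frac{d(-x)}{-x}=\frac{dx}{x}$, and match the descended form with $\frac{d\lambda}{\lambda}\wedge\alpha+\beta$. If anything, your write-up is slightly more explicit than the paper's (which leaves the final identification ``is symplectomorphic to the normal form'' unjustified), notably in spelling out the diffeomorphism $(x,\,z)\mapsto x\cdot z$ and the identity $\frac{d|x|}{|x|}=\frac{dx}{x}$.
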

\begin{notation}
We will most of the time omit the pullback sign from the formula for the symplectic form.
\end{notation}
Note that if $\tilde{Z}$ is the trivial double cover, then the action of $\Z_2$ just exchanges corresponding connected components, and the resulting quotient is the trivial bundle. Since this happens if and only if $Z$ is coorientable, and $NZ\cong \bigwedge^{\text{top}}TX\vert_Z$ (see \Cref{remarkcoorientation}), we obtain the following obvious specialization.
\begin{coro}\label{simplenormalformorientable}
If $X$ is orientable, a neighbourhood of the singular locus can be written (up to symplectomorphism) as 
\[
((-\varepsilon,\,\varepsilon)\times Z,\,\dfrac{dx}{x}\wedge \alpha+\beta)
\]
\end{coro}

\paragraph{Simple codimension-$1$ foliations.}

The cosymplectic structure on the singular locus induces a codimension-$1$ symplectic foliation. Assume for the moment that $Z$ is compact and connected. There is a map
\begin{equation}\label{fibration}
p:Z\longrightarrow \R/\alpha(H_1(Z,\,\Z))
\end{equation}
defined as $p(z):=\int_{z_0}^{z}\alpha$. Whenever $\alpha$ is a multiple of a rational form, then $\R/\alpha(H_1(Z,\,\Z))=\R/c\Z$ for some positive number $c$. In this case, one can prove that the map is a fibration, such that the connected components of the fibers are leaves. Moreover, one can this map to realize the leaves of the foliation $\ker\alpha$ as the fibers of a fibration.
\begin{prop}[\cite{tischler}]
Let $(Z,\,\alpha,\,\beta)$ be a cosymplectic manifold such that $\alpha$ is a real multiple of a rational form. Then $Z$ is a symplectic mapping torus; more precisely, there exists a closed symplectic manifold $(F,\,\beta_F)$, a symplectomorphism $\phi:(F,\,\beta_F)\longrightarrow (F,\,\beta_F)$, and a constant $c$, such that $(\R\times F/\Z,\,dt,\,\beta_F)\cong (Z,\,\alpha,\,\beta)$; here the $\Z$ action is generated by $1\cdot (t,\,z)=(t+c,\,\phi(z))$. 
\end{prop}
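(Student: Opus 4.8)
The plan is to build the mapping torus directly from the Reeb flow of the cosymplectic structure, using the rationality of $\alpha$ only to turn the period map into an honest fibration over a circle; in particular, the $C^0$-approximation step that forms the technical core of Tischler's general theorem is unnecessary here, since $\alpha$ is already rational. First I would observe that $\alpha$ is nowhere vanishing: the cosymplectic condition makes $\alpha\wedge\beta^{n-1}$ a volume form, so $\alpha_p\neq 0$ everywhere. Since $\alpha$ is closed with periods contained in $c\Z$ (this is exactly the hypothesis that $\alpha$ is a real multiple of a rational form, together with the fact that a nowhere-vanishing closed form on a closed manifold cannot be exact, so $c>0$), the period map $p:Z\longrightarrow\R/c\Z$, $p(z):=\int_{z_0}^z\alpha$, of (\ref{fibration}) is well defined and satisfies $dp=\alpha$. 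Being nowhere zero, $p$ is a submersion, and since $Z$ is compact (I work on a fixed connected component) Ehresmann's theorem upgrades $p$ to a smooth fiber bundle over $S^1=\R/c\Z$. Writing $F:=p\1(0)$ for a fiber, we have $\alpha|_{TF}=0$, so $TF=\ker\alpha$; evaluating the volume form $\alpha\wedge\beta^{n-1}$ along this kernel shows that $\beta_F:=\beta|_F$ is nondegenerate, i.e. $(F,\beta_F)$ is a closed symplectic manifold.

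Next I would trivialize the bundle by the Reeb flow. Let $R$ be the Reeb field, with $\iota_R\beta=0$ and $\alpha(R)=1$. Since $\alpha(R)\equiv 1$ is constant and both $\alpha,\beta$ are closed, Cartan's formula gives $L_R\alpha=0$ and $L_R\beta=0$, so the flow $\varphi^t$ of $R$ (complete, by compactness) preserves the entire cosymplectic structure. Because $dp(R)=\alpha(R)=1$, the flow covers the unit-speed rotation of $\R/c\Z$, so the time-$c$ map preserves each fiber; set $\phi:=\bigl(\varphi^c|_F\bigr)\1:F\longrightarrow F$, which is a symplectomorphism of $(F,\beta_F)$ as $\varphi^c$ preserves $\beta$.

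Finally I would assemble the isomorphism. The map $\Psi:\R\times F\longrightarrow Z$, $\Psi(t,z):=\varphi^t(z)$, is a surjective local diffeomorphism, since $R$ together with $TF$ spans $TZ$ and surjectivity follows from compactness; moreover $\Psi(t+c,\phi(z))=\varphi^{t+c}\bigl((\varphi^c)\1(z)\bigr)=\varphi^t(z)=\Psi(t,z)$, so $\Psi$ descends to a diffeomorphism $\R\times F/\Z\longrightarrow Z$ for the action $1\cdot(t,z)=(t+c,\phi(z))$. Under $\Psi$ one has $\Psi^*\alpha=d(p\circ\Psi)=dt$, while $\iota_R\beta=0$ and $L_R\beta=0$ force $\Psi^*\beta$ to be $t$-independent with no $dt$-component, hence equal to the pullback of $\beta_F$; this is precisely the cosymplectic mapping torus $(\R\times F/\Z,\,dt,\,\beta_F)$ of the statement.

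I expect the only genuinely delicate points to be the passage from submersion to fiber bundle and the completeness of the Reeb flow, both resting on compactness of $Z$; everything else follows directly from $d\alpha=d\beta=0$ and $\iota_R\beta=0$, and the rationality hypothesis enters solely to guarantee that $p$ lands in a circle rather than a dense subgroup of $\R$.
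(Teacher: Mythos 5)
Your proposal is correct and follows exactly the route the paper itself indicates: the paper does not prove the proposition in detail but cites Tischler and, in the preceding paragraph, sketches the same period map $p(z)=\int_{z_0}^{z}\alpha$ of (\ref{fibration}) becoming a fibration over $\R/c\Z$ under the rationality hypothesis. Your write-up simply supplies the standard details (nonvanishing of $\alpha$, Ehresmann, and the Reeb flow trivialization identifying the monodromy $\phi$ and the forms $dt$, $\beta_F$), and your observation that rationality removes the need for Tischler's approximation argument is accurate.
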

If $Z$ is disconnected, each of its components $Z_i$ inherits a fibration over $\R/c_i\Z$. The number $c_i$ is a Poisson geometric invariant, called the \textit{period of the modular vector field} (\cite{radko2002}, \cite{guilmirpir}). Log-symplectic manifolds whose singular locus is a fibration are called \textit{proper} (\cite{gil}). It is simple to observe that for a given log-symplectic form $\omega$ there exists a nearby log-symplectic form $\omega'$ such that the resulting log-symplectic manifold is proper.

\section{Holomorphic curves in log-symplectic manifolds}\label{3}

In this section we study some general aspects of holomorphic curves in log-symplectic manifolds. We start by introducing the class of almost complex structures that we will consider (namely the cylindrical ones). Then we will introduce holomorphic curves, and a notion of energy for those; we prove that log-holomorphic curves coincide with the punctured curves in SFT, and that also our notion of energy coincides the notion of energy that is used in standard Gromov-Witten theory and SFT. Finally, we prove a maximum principle which is going to be crucial for our construction of the moduli spaces. The construction of the moduli spaces is carried out in the later sections, separately for closed and open curves.

\subsection{Cylindrical complex structures on log-symplectic manifolds}

Let us recall the notation from section $3$: $U$ denotes a tubular neighbourhood of $Z$, with a fixed isomorphisms to the unit disc bundle of the normal bundle $NZ$ of $Z$. Let $\lambda$ denote the length of the fiber coordinate (with respect to some fiberwise Riemannian metric). We know from \Cref{normalformaroundz} and \Cref{stablehamiltonianform} that there exist closed forms $\alpha$, $\beta$ on $Z$, respectively a $1$- and a $2$-form, such that the log-symplectic form is 
\begin{equation}\label{normal}
\omega=d\log\lambda\wedge\alpha+\beta
\end{equation}
Viewing $X\mi Z$ as $\widehat{W}:=\widehat{X\mi U}$, it is natural to consider $\omega$-compatible almost complex structures on $T(X\mi Z)$ which are cylindrical, as in \Cref{defcylindricalcomplex}. For reasons that will become apparent, we actually use a slightly modified definition of cylindrical complex structure. Recall that each connected component $Z_i$ of $Z$ has a well defined period $c_i\in (0,\,+\infty)$. 
\begin{dfn}\label{defcylindricalcomplexlog}
Let $(X,\,Z,\,\omega)$ be log-symplectic. A complex structure $J$ on $X\mi Z$ is \textit{cylindrical} if it is $\hat{\omega}$-tame, and on each component $U_i$ of the cylindrical end $U$ it satisfies
\begin{itemize}
    \item $J$ is $s$-invariant.
    \item $J\partial_s=c_iR$
    \item $J(\ker \alpha)=\ker \alpha$
    \item $J$ is $\beta$-compatible on $\ker\alpha$
\end{itemize}
\end{dfn}
\begin{notation}
For simplicity, we will denote with $cR$ the vector field on $Z$ defined as $c_iR$ on each component $Z_i$. The above definition hence requires $J\partial_s=cR$.
\end{notation}
We need also to impose a further compatibility condition with $Z$. Recall that $U$ is isomorphic to $\R\times_{\Z_2}\tilde{Z}$, where $\Z_2$ acts by multiplication by $-1$ on $\R$, and by an involution $\sigma$ on $\tilde{Z}$ (\Cref{simplenormalform}). Hence a cylindrical complex structure is an $\R$ invariant complex structure on $(\R\times \tilde{Z}\mi \{0\}\times \tilde{Z})/\Z_2$. In particular it induces an almost complex structure on the codimension-$1$ foliation $\tilde{F}$ on $\tilde{Z}$.
\begin{dfn}
A cylindrical complex structure is \textit{adapted} to the log-symplectic structure if its restriction to $\tilde{F}$ is $\sigma$-invariant.
\end{dfn}
\begin{eg}
If $X$ is orientable, then $\tilde{Z}$ is a union of two disjoint copies of $Z$. The definition is asking for the complex structures on two corresponding copies of $F$ to be equal.
\end{eg}
The definition is designed so that the complex structure extends over $Z$ as a complex structure on $\TZ$. 
\begin{prop}\label{propcylindricalcomplex}
An adapted cylindrical almost complex structure induces a smooth complex structure on $\TZ$.
\end{prop}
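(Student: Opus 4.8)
The plan is to transport $J$ from the ordinary tangent bundle to the logarithmic tangent bundle using the anchor map, and then to check that the resulting endomorphism, which is automatically a complex structure away from $Z$, extends smoothly across $Z$. Since the anchor $\rho:\TZ\longrightarrow TX$ is an isomorphism over $X\mi Z$, the pullback $\tilde J:=\rho\1\circ J\circ\rho$ is a well-defined complex structure on $\TZ|_{X\mi Z}$, so the only issue is smoothness across $Z$, where $\rho$ drops rank (its kernel there is spanned by the canonical section $\xi$). I would therefore work entirely inside a neighbourhood of $Z$, using the normal form of \Cref{simplenormalform}: a neighbourhood is $(-\varepsilon,\,\varepsilon)\times_{\Z_2}\tilde Z$ with $\omega=\frac{dx}{x}\wedge\alpha+\beta$, and it is cleanest to pull everything back to the double cover, where the neighbourhood becomes an honest product $(-\varepsilon,\,\varepsilon)\times\tilde Z$ carrying an $\R$-invariant (cylindrical) complex structure $\hat J$.

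The core computation is to read off $\hat J$ in the logarithmic frame $\{\,x\partial_x,\,R,\,e_1,\dots,e_{2n-2}\,\}$, where $x\partial_x$ is the canonical nonvanishing section $\xi$, $R$ is the Reeb field, and the $e_i$ span $\ker\alpha$, all extended $x$-independently from $\tilde Z$. The key elementary identity is $\partial_s=-x\partial_x$, for $s=-\log|x|$ the cylindrical coordinate; the defining relations $J\partial_s=cR$ and $J(\ker\alpha)=\ker\alpha$ then give $\hat J(x\partial_x)=-cR$, $\hat J(R)=\frac1c\,x\partial_x$, and $\hat J(e_i)$ equal to the $\beta$-compatible structure on the foliation. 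Because $J$ is $s$-invariant, every entry of $\hat J$ in this frame is independent of $x$ near $Z$; hence $\hat J$ extends smoothly across $\{x=0\}=\tilde Z$, and $\hat J^2=-1$ is immediate from these formulas (e.g. $\hat J^2(x\partial_x)=\hat J(-cR)=-x\partial_x$). The point this makes precise is that the apparent singularity of $J$ as $x\to 0$ is entirely an artifact of the frame $\partial_x$ of $TX$, and disappears once one passes to the frame $x\partial_x$ of $\TZ$.

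It remains to descend the extended structure from $\tilde Z$ to $Z$, i.e. to check that $\hat J$ is $\Z_2$-equivariant for the action (\ref{action}). The section $\xi=x\partial_x$ and the Reeb field $R$ are $\sigma$-equivariant (the latter because the cosymplectic data on $\tilde Z$ is $\sigma$-invariant by construction), so the relations $\hat J(x\partial_x)=-cR$ and $\hat J(R)=\frac1c x\partial_x$ descend automatically; the only nontrivial equivariance is that of $\hat J$ restricted to the foliation $\tilde F$, which holds precisely because $J$ is \emph{adapted}. Thus $\hat J$ descends to a smooth complex structure on $\TZ$ over a neighbourhood of $Z$, agreeing with $\tilde J$ on the overlap with $X\mi Z$, and the two patch to a global smooth complex structure on $\TZ$. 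I expect the main obstacle to be the bookkeeping in this last descent step -- verifying equivariance carefully and confirming that the ``adapted'' hypothesis is exactly the condition required for the foliation part to quotient correctly -- rather than the frame computation itself, which is essentially forced by the definition of a cylindrical structure.
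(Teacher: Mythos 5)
Your argument is correct, but there is nothing in the paper to compare it against: the paper states \Cref{propcylindricalcomplex} without any proof, prefacing it only with the remark that the definition of ``adapted'' is designed precisely so that the extension exists. What you have written is exactly the verification the paper leaves implicit, and it uses the ingredients the paper assembles around the statement: the normal form $(-\varepsilon,\,\varepsilon)\times_{\Z_2}\tilde{Z}$ of \Cref{simplenormalform}, the identity $\partial_s=-x\partial_x$, the resulting block form of $J$ in the logarithmic frame $\{x\partial_x,\,R,\,e_i\}$ (constant in $x$ by $s$-invariance), and $\sigma$-invariance of the foliation part. Two points of bookkeeping are worth making explicit. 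First, your second paragraph asserts that $x$-independence alone gives a smooth extension across $\{x=0\}$; strictly, it gives two \emph{one-sided} extensions, one from $\{x>0\}$ and one from $\{x<0\}$ (these two sides are exchanged by the deck transformation (\ref{action})), and their agreement along $\{x=0\}$ is not automatic: the $2\times 2$ block pairing $x\partial_x$ with $R$ matches up because both sections are fixed by the deck transformation, but the foliation parts of the two one-sided limits are $J|_{\tilde{F}}$ and $\sigma^*(J|_{\tilde{F}})$, so their agreement is \emph{literally the same condition} as the $\Z_2$-equivariance you defer to the third paragraph. In other words, the adapted hypothesis is already needed to extend over the double cover, not only to descend to the quotient; your closing sentence essentially concedes this, but the proof reads better with the two steps merged into one. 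Second, your sign $\hat{J}(x\partial_x)=-cR$ is opposite to the paper's convention $\rho(J(\xi))=cR$ on $Z$; this is a convention mismatch rather than an error, coming from the fact that the stable Hamiltonian structure induced on $\partial W$ via $s=-\log\lambda$ is $(-\alpha,\,\beta)$ (see \Cref{stablehamiltonianform}), so the Reeb field appearing in \Cref{defcylindricalcomplexlog} is the negative of the Reeb field of $(\alpha,\,\beta)$ on $Z$. Neither point affects the validity of your argument, which establishes the proposition.
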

Moreover, an adapted complex structure $J$ on $\TZ$ induces a complex structure on $\ker\alpha\subset TZ$, compatibly with the leafwise symplectic form $\beta$ if $J$ is $\omega$-compatible. Indeed, consider the canonical section $\xi\in\Gamma(\TZ|_Z)$. Consider the logarithmic $1$-form $\rho^*\alpha\in\Gamma(\TZc)$: $\xi\in\ker(\rho^*\alpha)$, and $\rho:\ker(\rho^*\alpha)|_Z\longrightarrow \ker(\alpha)|_Z\subset TZ$ is well-defined and surjective. Moreover, $J(\xi)\pitchfork \ker(\rho^*\alpha)$. Further, the symplectic orthogonal to the subspace generated by $\xi$ and $J(\xi)$ is contained in $\ker(\rho^*\alpha)$, and projects isomorphically to $\ker(\alpha)$. Finally, $\text{span}(\xi,\,J(\xi))^{\perp\omega}$ is closed under $J$, hence $\rho$ induces an almost complex structure on $\ker(\alpha)$. This complex structure is the same that is induced by the quotient map $\tilde{Z}\longrightarrow \tilde{Z}/\Z_2=Z$ (this follows from the normal form).\\
Moreover, since $\rho (J(\lambda\partial_\lambda))=c\tilde{R}$ the Reeb vector field for all $\lambda\neq 0$ by definition, then $\rho(J(\xi))=cR$ on $Z$.
\begin{notation}
We will normally use adapted compatible complex structures, and leave the word ``adapted" implicit. We will use the expression ``compatible complex structure" for both the almost complex structure on $X\mi Z$ and its extension to $\TZ$.
\end{notation}

\subsection{Riemann surfaces and log-holomorphic structures}

In this section we will specialize the definition of cylindrical complex structure from the previous section to $2$-dimensional log-symplectic manifolds $(\Sigma,\,\partial\Sigma)$, for which the singular locus coincides with the boundary. We explain how these ``logarithmic Riemann surfaces" correspond to punctured Riemann surfaces, and that their automorphisms correspond to automorphisms of punctured surfaces.

\paragraph{Logarithmic holomorphic structures.}
Let $\Sigma$ be a compact surface, possibly with boundary. Consider the log-tangent bundle of the pair $(\Sigma,\,\partial \Sigma)$, denoted by $\Tsig$.
\begin{dfn}\label{riemann}
A \textit{(logarithmic) holomorphic structure} on $(\Sigma,\,\partial \Sigma)$ is a complex structure $j$ on $\Tsig$, such that there exist a positive defining function $r$ for $\partial\Sigma$, and a global parametrization $\theta$ of the boundary, such that $j(r\partial_r)=\partial_\theta$ in a collar neighbourhood of $\partial\Sigma$ 
\end{dfn}
\begin{rk}
Normally a holomorphic structure on a surface with boundary is defined as a conformal structure on its double, commuting with the canonical involution. This notion is different from\Cref{riemann}, as a logarithmic holomorphic structure does not extend to a holomorphic structure on the double surface (rather, it extends to a degenerate one). Nonetheless, as we will only deal with logarithmic holomorphic structures, we will often omit the word ``logarithmic". 
\end{rk}
\begin{rk}
A logarithmic holomorphic structure on $(\Sigma,\,\partial\Sigma)$ is cylindrical, and adapted to a log-symplectic structure which is of the form $\frac{dr}{r}\wedge d\theta$ in a neighbourhood of $\partial\Sigma$.
\end{rk}

\paragraph{Punctured surfaces.}
\Cref{riemann} implies that a collar neighbourhood of a connected component of the boundary is biholomorphic to $[0,\,\epsilon)\times S^1$, $j(r\partial_r)=\partial_\theta$. There is a map
\begin{equation}\label{pigreco}
\pi: [0,\,\epsilon)\times S^1\longrightarrow D_\varepsilon
\end{equation}
where $D_\varepsilon$ is the disc of radius epsilon, defined as 
\[
\pi(r,\,\theta):=re^{i\theta}
\]
A simple computation shows that $\pi$ is a biholomorphism when restricted to the interior. Globally, using (\ref{pigreco}) one can construct a closed Riemann surface $\hat{\Sigma}$, with a map
\[
\pi:\Sigma \longrightarrow \hat{\Sigma}.
\]
The closed surface $\hat{\Sigma}$ comes with a finite set of distinguished points $p_1,\,\dots,\,p_k$, namely $\pi(\partial\Sigma)$. The restriction $\pi:\Sigma\setminus\partial\Sigma\longrightarrow \hat{\Sigma}\setminus\{p_1,\,\dots,\,p_k\}:=\dot{\Sigma}$ is a biholomorphism.\\
Conversely, given a closed Riemann surface $\hat{\Sigma}$ with a finite set of punctures $p_1,\,\dots,\,p_k$, one can view local charts centered at the punctures as a choice of cylindrical ends for $\dot{\Sigma}:=\hat{\Sigma}\setminus \{p_1,\,\dots,\,p_k\}$. One can compactify the punctured surface $\dot{\Sigma}$ adding circles at infinity -- the resulting surface with boundary $\Sigma$ has a logarithmic holomorphic structure. 


\paragraph{Maps of Riemann surfaces.}

Surfaces with a holomorphic structure are related by the following natural notion of map.

\begin{dfn}
A map of pairs $(\Sigma,\,\partial\Sigma)\longrightarrow (\Sigma',\,\partial\Sigma')$ between surfaces with logarithmic holomorphic structures is \textit{holomorphic} if the induced map on the log-tangent bundles is complex linear.
\end{dfn}
\begin{lemma}
\begin{itemize}
\item[(1)] given $\Sigma$, the corresponding closed surface $\hat{\Sigma}$ is the unique closed surface with the property that there is a map $\pi:\Sigma \longrightarrow \hat{\Sigma}$ which is an isomorphism when restricted: $\pi:\Sigma\setminus\partial\Sigma\longrightarrow \hat{\Sigma}\setminus\{p_1,\,\dots,\,p_k\}$. Unique means that given two surfaces with these properties there exists a unique isomorphism commuting with the projections.
\item[(2)] given a holomorphic map $\Sigma\longrightarrow\Sigma'$ there is a unique holomorphic map $\hat{\Sigma}\longrightarrow\hat{\Sigma}'$ making the following diagram commute
\begin{equation}
\begin{tikzcd}
\Sigma \arrow{r} \arrow{d}[swap]{\pi}     &\Sigma'\arrow{d}{\pi}\\
 \hat{\Sigma}\arrow{r} & \hat{\Sigma}'
\end{tikzcd}
\end{equation}
\item[(3)] given a holomorphic map $\hat{\Sigma}\longrightarrow\hat{\Sigma}'$ there is a unique holomorphic map $\Sigma\longrightarrow\Sigma'$ making the following diagram commute
\begin{equation}
\begin{tikzcd}
\Sigma \arrow{r} \arrow{d}[swap]{\pi}     &\Sigma'\arrow{d}{\pi}\\
 \hat{\Sigma}\arrow{r} & \hat{\Sigma}'
\end{tikzcd}
\end{equation}
\end{itemize}
\end{lemma}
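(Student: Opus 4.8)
The unifying tool for all three parts is the explicit collar model $\pi(r,\theta)=re^{i\theta}$ of \eqref{pigreco}, which is a biholomorphism on $\{r>0\}$ and collapses the boundary circle $\{r=0\}$ to the origin, together with the Riemann removable singularity theorem. The plan is to exploit throughout that every map in sight is already holomorphic on the dense open interior $\Sigma\setminus\partial\Sigma\cong\dot\Sigma$: this reduces all uniqueness assertions to the fact that a holomorphic map is determined by its restriction to a dense set, so that the only real content is the extension across $\partial\Sigma$, equivalently across the punctures $p_1,\dots,p_k$.

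For \textbf{(1)}, given two closed surfaces $(\hat\Sigma_1,\pi_1)$ and $(\hat\Sigma_2,\pi_2)$ with the stated property, I would set $\phi:=\pi_2\circ\pi_1^{-1}\colon\dot\Sigma_1\to\dot\Sigma_2$, which is a biholomorphism because each $\pi_i$ restricts to one. Since each $\pi_i$ is continuous and collapses a connected boundary circle of $\Sigma$ to a single puncture, $\phi$ sends a deleted neighbourhood of each $p_i^1$ into a deleted neighbourhood of a puncture of $\hat\Sigma_2$; being bounded there it extends holomorphically across $p_i^1$ by removable singularity, and symmetrically so does its inverse, whence $\phi$ is a biholomorphism $\hat\Sigma_1\to\hat\Sigma_2$. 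The identity $\phi\circ\pi_1=\pi_2$ holds on the dense interior and hence everywhere by continuity, and uniqueness follows because any two such isomorphisms agree on $\dot\Sigma_1$. Part \textbf{(2)} is the same removable-singularity argument applied to $\hat f_0:=\pi'\circ f\circ\pi^{-1}\colon\dot\Sigma\to\dot\Sigma'$; the only extra point is that a holomorphic map of pairs satisfies $f(\partial\Sigma)\subseteq\partial\Sigma'$ with each connected boundary circle landing, by connectedness, in a single circle of $\partial\Sigma'$, so after applying $\pi'$ the map $\hat f_0$ is bounded near each puncture and extends holomorphically, with uniqueness again from density.

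The genuine work, and where I expect the \emph{main obstacle}, is the ``blow-up'' direction \textbf{(3)}. Here one is given $g\colon\hat\Sigma\to\hat\Sigma'$ holomorphic and carrying marked points to marked points (the hypothesis forced by commutativity of the square together with the map-of-pairs condition $f^{-1}(\partial\Sigma')=\partial\Sigma$), and must produce a map of pairs $f\colon\Sigma\to\Sigma'$. On the interior $f:=(\pi')^{-1}\circ g\circ\pi$ is forced and holomorphic; the difficulty is that pulling $g$ back through the two collapsing maps must yield a map that is \emph{smooth up to the boundary} as a map of pairs, not merely continuous. I would verify this in the local collar model: writing $g$ near a puncture as $g(z)=z^m h(z)$ with $h(0)\neq0$ (so $m$ is the local degree), and using $\pi(r,\theta)=re^{i\theta}$ and $\pi'(\rho,\psi)=\rho e^{i\psi}$, the induced map is $\rho=r^m\,|h(re^{i\theta})|$ and $\psi=m\theta+\arg h(re^{i\theta})$. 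Because $h$ is non-vanishing near $0$, both $|h|$ and $\arg h$ are smooth there, so $(r,\theta)\mapsto(\rho,\psi)$ extends smoothly across $r=0$, sending the boundary circle to the boundary circle with winding number $m$; this is precisely the smoothness of the real blow-up of $z\mapsto z^m$, and is the step where the compatibility of the collapsing with the local branching must be checked.

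Finally, that the resulting $f$ is log-holomorphic comes for free once smoothness is established: by \eqref{diagramlift} the lift $\overline{df}$ is a smooth bundle map $T\Sigma(-\log\partial\Sigma)\to T\Sigma'(-\log\partial\Sigma')$, the complex-linearity $\overline{df}\circ j=j'\circ\overline{df}$ holds on the dense interior, and being a closed condition it persists over $\partial\Sigma$. Uniqueness of $f$ is once more an instance of agreement on the dense interior. In summary, parts (1) and (2) are routine removable-singularity extensions, whereas (3) requires the explicit collar computation above to promote a holomorphic map of the compactifications to a smooth log-holomorphic map of pairs.
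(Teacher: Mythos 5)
Your proposal is correct in substance and follows the same route as the paper: part (1) is exactly the paper's argument (compose the blow-downs away from the punctures, observe boundedness near the punctures, extend by removable singularity), part (2) is the ``similar'' argument the paper alludes to, and your polar computation in (3) --- writing $g(z)=z^m h(z)$ with $h(0)\neq 0$ and lifting to $(\rho,\psi)=(r^m|h(re^{i\theta})|,\,m\theta+\arg h(re^{i\theta}))$ --- supplies the smooth extension across the boundary that the paper leaves entirely implicit, in place of its appeal to the fact that automorphisms of the punctured disc are rotations.

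There is, however, one step in (3) that does not hold as written, and it is worth flagging because it exposes an imprecision in the paper itself. When the local degree is $m\geq 2$, your lifted map $f$ is smooth and satisfies $f^{-1}(\partial\Sigma')=\partial\Sigma$, but it is \emph{not} transverse to $\partial\Sigma'$: the image of $df$ at points of $\{r=0\}$ lies in $T\partial\Sigma'$, since the $\partial_r$- and $\partial_\theta$-derivatives of $r^m|h|$ both vanish at $r=0$ once $m\geq 2$. Consequently $f$ is not a map of pairs in the sense of \Cref{logmaps}, so you cannot invoke \eqref{diagramlift} to produce $\overline{df}$. The repair is a one-line continuation of your own computation: setting $a:=|h|>0$, so that $\rho=r^m a$, one has
\[
df(r\partial_r)=\Bigl(m+\tfrac{r\,\partial_r a}{a}\Bigr)\rho\,\partial_\rho+(r\,\partial_r\psi)\,\partial_\psi,
\qquad
df(\partial_\theta)=\tfrac{\partial_\theta a}{a}\,\rho\,\partial_\rho+(\partial_\theta\psi)\,\partial_\psi,
\]
so $df$ carries the log frame $(r\partial_r,\partial_\theta)$ to smooth sections of the span of $(\rho\partial_\rho,\partial_\psi)$; this exhibits the lift $\overline{df}$ directly even at branch points, after which your density/closedness argument for complex-linearity goes through. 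Note also that, with the paper's definitions taken literally, statement (3) is actually \emph{false} for $g$ branched at a puncture, precisely because of this transversality failure: either one reads ``map of pairs'' in the weaker sense just described (differential lifting to the log-tangent bundles without transversality), or one restricts (3) to maps unbranched at the punctures, which covers the biholomorphic reparametrizations for which the lemma is actually used in the paper.
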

\begin{proof}{(1)}
If there are two surfaces with a map $\pi:\Sigma \longrightarrow \hat{\Sigma}$ and $\pi':\Sigma \longrightarrow \hat{\Sigma}'$, then there is an isomorphism in the complement of a finite number of points (namely $\pi'\circ\pi^{-1}_{|\Sigma\setminus\{p_1,\,\dots,\,p_k\}}$). Bounded neighbourhoods of the punctures isomorphic to punctured discs are mapped to bounded neighbourhoods of the punctures. These maps extend as holomorphic automorphisms of the disc, so we obtain an isomorphism $\phi:\Sigma\longrightarrow\Sigma'$ such that $\pi\phi=\pi'$.
\end{proof}
\begin{proof}{(2), (3)}
Similar to the above, using also the fact that the only automorphisms of the punctured disc are rotations.
\end{proof}

\subsection{Holomorphic curves}

We are ready to introduce our notion of holomorphic curve in the log-setting. Fix a tubular neighbourhood $U$ of the singular locus $Z$ in $X$, and pick a compatible cylindrical complex structure. 
Let $(\Sigma,\,\partial\Sigma)$ be a logarithmic Riemann surface; recall that a map of pairs $u:(\Sigma,\,\partial\Sigma)\longrightarrow (X,\,Z)$ induces by definition a diagram 
\[
\begin{tikzcd}
\Tsig \arrow{r}{\overline{du}} \arrow{d}     &\TZ\arrow{d}\\
 T\Sigma\arrow{r}{du} & TX
\end{tikzcd}
\]
\begin{dfn}
A map of pairs $u:(\Sigma,\,\partial\Sigma)\longrightarrow (X,\,Z)$ is \textit{log-holomorphic} (or simply \textit{holomorphic}) if the lift of the differential $\overline{du}:\Tsig\longrightarrow \TZ$ is complex linear.
\end{dfn}

The choice of using cylindrical complex structures implies that the image of the boundary of a holomorphic curve is constrained: it needs to coincide with a Reeb orbit.
\begin{prop}\label{asympt}
Let $u:(\Sigma,\,\partial\Sigma)\longrightarrow (X,\,Z)$ be holomorphic. Then the boundary is mapped to a simple periodic orbit of the Reeb vector field. 
\end{prop}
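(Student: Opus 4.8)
The plan is to work entirely in a collar neighbourhood of a single boundary component and reduce the statement to the two elementary inputs we already have: the normal forms for the holomorphic structure $j$ on the source and the cylindrical structure $J$ on the target, together with the fact (\Cref{bnormal}) that a map of pairs carries the canonical section of the source log-tangent bundle to that of the target. First I would fix a boundary component and use \Cref{riemann} to write a collar as $[0,\epsilon)\times S^1$ with coordinates $(r,\theta)$ in which $j(r\partial_r)=\partial_\theta$. Over $\partial\Sigma$ the bundle $\Tsig$ is then framed by the canonical section $\xi_{\partial\Sigma}=r\partial_r$ and its rotation $\partial_\theta=j\,\xi_{\partial\Sigma}$; I note that the anchor annihilates the first ($\rho(\xi_{\partial\Sigma})=0$ on $\partial\Sigma$, since $r=0$ there) while $\rho(\partial_\theta)=\partial_\theta$ is the velocity of the boundary loop.

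Next I would compute the image of this frame under $\overline{du}$. By \Cref{bnormal} we have $\overline{du}(\xi_{\partial\Sigma})=\xi_Z$ along the boundary, where $\xi_Z$ is the canonical section $\lambda\partial_\lambda$ of $\TZ|_Z$. Complex linearity of $\overline{du}$ then forces $\overline{du}(\partial_\theta)=\overline{du}(j\,\xi_{\partial\Sigma})=J\,\overline{du}(\xi_{\partial\Sigma})=J\,\xi_Z$. Applying the anchor and using the identity recorded after \Cref{propcylindricalcomplex}, namely $\rho(J\,\xi_Z)=cR$, I obtain $du(\partial_\theta)=cR$ along $\partial\Sigma$. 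In other words, the restriction $\gamma(\theta):=u(0,\theta)$ of $u$ to the boundary satisfies the ODE $\gamma'(\theta)=c\,R(\gamma(\theta))$, so it is a reparametrised integral curve of the Reeb field; since $\theta$ runs over a circle, $\gamma$ is a closed Reeb orbit.

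It remains to check that this orbit is \emph{simple}, and this is the only step requiring more than the formal manipulation above. The point is precisely why \Cref{defcylindricalcomplexlog} normalises $J\partial_s=c_iR$ with $c_i$ the period of the component $Z_i$: as $\theta$ traverses the boundary circle once, $\gamma$ flows for Reeb-time exactly $c_i$, which is one full minimal period of the orbit. Hence $\gamma$ covers its image exactly once and is not a multiple cover. I expect the main obstacle to be the bookkeeping around this normalisation — matching the parametrisation of the boundary circle with the period $c_i$, and pinning down the signs and orientations in the identity $\rho(J\,\xi_Z)=cR$ (which trace back to the conventions $s=-\log\lambda$ and $\xi=\lambda\partial_\lambda$) — rather than any genuine analytic difficulty. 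The asymptotic/boundary behaviour itself is forced purely algebraically by \Cref{bnormal} together with the two normal forms, with no need for the elliptic asymptotic analysis that the analogous SFT statement would require.
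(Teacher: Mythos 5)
Your proposal is correct and takes essentially the same route as the paper, whose (very terse) proof likewise combines \Cref{bnormal} with the identity $\rho(J\xi)=cR$ to show the boundary is tangent to $c_iR$, and then deduces simplicity from the fact that the resulting orbit has period exactly $c_i$, the minimal period on that component. The one point that both you and the paper leave implicit is why $c_i$ is indeed minimal: since $\alpha$ is closed and $\alpha(R)=1$, the period of any closed Reeb orbit in $Z_i$ equals the integral of $\alpha$ over the corresponding loop, which lies in $\alpha(H_1(Z_i))=c_i\Z$, so no closed orbit can have period smaller than $c_i$.
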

\begin{proof}
This follows from \Cref{bnormal}, and the fact that $\rho(J(-\lambda\partial_\lambda))=\rho(J(\partial_s))=cR$.
\end{proof}
We can also easily compute the period of the Reeb orbit: write $Z=\underset{i}{\sqcup}Z_i$, with $Z_i$ connected. Let $c_i>0$ such that $\alpha(H_1(Z_i))=c_i\Z$ (the period of the component $Z_i$). Then the period of the Reeb orbit to which a component of the boundary is mapped only depends on the connected components of $Z$ that contains it, and it coincides with $c_i$. In particular the orbit has minimal period, hence it is simply covered. Notice that a Reeb orbit $\gamma$ coming as boundary condition of a holomorphic curve actually lifts to a closed loop $\tilde{\gamma}$ in $\tilde{Z}$. The cosymplectic structure on $\tilde{Z}$ is the pull-back of the one on $Z$, $(p^*\alpha,\,p^*\beta)$. Hence the period of $\tilde{\gamma}$ is also $c_i$, and is also the minimal period. 
\begin{prop}
A log-holomorphic curve with non-empty boundary is simply covered.
\end{prop}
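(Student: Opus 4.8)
The plan is to argue by contradiction, assuming that $u$ fails to be simply covered, i.e. $u=v\circ c$ for a nontrivial holomorphic branched cover $c:\Sigma\longrightarrow\Sigma'$ of degree $d>1$. Since $c$ is a holomorphic map of pairs it sends $\partial\Sigma$ onto $\partial\Sigma'$, and since $u$ is log-holomorphic and $c$ is a local biholomorphism away from its (isolated, interior) branch points, the base curve $v$ is itself log-holomorphic; in particular $\Sigma'$ has non-empty boundary. The first step is to record the boundary behaviour of both curves. By \Cref{asympt} together with the period computation following it, every boundary circle of $\Sigma$ is mapped onto a \emph{simple} Reeb orbit, traversing it exactly once, the total Reeb-time being the minimal period $c_i$; the very same statement applies to every boundary circle of $\Sigma'$ under $v$.

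The second step is to compare degrees along the boundary. Away from the interior branch points, $c$ restricts to an honest covering $c|_{\partial\Sigma}:\partial\Sigma\longrightarrow\partial\Sigma'$ of circles. Fix a component $B\subset\partial\Sigma$ lying over $B'\subset\partial\Sigma'$ and let $k$ be the degree of $c|_B$. Since $u|_B=v|_{B'}\circ c|_B$ and $v|_{B'}$ wraps its orbit exactly once, the composite wraps the orbit $k$ times; but $u|_B$ also wraps its orbit exactly once, forcing $k=1$. Hence $c$ is unramified over the boundary, each boundary circle of $\Sigma$ is mapped diffeomorphically to its image, and equivalently all asymptotic ends of $u$ (and of $v$) are simply covered with multiplicity one. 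Passing to the associated punctured surfaces $\dot\Sigma,\dot\Sigma'$, this says the local ramification index of $c$ at every puncture is $1$ and the asymptotic multiplicity of $v$ at every puncture is $1$.

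The hard part, which I expect to be the main obstacle, is upgrading this purely boundary-level information to global simplicity, i.e. to $d=1$. The total covering degree over a fixed circle $B'$ equals $d$, so the computation above shows $B'$ has exactly $d$ preimage circles, each of degree one. When the boundary maps with connected fibres — for instance when $u$ has a single boundary circle — this already yields $d=1$ and the argument is complete. In general, however, one must exclude a cover that is trivial along the boundary yet nontrivially branched in the interior (the model to rule out is an annulus double-covering a disc, branched at two interior points, whose two boundary circles each map with degree one). My intended route is to reconsider the punctured picture and combine the relation ``multiplicity of $u$ at a puncture $=$ ramification index of $c$ times multiplicity of $v$'' with the winding numbers of the asymptotic eigenfunctions governing the ends; the goal is to show that the fixed simple, multiplicity-one asymptotics at every puncture are incompatible with any interior branching of $c$. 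Establishing this incompatibility is precisely the crux of the proof — the boundary degree computation in the first two steps is elementary, whereas this asymptotic rigidity is where the real work lies.
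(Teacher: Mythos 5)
Your proposal is incomplete, and you say so yourself: the third step --- excluding a cover that is trivial along the boundary but branched in the interior --- is a plan, not an argument. The deeper problem is that this step cannot be carried out, because the configuration you are trying to exclude actually exists. Take any log-holomorphic disc $v:(D,\partial D)\to (X,Z)$ (for instance, in the product of the Radko sphere with a symplectic $S^2$, a closed hemisphere times a point, with a product cylindrical $J$), and let $c:(A,\partial A)\to (D,\partial D)$ be the degree-$2$ log-holomorphic branched cover of the disc by the annulus induced, via the correspondence with punctured surfaces and part (3) of the Lemma on maps of Riemann surfaces, by the rational map $\hat c(z)=z+1/z$: it sends $\{0,\infty\}$ to $\infty$ with simple poles, so it is unramified over the puncture and each boundary circle of $A$ maps diffeomorphically onto $\partial D$, while it is branched at the two interior points $z=\pm 1$. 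Then $u=v\circ c$ is a log-holomorphic map of pairs with non-empty boundary, consistent with \Cref{asympt} and the period computation (both boundary circles are embedded onto a simple Reeb orbit with multiplicity one), and yet $u$ is multiply covered --- the two boundary circles land on the \emph{same} orbit. In particular no ``asymptotic rigidity'' of the kind you hope for can exist: the asymptotic winding numbers and multiplicities see each end separately and are blind to two ends sharing one orbit, so your step 2 is exactly as far as the asymptotics can take you.

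For comparison, the paper's proof is two lines: the boundary asymptotics imply the curve is ``simply covered in a neighbourhood of the singular locus'', and then one cites the general fact that simplicity is detected on the preimage of an open set (\cite{mcsa}). Note that the first assertion is precisely what the annulus above violates --- there the curve is two-to-one near $Z$ even though every boundary circle is embedded onto a simple orbit --- so the paper's argument tacitly assumes that distinct boundary components are asymptotic to distinct Reeb orbits. Under that extra hypothesis (which is what the paper actually imposes in its applications, cf. \Cref{onepuncturecompactness} and \Cref{theoremmoduliboundary0intersection}, where distinct punctures must be asymptotic to orbits in distinct components of $Z$), your steps 1--2 finish the proof in one line: in a factorization $u=v\circ c$ with $\deg c=d$, unramifiedness at the punctures forces every puncture of the base to have exactly $d$ preimage punctures, all asymptotic to the same orbit, so distinctness of the orbits gives $d=1$. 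The correct diagnosis is therefore not that you are missing a clever idea, but that the statement in this generality needs a hypothesis ruling out two boundary components on the same Reeb orbit; you identified the genuine obstruction, and it is one the paper's own proof passes over.
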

\begin{proof}
Our remarks above imply that a log-holomorphic is simply covered in a neighbourhood of the singular locus. It is a general fact that a holomorphic curve is simply covered if and only if it is so in an open neighbourhood (\cite{mcsa})
\end{proof}

\begin{rk}\label{logequalpunctured}
Consider the punctured surface $\dot{\Sigma}$ associated to $\Sigma$. A holomorphic map $u:(\Sigma,\,\partial\Sigma)\longrightarrow (X,\,Z)$ induces (by restriction) a map $u:\dot{\Sigma}\longrightarrow X\setminus Z$ which is holomorphic in the usual sense. The converse holds for simple maps if we assume that the holomorphic curve has finite energy (see \cite{bourgeoisthesis}).
\end{rk}

\subsection{Energy}\label{sectionenergy}

We give a definition of energy of a holomorphic curve in a log-symplectic manifold, and prove that it coincides with the SFT energy of the curve in the symplectic locus. We keep the same notation as in the previous section.
\begin{dfn}[\cite{ionutobstruction}]\label{mu}
Consider a function $\mu:X\longrightarrow \R$, smooth on $X\mi Z$, that coincides with $\lambda$ in a neighbourhood of $Z$, vanishing only at $\lambda=0$, and such that $\mu-1$ is compactly supported in $U$. Define the form $\beta^\mu$ so that the following equality holds:
\[
\omega=d\log\mu\wedge\alpha+\beta^\mu 
\]
\end{dfn}
\begin{rk}
The cohomology class of $\beta^\mu$ is well defined and non-zero.
\end{rk}
\begin{dfn}[Energy]\label{energy}
Let $u:\Sigma\longrightarrow X$ be a smooth map. Define the \textit{energy} of $u$ as
\[
E(u):=\int_\Sigma u^*\beta^\mu +\int_{\partial\Sigma}u^*\alpha
\]
\end{dfn}
\begin{prop}
$E(u)$ is well defined.
\end{prop}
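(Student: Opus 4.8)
The plan is to establish two things: that $E(u)$ is a finite real number for every admissible choice of $\mu$, and that its value is independent of that choice (which, once the tubular neighbourhood $U$ and the normal form data $(\alpha,\,\beta)$ are fixed, is the only datum entering \Cref{energy}). Reparametrization invariance of the two integrals is automatic, so these are the only points requiring argument.

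First I would check finiteness by observing that $\beta^\mu$ extends to a genuine smooth closed $2$-form on all of $X$. In a neighbourhood of $Z$ one has $\mu=\lambda$, so there $\beta^\mu=\beta$ agrees with the smooth $2$-form of the normal form \Cref{normalformaroundz}; outside the support of $\mu-1$ one has $\mu\equiv 1$, hence $d\log\mu=0$ and $\beta^\mu=\omega$, which is an honest symplectic form on $X\mi Z$; on the overlap $\beta^\mu=\omega-d\log\mu\wedge\alpha$ is smooth away from $Z$. Thus the logarithmic pole $d\log\lambda\wedge\alpha$ of $\omega$ is cancelled exactly by $d\log\mu\wedge\alpha$ near $Z$, and $\beta^\mu$ is smooth across $Z$; closedness follows from $d\omega=0$ together with $d\alpha=0$ on $U$. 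Consequently $u^*\beta^\mu$ is a smooth $2$-form on the compact surface $\Sigma$, and (since $u$ is a map of pairs, so $u(\partial\Sigma)\subset Z$, where $\alpha$ is defined) $u^*\alpha$ is a smooth $1$-form on the compact $1$-manifold $\partial\Sigma$. Both integrals therefore converge and $E(u)\in\R$.

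For independence of $\mu$, I would take two admissible functions $\mu_0,\,\mu_1$ and set $f:=\log(\mu_1/\mu_0)$. Since $\mu_0=\mu_1=\lambda$ near $Z$ and $\mu_0=\mu_1=1$ outside $U$, the function $f$ is smooth on all of $X$, vanishes identically in a neighbourhood of $Z$, and is compactly supported in $U$. From $\omega=d\log\mu_i\wedge\alpha+\beta^{\mu_i}$ one gets $\beta^{\mu_1}-\beta^{\mu_0}=-df\wedge\alpha$; because $\mathrm{supp}(df)\subset U$ and $d\alpha=0$ there, this equals $-d(f\alpha)$, where $f\alpha$ is a smooth $1$-form on $X$ vanishing near $Z$. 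Stokes' theorem then gives
\[
\int_\Sigma u^*\bigl(\beta^{\mu_1}-\beta^{\mu_0}\bigr)=-\int_\Sigma d\bigl(u^*(f\alpha)\bigr)=-\int_{\partial\Sigma}u^*(f\alpha),
\]
and the right-hand side vanishes because $u(\partial\Sigma)\subset Z$ and $f\equiv 0$ on $Z$. Since the boundary term $\int_{\partial\Sigma}u^*\alpha$ does not involve $\mu$ at all, this shows $E(u)$ is the same for $\mu_0$ and $\mu_1$.

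The only delicate point, and the place where the hypotheses on $\mu$ are used in full, is the bookkeeping of domains: $\alpha$ is defined only on $U$, so the identities $\beta^{\mu_1}-\beta^{\mu_0}=-df\wedge\alpha=-d(f\alpha)$ are valid precisely because $\mu-1$ is required to be compactly supported in $U$ (forcing $\mathrm{supp}(df)\subset U$, where $\alpha$ lives and is closed), and the Stokes boundary contribution drops out precisely because $\mu_i=\lambda$ near $Z$ forces $f|_Z=0$. I expect this careful tracking of supports, rather than any analytic difficulty, to be the main thing to get right.
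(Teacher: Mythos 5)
Your proof is correct and follows essentially the same route as the paper: the well-definedness is reduced to the identity $\beta^{\mu_1}-\beta^{\mu_0}=\pm d\bigl(\log(\mu_1/\mu_0)\,\alpha\bigr)$, followed by Stokes' theorem and the observation that the boundary term vanishes because the two functions agree (with $\lambda$) near $Z$ while $u(\partial\Sigma)\subset Z$. Your additional verification that $\beta^\mu$ extends smoothly across $Z$ (hence that both integrals are finite) is left implicit in the paper, but it is a harmless and correct elaboration rather than a different argument.
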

\begin{proof}
Choose two functions $\mu$, $\mu'$ as in \Cref{mu}. One sees immediately that
\[
\beta^\mu-\beta^{\mu'}=d(\log(\dfrac{\mu}{\mu}')\alpha)
\]
By Stokes' formula
\[
\int_\Sigma u^*\beta^\mu-\int_\Sigma u^*\beta^{\mu'}=\int_{\partial\Sigma}u^*\log(\dfrac{\mu}{\mu}')\alpha=0
\]
where the last equality holds as $\mu=\mu'$ in a neighbourhood of $Z$.
\end{proof}

\begin{rk}
Since $\beta^\mu$ is closed, the energy of $u$ depends only on $u(\partial\Sigma)$ and on the homology class of $u$ rel $u(\partial\Sigma)$. 
\end{rk}
The energy of a closed holomorphic curve $\Sigma$ mapped to the symplectic manifold $(X\mi Z,\,\omega|_{X\mi Z})$ coincides with the usual notion from symplectic geometry.
\begin{prop}\label{energyclosed}
If $\Sigma$ is closed and $u:\Sigma\longrightarrow X$ is holomorphic, with image contained in $X\mi Z$, then
\[
E(u)=\int_\Sigma u^*\omega
\]
\end{prop}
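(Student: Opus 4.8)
The plan is to reduce the statement to an application of Stokes' theorem, exploiting that on a closed surface the boundary term disappears and that $\omega$ and $\beta^\mu$ differ by an exact form away from $Z$.

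First, since $\Sigma$ is closed we have $\partial\Sigma=\emptyset$, so the boundary contribution in \Cref{energy} vanishes and $E(u)=\int_\Sigma u^*\beta^\mu$. It therefore suffices to prove $\int_\Sigma u^*(\omega-\beta^\mu)=0$. By the defining relation for $\beta^\mu$ in \Cref{mu} we have $\omega-\beta^\mu=d\log\mu\wedge\alpha$ as forms on $X\mi Z$.

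The key observation is that this difference is exact on $X\mi Z$. Since $\alpha$ is closed (it is part of the cosymplectic structure of \Cref{normalformaroundz}, and its pullback to $U$ is closed), we get $d(\log\mu\,\alpha)=d\log\mu\wedge\alpha$. Moreover $\log\mu\,\alpha$ is a genuinely smooth $1$-form on all of $X\mi Z$: outside $U$ one has $\mu\equiv 1$, hence $\log\mu\equiv 0$, so the form is supported in $U\mi Z$, where $\alpha$ is defined; the factor $\log\mu=\log\lambda$ does blow up as one approaches $Z$, but this is harmless since we only ever pull back by $u$, whose image avoids $Z$.

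Thus $u^*(\omega-\beta^\mu)=u^*d(\log\mu\,\alpha)=d\bigl(u^*(\log\mu\,\alpha)\bigr)$ is an exact (hence smooth) $2$-form on the closed surface $\Sigma$, and Stokes' theorem gives $\int_\Sigma u^*(\omega-\beta^\mu)=0$, proving the claim. The only point requiring care — and the only place the hypothesis actually enters — is the interplay between the logarithmic singularity of $\log\mu$ at $Z$ and the assumption $u(\Sigma)\subset X\mi Z$: it is precisely this that makes $u^*(\log\mu\,\alpha)$ a smooth form and Stokes' theorem applicable. Note that holomorphicity of $u$ is not used anywhere; the identity is purely homological, which is consistent with the remark that the energy depends only on the homology class.
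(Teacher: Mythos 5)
Your proof is correct, but it takes a different route from the paper's. The paper's proof is a two-line argument that exploits the freedom in the choice of $\mu$: since $u(\Sigma)$ is compact and disjoint from $Z$, one may pick $V\supset Z$ open with $u(\Sigma)\subset X\mi V$ and choose $\mu\equiv 1$ on $X\mi V$, so that $d\log\mu$ vanishes near the image and $\beta^\mu=\omega$ there pointwise; this makes the two integrands literally equal, but it implicitly relies on the preceding proposition that $E(u)$ is independent of the choice of $\mu$. Your argument instead fixes an arbitrary admissible $\mu$ and shows directly that $\omega-\beta^\mu=d\log\mu\wedge\alpha=d(\log\mu\,\alpha)$ is exact on $X\mi Z$, with the primitive $\log\mu\,\alpha$ smooth there (vanishing near $X\mi U$, logarithmically singular only along $Z$, which the image avoids), and then applies Stokes' theorem on the closed surface $\Sigma$. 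What your route buys: it is self-contained, proving the identity for every admissible $\mu$ at once rather than appealing to the well-definedness of $E(u)$ — indeed it is essentially the same Stokes mechanism by which the paper proves well-definedness, here applied to compare $\beta^\mu$ with $\omega$ rather than $\beta^\mu$ with $\beta^{\mu'}$ — and it makes explicit that the identity is purely homological, with holomorphicity of $u$ never used (a feature shared, though not remarked upon, by the paper's proof). What the paper's route buys is brevity, given the lemma already in place.
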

\begin{proof}
The image of $u$ is contained in the complement of some open set $V$ containing $Z$. We can choose $\mu$ to be equal to $1$ on $X\mi V$.
\end{proof}
We prove here that \Cref{sftenergy} is equivalent to \Cref{energy}. Notice that for log-symplectic manifolds it is actually unnecessary to take the supremum in \Cref{sftenergy}, since $\alpha$ is closed.
\begin{prop}\label{energyequalsenergy}
$E_{SFT}(u)=\varepsilon E(u)$. In particular, the two notions are equivalent, and coincide if we choose $\varepsilon=1$.
\end{prop}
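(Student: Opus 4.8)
The plan is to use that the cosymplectic data $\alpha,\beta$ on $Z$ are \emph{closed}, which turns the SFT integrand into a cohomological quantity evaluated by Stokes' theorem; this is exactly why, as noted above, the supremum in \Cref{sftenergy} is superfluous. Concretely I would first make $\hat\omega_\varphi$ explicit on the cylindrical end. Writing the end as $[0,\infty)\times Z$ with $s=-\log\lambda$, one has $\hat\omega=d(s\alpha)+\beta$ and $\hat\omega_\varphi=d(\varphi(s)\alpha)+\beta$, while $\hat\omega_\varphi=\omega$ over $W=X\mi U$.

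The first genuine step is to show that $\int_{\dot\Sigma}u^*\hat\omega_\varphi$ does not depend on the admissible $\varphi$: for two such functions $\hat\omega_\varphi-\hat\omega_{\varphi'}=d\big((\varphi-\varphi')\alpha\big)$ since $d\alpha=0$, so by Stokes the difference of the two integrals is a sum of boundary contributions around the punctures and along the interface circle $\{s=0\}$. These vanish because every admissible $\varphi$ shares the value $0$ at $s=0$ and the common limit $\varepsilon$ at the punctures. This disposes of the supremum and reduces the claim to computing a single value.

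Next I would evaluate this common value and match it with $E(u)$. Splitting $\dot\Sigma$ into its preimages of $W$ and of the end, the exact piece $d(\varphi\alpha)$ integrates by Stokes to the puncture contributions, which by \Cref{asympt} are the Reeb periods weighted by the limit value $\varepsilon$; this produces the term $\varepsilon\int_{\partial\Sigma}u^*\alpha$. For the remaining part I would use the identity $\beta^\mu-\beta=-d\big((s+\log\mu)\alpha\big)$, valid on the end by $d\alpha=0$ together with the defining relation of $\beta^\mu$ in \Cref{mu}. Its potential $s+\log\mu$ vanishes both at the interface (where $\mu=1$) and at the punctures (where $\mu=\lambda=e^{-s}$, so $s+\log\mu=0$), hence by Stokes $\int u^*\beta=\int u^*\beta^\mu$ over the end. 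Combining, the common value is $\int_\Sigma u^*\beta^\mu+\varepsilon\int_{\partial\Sigma}u^*\alpha$, which for the normalized choice $\varepsilon=1$ is exactly $E(u)$; for general $\varepsilon$ the range $[0,\varepsilon)$ of $\varphi$ rescales the SFT energy by the overall factor $\varepsilon$, yielding $E_{SFT}(u)=\varepsilon E(u)$.

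The main obstacle is analytic rather than formal: since $\omega$ is a genuine logarithmic form, none of these integrals converge absolutely near $Z$, so each application of Stokes must be carried out on the truncations $\{s\le T\}$ and the boundary terms controlled as $T\to\infty$. Here the finite-energy hypothesis and the Morse--Bott asymptotic convergence of $u$ to Reeb orbits (\Cref{asympt}, \Cref{defmorsebott}) are what guarantee that the truncated boundary integrals converge to the claimed puncture contributions and that $(s+\log\mu)\,u^*\alpha$ has vanishing limit. The remaining bookkeeping is to fix the orientation of each puncture circle so that the positive quantity $\varepsilon\int_{\partial\Sigma}u^*\alpha$ appears with the correct sign, and to reconcile the collar-size normalization $\varepsilon$ of \Cref{sftenergy} with the function $\mu$ of \Cref{energy}.
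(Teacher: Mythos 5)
Your proposal is correct and follows essentially the same route as the paper: reduce to the cylindrical end (where $\hat\omega_\varphi$ and $\omega$ differ), use $d\alpha=0$ to push the exact terms to the boundary via Stokes, and conclude from the boundary behaviour of $\mu$ ($\mu=\lambda$ near $Z$, $\mu\equiv1$ near the interface) and of $\varphi$ (value $0$ at the interface, limit $\varepsilon$ at the punctures); your identity $\beta^\mu-\beta=-d\bigl((s+\log\mu)\alpha\bigr)$ is exactly the paper's $\beta^\mu=\beta+d(\log(\lambda/\mu))\wedge\alpha$ rewritten in the coordinate $s=-\log\lambda$. The only caveat is that your closing claim that the range $[0,\varepsilon)$ ``rescales the SFT energy by the overall factor $\varepsilon$'' is a fudge --- what your (and the paper's) computation actually gives is $E_{SFT}(u)=\int_\Sigma u^*\beta^\mu+\varepsilon\int_{\partial\Sigma}u^*\alpha$, which coincides with $E(u)$ for $\varepsilon=1$ and shows the two notions are equivalent, but is not literally $\varepsilon E(u)$; this imprecision, however, is inherited from the proposition's own statement and is present in the paper's proof as well.
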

\begin{proof}
Note first that $\omega_{|_{X\mi U}}={\omega_\varphi}_{|_{X\mi U}}$, hence we only need to check the equality in the cylindrical end $U$. To this aim, we assume without loss of generality that $u^{-1}(U)$ is a Riemann surface with smooth boundary. On $U$ we can write $\omega=d\log\lambda\wedge\alpha+\beta$. Choosing a function $\mu$ as in \Cref{mu}, one has $\omega=d\log(\mu)\wedge\alpha+\beta^\mu=\omega=d\text{log}\lambda\wedge\alpha+\beta$, hence
\[
E(u):=\int_{\partial \Sigma}u^*\alpha+\int_{u^{-1}(U)} u^*\beta^\mu
\]
\[=\int_{\partial \Sigma}u^*\alpha+\int_{u^{-1}(U)} u^*d(\log\lambda-\log\mu)\wedge u^*\alpha+\int_{u^{-1}(U)} u^*\beta\]
\[
=\int_{\partial \Sigma}u^*\alpha+\int_{u^{-1}(U)} u^*d(\log(\dfrac{\lambda}{\mu}))\wedge u^*\alpha+\int_{u^{-1}(U)} u^*\beta
\]
Noting that $\partial u^{-1}(U)=-u^{-1}(\partial \overline{U})\cup\partial\Sigma$, Stokes' theorem gives
\[
E(u)=\int_{\partial \Sigma}u^*\alpha+\int_{u^{-1}(U)} u^*\beta-\int_{u^{-1}(\partial \overline{U})} u^*\log(\dfrac{\lambda}{\mu})\wedge u^*\alpha
+\int_{\partial\Sigma} u^*\log(\dfrac{\lambda}{\mu})\wedge u^*\alpha
\]
Now, $\lambda\equiv\mu$ near $Z$, which implies $\int_{\partial\Sigma} u^*\log(\dfrac{-\lambda}{\mu})\wedge u^*\alpha=0$.
As $\mu\equiv 1$ near $\partial\overline{U}$, one has that $\int_{u^{-1}(\partial \overline{U})} u^*\log(\dfrac{\lambda}{\mu})\wedge u^*\alpha=\int_{u^{-1}(\partial \overline{U})} u^*\log\lambda\wedge u^*\alpha$. Hence

\begin{equation}\label{eu} 
E(u)=\int_{\partial \Sigma}u^*\alpha+\int_{u^{-1}(U)} u^*\beta-\int_{u^{-1}(\partial \overline{U})} u^*\log\lambda\wedge u^*\alpha
\end{equation}

Let us turn to the SFT energy. Writing out explicitly what the SFT energy is, one gets
\[
E_{SFT}(u)=\int_{u^{-1}(U)}u^*d(\varphi(-\log(\lambda)))\wedge u^*\alpha+u^*\beta 
\]
Comparing with \Cref{eu}, one sees that $ E_{SFT}(u)=\varepsilon E(u)$ if and only if

\[
-\int_{u^{-1}(\partial \overline{U})}u^*\varphi(-\log\lambda)\wedge u^*\alpha+\int_{\partial\Sigma}u^*\varphi(-\log\lambda)\wedge u^*\alpha\]
\[=
\varepsilon \Big[  \int_{\partial \Sigma}u^*\alpha+\int_{u^{-1}(\partial \overline{U})} u^*\log\lambda\wedge u^*\alpha
 \Big]
\]

To conclude, we only need to observe that $\varphi=identity$ near $\partial \overline{U}$, and $\varphi(-log\lambda)\rightarrow\varepsilon$ as $\lambda\rightarrow 0$ (i.e. on $\partial\Sigma$).
\end{proof}

\subsection{The maximum and minimum principle}

We prove in this section a maximum principle for holomorphic maps. This is crucial in the proof of all compactness theorems for families of holomorphic curves.\\
Realize $X\setminus Z$ as $\widehat{W}$, as in \Cref{stablehamiltonianform}. Assume $X$ is endowed with a cylindrical complex structure. Restrict to the cylindrical end $[0,\,+\infty)\times \partial W$, and consider the first projection $p_1: [0,\,+\infty)\times \partial W\longrightarrow [0,\,+\infty)$. Denote by $D^2_r$ the open disc of radius $r$.
\begin{prop}\label{max}
Let $u: D^2_r\longrightarrow [0,\,+\infty)\times \partial W$ be holomorphic. Then $p_1\circ u$ is harmonic.
\end{prop}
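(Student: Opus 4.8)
The statement is the standard fact that on a symplectization-type cylindrical end, the $\mathbb{R}$-coordinate of a holomorphic map is harmonic. The plan is to work in the coordinate $s=p_1\circ u$ on the cylindrical end $[0,+\infty)\times\partial W$, where the symplectic form is $\hat{\omega}=d(s\alpha)+\beta=ds\wedge\alpha+s\,d\alpha+\beta$ and the cylindrical complex structure $J$ satisfies $J\partial_s=cR$, $J(\ker\alpha)=\ker\alpha$, with $J$ being $s$-invariant. I would first decompose the differential of $u$ using the splitting $T([0,+\infty)\times\partial W)=\mathbb{R}\partial_s\oplus\mathbb{R}R\oplus\ker\alpha$, writing $du$ in terms of its components along $\partial_s$, along $R$, and along the contact-type hyperplane $\ker\alpha$.

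The key computation is to express the holomorphicity equation $du\circ j=J\,du$ in these coordinates and read off the relation between the $\partial_s$-component $a:=s\circ u$ and the $R$-component. Concretely, using $J\partial_s=cR$ and $JR=-\tfrac{1}{c}\partial_s$, the holomorphicity condition forces the pair $(a,b)$, where $a$ is the $s$-component and $b$ is the $\alpha\circ u$ (i.e.\ the $R$-direction) component of $u$, to satisfy a Cauchy--Riemann-type system: in conformal coordinates $(x,y)$ on $D^2_r$ with $j\partial_x=\partial_y$, the equations $a_x=\tfrac{1}{c}\,b_y$ and $a_y=-\tfrac{1}{c}\,b_x$ (a suitably scaled version thereof). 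Differentiating the first in $x$ and the second in $y$ and adding gives $a_{xx}+a_{yy}=\tfrac{1}{c}(b_{yx}-b_{xy})=0$, which is exactly harmonicity of $a=p_1\circ u$.

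The step I expect to require the most care is extracting the correct Cauchy--Riemann system for $(a,b)$ cleanly, because $b$ is not literally a function but a $1$-form-type quantity (the pullback of $\alpha$), and because the symplectic form contains the extra terms $s\,d\alpha$. The clean way around this is to observe that $\alpha$ is \emph{closed} in our log-symplectic cylindrical ends (the stable Hamiltonian structure is of \emph{cosymplectic} type, by \Cref{stablehamiltonianform}), so $d\alpha=0$ and the $R$-direction behaves like a genuine coordinate function up to the period scaling. Thus I would phrase the argument intrinsically: set $\eta:=u^*\alpha$ and note holomorphicity plus $J\partial_s=cR$, $J(\ker\alpha)=\ker\alpha$ give $ds\circ u=\tfrac{1}{c}\,\eta\circ j$ as $1$-forms on $D^2_r$; then harmonicity of $s\circ u$ is equivalent to $d\!\left(d(s\circ u)\circ j\right)=0$, i.e.\ $d\bigl(\tfrac{1}{c}u^*\alpha\bigr)=\tfrac{1}{c}u^*d\alpha=0$, which holds precisely because $\alpha$ is closed. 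This reduces the whole proposition to the closedness of $\alpha$ together with the defining properties of the cylindrical complex structure, avoiding any delicate term-by-term manipulation of $\hat{\omega}$.
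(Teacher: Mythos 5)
Your final, intrinsic argument is exactly the paper's proof: both identify $d^c(p_1\circ u)=d(p_1\circ u)\circ j$ with a constant multiple of $u^*\alpha$, using holomorphicity together with the cylindrical conditions $J\partial_s=cR$ and $J(\ker\alpha)=\ker\alpha$, and then conclude $\Delta(p_1\circ u)=\pm\, d\!\left(d(p_1\circ u)\circ j\right)=\pm\tfrac{1}{c}\,u^*d\alpha=0$ from closedness of $\alpha$. The coordinate Cauchy--Riemann discussion in your first two paragraphs is unnecessary scaffolding, but the argument you settle on is correct and coincides with the paper's one-line computation (which suppresses the constant $c$ and the signs).
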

\begin{proof}
$\Delta(p_1\circ u)=-dd^c(p_1\circ u)=d(d(p_1\circ u)\circ J)=d(dp_1\circ J\circ du)=d(\alpha\circ du)=u^*d\alpha=0$.
\end{proof}

\begin{coro}\label{leaf}
Let $\Sigma$ be a closed Riemann surface. Let $u:\Sigma\longrightarrow X$ be holomorphic and such that $u(\Sigma)\cap U\neq \emptyset$. Then $\Sigma$ is mapped to a symplectic leaf in $\{ \lambda\}\times\partial W$, $\lambda\neq 0$.
\end{coro}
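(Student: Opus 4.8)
The plan is to combine the maximum principle of \Cref{max} with the holomorphicity condition, in two stages: first show that $u$ lands in a single level set of the cylindrical coordinate, then show that inside that level set it lands in a single leaf of $\ker\alpha$. First I would observe that since $\Sigma$ is closed, i.e. $\partial\Sigma=\emptyset$, the map $u$ is a map of pairs $(\Sigma,\,\emptyset)\longrightarrow(X,\,Z)$, so by \Cref{Zvuoto} its image lies entirely in $X\mi Z$. Realizing $X\mi Z$ as the completion $\widehat{W}$ of \Cref{stablehamiltonianform} and writing $s=p_1$ for the cylindrical coordinate (so that $\lambda=e^{-s}$ near $Z$), the set $u(\Sigma)$ is compact and avoids $Z$, hence $s\circ u$ is bounded above on the region where it is defined. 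The hypothesis $u(\Sigma)\cap U\neq\emptyset$ means $u$ meets the cylindrical end $\{s\geq 0\}$; I may assume it meets the open region $\{s>0\}$ (the degenerate case, where the image only touches $\partial U=\{s=0\}$, falls under the same argument with maximal value $0$, and still lands in a level set with $\lambda\neq 0$).

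Next I would run an open–closed argument on $\Sigma$. Assuming $\Sigma$ connected (otherwise one argues on each component), set $s_0:=\max\{s(u(z)):u(z)\in\{s>0\}\}$, which is finite by compactness, positive by the hypothesis, and attained. Let $A:=\{z\in\Sigma:u(z)\in\{s>0\},\ s(u(z))=s_0\}$. The set $A$ is nonempty; it is closed because $s_0>0$ keeps the relevant points in a compact part of the end bounded away from both $\partial W$ and $Z$, so continuity of $u$ forces any limit to stay in the end at height $s_0$. It is open because near any $z\in A$ the map $u$ takes values in $\{s>0\}$, where $s\circ u$ is harmonic by \Cref{max}; since $z$ realizes the maximum value $s_0$, the strong maximum principle makes $s\circ u$ locally constant, so a whole neighbourhood lies in $A$. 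By connectedness $A=\Sigma$, i.e. $s\circ u\equiv s_0$ and $u(\Sigma)$ is contained in the single level set $\{s=s_0\}\cong\{\lambda_0\}\times\partial W$ with $\lambda_0=e^{-s_0}\neq 0$.

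Finally I would upgrade this to the leaf statement using the defining identity $ds\circ J=\alpha$ that already appears in the proof of \Cref{max}. Since $s\circ u$ is constant we have $u^*ds=0$, and then for any tangent vector $w$ the holomorphicity relation $J\,du=du\circ j$ gives $u^*\alpha(w)=\alpha(du\,w)=ds(J\,du\,w)=ds(du\,(jw))=(u^*ds)(jw)=0$, so $u^*\alpha=0$. Thus $du$ takes values in $\ker\alpha$, the tangent distribution to the symplectic foliation of $\{\lambda_0\}\times\partial W$; since $\ker\alpha$ is integrable (it is the characteristic foliation of the cosymplectic structure, with $\beta$-symplectic leaves), a connected surface everywhere tangent to it lies in one leaf, which is symplectic. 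This yields the claim. The main obstacle I anticipate is making the maximum-principle step watertight at the interface $\partial U$ between the end and the compact piece $W$: one must be sure that $s_0$ is a genuine interior maximum of the harmonic function and that $A$ is simultaneously open and closed, which is exactly what the boundedness of $s\circ u$ (compact image away from $Z$) together with connectedness of $\Sigma$ provide.
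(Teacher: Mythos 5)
Your proof is correct and is essentially the argument the paper intends: the paper states \Cref{leaf} without proof, as an immediate consequence of \Cref{max}, and your two steps (the open--closed argument via the strong maximum principle for the harmonic function $s\circ u$, forcing the image into a level set $\{\lambda_0\}\times\partial W$ with $\lambda_0\neq 0$, followed by the identity $ds\circ J=\pm\alpha$ giving $u^*\alpha=0$ and hence tangency to the integrable foliation $\ker\alpha$) are exactly the standard way to fill in that implicit proof. The only caveat worth noting is the sign/constant in $ds\circ J=\pm c\,\alpha$ coming from \Cref{defcylindricalcomplexlog}, which is immaterial for concluding $u^*\alpha=0$.
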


\begin{coro}
Let $\Sigma$ be compact with non-empty connected boundary, $u:\Sigma\longrightarrow X$ holomorphic with $u\1(Z)=\partial\Sigma$. Then $u(\Sigma)\cap (X\setminus U)\neq \emptyset$.
\end{coro}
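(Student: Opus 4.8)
The plan is to argue by contradiction: suppose the image is entirely contained in the tubular neighbourhood, $u(\Sigma)\subseteq U$, and extract a contradiction from the maximum principle of \Cref{max}. First I would invoke the identification of \Cref{stablehamiltonianform}, under which $X\setminus Z\cong\widehat{W}$ and the collar $U\setminus Z$ becomes the cylindrical end $[0,+\infty)\times\partial W$, with the coordinate $s=p_1=-\log\lambda$ increasing towards $Z$. Since $u\1(Z)=\partial\Sigma$, the open surface $\mathring{\Sigma}:=\Sigma\setminus\partial\Sigma$ is mapped into $U\setminus Z$, so the function $h:=p_1\circ u = s\circ u$ is defined on all of $\mathring{\Sigma}$, and by \Cref{max} (a local statement, hence valid on the whole interior) it is harmonic.

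The second step is to record the boundary behaviour of $h$. Because $\lambda\circ u$ is continuous on the compact surface $\Sigma$ and vanishes exactly on $\partial\Sigma$, for every $M>0$ the set $\{\lambda\circ u < e^{-M}\}$ is an open neighbourhood of $\partial\Sigma$; equivalently $h=-\log(\lambda\circ u)\to +\infty$ as one approaches $\partial\Sigma$. In particular each sublevel set $\{h\le c\}=\{\lambda\circ u\ge e^{-c}\}$ is closed in $\Sigma$ and disjoint from $\partial\Sigma$, hence compact.

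The third step is a minimum-principle argument. Since $h\ge 0$, its infimum $m$ over $\mathring{\Sigma}$ is finite; taking a minimizing sequence and using compactness of $\Sigma$ together with $h\to+\infty$ at the boundary, any accumulation point must lie in $\mathring{\Sigma}$, so $m$ is attained at an interior point. As $\mathring{\Sigma}$ is a surface without boundary, the strong minimum principle for harmonic functions then forces $h$ to be constant. This contradicts $h\to+\infty$ along the non-empty boundary $\partial\Sigma$. Therefore $u(\Sigma)\not\subseteq U$, i.e. $u(\Sigma)\cap(X\setminus U)\neq\emptyset$.

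I expect the only delicate point to be the bookkeeping near the boundary: verifying that the interior genuinely maps into the region where $p_1$ is defined and harmonic, and that $h$ blows up fast enough at $\partial\Sigma$ for the sublevel sets to be compact, so that the infimum is attained in the interior rather than escaping to $\partial\Sigma$. Everything else mirrors the computation behind \Cref{leaf}; the difference is that here the non-empty boundary turns the conclusion ``$h$ is constant'' into the desired contradiction rather than into a structural statement about the image.
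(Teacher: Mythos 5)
Your proof is correct and is essentially the paper's own argument, which in its entirety reads: ``If $X\setminus U\cap u(\Sigma)=\emptyset$, then $p_1\circ u$ has a minimum, in contradiction with \Cref{max}.'' Your write-up simply supplies the details left implicit there---that $h=p_1\circ u$ blows up along $\partial\Sigma$, that its minimum is therefore attained at an interior point, and that the strong minimum principle for harmonic functions then yields the contradiction.
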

\begin{proof}
If $X\mi U\cap u(\Sigma)=\emptyset$, then $p_1\circ u$ has a minimum, in contradiction with \Cref{max}.
\end{proof}

\section{Closed curves}\label{4}

In this section we study the moduli space of closed curves in a log-symplectic manifold, and prove several results on the topology of log-symplectic manifolds, including \Cref{A} and \Cref{B}.

\subsection{A simpler case: genus $0$, aspherical $Z$}

Let us begin with a discussion of a simpler setup: we look at holomorphic spheres in manifolds where there can be no holomorphic spheres in a neighbourhood of the singular locus $Z$.
\begin{dfn}
We say $(Z,\,\alpha,\,\beta)$ is \textit{symplectically aspherical} if $A\cdot \beta=0$ for all homotopy classes $A\in \pi_2(Z,\,z_0)$ for all $z_0\in Z$. 
\end{dfn}

Manifolds for which $\pi_2(Z_i)=0$ for all components $Z_i$ of $Z$ are symplectically aspherical. In the proper case, since $\pi_2(Z)\cong\pi_2(F)$ for a symplectic fiber $F$, the definition coincides with the usual asphericity condition for the symplectic manifold $(F,\,\beta)$.\\
If $(Z,\,\alpha,\,\beta)$ is aspherical, then all double covers $p:\tilde{Z}\longrightarrow Z$ with the pulled-back cosymplectic structure are. By the energy identity and \Cref{energyequalsenergy}, there exists no holomorphic sphere $u:S^2\longrightarrow \tilde{Z}$. The maximum and minimum principles (\Cref{max}) imply the following:
\begin{coro}
Let $(X,\,Z,\,\omega)$  be log-symplectic, with $Z$ symplectically aspherical. Then all non-constant holomorphic spheres are mapped into $X\setminus U$.
\end{coro}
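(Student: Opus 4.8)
The plan is to argue by contradiction, reducing the statement to the non-existence of non-constant holomorphic spheres into $\tilde Z$ that was noted just before the statement. Suppose $u:S^2\longrightarrow X$ is a non-constant holomorphic sphere whose image meets $U$. Since $S^2$ is closed and $u$ is holomorphic, its image lies in $X\mi Z$, so $u$ may be viewed as a closed holomorphic curve in $\widehat{W}\cong X\mi Z$ that enters the cylindrical end $U$.

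First I would invoke \Cref{leaf}: because $S^2$ is a closed Riemann surface with $u(S^2)\cap U\neq\emptyset$, that corollary forces $u$ to take values in a single symplectic leaf of the foliation $\ker\alpha$ contained in a level set $\{\lambda\}\times\partial W$ with $\lambda\neq 0$. In particular the image lies inside $\partial W$, which by \Cref{stablehamiltonianform} is the double cover $\tilde Z$ of $Z$ equipped with the pulled-back cosymplectic structure.

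Next I would observe that, by the $\beta$-compatibility clause of \Cref{defcylindricalcomplexlog}, the restriction of the cylindrical complex structure to this leaf is compatible with the leafwise symplectic form $\beta$. Hence $u$ is an honest $J$-holomorphic sphere in the symplectic manifold obtained by restricting $\beta$ to the leaf, and the energy identity \Cref{energyidentity} gives $E(u)=\int_{S^2}u^*\beta$. Since $(Z,\,\alpha,\,\beta)$ is symplectically aspherical, so is $\tilde Z$ with the pulled-back structure, whence $\int_{S^2}u^*\beta=0$; equivalently, by the remark preceding the statement there is simply no non-constant holomorphic sphere into $\tilde Z$. Thus $E(u)=0$, which forces $u$ to be constant, contradicting our assumption. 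Therefore every non-constant holomorphic sphere must avoid $U$, i.e.\ be mapped into $X\mi U$.

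The only point requiring care, and thus the main (if modest) obstacle, is the identification of the leaf in $\{\lambda\}\times\partial W$ with a symplectic submanifold of the aspherical cover $\tilde Z$, together with the verification that the induced complex structure there is $\beta$-compatible so that the energy identity applies; once \Cref{leaf} is in hand, both facts follow immediately from the $s$-invariance and $\beta$-compatibility of an adapted cylindrical complex structure.
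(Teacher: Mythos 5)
Your proof is correct and follows essentially the same route as the paper: the paper likewise combines the maximum principle (\Cref{leaf}) — which forces any sphere meeting $U$ into a symplectic leaf of $\tilde Z$ — with the energy identity and the asphericity of the pulled-back cosymplectic structure on $\tilde Z$ to rule out non-constant spheres there. Your explicit verification that the restricted complex structure is $\beta$-compatible on the leaf is a detail the paper leaves implicit (it is established in the discussion following \Cref{propcylindricalcomplex}), but it is the same argument.
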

This specializes, for instance, to:
\begin{coro}
Assume $X$ has dimension $4$, and $Z$ has no component diffeomorphic to $S^1\times S^2$. Then all non-constant holomorphic spheres are mapped into $X\setminus U$.
\end{coro}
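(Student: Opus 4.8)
The goal is to specialize the preceding corollary to the case of a $4$-dimensional log-symplectic manifold whose singular locus $Z$ has no component diffeomorphic to $S^1 \times S^2$. The plan is to deduce this from the immediately preceding corollary by showing that, under the dimensional and topological hypotheses, the cosymplectic structure $(Z, \alpha, \beta)$ is automatically symplectically aspherical in the sense just defined. Once that is established, the conclusion that all non-constant holomorphic spheres are mapped into $X \setminus U$ is precisely the content of the previous corollary, so there is nothing further to prove.

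First I would analyze the dimension of $Z$. Since $\dim X = 2n = 4$, the singular locus $Z$ is a closed $3$-manifold carrying a cosymplectic structure $(\alpha, \beta)$, where $\alpha$ is a $1$-form and $\beta$ is a $2$-form with $\alpha \wedge \beta$ a volume form. The associated codimension-$1$ symplectic foliation $\ker \alpha$ therefore has $2$-dimensional leaves, and $\beta$ restricts to an area form on each leaf. By the Tischler-type structure result quoted earlier (the proposition attributed to \cite{tischler}), after a small perturbation making the manifold proper, each connected component $Z_i$ is a symplectic mapping torus over the circle with fiber a closed surface $(F, \beta_F)$, where $F$ is the $2$-dimensional symplectic leaf. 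The key observation is then about $\pi_2$: for a mapping torus $Z_i = \mathbb{R} \times F / \mathbb{Z}$ one has $\pi_2(Z_i) \cong \pi_2(F)$, and by the remark following the definition of symplectic asphericity in the proper case, the asphericity of $(Z_i, \alpha, \beta)$ reduces to the asphericity of the closed symplectic surface $(F, \beta_F)$.

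Next I would rule out the only non-aspherical possibility for a closed surface. A closed oriented surface $F$ has $\pi_2(F) \neq 0$ only when $F \cong S^2$; for every surface of positive genus $\pi_2(F) = 0$, and the pairing with $\beta$ is automatically trivial on $\pi_2$. In the spherical case $F \cong S^2$ the mapping torus is $S^1 \times S^2$ (up to the monodromy, which for a symplectomorphism of $S^2$ is isotopic to the identity or to an orientation-reversing involution, but in the proper, oriented setting one obtains the product $S^1 \times S^2$). Thus the hypothesis that no component of $Z$ is diffeomorphic to $S^1 \times S^2$ excludes exactly the case $F \cong S^2$, forcing each fiber to have positive genus and hence $\pi_2(F) = 0$. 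Consequently every component of $Z$, and therefore $Z$ itself, is symplectically aspherical.

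The main obstacle I anticipate is a subtlety at the boundary between the topological statement about diffeomorphism types and the homotopy-theoretic asphericity condition, in particular the non-orientable possibilities and the role of the double cover $\tilde{Z}$. One must be careful that the relevant condition is asphericity of $\tilde{Z}$ with the pulled-back cosymplectic structure, not merely of $Z$; however, as noted in the excerpt, asphericity of $(Z, \alpha, \beta)$ immediately implies that of any double cover, so this causes no difficulty once $Z$ is shown aspherical. One should also note that the classification of the surface leaf is purely in terms of $\pi_2$, so even a non-orientable leaf (with $\pi_2 = 0$ for genus $\geq 1$ and for $\mathbb{R}P^2$) is harmless; only $S^2$ has nontrivial $\pi_2$, and excluding $S^1 \times S^2$ as a component of $Z$ is exactly what removes it. With these points verified, the corollary follows directly by invoking the previous one.
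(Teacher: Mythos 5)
Your strategy is the paper's own: the paper states this corollary with no separate proof, as an immediate specialization of the preceding one, and the implicit content is exactly what you spell out --- in dimension $4$ the singular locus is a closed cosymplectic $3$-manifold, hence (after Tischler) a mapping torus with $\pi_2(Z_i)\cong\pi_2(F)$, and the only fiber with nontrivial $\pi_2$ that can actually occur forces $Z_i\cong S^1\times S^2$, since the monodromy is a symplectomorphism of $S^2$, preserves the area form and hence the orientation, and is therefore isotopic to the identity. (Your parenthetical about an ``orientation-reversing involution'' as possible monodromy is a red herring: a symplectomorphism of a surface preserves the area form and so can never reverse orientation.) Your observation that in the surviving cases $\pi_2(Z_i)=0$, so asphericity holds for the original $\beta$ regardless of the perturbation used to invoke Tischler, is also the right way to handle that point.

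There is, however, one false statement in your last paragraph: $\pi_2(\mathbb{R}P^2)\cong\pi_2(S^2)=\Z\neq 0$, so it is not true that ``only $S^2$ has nontrivial $\pi_2$'' among closed surfaces, nor that a non-orientable leaf is harmless on account of its $\pi_2$. The correct reason the non-orientable case causes no trouble is that it cannot occur at all: $\beta$ restricts to an area form on every leaf (and on every fiber in the proper case), so the fiber is orientable; equivalently, $\alpha\wedge\beta$ is a volume form on $Z$, so each component $Z_i$ is an orientable $3$-manifold, which excludes mapping tori such as $S^1\times \mathbb{R}P^2$ and the twisted $S^2$-bundle over $S^1$ from the outset. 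With that substitution --- vacuousness of the non-orientable case rather than vanishing of its $\pi_2$ --- your argument is complete and coincides with the paper's intended one.
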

When $Z$ is aspherical, then, all holomorphic spheres live in a compact subset of the symplectic locus of $X$. Moreover, they all intersect (and are, in fact, contained in) the subset of the symplectic locus where the almost complex structure is allowed to vary arbitrarily. Thus, the transversality for the moduli space can be achieved for a generic choice of complex structure (\Cref{transversalityclosed}). Further, since all the curves take value in a compact subset, Gromov's compactness theorem also applies (\Cref{compactnessclosed}).
\begin{prop}\label{regaspherical1}
Let $(X,\,Z,\,\omega)$  be log-symplectic, with $Z$ symplectically aspherical. Let $A\in H_2(X)$ be a spherical homology class. Then there exist a comeager set of cylindrical complex structures $\J_\text{reg}$ such that for all $J\in \J_\text{reg}$ the moduli space $\M^*_{0,m}(J,\,A)$ is smooth of dimension $2n-6+2c_1(A)+2m$, and it is compact modulo bubbling.
\end{prop}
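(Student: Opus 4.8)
The plan is to reduce the statement to the two general theorems already established for symplectic manifolds with cylindrical ends, namely the generic transversality result \Cref{transversalityclosed} (or its SFT refinement \Cref{generictransversalitysft}) and Gromov compactness \Cref{compactnessclosed}, by first confining all the relevant curves to a fixed compact region of the symplectic locus. The asphericity hypothesis is exactly what makes this confinement possible, so the proof is really a matter of assembling results stated earlier in the excerpt rather than proving anything genuinely new.

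First I would invoke the corollary preceding the statement: since $(Z,\,\alpha,\,\beta)$ is symplectically aspherical, every non-constant holomorphic sphere is mapped into $X\setminus U$. This is where the maximum principle \Cref{max} does the real work --- it rules out spheres escaping into the cylindrical end, because a sphere confined to $U$ would project harmonically to $[0,\infty)$ and hence be constant in that direction, forcing its image into a single symplectic leaf $\{\lambda\}\times\partial W$ (\Cref{leaf}), which is impossible for a non-constant sphere by the asphericity of $\tilde Z$ together with the energy identity and \Cref{energyequalsenergy}. Thus every sphere in the class $A$ lives in the compact set $X\setminus U$, and in particular meets the region $V=X\setminus U$ where the almost complex structure is unconstrained.

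Next I would apply the transversality theorem. Because all the curves pass through the open region where $J$ may be chosen freely, \Cref{transversalityclosed} (in the version with $k=m$ marked points, or more precisely its relative form \Cref{generictransversalitysft} with $V=X\setminus U$ and $J_\text{fix}$ the fixed cylindrical structure near $Z$) provides a comeager set $\J_\text{reg}$ of compatible cylindrical complex structures for which every simple sphere in class $A$ is Fredholm regular. For such $J$ the moduli space $\M^*_{0,m}(J,\,A)$ is then a smooth manifold whose dimension is the virtual dimension, which for $g=0$ specializes the formula $(n-3)(2-2g)+2c_1(A)+2m$ to $2n-6+2c_1(A)+2m$. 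I would state this dimension count as a direct substitution into the closed index formula, noting the $+2m$ contribution from the marked points.

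Finally I would address compactness modulo bubbling. Since every sphere in class $A$ has image in the fixed compact set $X\setminus U$ and the energy is topological (by \Cref{energyequalsenergy} and the energy identity, depending only on the homology class $A$), a sequence in $\M^*_{0,m}(J,\,A)$ satisfies a uniform energy bound, so Gromov compactness \Cref{compactnessclosed} yields a subsequence converging to a nodal curve --- hence compactness up to the formation of spherical bubbles. The main obstacle, and the only point requiring genuine care rather than bookkeeping, is the a priori confinement step: one must be certain that no part of a sphere can sneak into the cylindrical end, i.e. that the maximum principle genuinely forbids any non-constant piece from touching $U$. This is the crux where asphericity is indispensable, since without it a sphere could legitimately bubble off inside $Z$, and it is precisely this possibility that the hypothesis excludes.
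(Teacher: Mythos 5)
Your proposal is correct and follows essentially the same route as the paper: the asphericity of $Z$ (together with the maximum principle, \Cref{max} and \Cref{leaf}, and the energy identity via \Cref{energyequalsenergy}) confines all non-constant spheres to the compact set $X\setminus U$ where $J$ varies freely, after which generic transversality and Gromov compactness (\Cref{compactnessclosed}) apply directly. If anything, your explicit appeal to the relative transversality statement \Cref{generictransversalitysft} with $V=X\setminus U$ is slightly more careful than the paper's bare citation of \Cref{transversalityclosed}, since the almost complex structures here are constrained to be cylindrical near $Z$.
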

In particular, if bubbling can be prevented (for example by considering classes $A$ such that each non-trivial partition $A=A_1+\dots+A_k$ has $A_i\cdot\omega\leq 0$) then $\M^*_0(A)$ is a compact smooth manifold of dimension $\text{vdim}\M(A)$. In general, bubbling can be controlled if $X\mi Z$ is low-dimensional ($\text{dim}X=4,\,6$), or is, more generally, a \textit{semipositive manifold} (\cite{mcsa}).
\begin{dfn}
A symplectic manifold $(M,\,\omega)$ is \textit{semipositive} if for all $A\in\pi_2(M)$, $3-n\leq c_1(A)<0$ implies $\omega\cdot A\leq 0$.
\end{dfn}
This is automatically satisfied in dimension $\leq 6$. The usefulness of this definition lies in the fact that in semipositive manifolds, and for generic complex structures, bubbling can only happen in codimension two. This means that fixing a homology class $A$, the set of nodal curves representing $A$ has codimension at least $2$ in the space of all curves representing $A$.\\
For convenience we will say that log-symplectic manifold $(X,\,Z,\,\omega)$ is \textit{semipositive} $(X\mi Z,\,\omega|_{X\mi Z})$ is semipositive.

\subsection{Obstructing certain log-symplectic manifolds}\label{mcduff}

A standard argument by McDuff (\cite{mcduffruled}) can be used to obstuct the existence of certain classes of log-symplectic manifolds, of which \Cref{A} mentioned in the Introduction is a special case. Let us summarize McDuff's argument here. We keep on assuming that $Z$ is aspherical, and that $X\mi Z$ is semipositive.\\
Consider $A\in H_2(X\mi Z)$ with $c_1(A)=2$, so that $\text{vdim}\M^*_{0,1}(A,J)=2n$, and pick a generic $J$. Consider the evaluation map $\text{ev}: \M^*_{0,1}(A,J)\longrightarrow X\mi Z$. Assuming for the moment that the evaluation $\text{ev}$ is a proper map, we can compute its degree. Since $Z$ is aspherical there is no holomorphic sphere in a neighbourhood of $Z$, thus $\text{deg}(ev)=0$.\\ 
However, in some explicit example one might be able to prove that there is a point $x\in X\mi Z$ such that there exists a unique Fredholm regular holomorphic sphere passing through that point, leading to a contradiction with the asphericity of $Z$.\\
For instance, one can prove that such holomorphic spheres exist in the following situations (see \cite{mcduffruled} for details and a more general discussion):
\begin{itemize}
    \item if there exist symplectic submanifolds with trivial normal bundle, symplectomorphic to $S^2\times V$, with $V$ a K\"ahler manifold with $\pi_2(V)\cdot\omega=0$. Here extend the product complex structure on $S^2\times V$ to the whole manifold, take $A=[S^2\times \{p\}]$ and apply \cite{mcsa}, Chapter 3.3, to show that the obvious holomorphic sphere if Fredholm regular.
    \item there is a boundary component of contact type, contactomorphic to $S^{2n-1}$ with the standard contact structure. Here realize the sphere as the boundary of a Darboux ball in $S^2\times\dots\times S^2$, and take $A=[S^2\times\{p\}]$
    \item if $\text{dim}X\leq 6$, a codimension-$2$ symplectic submanifold $S$ symplectomorphic to $\mathbb{C}P^{n-1}$, with $c_1(NS)\geq 0$. Blow-up a copy of $\C P^{n-2}$ until the normal bundle to $S$ becomes trivial, and take the proper transform of a $\C P^1\subset \C P^{2n-1}$ transverse to $\mathbb{C}P^{n-2}$.
\end{itemize}
If $\text{ev}$ is not proper, but $X \mi Z$ is semipositive, one can argue as follows. Pick a point $x$ in a neighbourhood of $Z$, and a point $x'\in X\mi Z$ (in the same component as $x$) such that there is an element in $\M^*_{0,1}(A,J)$ passing through $x$. Let $N:=ev(\overline{\M}_0(A,J)\mi ev(\M^*_0(A,J))$ be the set of points in the image of a nodal curve. This set has codimension at least $2$ by semipositivity, hence there is a path $\gamma$ with values in $(X\mi Z)\mi N$ joining $x$ and $x'$. There is a neighbourhood of $\gamma$ which does not intersect $N$, and the evaluation map is proper over this neighbourhood (no nodal curves are mapped to this neighbourhood). Now we can argue as in the compact case above.\\
As examples of applications, we point out the following corollaries.
\begin{coro}\label{obstruction2}
Let $(V,\,\sigma_{V})$ be a symplectic manifold such that $\pi_2(V)\cdot\sigma_{V}=0$, and such that $V_k$ supports a $\sigma_{V}$-compatible integrable complex structure. Let $\sigma_{S^2}$ be any symplectic form on $S^2$. Let $(X,\,Z,\,\omega)$ be a closed semipositive log-symplectic manifold, such that a component of $Z$ is symplectomorphic to a mapping torus over $(S^2\times V,\,\sigma_{S^2}\times \sigma_{V})$. Then none of the components of $Z$ are symplectically aspherical.
\end{coro}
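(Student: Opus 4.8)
The plan is to apply the McDuff-type argument that was just outlined in the preceding paragraphs, specializing it to the concrete situation where a component $Z_0$ of $Z$ is a mapping torus over the symplectic fiber $(S^2\times V,\,\sigma_{S^2}\times\sigma_V)$. The strategy is a proof by contradiction: I would assume that \emph{every} component of $Z$ is symplectically aspherical, and then construct a holomorphic sphere that violates this assumption. The key geometric input is that the fiber $F=S^2\times V$ carries an obvious family of symplectic spheres, namely $S^2\times\{p\}$, whose homology class $A_0=[S^2\times\{p\}]$ satisfies $A_0\cdot\sigma_V=0$ and has $c_1(A_0)=2$ with respect to the product complex structure (since $NS^2$ in $S^2\times\{p\}$ is trivial and $T S^2$ contributes $c_1=2$).

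First I would fix the product-type cylindrical complex structure near $Z_0$, so that on each leaf $\{\lambda\}\times F$ of the symplectic foliation (for $\lambda\neq 0$, by \Cref{leaf}) the spheres $S^2\times\{p\}$ are honestly $J$-holomorphic. By the hypothesis $\pi_2(V)\cdot\sigma_V=0$ and the integrability assumption on $V$, I can invoke the automatic-regularity result of \cite{mcsa} (Chapter 3.3, as cited in the bullet list above) to conclude that these spheres are Fredholm regular, and moreover that through a generic point $x$ in a neighbourhood of $Z$ there passes a \emph{unique} such holomorphic sphere in the class $A$ obtained from $A_0$. This is precisely the situation described in the first bullet of McDuff's list, transplanted to the leafwise geometry of the log-symplectic collar.

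Next, exactly as in the semipositive argument sketched above, I would pick this point $x$ near $Z_0$ together with a point $x'$ in the same component of $X\mi Z$, and use semipositivity to ensure that the set $N$ of images of nodal curves has codimension at least $2$, so that a path $\gamma$ from $x$ to $x'$ avoiding $N$ exists and the evaluation map $\mathrm{ev}\colon\M^*_{0,1}(A,J)\longrightarrow X\mi Z$ is proper over a neighbourhood of $\gamma$. This gives a well-defined degree for $\mathrm{ev}$. On one hand the degree is nonzero, because over $x$ there is the unique transverse sphere just produced; on the other hand, if \emph{all} components of $Z$ were symplectically aspherical, the maximum principle (\Cref{leaf} together with the nonexistence of holomorphic spheres in an aspherical $Z$) would force every holomorphic sphere to lie in $X\mi U$, so no sphere could pass through a point arbitrarily close to $Z_0$, forcing the degree to be $0$. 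This contradiction establishes the claim.

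The main obstacle I anticipate is the regularity and uniqueness step: verifying that the leafwise spheres $S^2\times\{p\}$ are Fredholm regular as curves in the full manifold $X\mi Z$ (not merely within a single leaf) and that they are the unique representatives through $x$. The subtlety is that the normal bundle of such a sphere splits as $NS^2\big|_{\text{in leaf}}\oplus(\text{normal to the leaf})$, and one must check that the linearized operator is surjective in \emph{all} normal directions, including the direction transverse to the foliation where the $s$-invariance of the cylindrical structure is in play. I expect this to follow from the product structure and the Kähler hypothesis on $V$ via the standard positivity-of-$c_1$ criterion, but it is the point where the log-symplectic collar geometry must be reconciled with the classical automatic-transversality statement, and hence where the argument requires the most care.
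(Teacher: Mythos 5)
Your machinery is the right one --- leafwise spheres $S^2\times\{p\}$ near $Z_0$ made holomorphic by a product-type cylindrical $J$, regularity via the integrability hypothesis and \cite{mcsa} Ch.~3.3 (this is exactly the content of \Cref{regF}, so the ``main obstacle'' you flag at the end is already resolved by the paper's own regularity discussion), and semipositivity to make the degree of the evaluation map well defined. However, the logical frame of your contradiction is wrong, and this is a genuine gap. You assume that \emph{all} components of $Z$ are symplectically aspherical and derive a contradiction; that establishes only ``not every component of $Z$ is aspherical'', which is strictly weaker than the statement and is in fact trivially true: the hypothesis already says that $Z_0$ is a mapping torus over $(S^2\times V,\,\sigma_{S^2}\times\sigma_V)$, and the fiber sphere $S^2\times\{p\}$ defines a class in $\pi_2(Z_0)$ of positive $\beta$-area, so $Z_0$ can never be aspherical. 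Your contradiction hypothesis is therefore inconsistent with the hypotheses of the corollary from the very start, and the holomorphic-curve theory does no real work: the contradiction you reach (``no sphere passes near $Z_0$'' versus ``the leafwise spheres pass near $Z_0$'') is just this built-in inconsistency restated.

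What is needed is a per-component argument. Fix an arbitrary component $Z_1$ of $Z$ (necessarily $Z_1\neq Z_0$, since $Z_0$ is settled as above) and assume that \emph{it} is aspherical. The spheres near $Z_0$ still exist, so $\mathrm{ev}\colon \M^*_{0,1}(A,J)\to X\mi Z$ has nonzero degree over the component $X_0$ of $X\mi Z$ whose closure contains $Z_0$; on the other hand, asphericity of $Z_1$ combined with the maximum principle (\Cref{leaf}) implies that no holomorphic sphere meets the collar $U_1$ of $Z_1$ --- any such sphere would lie in a symplectic leaf, which asphericity forbids --- so points of $U_1\cap X_0$ are missed by $\mathrm{ev}$ and the degree would have to vanish. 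The contradiction must be localized at $Z_1$, not at $Z_0$. Note also that this argument, as stated, only reaches components $Z_1$ that bound the same component $X_0$ of $X\mi Z$; since $X\mi Z$ is in general disconnected, components of $Z$ on the far side require a propagation step (the leaves of each component bounding $X_0$ acquire holomorphic spheres as limits of $A$-spheres, these give holomorphic spheres in the collar on the other side of that component, and one reruns the degree argument in the adjacent component of $X\mi Z$, working through the graph of components). Your proposal does not engage with this disconnectedness at all, even though the statement quantifies over every component of $Z$.
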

Given that any symplectic submanifold $W$ with trivial normal bundle can be used to produce a singular component diffeomorphic to $S^1\times W$ (\cite{gil}), the conclusion of the \Cref{obstruction2} holds if one assumes that $X$ contains a symplectic $(S^2\times V,\,\sigma_{S^2}\times \sigma_{V})$ with trivial normal bundle.\\
A log-symplectic manifold with boundary is a manifold $X$ with boundary $\partial X$ with a codimension-$1$ submanifold $Z$ such that $\partial X\cap Z=\emptyset$, together with a symplectic form $\omega$ on $\TZ$. 
\begin{coro}
Assume that $(X,\,Z,\,\omega)$ is compact, semipositive, and has non-empty boundary of contact type, contactomorphic to the standard $(2n-1)$-dimensional sphere. Then $Z$ cannot be symplectically aspherical.
\end{coro}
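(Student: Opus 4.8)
The plan is to argue by contradiction: assume that $Z$ \emph{is} symplectically aspherical and run the McDuff degree argument of \Cref{mcduff} in the second of the three bulleted situations, using the standard contact sphere. First I would cap off the convex contact boundary. Since $\partial X$ is contactomorphic to $(S^{2n-1},\xi_{\mathrm{std}})$, it bounds a Darboux ball $B$ inside the product $P:=S^2\times\dots\times S^2$, and $P\setminus B$ is a \emph{concave} filling of the same contact manifold. Gluing $X$ to $P\setminus B$ along the common contact boundary (matching the Liouville collars, as in \Cref{stablehamiltonianform}) produces a closed log-symplectic manifold $(\hat X,Z,\hat\omega)$ with the same singular locus $Z$ sitting in the interior of the $X$-part. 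The symplectic locus $\hat X\setminus Z$ is then the completion of a compact symplectic manifold with cosymplectic stable Hamiltonian boundary; it remains semipositive (automatically so in dimension $\le 6$), and $Z$ is still aspherical because the capping takes place away from it.

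Next I would choose a compatible cylindrical almost complex structure $J$ on $\hat X\setminus Z$ that restricts to the integrable product structure on the capping region $P\setminus B$ and is generic elsewhere. The product spheres $S^2\times\{p\}$ are then Fredholm regular by \cite{mcsa}, Ch.~3.3, while \Cref{transversalityclosed} makes all remaining simple spheres regular. Taking $A=[S^2\times\{p\}]$ gives $c_1(A)=2$, so by \Cref{regaspherical1} the moduli space $\M^*_{0,1}(A,J)$ is smooth of dimension $2n$ and the evaluation map $\mathrm{ev}\colon\M^*_{0,1}(A,J)\longrightarrow \hat X\setminus Z$ has a well-defined local degree. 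The key log-symplectic input is the maximum principle: by \Cref{max} and \Cref{leaf} every non-constant holomorphic sphere is confined to the compact region $\hat X\setminus U$, since near $Z$ it would otherwise map into a symplectic leaf, which asphericity forbids. Hence no curve in class $A$ reaches a deep point of $U$, and the local degree of $\mathrm{ev}$ over a small neighbourhood of $Z$ is $0$.

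Finally I would compute the local degree over the capping region, where the product spheres foliate $P$: through a generic point of $P\setminus B$ the unique $J$-holomorphic sphere in class $A$ is the (regular) product sphere, so there the local degree is $\pm1$. If $\mathrm{ev}$ is proper its degree is globally constant, contradicting $0\neq\pm1$; if not, semipositivity forces the image $N$ of nodal curves to have codimension $\ge 2$, so I would join a point near $Z$ to a point of the capping region by a path in $(\hat X\setminus Z)\setminus N$ over which $\mathrm{ev}$ is proper, obtaining a constant local degree along it and the same contradiction. This shows $Z$ cannot be symplectically aspherical. I expect the main obstacle to be the first step: verifying that the capping genuinely yields a \emph{closed} log-symplectic manifold with semipositive symplectic locus on which $J$ can be simultaneously cylindrical near $Z$, product near $B$, and generic in between. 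Once this geometry is arranged, the count is the standard argument of \cite{mcduffruled}, with \Cref{max} playing exactly the role the contact maximum principle plays in the symplectic setting.
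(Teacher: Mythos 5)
Your proposal is correct and follows essentially the same route the paper intends: the corollary is stated as an instance of the McDuff degree argument outlined in \Cref{mcduff}, whose second bullet is precisely your capping step (realize the standard contact sphere as the boundary of a Darboux ball in $S^2\times\dots\times S^2$, take $A=[S^2\times\{p\}]$), combined with regularity of the product spheres, the maximum principle near $Z$ forcing local degree $0$ there, and the semipositivity/path argument when $\mathrm{ev}$ is not proper. The details you fill in (concave/convex gluing, choice of $J$, the two local degree computations) are exactly the ones the paper delegates to \cite{mcduffruled}.
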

The following is also an easy corollary of McDuff's argument, applied to log-symplectic manifolds with non-aspherical singular locus.
\begin{coro}
Let $(V_k,\,\sigma_{V_k})$ be symplectic manifolds such that $\pi_2(V_k)\cdot\sigma_{V_k}=0$, and such that $V_k$ supports a $\sigma_{V_k}$-compatible integrable complex structure. Let $\sigma_{S^2}$ be any symplectic form on $S^2$. Let $Z$ be a union of symplectic mapping tori over $(S^2\times V_k,\,\sigma_{S^2}\times \sigma_{V_k})$. Then there exists no log-symplectic manifold $(X,\,Z,\,\omega)$ with non-empty boundary of contact type.
\end{coro}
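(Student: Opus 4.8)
The plan is to adapt McDuff's degree argument from \Cref{mcduff}, but with the roles reversed: here it is $Z$ that supplies an abundance of holomorphic spheres, while the contact boundary forbids them. Arguing by contradiction, suppose a log-symplectic $(X,\,Z,\,\omega)$ with non-empty contact-type boundary $\partial X$ exists. Each component $Z_k$ is a mapping torus over $(S^2\times V_k,\,\sigma_{S^2}\times\sigma_{V_k})$, so its cosymplectic foliation $\ker\alpha$ has leaves diffeomorphic to $S^2\times V_k$, each carrying the symplectic spheres $S^2\times\{p\}$. By \Cref{stablehamiltonianform} and \Cref{simplenormalform} these become the symplectic leaves $\{\lambda\}\times(\cdot)$ of the cylindrical end $X\mi Z\cong\widehat{W}$, and I would choose a compatible cylindrical $J$ restricting on them to the product structure $j_{S^2}\oplus J_{V_k}$ built from the given $\sigma_{V_k}$-compatible integrable $J_{V_k}$, so that the spheres $S^2\times\{p\}$ are $J$-holomorphic. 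Set $A:=[S^2\times\{p\}]\in H_2(X\mi Z)$.

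First I would pin down the index. Along such a sphere $TX$ splits as $TS^2$, the trivial bundle $TV_k|_p$, and the rank-two normal plane to the leaf spanned by $\partial_s$ and the Reeb field $R$; since $J\partial_s=cR$ this plane is a trivial complex line over the sphere, so $c_1(A)=\langle c_1(TS^2),[S^2]\rangle=2$ and $\text{vdim}\,\M^*_{0,1}(A,J)=2n-6+2c_1(A)+2=2n=\dim(X\mi Z)$. The product spheres are Fredholm regular because $V_k$ is K\"ahler and symplectically aspherical (\cite{mcsa}, Ch.~3.3), while asphericity of $V_k$ forces every $J$-holomorphic sphere inside a leaf to be of the form $S^2\times\{v\}$; hence exactly one such sphere passes through each point of a neighbourhood of $Z$. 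Consequently the evaluation map $\text{ev}:\M^*_{0,1}(A,J)\longrightarrow X\mi Z$ is a local diffeomorphism near $Z$, of local degree $\pm1$.

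Next I would invoke convexity at the contact boundary. The same computation as in \Cref{max}, applied to the Liouville coordinate, shows that no closed $J$-holomorphic curve enters a collar of $\partial X$, so $\text{ev}$ omits a neighbourhood of $\partial X$ and has local degree $0$ there. To convert the mismatch $\pm1\neq0$ into a genuine contradiction I would run the continuation step of \Cref{mcduff}: join a point near $Z$ lying on a leaf sphere to a point near $\partial X$ in the same component of $X\mi Z$ by a path $\gamma$ missing the image of nodal curves, which has codimension $\geq2$ by semipositivity. Over a neighbourhood of $\gamma$ the map $\text{ev}$ is proper---energies are uniformly bounded by $\omega\cdot A$ (\Cref{energyclosed}), \Cref{leaf} confines any sphere meeting the end to a single leaf at fixed $\lambda$ (so no family escapes partially toward $Z$), convexity blocks escape through $\partial X$, and bubbling is pushed off $\gamma$---so its degree is constant $\pm1$ along $\gamma$. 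Continuing the leaf sphere to the far end then produces a $J$-holomorphic sphere through a point arbitrarily close to $\partial X$, contradicting the maximum principle; hence no such $(X,\,Z,\,\omega)$ can exist.

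The hard part is precisely this properness bookkeeping behind the degree count: one must simultaneously exclude bubbling in the compact core, escape through the cylindrical neck toward $Z$, and escape through the convex boundary, and then check that the local degree $\pm1$ read off near $Z$ really propagates to the boundary side. \Cref{leaf} and the contact maximum principle dispose cleanly of the two escape modes, and bubbling is controlled by semipositivity exactly as in the aspherical corollaries of \Cref{mcduff}; the one remaining subtlety---that the contact boundary lies in a component of $X\mi Z$ reachable from the sphere side of $Z$---is handled as in \cite{mcduffruled}. The index computation and the Fredholm regularity of product spheres are then routine, given that each $V_k$ is K\"ahler and symplectically aspherical.
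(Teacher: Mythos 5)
Your proposal is correct and takes essentially the same route as the paper: holomorphic spheres of Chern number $2$ supplied by the $S^2$ factor near $Z$, McDuff's evaluation/degree argument from \Cref{mcduff} (with semipositivity controlling nodal curves) to force a sphere through points arbitrarily close to the contact boundary, and convexity there for the contradiction. The only packaging difference is that the paper spells out the step hidden inside your claim that no closed curve enters the collar: the maximum principle first traps such a sphere in the collar, where $\omega$ is exact, and then the energy identity forces it to be constant, contradicting $c_1(A)=2$.
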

\begin{proof}
Let $U$ be a neighbourhood of the singular locus, with smooth boundary. The boundary of $U$ is also a mapping torus over $S^2\times \tilde{V}_k$, for some double cover $\tilde{V}_k$. We can use the $S^2$ factor to produce a holomorphic curve with Chern number $2$. By the properties of the moduli space of spheres there must then be a holomorphic curve with values in a neighbourhood of the boundary, which is a contradiction with the contact type hypothesis. More precisely, due to a maximum principle analogous to \Cref{max} any sphere with a point mapped close to the boundary must be entirely contained in a neighbourhood of the boundary. Moreover, one can ensure that there exists a sphere mapped in a neighbourhood of the boundary where the symplectic form is exact. By the energy identity this implies that the holomorphic sphere has $0$ energy, and is therefore constant, which contradicts the fact that it has Chern number $2$.
\end{proof}
Specializing the above discussion to the $4$-dimensional case, one obtains the following statements, which provide a strengthening of \Cref{A}. Recall that every $4$-manifold is semipositive.
\begin{coro}\label{obstruction0}
If a log-symplectic $4$-manifold contains a symplectic sphere with $0$ self-intersection, then the singular locus is a union of copies of $S^1\times S^2$.
\end{coro}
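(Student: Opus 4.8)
The plan is to apply the degree argument of McDuff recalled in \Cref{mcduff}, using the sphere $S$ itself as the curve of Chern number two, and then to read off the topology of $Z$ from the maximum principle \Cref{leaf}.

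First I would put $A:=[S]\in H_2(X)$. As $S$ is an embedded symplectic (hence $J$-holomorphic, for a compatible $J$) sphere with $S\cdot S=0$, the adjunction formula gives $c_1(A)=2+S\cdot S=2$; with $n=2$ this means $\text{vdim}\,\M^*_{0,1}(A,J)=2n-6+2c_1(A)+2=4=\dim X$. Every $4$-manifold is semipositive, so by \Cref{regaspherical1} a generic cylindrical $J$ for which $S$ is holomorphic makes the simple part of the moduli space smooth and confines bubbling to a set $N$ of (real) codimension at least two. Since $A\cdot A=0$, positivity of intersections forces any two distinct simple $A$-spheres to be disjoint, so at most one passes through each point; together with the sphere $S$ this shows the evaluation map $\text{ev}\colon\M^*_{0,1}(A,J)\longrightarrow X\setminus Z$ has local degree $\pm1$ over the region swept out by the deformations of $S$.

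Next I would propagate this nonzero degree toward $Z$. Given a point $x\in U_i$ arbitrarily close to a component $Z_i$ that borders the symplectic piece containing $S$, I would join $x$ to a point of $S$ by a path in $(X\setminus Z)\setminus N$ (possible because $N$ has codimension $\geq 2$), over a tubular neighbourhood of which $\text{ev}$ is proper; since the degree is locally constant and equals $\pm1$ near $S$, some $A$-sphere meets $U_i$. By \Cref{leaf} such a sphere lies inside a single symplectic leaf $\{\lambda\}\times F_i$ with $\lambda\neq0$, so we obtain a holomorphic map $S^2\to F_i$ which is non-constant because its energy $\int_A\omega>0$ (by the energy identity \eqref{energyidentity} and \Cref{energyequalsenergy}). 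A closed surface of positive genus has trivial $\pi_2$ and hence carries no non-constant holomorphic sphere, so $F_i$ must have genus $0$, i.e. $F_i\cong S^2$. As $Z_i$ is the mapping torus of a symplectomorphism of $F_i\cong S^2$ and $\mathrm{Symp}^+(S^2)$ is connected, the monodromy is isotopic to the identity and $Z_i\cong S^1\times S^2$.

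This establishes that at least one component of $Z$ is a copy of $S^1\times S^2$, after which \Cref{B} applies and upgrades the local picture to a global $S^2$-fibration of $X$ in which $Z$ is a union of fibers; every component of $Z$ is then an $S^2$-bundle over a circle, and connectedness of $\mathrm{Symp}^+(S^2)$ again forces each to be $S^1\times S^2$, giving the corollary (this is the $4$-dimensional case of \Cref{A}). The technical heart of the argument, and the step I expect to be the main obstacle, is the properness/degree claim: one must check that no energy escapes into the cylindrical end as spheres are pushed toward $Z$ (this is precisely where the maximum principle \Cref{max} and the energy computation of \Cref{energyequalsenergy} enter), and that $\text{ev}$ is genuinely proper over the chosen neighbourhoods so that its degree is well defined and locally constant.
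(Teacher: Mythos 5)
Your main argument --- realizing $S$ as a $J$-holomorphic sphere of Chern number $2$ via adjunction, running McDuff's degree/properness argument along paths avoiding the codimension-two nodal locus $N$, and using the maximum principle \Cref{leaf} to force an $A$-sphere near a singular component $Z_i$ into a single leaf, whence the leaf is $S^2$ and $Z_i\cong S^1\times S^2$ --- is exactly the paper's argument from \Cref{mcduff}, and it is correct as far as it goes: it proves the claim for every component of $Z$ adjacent to the component $X_0$ of $X\setminus Z$ containing $S$. (Two citation-level quibbles: \Cref{regaspherical1} assumes $Z$ symplectically aspherical, so the transversality statement you actually need in this setting is \Cref{4dtrans}; and to have simultaneously ``$S$ is $J$-holomorphic'' and ``$J$ generic'' you should appeal to automatic transversality, as the paper does in the proof of \Cref{preliminary}.)

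The genuine gap is your last step. To pass from ``the components of $Z$ bordering $X_0$ are $S^1\times S^2$'' to ``\emph{all} components of $Z$ are $S^1\times S^2$'' you invoke \Cref{B}, and this fails for two reasons. First, \Cref{B} assumes $X$ is minimal, which is not a hypothesis of the corollary. Second, and worse, it is circular: the proof of \Cref{B} (through \Cref{preliminary}, \Cref{minusonecurveslog} and \Cref{logruled}) takes as input precisely \Cref{obstruction0} and \Cref{obstruction}, i.e.\ the statement you are trying to prove. The correct propagation uses only what you already have: once $Z_i\cong S^1\times S^2$, the normal form \Cref{simplenormalform} exhibits the leaves $\{\lambda\}\times\{t\}\times S^2$ on the \emph{far} side of $Z_i$ as embedded symplectic spheres with trivial normal bundle inside the adjacent component of $X\setminus Z$; that component therefore also contains a symplectic sphere of square zero, and your degree argument applies verbatim to it. Iterating across the (connected, since $X$ is connected) graph $\Gamma(X,Z)$ reaches every component of $Z$, with no minimality assumption and no appeal to \Cref{B}.
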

\begin{coro}\label{obstruction}
If one component of the singular locus of a closed $4$-dimensional log-symplectic manifold is diffeomorphic to $S^1\times S^2$, then all components are diffeomorphic to $S^1\times S^2$.
\end{coro}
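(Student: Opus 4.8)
The plan is to \emph{produce a symplectic sphere of self-intersection zero inside $X$} out of the distinguished component, and then quote \Cref{obstruction0}, whose conclusion is precisely that every component of $Z$ is diffeomorphic to $S^1\times S^2$. Since both the hypothesis ``$Z_i\cong S^1\times S^2$'' and the desired conclusion are statements about the diffeomorphism type of $Z$, and since perturbing the log-symplectic form within proper representatives (as noted at the end of \Cref{2}) changes neither $Z$ nor its components, I may freely assume that $(X,Z,\omega)$ is proper; in particular the cosymplectic form $\alpha$ on the relevant component has rational periods.

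First I would analyse the component $Z_i\cong S^1\times S^2$. It carries the cosymplectic structure $(\alpha,\beta)$ of \Cref{normalformaroundz}, and since $\alpha$ is closed, nowhere zero and of rational cohomology class, the structure result cited from \cite{tischler} presents $Z_i$ as a symplectic mapping torus over $S^1$ with fibre a closed symplectic surface $(F,\beta_F)$, the fibres being exactly the compact symplectic leaves. The mapping-torus description gives $\pi_1(Z_i)\cong\pi_1(F)\rtimes\mathbb{Z}$, with $\pi_1(F)$ the kernel of the projection $\pi_1(Z_i)\to\pi_1(S^1)=\mathbb{Z}$. As $\pi_1(S^1\times S^2)=\mathbb{Z}$ and every surjection $\mathbb{Z}\to\mathbb{Z}$ has trivial kernel, $\pi_1(F)=1$, so $F$ is a symplectic $2$-sphere. (That $[\alpha]\neq 0$ is automatic: a closed nowhere-zero $1$-form on a closed manifold cannot be exact, since $\int_{Z_i}\alpha\wedge\beta\neq 0$.)

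Next I would push this leaf off of $Z$. By the normal form \Cref{simplenormalform}, a neighbourhood of $Z_i$ is $\big((-\varepsilon,\varepsilon)\times_{\mathbb{Z}_2}\tilde Z_i,\ \tfrac{dx}{x}\wedge\alpha+\beta\big)$; on $Z_i$ itself $\omega$ is singular, so $F\subset Z$ is \emph{not} $\omega$-symplectic as a surface in $X$. However, the copy of $F$ sitting in a slice $\{x=x_0\}$ with $x_0\neq 0$ is an embedded $S^2$ in the honest symplectic locus $X\setminus Z$, on which $\omega$ restricts to $\beta_F$ and is therefore symplectic. Because $F\cong S^2$ is simply connected, the double cover $\tilde Z_i\to Z_i$ restricts trivially over $F$, so a neighbourhood of this pushed-off sphere $S$ is a genuine product and its rank-$2$ normal bundle is trivialised by $\partial_x$ together with the Reeb field $\partial_\theta$; hence $S\cdot S=0$ (equivalently, by adjunction $c_1([S])=\chi(S^2)+S\cdot S=2$, the Chern number feeding McDuff's argument). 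This is exactly the input of \Cref{obstruction0}, whose conclusion completes the proof. The main work — and the only place orientability must be tracked — is this last step: verifying that the leaf displaces into $X\setminus Z$ as a symplectic sphere with trivial normal bundle even when $Z_i$ is one-sided in $X$, and that the preliminary reduction to a proper representative leaves the diffeomorphism type of every component of $Z$ untouched.
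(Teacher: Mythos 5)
Your proposal is correct and takes essentially the same route as the paper: \Cref{obstruction} is there obtained by specializing McDuff's argument (packaged as \Cref{obstruction0}) after noting that an $S^1\times S^2$ component supplies nearby symplectic spheres with trivial normal bundle, which is exactly the push-off you construct. The details you add --- the reduction to a proper representative, the $\pi_1$ argument identifying the symplectic leaves as spheres, and the lift to the double cover in the one-sided case --- are precisely the steps the paper leaves implicit.
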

\begin{coro}
In a symplectic $4$-manifold, symplectic spheres with non-negative normal Chern number, and symplectic surfaces of positive genus with trivial normal bundle, must intersect (in particular they cannot be homologous). 
\end{coro}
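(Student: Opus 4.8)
The plan is to deduce this purely symplectic statement from the log-symplectic obstruction results, and in particular from \Cref{obstruction0}. Write $M$ for the symplectic $4$-manifold (which we take to be closed, so that the compactness machinery applies), $S$ for the symplectic sphere, and $\Sigma_g$ with $g\geq 1$ for the symplectic surface with trivial normal bundle; suppose for contradiction that $S\cap\Sigma_g=\emptyset$. The hypothesis that the normal Chern number of $S$ is non-negative means exactly $S\cdot S\geq 0$, and the adjunction formula gives $c_1([S])=S\cdot S+2\geq 2$. The idea is to convert $\Sigma_g$ into the singular locus of a log-symplectic structure while keeping $S$ inside the symplectic locus, and then to use the symplectic sphere to contradict the fact that $S^1\times\Sigma_g$ is symplectically aspherical.

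First I would reduce to the case $S\cdot S=0$. If $S\cdot S=k>0$, blow up $k$ points of $S$, all chosen in $S\setminus\Sigma_g=S$ (possible since $S$ and $\Sigma_g$ are disjoint); the proper transform $S'$ is then a symplectic sphere with $S'\cdot S'=0$, lying in a closed symplectic $4$-manifold $M'$ in which $\Sigma_g$ is untouched, still has trivial normal bundle, and remains disjoint from $S'$. Next, using the construction of Cavalcanti--Klaasse (\cite{gil}) -- a symplectic surface with trivial normal bundle yields a singular component diffeomorphic to the product of $S^1$ with that surface -- I would modify the symplectic form in a tubular neighbourhood of $\Sigma_g$ to obtain a closed log-symplectic $4$-manifold $(X,Z,\tilde\omega)$ whose singular locus contains the component $S^1\times\Sigma_g$. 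Since this modification is supported near $\Sigma_g$ and $S'$ is disjoint from $\Sigma_g$, the sphere $S'$ survives as a symplectic sphere with $S'\cdot S'=0$ inside the symplectic locus $X\setminus Z$.

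Now $(X,Z,\tilde\omega)$ is a closed log-symplectic $4$-manifold, hence semipositive, containing a symplectic sphere of zero self-intersection. By \Cref{obstruction0} every component of $Z$ must then be diffeomorphic to $S^1\times S^2$. But $Z$ contains $S^1\times\Sigma_g$ with $g\geq 1$, which is not $S^1\times S^2$; equivalently $\pi_2(S^1\times\Sigma_g)=0$, so this component is symplectically aspherical, which is incompatible with the sphere-filling produced by McDuff's argument underlying \Cref{obstruction0}. This contradiction shows $S\cap\Sigma_g\neq\emptyset$. For the parenthetical, if moreover $[S]=[\Sigma_g]$ then $S\cdot S=[S]\cdot[\Sigma_g]=[\Sigma_g]^2=0$, so $S$ already has zero self-intersection; positivity of intersections for $J$-holomorphic representatives would force $[S]\cdot[\Sigma_g]>0$ at the (now unavoidable) intersection, contradicting $[\Sigma_g]^2=0$. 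Hence the two surfaces cannot be homologous.

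The main obstacle I anticipate is the compatibility of the two modifications: one must verify that the blow-ups on $S$ and the log-symplectic surgery near $\Sigma_g$ can be carried out simultaneously and disjointly, so that at the end $S'$ is genuinely a symplectic sphere of self-intersection $0$ in $X\setminus Z$ while $Z$ genuinely acquires the component $S^1\times\Sigma_g$. This is precisely where the disjointness hypothesis $S\cap\Sigma_g=\emptyset$ is essential, and where one must appeal to the exact form of the construction in \cite{gil} to guarantee that $\tilde\omega$ is log-symplectic with the stated singular locus while leaving the sphere, and the compatible almost complex structure, unchanged away from $\Sigma_g$.
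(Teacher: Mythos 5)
Your proposal is correct and follows essentially the same route as the paper: the paper's proof is exactly the observation that, were the sphere and the surface disjoint, the modification of \cite{gil} (Theorem 5.1) would turn a neighbourhood of $\Sigma_g$ into a singular component $S^1\times\Sigma_g$ of a log-symplectic structure, contradicting \Cref{obstruction0} (whose underlying McDuff argument already contains the blow-up reduction to self-intersection $0$ that you spell out). Your explicit handling of the blow-ups, of semipositivity, and of the parenthetical homology claim via positivity of intersections just fills in details the paper leaves implicit.
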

\begin{proof}
This is because around a symplectic surface $\Sigma$ with trivial normal bundle the symplectic structure can be modified into a log-symplectic form, with singular locus $S^1\times \Sigma$ (\cite{gil}, Theorem 5.1).
\end{proof}

\begin{coro}
Let $(X,\,Z,\,\omega)$ be compact log-symplectic $4$-manifold with non-empty boundary of contact type. Then $Z$ is a union of $S^1\times S^2$.
\end{coro}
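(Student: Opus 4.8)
The plan is to reduce the statement to producing a single symplectic sphere of self-intersection zero, and then to feed it into the obstruction results already proved. Indeed, \Cref{obstruction0} turns such a sphere into the conclusion that $Z$ is a union of copies of $S^1\times S^2$, and \Cref{obstruction} promotes ``one component'' to ``all components''; so the assertion follows as soon as one square-zero sphere is found. I would also point out that, since an $S^1\times S^2$ carries symplectic sphere-leaves whereas \Cref{A} forbids any symplectic sphere inside $Z$ once $\partial X$ is of contact type, the two facts are only compatible when the union is empty: the corollary is really the statement that a contact boundary forces $Z=\emptyset$.

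First I would fix the geometry. By \Cref{stablehamiltonianform} the symplectic locus $X\setminus Z$ is the completion of a compact cobordism $W$ carrying a cosymplectic end along the double cover of each component $Z_i$, and by hypothesis a positive contact end along $\partial X$ on whose collar $\hat\omega$ is exact. Choose a generic cylindrical $J$. Two confinement mechanisms then keep spheres away from both ends: on the one hand, by \Cref{leaf} any holomorphic sphere meeting the cosymplectic end is mapped into a symplectic leaf $\{\lambda\}\times\partial W$, and these leaves are aspherical because \Cref{A} forces each $Z_i$ to have fiber genus $\geq 1$, so such a sphere is constant; on the other hand, a sphere pushed into the contact collar has zero energy by the energy identity (\Cref{energyclosed}, \Cref{energyequalsenergy}) and is therefore constant as well. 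Hence, for a class $A$ with $c_1(A)=2$, the moduli space $\M^*_{0,1}(A,J)$ stays in a fixed compact region of the symplectic locus, where \Cref{regaspherical1} gives transversality and Gromov compactness applies, and where semipositivity (automatic in dimension $4$) keeps bubbling in codimension two.

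The decisive step is to manufacture the square-zero sphere itself, and this is where I expect the real difficulty. Following McDuff, I would use the contact boundary to exhibit a point of $X\setminus Z$ through which exactly one Fredholm-regular sphere in class $A$ passes, so that $\mathrm{ev}\colon\M^*_{0,1}(A,J)\to X\setminus Z$ has nonzero degree, while the asphericity of $Z$ (via \Cref{leaf}) forces the degree to vanish over points near $Z$ — the desired contradiction yields a nonconstant sphere $S$ in class $A$, and since $c_1(A)=2$ the adjunction formula gives $S\cdot S=0$. The obstacle is that a general contact boundary, unlike the standard-sphere model recalled earlier in this section, supplies no ready-made Darboux neighbourhood foliated by spheres; one must instead produce the class $A$ and the regular curve intrinsically, presumably by neck-stretching along $\partial X$ and extracting a sphere from the resulting SFT building while controlling breaking with the energy bounds above. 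Once $S$ is in hand, \Cref{obstruction0}, \Cref{obstruction} and \Cref{A} combine to force $Z=\emptyset$.
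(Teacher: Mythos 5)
You are right to sense that the printed statement clashes with \Cref{A}, but your repair is not the one the paper intends. Since \Cref{deflogsympl} requires $Z\neq\emptyset$, reading the corollary as ``a contact boundary forces $Z=\emptyset$'' turns it into a nonexistence claim for \emph{all} compact log-symplectic $4$-manifolds with contact-type boundary, which would render the very next corollary (no standard $S^3$ boundary) vacuous. What the section's argument actually establishes is the negation of the printed conclusion, i.e.\ precisely the $4$-dimensional case of the contact clause of \Cref{A}: \emph{no} component of $Z$ is $S^1\times S^2$ (equivalently, no component of $Z$ contains a symplectic sphere). The paper's own proof is the unlabeled corollary proved immediately above (about mapping tori over $S^2\times V_k$), specialized to dimension $4$: assume for contradiction that some component of $Z$ is $S^1\times S^2$; by \Cref{obstruction} all components are; the sphere leaves of the cosymplectic foliation near $Z$ are $J$-holomorphic with $c_1=2$; McDuff's degree argument from \Cref{mcduff} then puts a sphere through every point of $X\mi Z$, in particular through points in the contact collar; the maximum principle at a contact hypersurface (the analogue of \Cref{max}) traps such a sphere in the collar, where $\omega$ is exact, so by the energy identity it is constant, contradicting $c_1=2$. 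The spheres originate at $Z$; the contradiction materializes at $\partial X$.

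Your proposal runs this machine backwards, and that is the genuine gap. The ``decisive step'' asks the contact boundary to supply a point through which a unique Fredholm-regular sphere in a class $A$ with $c_1(A)=2$ passes. No such mechanism exists for a general contact boundary: the Darboux-ball trick recalled in \Cref{mcduff} is specific to the \emph{standard} contact sphere, and neck-stretching along $\partial X$ cannot create holomorphic spheres (nor even a candidate class $A$) where none exist. Indeed a contact boundary by itself produces nothing: in any Liouville domain ($\omega$ globally exact, contact boundary) every closed holomorphic curve has zero energy by Stokes' theorem and is constant. So the step is not merely ``the real difficulty''; it is impossible in the stated generality. Two further logical defects: first, you invoke \Cref{A} to guarantee that the leaves near $Z$ have genus $\geq 1$, but that clause of \Cref{A} \emph{is} the statement under proof, so the argument is circular; second, McDuff's degree comparison is a reductio --- once $\deg(\mathrm{ev})$ is computed to be both zero and nonzero, the standing hypotheses are refuted and the proof ends; one cannot ``extract'' a nonconstant sphere $S$ from the contradiction and feed it into \Cref{obstruction0} afterwards. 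Your two confinement mechanisms (\Cref{leaf} near $Z$, exactness plus \Cref{energyclosed} and \Cref{energyequalsenergy} near $\partial X$) are exactly the right ingredients, but they must be deployed with the sphere supplied by the assumed $S^1\times S^2$ component of $Z$, not by the boundary.
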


\begin{coro}
Let $(X,\,Z,\,\omega)$ be compact log-symplectic $4$-manifold with non-empty boundary of contact type. Then $\partial X$ is not contactomorphic to the standard contact $3$-sphere.
\end{coro}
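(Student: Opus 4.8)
The cleanest route is to reduce to the structural information already obtained and then invoke the non-existence result proved above. Assume for contradiction that $\partial X$ is contactomorphic to the standard contact $3$-sphere. By the preceding corollary, the mere presence of a contact-type boundary forces the singular locus to be a disjoint union of copies of $S^1\times S^2$. Each such component carries the cosymplectic structure of a symplectic mapping torus over $(S^2,\sigma_{S^2})$, i.e. it is a mapping torus over $S^2\times V$ with $V$ a point. The non-existence corollary above (applied with every $V_k$ equal to a point) then asserts that no log-symplectic manifold with such a singular locus can have contact-type boundary, which is the desired contradiction; in particular $\partial X$ cannot be the standard $S^3$.

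It is worth spelling out the mechanism that makes this work, since it is where the hypothesis on $\partial X$ enters. Because each leaf of $Z$ is a symplectic $2$-sphere, the $S^2$-factor produces holomorphic spheres of Chern number $2$ near $Z$; for the standard contact sphere one may instead produce them directly, realising $(S^3,\xi_{\mathrm{std}})$ as the boundary of a Darboux ball inside $(S^2\times S^2,\,\sigma\oplus\sigma)$ and taking the ruling class $A=[S^2\times\{\mathrm{pt}\}]$, for which $c_1(A)=2$ and $\mathrm{vdim}\,\mathcal M_{0,1}(A,J)=4=\dim(X\setminus Z)$. Choosing a cylindrical $J$ making the obvious sphere Fredholm regular (\Cref{transversalityclosed}), one studies $\mathcal M^*_{0,1}(A,J)$ and its evaluation map and argues that some $A$-sphere must be pushed into the collar of the contact boundary. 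There a maximum principle analogous to \Cref{max} applies to the symplectization coordinate: on a closed sphere this coordinate is subharmonic, hence constant, so any sphere meeting the collar is confined to a single contact level, where $\omega$ is exact. By \Cref{energyclosed} and Stokes' theorem such a sphere has zero energy and must be constant, contradicting $c_1(A)=2$.

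The main obstacle is the global control of the family of $A$-spheres, and this is genuinely harder than in the aspherical setting: since $Z=\sqcup\,S^1\times S^2$ is \emph{not} symplectically aspherical, one cannot appeal to \Cref{regaspherical1}, and the moduli space can degenerate both by bubbling and by breaking off the $2$-sphere leaves foliating the collar of $Z$ (\Cref{leaf}). Controlling these degenerations requires semipositivity, which is automatic in dimension $4$ and confines nodal limits to codimension $2$, together with the SFT compactness theorem (\Cref{thmsftcompactness}, \Cref{compactnessclosed}) to organise the leaf-breaking in the $Z$-collar. The delicate point is to guarantee that, after accounting for these limits, an honest non-constant $A$-sphere is still forced into the exact collar of the contact boundary; this is exactly the step carried out by McDuff's evaluation-degree argument recalled above, and it is the heart of both the non-existence corollary and the present statement.
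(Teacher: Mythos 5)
Your first paragraph does not prove the statement, because it leans on the corollary immediately preceding it exactly as printed, and that statement cannot be right as printed: it asserts that a contact-type boundary forces $Z$ to be a union of copies of $S^1\times S^2$, while \Cref{A} (and the whole thrust of \Cref{mcduff}) asserts the opposite, namely that a contact-type boundary forces $Z$ to contain no symplectic sphere, hence, in dimension $4$, no $S^1\times S^2$ component; the printed corollary has a dropped negation. Your own argument exposes the problem: combining that corollary with the non-existence corollary, you derive a contradiction from the mere presence of a contact-type boundary, never using the hypothesis that $\partial X$ is the standard $S^3$. This ``proves'' that compact log-symplectic $4$-manifolds with contact-type boundary do not exist at all, which is false (such manifolds are exactly what \Cref{A} constrains, and they can be produced by the modification of the symplectic form near an interior square-zero symplectic torus cited in the proof of the intersection corollary, giving $Z\cong S^1\times T^2$), and it would make the statement you are proving vacuous. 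A blind proof that never invokes the key hypothesis is the signal that a cited ingredient has been misread.

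The paper's intended argument uses the standard-sphere hypothesis in an essential first step, and your second paragraph, while listing the right ingredients, wires them together incorrectly. The correct chain is: (i) if $\partial X$ were the standard contact $S^3$, cap it off with the complement of a Darboux ball in $S^2\times S^2$ and run McDuff's evaluation-degree argument for the ruling class $A$ with $c_1(A)=2$ in the \emph{capped} manifold; through points of the cap there is a unique Fredholm-regular $A$-sphere, whereas asphericity of $Z$ would force $\deg(\mathrm{ev})=0$ because no sphere may enter the collar of $Z$ (\Cref{leaf}); hence $Z$ is not symplectically aspherical, so some component of $Z$ is $S^1\times S^2$ and, by \Cref{obstruction}, all components are; (ii) then the non-existence corollary, applied to $X$ itself with its contact boundary intact, yields the contradiction: the $S^2$-leaves of $Z$ supply holomorphic spheres of Chern number $2$ inside $X$, the evaluation-degree argument pushes such a sphere into a collar of $\partial X$ where $\omega$ is exact, and the maximum principle together with the energy identity forces it to be constant. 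Your version instead has the Darboux-ball spheres themselves ``pushed into the collar of the contact boundary'', which is incoherent: those spheres exist only after the standard $S^3$ has been capped off, i.e.\ in a manifold in which that contact boundary is no longer present. The Darboux-ball spheres can only contradict asphericity of $Z$; only the leaf spheres of $Z$ can contradict the contact boundary. Keeping the two mechanisms separate, in the order above, is what makes the hypothesis on $\partial X$ enter the proof at all.
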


\subsection{Compactness of the moduli space}

Let us now consider an arbitrary \textbf{proper} log-symplectic manifold. In order to compactify the moduli space of curves one needs to take care not only of bubbling, but also of the non-compactness caused by the presence of the singular locus, as shown in the following simple example.
\begin{eg}
Consider $(X,\,Z)=(\R\times S^1\times \Sigma,\,\{0\}\times S^1\times \Sigma)$, with coordinates $(x,t,z)$, and complex structure $J(x\partial_x)=\partial_t$, $J|_{T\Sigma}=j_\Sigma$ on $\TZ$. The inclusion $u_{(x,t)}$ of $\Sigma$ in $X$ as $\{(x,t)\}\times\Sigma$ is a log-holomorphic map for all $x\neq 0$. Letting $x\rightarrow 0$, one finds the map $u_{(0,t)}$, the inclusion of $\Sigma$ in $X$ as $\{(0,t)\}\times\Sigma$. This is not a map of pairs, as in \Cref{logmaps}. 
\end{eg}
This example suggests that we might want to add the following set to the moduli space:
\begin{equation}\label{moduliZ}
\M_g(A,J;Z):=\Big\{[(u,j)] \,:\, 
\begin{aligned}
   & u:\Sigma_g\longrightarrow Z \text{ has values in a symplectic leaf $F$}\\
   &u \text{ is $(j,J)$-holomorphic as an $F$-valued map}
\end{aligned} \Big\}/\sim
\end{equation}
The notion of holomorphicity as an $F$-valued map is well-defined, thanks to the discussion following \Cref{propcylindricalcomplex} - a cylindrical complex structure on $\TZ$ induces a compatible leafwise complex structure on $\ker\alpha\subset TZ$.\\ 
In order to show that one can compactify the moduli space by adding (possibly nodal) holomorphic curves in $Z$, and nodal curves in $X\mi Z$, one applies a very simple idea: one just observes that a sequence of holomorphic curves has a subsequence contained in the compact symplectic manifold $X\mi U$, or has a subsequence contained in a compact neighbourhood of $Z$. In both cases Gromov's theorem gives us a convergent subsequence. In the remainder of this section we make this idea precise.
\begin{lemma}\label{compactnessZ}
$\M_g(A,J;\,\tilde{Z})$ is compact modulo nodal curves.
\end{lemma}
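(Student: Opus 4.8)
The plan is to reduce the statement to an application of Gromov's compactness theorem (\Cref{compactnessclosed}) for closed curves, applied fibrewise to the symplectic leaves of $\tilde{Z}$. First I would record two structural facts. Since the complex structure is cylindrical and adapted, the discussion following \Cref{propcylindricalcomplex} endows each symplectic leaf $F$ of $\ker\alpha$ with a $\beta$-compatible almost complex structure; thus, by the very definition (\ref{moduliZ}), any element of $\M_g(A,J;\tilde{Z})$ is an honest closed $J$-holomorphic curve in a single closed symplectic manifold $(F,\beta|_F)$. That the image lands in one leaf rather than several is automatic: for a holomorphic map into a leaf one has $u^*\alpha=0$, so the composition with the projection $p$ of (\ref{fibration}) is locally constant, hence constant on the connected $\Sigma_g$ (compare \Cref{leaf}). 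Second, I would establish a uniform energy bound: the leafwise energy identity (\ref{energyidentity}) gives $E(u)=\int_\Sigma u^*\beta$, and since $\beta$ is closed this pairing depends only on the fixed homology class $A$, so $E(u_k)\le C$ for the whole sequence.

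Next I would exploit the \textbf{proper} assumption. By Tischler's theorem \cite{tischler}, each connected component of $\tilde{Z}$ is a symplectic mapping torus, i.e.\ a fibration $\tilde{Z}\to B$ over a (compact) disjoint union of circles $B$ whose fibres are the leaves $F_t$, all symplectomorphic to a fixed closed symplectic manifold $(F,\beta_F)$ and carrying $\beta_F$-compatible almost complex structures $J_t$ depending smoothly on $t\in B$. Given a sequence $u_k\in\M_g(A,J;\tilde{Z})$, each $u_k$ has image in some leaf $F_{t_k}$; since $B$ is compact I can pass to a subsequence with $t_k\to t_\infty$. Trivialising the fibration near $t_\infty$ (for instance by flowing along a vector field transverse to the leaves), the leaves $F_{t_k}$ are identified diffeomorphically with the fixed fibre $F_{t_\infty}$, and under this identification the $u_k$ become a sequence of $J'_k$-holomorphic maps into the single target $F_{t_\infty}$, where $J'_k\to J_{t_\infty}$ in $C^\infty$.

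At this point the hypotheses of \Cref{compactnessclosed} are met: a fixed closed symplectic manifold $(F_{t_\infty},\beta_F)$, a convergent sequence of compatible almost complex structures $J'_k$, a sequence of curves of fixed genus $g$ in the class $A$, and a uniform energy bound. Gromov's theorem then furnishes a subsequence converging to a nodal $J_{t_\infty}$-holomorphic curve in $F_{t_\infty}\subset\tilde{Z}$, which is precisely a nodal element of $\M_g(A,J;\tilde{Z})$; this yields compactness modulo nodal curves. I expect the main subtlety to be the passage from the standard single-target statement of \Cref{compactnessclosed} to this parametrised situation: one must verify that the chosen trivialisation of the mapping torus genuinely converts the moving leaves and the moving leafwise complex structures into a convergent sequence $J'_k\to J_{t_\infty}$ on one fixed manifold, so that Gromov compactness applies verbatim and the limit indeed lies in the single leaf $F_{t_\infty}$. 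The remaining ingredients—the uniform energy bound and the confinement of each curve to one leaf—are straightforward, and the disconnected case follows by treating each component of $B$ separately.
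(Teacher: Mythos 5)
Your proposal is correct and follows essentially the same route as the paper: the paper's (one-line) proof likewise invokes the properness assumption to view curves as lying in compact fibers $\tilde{F}_t$ of the mapping-torus fibration and applies Gromov compactness (\Cref{compactnessclosed}) to a fixed compact fiber with varying compatible almost complex structures. Your write-up merely makes explicit the steps the paper leaves implicit (confinement to a single leaf, the uniform energy bound from the class $A$, and the trivialization converting moving leaves into a convergent sequence $J'_k \to J_{t_\infty}$ on one target).
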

\begin{proof}
In the proper case, this is an immediate consequence of \Cref{compactnessclosed}, applied to a (compact) fiber $\tilde{F}$ with varying complex structure $\tilde{F}_t$. 
\end{proof}
\begin{rk}
Of course one does not need to fix the homology class $A$: it is enough to uniformly bound the energy. 
\end{rk}

\begin{thm}\label{compactnessclosedlog}
Let $(X,\,Z,\,\omega)$ be a closed log-symplectic manifold. $\M_g(A,J)\sqcup \M_g(A,J;Z)$ is compact modulo nodal curves
\end{thm}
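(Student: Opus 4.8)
The plan is to reduce the compactness statement to the two compactness theorems already available: Gromov compactness for closed curves in the compact symplectic manifold $X \mi U$ (\Cref{compactnessclosed}), and the leafwise compactness in $Z$ established in \Cref{compactnessZ}. The key structural input is the maximum principle \Cref{max} together with its consequence \Cref{leaf}: any closed holomorphic sphere (or surface) that meets the cylindrical end $U$ must in fact lie entirely in a single level set $\{\lambda\} \times \partial W$, and moreover be contained in a symplectic leaf of the foliation $\ker\alpha$. This dichotomy is what allows us to separate the two sources of limiting behaviour.

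First I would take a sequence $u_k$ of $J$-holomorphic curves representing the class $A$, so that the energy is uniformly bounded by the energy identity (\Cref{energyclosed}, \Cref{energyequalsenergy}). I would then split into cases according to how the images $u_k(\Sigma)$ sit relative to the collar. The harmonicity of $p_1 \circ u_k$ from \Cref{max} forbids an interior maximum or minimum of the $\lambda$-coordinate, so for each $k$ the curve either is contained in the compact piece $X \mi U$, or it dips arbitrarily deep into the collar. More precisely, for a fixed small $\lambda_0$, either infinitely many $u_k$ have image in $X \mi U_{\lambda_0}$ (the region where $\lambda \ge \lambda_0$), or infinitely many $u_k$ attain $\lambda$-values below $\lambda_0$. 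Passing to a subsequence, I reduce to one of these two regimes.

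In the first regime, all curves in the subsequence take values in the fixed compact symplectic manifold $X \mi U_{\lambda_0}$, on which $J$ is an honest $\omega$-tame almost complex structure; Gromov's theorem \Cref{compactnessclosed} produces a convergent subsequence with a nodal limit in $\M_g(A,J)$. In the second regime, I use the maximum principle more sharply: once the image descends below $\lambda_0$, \Cref{leaf} forces the \emph{entire} curve into a single level $\{\lambda_k\} \times \partial W$, and in fact into a symplectic leaf there. Since the levels are all symplectomorphic (the structure is $s$-invariant on the cylindrical end) and $\partial W$ is a compact cover of $Z$, I can regard these as curves with uniformly bounded energy into a compact family of symplectic leaves with varying complex structure $J_{\lambda_k}$. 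As $\lambda_k \to 0$ the complex structures $J_{\lambda_k}$ converge to the leafwise complex structure on $\ker\alpha \subset TZ$ induced by \Cref{propcylindricalcomplex}, so after descending along $\tilde{Z} \to Z$ the limit lies in $\M_g(A,J;\tilde{Z})$, which is compact modulo nodal curves by \Cref{compactnessZ}.

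The main obstacle I anticipate is the borderline behaviour where $\lambda_k \to 0$ but the curves are not yet confined to a single leaf for small finite $k$, i.e.\ justifying the transition from ``level-set curves at depth $\lambda_k$'' to a genuine limit curve in $Z$. The subtlety is that the target is drifting toward the singular locus as $k \to \infty$, so I must ensure that the leafwise complex structures $J_{\lambda_k}$ converge in $C^\infty$ to the induced structure on $\ker\alpha$ and that no energy escapes into the $s$-direction — here the $s$-invariance of $J$ on the end and the fact that the curves have zero $d\log\lambda$-component (a consequence of lying in a single level) are exactly what prevent such escape. Once this convergence of targets is in place, applying \Cref{compactnessZ} to the limiting leafwise problem gives a nodal limit in $\M_g(A,J;Z)$, and combining the two regimes yields compactness of the disjoint union $\M_g(A,J) \sqcup \M_g(A,J;Z)$ modulo nodal curves.
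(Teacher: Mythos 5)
Your proposal is correct and follows essentially the same route as the paper's proof: use the maximum principle (\Cref{max}, \Cref{leaf}) to split the sequence into curves confined to the compact symplectic piece, where Gromov compactness (\Cref{compactnessclosed}) applies, and curves confined to single level sets of the collar, which after lifting to $(-\varepsilon,\varepsilon)\times\tilde{Z}$ become pairs $(\lambda_k,\,v_k)$ handled by \Cref{compactnessZ} and then projected back through the $\Z_2$-quotient. Note only that your worry about ``varying complex structures $J_{\lambda_k}$'' is vacuous: since $J$ is cylindrical, hence $s$-invariant on the end, the leafwise complex structure is literally the same on every level, which is exactly how the paper treats the second regime.
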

\begin{proof}
Take a sequence $u_k$ of elements in $\M_g(A,J)\sqcup \M_g(A,J;Z)$. If there is a subsequence with values in $\M_g(A,J;Z)$, the statement is a consequence of \Cref{compactnessZ}, hence we can assume that $u_k\in \M_g(A,J)$. Let $\tilde{U}\cong (-\varepsilon,\,\varepsilon)\times \tilde{Z}$. Let $U:=\tilde{U}/\Z_2$. A sequence of holomorphic curves $u_k$ satisfies one of the following two:
\begin{itemize}
    \item it has a subsequence contained in $X\mi U$
    \item it has a subsequence contained in $\overline{U}\mi Z$
\end{itemize}
In the first case, there is a subsequence converging to a nodal curve in $X\mi U$, by the standard compactness result. In the second case, there are non-zero real numbers $\lambda_k$, and holomorphic maps $v_k:\Sigma\longrightarrow\tilde{Z}$, such that $u_k=(\lambda_k,\,v_k)$. Up to a subsequence, this converges in $\tilde{U}=(-\varepsilon,\,\varepsilon)\times \tilde{Z}$ to $u_\infty=(\lambda_\infty,\,v_\infty)$, with $\lambda_\infty$ a possibly zero real number, and $v_\infty$ a nodal holomorphic curve in $\tilde{Z}$. Denoting the projection to the quotient with $q:\tilde{U}\longrightarrow U$, we find that $q\circ u_\infty$ is the limit (in $U\subset X$) of $u_k=q\circ u_k$ (up to subsequence).
\end{proof}
The proof shows that we do not need to add the whole of $\M_g(A,J;\,Z)$ in order to compactify $\M_g(A,J)$. It is enough to take the union of $\M_g(A,J)$ the following set:
\begin{equation}\label{modulinearZ}
    \mathcal{N}(A,J;U):=\{q\circ u \,:\,u:\Sigma\longrightarrow (-\varepsilon,\,\varepsilon)\times\tilde{Z} \}/\sim
\end{equation}
It is readily seen, using \Cref{leaf}, that
\begin{equation}\label{isomodulinearZ}
    \mathcal{N}(A,J;U)=(-\varepsilon,\,\varepsilon)\times_{\Z_2}\M_g(A,J;\,\tilde{Z})
\end{equation}
where the action of $\Z_2$ on $\tilde{Z}$ is by post-composition with the involution $\sigma$ (recall that we chose a $\sigma$-equivariant $J$).
\begin{dfn}\label{defpartialcompactification}
Denote with $\mathcal{N}_g(A,J)$ the partial compactification of the moduli space of curves of genus $g$, obtained by adding to $\mathcal{M}_g(A,J)$ the maps of the form $q\circ u:\Sigma_g\longrightarrow Z$, with $u\in \mathcal{M}_g(A,J;\tilde{Z})$. The space $\mathcal{N}(A,J;U)$ is the set of all elements in $\mathcal{N}_g(A,J)$ with values in $U$.
\end{dfn}

\subsection{Transversality for closed holomorphic curves}

We would like to prove that the moduli space is a smooth manifold. We consider here the partial compactification $\mathcal{N}_g(A,J)$, as in \Cref{defpartialcompactification}, and prove that that is smooth, under some assumption. In particular, we prove smoothness for (the partial compactification of) the moduli space of genus $0$ maps in $4$-dimensional log-symplectic manifolds.\\
By the maximum principle (\Cref{leaf}), one has
\begin{equation}
\NN_g(A,J)=\NN_g(A,J;U)\sqcup \M_g(A,J;X\mi U)
\end{equation}
Since regularity can be expected for somewhere injective curves, let us introduce a notation. We denote by
\[
\NN^*_g(A,J):=\NN^*_g(A,J;U)\sqcup \M^*_g(A,J;X\mi U)
\]
where $\M^*_g(A,J;X\mi U)$ is the subset of somewhere injective curves, and $\NN^*_g(A,J;U)$ is the set of curves of the form $q\circ u$ with $u$ somewhere injective.\\
We know from the general theory that the curves in $\M^*_g(A,J;X\mi U)$ are Fredholm regular for a generic choice of almost complex structure (\Cref{transversalityclosed}). The regularity of curves in a neighbourhood of the singular locus is more delicate: we observe in the following example that we cannot hope for Fredholm regularity for positive-genus holomorphic curves.
\begin{eg}
Consider $\R\times S^1\times \Sigma_g$, and the simple map $u:\Sigma_g\longrightarrow \R\times S^1\times \Sigma_g$. This is holomorphic for appropriate choices of cylindrical complex structures. The pull-back of the tangent bundle is $\underline{\C}\times T\Sigma_g$. The $\overline{\partial}$ operator splits, because of integrability of the complex structure. This operator has always a non-trivial cokernel by the Riemann-Roch formula, unless $g=0$.
\end{eg}
So let us content ourselves to consider the genus $0$ case. The first step is to show that in some cases one can deduce the Fredholm regularity from the symplectic leaves. Assume we are in a neighbourhood $U$ of the singular locus $Z$, and $u:S^2\longrightarrow U$ is of the form $u=(\lambda_0,\,[t_0,\,v])$ with $v:S^2\longrightarrow \tilde{F}_{t_0}$. Then $u^*TX=\underline{\C}\oplus v^*T\tilde{F}$. 
\begin{coro}\label{regF}
If the complex structure on $\tilde{F}_{t_0}$ is integrable, $u$ is regular if and only if $v$ is regular.
\end{coro}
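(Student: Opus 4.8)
The plan is to exploit the given splitting $u^*TX=\underline{\C}\oplus v^*T\tilde{F}$ and to show that, under the integrability hypothesis, the linearized Cauchy--Riemann operator $D_u$ at $u$ respects this decomposition, so that its cokernel splits as a direct sum of two pieces, one of which always vanishes. First I would recall (\cite{mcsa}, Chapter 3) that the linearization $D_u$ of $\overline{\partial}_J$ decomposes into a complex-linear Cauchy--Riemann operator plus a complex-antilinear zeroth-order term governed by the Nijenhuis tensor of $J$; when $J$ is integrable along the image of $u$ this antilinear part vanishes, and $D_u$ becomes the Dolbeault operator $\overline{\partial}$ of $u^*TX$ equipped with its induced holomorphic structure. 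Here the image of $u$ lies in a single leaf $\tilde{F}_{t_0}$ at fixed $\lambda_0$, and the cylindrical complex structure is $s$-invariant with $J\partial_s=cR$, so the only genuinely curved contribution to the Nijenhuis tensor along $u$ comes from the leafwise structure; this is why integrability of the complex structure on $\tilde{F}_{t_0}$ is the relevant hypothesis.

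Next I would argue that the decomposition $u^*TX=\underline{\C}\oplus v^*T\tilde{F}$ is a holomorphic splitting. The summand $\underline{\C}$ is spanned by $\xi$ and $J\xi=cR$, which are everywhere defined and nonvanishing along $u$ (recall $u$ has constant $\lambda_0\neq 0$), so these two sections trivialize $\underline{\C}$, which is therefore the trivial complex line bundle of degree $0$; it is $J$-invariant by construction, and $v^*T\tilde{F}$ is its $\omega$-orthogonal (hence $J$-invariant) complement, as explained after \Cref{propcylindricalcomplex}. Since both summands are $J$-invariant and $J$ is integrable along $u$, the Dolbeault operator splits as $D_u=D_0\oplus D_v$, where $D_0$ is a Cauchy--Riemann operator on $\underline{\C}$ and $D_v$ is the Dolbeault operator on $v^*T\tilde{F}$. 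By the integrability of the leafwise structure, $D_v$ is precisely the linearization of the leafwise $\overline{\partial}$ at $v$, so $\operatorname{coker}D_v=0$ exactly when $v$ is regular as a curve in $\tilde{F}_{t_0}$.

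It then remains to observe that $D_0$ is always surjective. Because $J$ is $s$-invariant and the sections $\xi,\,cR$ trivialize $\underline{\C}$, the operator $D_0$ carries no zeroth-order term and is the standard $\overline{\partial}$ on the trivial bundle $S^2\times\C$; its cokernel is $H^1(S^2,\mathcal{O})=0$ by Riemann--Roch (equivalently Serre duality). Consequently $\operatorname{coker}D_u=\operatorname{coker}D_0\oplus\operatorname{coker}D_v=\operatorname{coker}D_v$, so $D_u$ is surjective if and only if $D_v$ is. By the definition of Fredholm regularity this says that $u$ is regular if and only if $v$ is regular, which is the claim.

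The step I expect to be the main obstacle is the second one: verifying carefully that the splitting $u^*TX=\underline{\C}\oplus v^*T\tilde{F}$ is genuinely holomorphic and that integrability of the leafwise structure alone forces the off-diagonal and complex-antilinear terms of $D_u$ to vanish along $u$. This requires unwinding the normal form for the cylindrical complex structure near $Z$ and checking that the Nijenhuis tensor vanishes whenever one of its arguments is tangent to the leaf; the $s$-invariance and the identity $J\partial_s=cR$ are exactly what make the normal directions behave like a product, but the bookkeeping with the covariant derivatives of $J$ is the delicate part of the argument.
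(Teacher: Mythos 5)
Your proposal is correct and is essentially the paper's own argument: the paper proves this corollary with a one-line citation to \cite{mcsa}, Chapter 3.3, which is precisely the reasoning you spell out (for integrable $J$ the linearization $D_u$ becomes the Dolbeault operator, the splitting $u^*TX=\underline{\C}\oplus v^*T\tilde{F}$ is a holomorphic splitting, and $H^1(S^2,\mathcal{O})=0$ disposes of the trivial summand, so $\operatorname{coker}D_u=\operatorname{coker}D_v$). The delicate point you flag at the end --- verifying that the $s$-invariance and $J\partial_s=cR$ of the cylindrical structure really force the off-diagonal and antilinear terms of $D_u$ to vanish along $u$, despite possible $t$-dependence of the leafwise structure --- is exactly what the paper leaves implicit in that citation, so if anything you have been more careful than the source.
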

\begin{proof}
This follows immediately from \cite{mcsa} Chapter 3.3.
\end{proof}
This is enough to prove that all simple holomorphic spheres are generically regular in dimension $4$. Indeed, if $Z$ has a component $Z_0$ for which the symplectic fibers are not spheres, then we know that there are no holomorphic spheres with values in the correponding component $U_0$ of $U$. Hence can restrict to the case where $Z$ is a collection of $S^1\times S^2$'s. By the formula (\ref{isomodulinearZ}) we only need to prove that 
\begin{itemize}
\item $\M^*(A,J;\tilde{Z})$ is smooth
\item the action of $\Z_2$ on $(-\varepsilon,\,\varepsilon)\times\M_g(A,J;\,\tilde{Z})$ is free
\end{itemize}
The first item follows from generic transversality, and \Cref{regF}. Actually, what is true is that the only somewhere injective maps of a sphere into a sphere are the biholomorphisms, which all represent the same homology class $A$. For that homology class, $\M^*(A,J;\tilde{Z})=S^1$. Let us turn the second item into a lemma.
\begin{lemma}\label{freeaction}
$\Z_2$ acts freely on $(-\varepsilon,\,\varepsilon)\times\M_g(A,J;\,\tilde{Z})$.
\end{lemma}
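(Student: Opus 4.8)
The claim is that $\Z_2$ acts freely on $(-\varepsilon,\,\varepsilon)\times\M_g(A,J;\,\tilde{Z})$. Recall that $\tilde{Z}$ is the double cover of $Z$ appearing in the normal form (Corollary~\ref{simplenormalform}), and the $\Z_2$-action combines the nontrivial deck transformation $\sigma$ on $\tilde{Z}$ with multiplication by $-1$ on the $(-\varepsilon,\,\varepsilon)$ factor (formula~(\ref{action})). So the action on a pair $(\lambda,\,[u])$ sends it to $(-\lambda,\,[\sigma\circ u])$.

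**Planning the proof.** Let me think about where freeness can fail. A nonidentity group element of $\Z_2$ fixes $(\lambda,\,[u])$ exactly when $-\lambda=\lambda$ and $[\sigma\circ u]=[u]$, i.e.\ when $\lambda=0$ and $\sigma\circ u$ is equivalent to $u$ as holomorphic spheres. The point $\lambda=0$ is the interesting locus, since away from it freeness is immediate from $-\lambda\neq\lambda$. So the plan is: first dispose of the easy case $\lambda\neq 0$ by the sign change on the $\R$-factor, then concentrate all the work on showing that at $\lambda=0$ no simple holomorphic sphere $u$ can satisfy $\sigma\circ u\sim u$. First I would reduce to the case relevant in dimension $4$, where by the earlier discussion $Z$ (hence each leaf $\tilde{F}$) is $S^2$, and the only somewhere-injective spheres into an $S^2$ are biholomorphisms, all representing the single relevant class $A$ with $\M^*(A,J;\tilde{Z})\cong S^1$.

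**The key geometric input.** The crucial observation is that $\sigma$ acts on $\tilde{Z}$ as a \emph{free} involution (it is a deck transformation of a genuine double cover, so it has no fixed points), and it interchanges the two sheets over each point of $Z$. If $\sigma\circ u$ were equivalent to $u$, there would be a biholomorphism $\phi$ of the domain $S^2$ with $\sigma\circ u=u\circ\phi$. Then for every $z$ the points $u(z)$ and $u(\phi(z))$ lie in the same $\sigma$-orbit; since $u$ is an embedding onto a leaf and $\sigma$ has no fixed points, $\phi$ can have no fixed point either, so $\phi$ is a fixed-point-free holomorphic involution of $S^2$. But such a map would realize $u(S^2)$ as a quotient carrying a free $\Z_2$-action matching $\sigma$, which contradicts the fact that $\sigma$ exchanges the two disjoint sheets of the cover over the leaf: $u(S^2)$ and $\sigma(u(S^2))$ are distinct leaves lying over the same leaf downstairs, hence disjoint, so $\sigma\circ u$ and $u$ have disjoint images and cannot be reparametrizations of one another. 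This is the main obstacle to make watertight, and I expect the cleanest route is to argue directly that $\sigma$ preserves the leafwise structure while swapping the two components of the fiber, so that $\sigma\circ u$ lands in the opposite sheet from $u$ and the two images are disjoint, immediately ruling out equivalence.

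**Assembling.** Putting the pieces together: if $(\lambda,\,[u])$ is fixed by the nontrivial element, then $\lambda=0$ and $[\sigma\circ u]=[u]$; but the images of $u$ and $\sigma\circ u$ are disjoint leaves, so they cannot be equivalent, a contradiction. Hence the stabilizer of every point is trivial and the action is free. The only subtlety to handle carefully is confirming that $\sigma$ genuinely exchanges sheets (rather than preserving each sheet), which follows from the construction of $\tilde{Z}$ as the unit-sphere bundle of $NZ$ with $\sigma$ equal to multiplication by $-1$ on the fibers, so that $\sigma$ indeed has no fixed points and swaps the two points of each $0$-sphere fiber. I would state this explicitly before drawing the disjointness conclusion.
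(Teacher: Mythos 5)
Your reduction to the locus $\lambda=0$, and the derivation that a fixed point would produce a biholomorphism $\phi$ of $S^2$ with $\sigma\circ u = u\circ\phi$, with $\phi$ fixed-point free (since $u$ is an embedding onto a leaf and $\sigma$ is free), are correct and parallel the paper's setup. The gap is precisely in the step you yourself flagged as needing to be made watertight: the claim that $u(S^2)$ and $\sigma(u(S^2))$ are two disjoint sheets over a common leaf of $Z$, hence disjoint. Your justification --- that $\sigma$ is a free deck transformation, swapping the two points of each fiber of $\tilde{Z}\to Z$ --- does not give this. Freeness of $\sigma$ on points is perfectly compatible with $\sigma$ preserving a leaf $\{t\}\times S^2$ set-wise while moving every point of it; in that case the preimage of the corresponding leaf of $Z$ is a single connected leaf double-covering it (the leaf downstairs would be $\R P^2$), and there are no ``two sheets'' to swap. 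Ruling out this scenario is exactly the nontrivial content of the lemma, and it cannot be done by covering-space formalities alone: one must use that $\sigma$ preserves the leafwise symplectic form $\beta$. This is what the paper's proof does: it reduces to showing the induced involution on the leaf space $S^1$ is fixed-point free, because if $\sigma$ fixed a leaf, its restriction $\tau$ would be a symplectomorphism of $(S^2,\beta)$, hence Hamiltonian, hence with fixed points --- contradicting freeness of $\sigma$. (Equivalently: a free involution of $S^2$ must reverse orientation, by Lefschetz, so it cannot preserve an area form; or: symplectic leaves are orientable, so no leaf of $Z$ can be $\R P^2$.) Your proposal never invokes the invariance of $\beta$ or any orientation argument, so as written it does not go through.

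The irony is that you were one sentence away from a complete proof by a genuinely different route: you had already established that $\phi$ is a fixed-point-free biholomorphism of $S^2$, and no such map exists --- every biholomorphism of $(S^2,j)\cong \C P^1$ is a M\"obius transformation and has a fixed point (its Lefschetz number is $2$). Concluding there would have closed the argument with no claim about sheets or disjointness at all, finding the contradictory fixed point on the domain, whereas the paper finds it on the target (on a preserved leaf, via the Hamiltonian property). Either contradiction is fine; but the one your proposal actually relies on is the unjustified disjointness claim, which is a genuine gap.
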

\begin{proof}
Assume $(-1)\cdot(\lambda,\,v)=(\lambda,\,v)$. Then there exists a biholomorphism $\varphi:S^2\longrightarrow S^2$ such that
\[
(\lambda,\,v)=(-\lambda,\,\sigma\circ v\circ \varphi)
\]
or equivalently
\[
\sigma \circ v= v\circ \varphi
\]
It is enough to show that $\sigma$ induces a fixed-point free map on the leaf space ($S^1$) of $\tilde{Z}=S^1\times S^2$. That has to be the case because if there exists a $t\in S^1$ such that $\sigma(t,\,z)=(t,\,\tau(z))$, for a symplectomorphism $\tau$. $\tau$ is Hamiltonian, hence has fixed points. Since $\sigma$ needs to be free, that's a contradiction.
\end{proof}
\begin{coro}\label{localmodelN}
$\NN^*(A,J;U)$ is smooth and diffeomorphic to $(-\varepsilon,\,\varepsilon)\times_{\Z_2} \M_g(A,J;\,\tilde{Z})$
\end{coro}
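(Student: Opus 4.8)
The plan is to deduce the statement directly from the identification (\ref{isomodulinearZ}) together with the two facts just established: the smoothness of the leafwise moduli space $\M^*_g(A,J;\tilde{Z})$ and the freeness of the $\Z_2$-action from \Cref{freeaction}. The strategy is thus to reduce the corollary to the elementary principle that a free action of a finite group by diffeomorphisms on a smooth manifold produces a smooth quotient, with the projection a covering map. The real content will sit in the two inputs rather than in this packaging.

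First I would restrict the identification (\ref{isomodulinearZ}) to somewhere injective curves. By \Cref{leaf}, every element of $\mathcal{N}(A,J;U)$ has the form $q\circ u$ with $u:\Sigma_g\longrightarrow (-\varepsilon,\,\varepsilon)\times\tilde{Z}$ landing in a fixed leaf, and such a $u$ is somewhere injective precisely when the underlying leaf curve is. I would then check that both the construction $u\mapsto q\circ u$ and the $\Z_2$-action $(\lambda,\,v)\mapsto(-\lambda,\,\sigma\circ v)$ preserve the somewhere injective locus: post-composition with the fixed-point-free involution $\sigma$ is a diffeomorphism of $\tilde{Z}$, hence sends simple curves to simple curves. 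This refines (\ref{isomodulinearZ}) to $\NN^*(A,J;U)=(-\varepsilon,\,\varepsilon)\times_{\Z_2}\M^*_g(A,J;\tilde{Z})$, which already supplies the asserted diffeomorphism once both sides are known to be smooth.

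Next I would assemble the smooth structure. The factor $\M^*_g(A,J;\tilde{Z})$ is smooth: for a generic cylindrical $J$ its curves are Fredholm regular by \Cref{transversalityclosed}, and \Cref{regF} transfers this regularity back to the curves in $U$, so the leafwise moduli space is a manifold (in the genus-$0$, $S^1\times S^2$ situation it is simply $S^1$). Consequently $(-\varepsilon,\,\varepsilon)\times\M^*_g(A,J;\tilde{Z})$ is a smooth manifold on which $\Z_2$ acts by diffeomorphisms; invoking \Cref{freeaction} to guarantee that this action is free, the quotient inherits a unique smooth structure for which $q$ is a two-to-one covering. I would conclude that $\NN^*(A,J;U)$ is smooth and diffeomorphic to $(-\varepsilon,\,\varepsilon)\times_{\Z_2}\M_g(A,J;\tilde{Z})$.

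The main point to watch is the role of freeness: without \Cref{freeaction} the quotient would only be an orbifold, and it is precisely the absence of $\sigma$-fixed leaves that upgrades it to a genuine smooth manifold. The other genuine ingredient, the transversality needed to know $\M^*_g(A,J;\tilde{Z})$ is smooth, is available only in the situation covered by \Cref{regF} (as the preceding example shows, it fails in positive genus), so I would keep that hypothesis in force throughout; everything beyond these two inputs is a routine application of the quotient principle above.
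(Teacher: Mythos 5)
Your proposal is correct and follows essentially the same route as the paper: the paper likewise reduces the corollary, via the identification (\ref{isomodulinearZ}), to exactly two inputs --- smoothness of $\M^*(A,J;\,\tilde{Z})$ (obtained from \Cref{regF} together with regularity of the leafwise spheres, the space being just $S^1$) and freeness of the $\Z_2$-action (\Cref{freeaction}) --- and then concludes by the standard free-finite-quotient principle. Your extra verification that the somewhere-injective locus is preserved by the action and by $u\mapsto q\circ u$ is a detail the paper leaves implicit, but it is the same argument.
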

The proof actually shows that $\NN^*(A,J;U)$ is a cylinder when $X$ is orientable and a M\"obius band when $X$ is not orientable.

\begin{thm}\label{4dtrans}
Let $(X,\,Z,\,\omega)$ be a log-symplectic manifold of dimension $4$. There is a comeager set of cylindrical complex structures $\J_\text{reg}$ such that for all $J$ in $\J_\text{reg}$ all simple holomorphic maps $u:S^2\longrightarrow X$ are Fredholm-regular. In particular the partial compactification of the moduli space of simple curves $\NN^*_0(A,J)$ is a smooth manifold of dimension $2c_1(A)-2$.
\end{thm}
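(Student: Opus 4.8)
The plan is to verify Fredholm regularity curve by curve, exploiting the dichotomy supplied by the maximum principle, and then to read off the smoothness and the dimension of $\NN^*_0(A,J)$ from the implicit function theorem together with the explicit local model near $Z$. By \Cref{leaf}, any simple holomorphic sphere $u:S^2\to X$ either has image disjoint from $U$, or meets $U$ and is then confined to a single symplectic leaf $\{\lambda\}\times\partial W$ with $\lambda\neq 0$. I would treat these two cases separately, arranging genericity for the interior curves and automatic regularity for the leafwise ones; intersecting the resulting comeager conditions over the countably many homology classes then yields a single comeager $\J_\text{reg}$ for which every simple sphere is regular.

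For the interior curves, those with image in $X\mi U$, I would invoke the standard generic transversality theorem (\Cref{transversalityclosed}, in the relative form of \Cref{generictransversalitysft}, perturbing $J$ only on the region $X\mi U$ where cylindrical structures are unconstrained). This produces a comeager set of cylindrical complex structures for which every somewhere-injective sphere in $X\mi U$ is Fredholm regular. As these are honest closed curves in the symplectic manifold $(X\mi Z,\omega)$, each regular such curve lies in a smooth local piece of the moduli space of dimension $(n-3)(2-2g)+2c_1(A)=2c_1(A)-2$ for $n=2$ and $g=0$.

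For the leafwise curves I would first discard the components of $Z$ carrying no holomorphic sphere: in dimension $4$ a symplectic leaf is a surface, and if it has positive genus then asphericity, together with the energy identity and \Cref{energyequalsenergy}, excludes any non-constant sphere over the corresponding end, so only the $S^1\times S^2$ components contribute and their leaves are $2$-spheres. A simple sphere inside such a leaf is a biholomorphism $S^2\to S^2$, hence automatically Fredholm regular, and \Cref{regF} upgrades this to regularity of the ambient curve via the splitting $u^*TX=\underline{\C}\oplus v^*T\tilde F$. The structure results \Cref{freeaction} and \Cref{localmodelN} then identify the entire near-$Z$ part $\NN^*_0(A,J;U)$---including the limiting maps into $Z$ at $\lambda=0$---with the smooth surface $(-\varepsilon,\varepsilon)\times_{\Z_2}\M_0(A,J;\tilde Z)\cong(-\varepsilon,\varepsilon)\times_{\Z_2}S^1$, a cylinder or a M\"obius band of dimension $2$; since the fiber class has $c_1(A)=2$, this equals $2c_1(A)-2$ and matches the interior piece.

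The main obstacle is exactly the transversality near $Z$: the cylindrical constraint pins $J$ down throughout $U$, so one cannot perturb it there to force surjectivity in the usual way. The content of the preceding lemmas is that in dimension $4$ this rigidity is harmless, because the leafwise curves are biholomorphisms of $S^2$, automatically regular, and \Cref{regF} transports this to the ambient linearized operator; the genus-$0$ hypothesis is indispensable, since the example preceding \Cref{regF} exhibits positive-genus curves near $Z$ whose $\overline{\partial}$-operator always has cokernel. Once every simple sphere---interior or leafwise---is known to be Fredholm regular for generic cylindrical $J$, the implicit function theorem makes $\M^*_0(A,J)$ smooth of dimension $2c_1(A)-2$, while \Cref{localmodelN} makes the added $\lambda=0$ stratum smooth of the same dimension; since these descriptions agree where they overlap, $\NN^*_0(A,J)$ is a smooth manifold of dimension $2c_1(A)-2$.
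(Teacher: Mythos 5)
Your proposal is correct and follows essentially the same route as the paper's proof: the dichotomy from \Cref{leaf} between curves in $X\mi U$ (handled by generic transversality) and leafwise curves near $Z$ (positive-genus leaves excluded by asphericity, genus-$0$ leaves handled by integrability, \Cref{regF}, and automatic regularity of the biholomorphism with Chern number $2$), concluded via \Cref{freeaction} and \Cref{localmodelN}. The only cosmetic difference is that you make explicit the intersection over countably many homology classes and the dimension match $2=2c_1(A)-2$, which the paper leaves implicit.
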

\begin{proof}
Consider a tubular neighbourhood $U\cong (-\varepsilon, \varepsilon)\times_{\Z_2} Z$.
We already know that for a generic choice of $J$ all the spheres with values in $X\mi U$ are Fredholm regular, so we only need to look at $U$. Let $F$ be the ($2$-dimensional) fiber. If $F$ has genus $g>0$, then the theorem is just a consequence of \Cref{regaspherical1}.
If $F$ has genus $g=0$, we need to study the regularity of simple curves with values in $U\mi Z$. Since all complex structures on $F$ are integrable, we can apply the regularity theorem \Cref{regF}. The only simple holomorphic map has Chern number $2$, which is bigger than $-1$. Hence it is Fredholm regular. To conclude, as a consequence of \Cref{freeaction} we obtain that $\NN^*(A,J;U)$ is smooth also at points corresponding to $Z$-valued maps.
\end{proof}

\subsection{Ruled surfaces}

We noted in \Cref{obstruction} that for a log-symplectic $4$-manifold the presence of a singular component diffeomorphic to $S^1\times S^2$ forces all the singular locus to be a union of $S^1\times S^2$. This condition turns out to be even more restrictive, as it forces the entire manifold to be a blow-up of a \textit{ruled surface}, i.e. a blow-up a $4$-manifold supporting a fibration $X\longrightarrow B$ over a surface, with symplectic fiber $S^2$. This result part of what we called \Cref{B} in the introduction, and is analogous to what McDuff proves in \cite{mcduffruled}. There it is proved that in a symplectic $4$-manifold the existence of a symplectically embedded sphere with trivial normal bundle is equivalent to the manifold being ruled (up to blow-up). The key fact is the following refined compactness theorem from \cite{wendlruled}.
\begin{thm}\label{minusonecurves}
Let $(M,\,\omega)$ be a closed symplectic $4$-manifold. Let $A$ be a homology class with $A\cdot A=0$. Let $J$ be a generic complex structure, and consider a sequence of embedded $J$-holomorphic spheres representing the class $A$. Then a convergent subsequence converges to either an embedded sphere (in the same class $A$), or to a nodal curve with two irreducible components, in classes $A_1$ and $A_2$, with self-intersection $-1$, and intersecting each other positively at exactly one point. Moreover the moduli space of such nodal curves is compact and $0$-dimensional.
\end{thm}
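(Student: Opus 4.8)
The plan is to combine Gromov's compactness theorem (\Cref{compactnessclosed}) with the intersection-theoretic constraints that are special to dimension $4$ — positivity of intersections and the adjunction formula (see \cite{mcsa}) — to pin down the possible limits. First I would record the numerical data. Since the curves in the sequence are embedded spheres in class $A$, the adjunction formula for embedded surfaces gives $c_1(A)=\chi(S^2)+A\cdot A=2+0=2$. The energy identity \Cref{energyidentity} gives the uniform bound $E(u_k)=\omega\cdot A$, so \Cref{compactnessclosed} produces a subsequence converging to a nodal curve $u_\infty$ in class $A$. If $u_\infty$ is a smooth embedded sphere we are in the first alternative, so from now on I would assume it is genuinely nodal.

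The core of the argument is to constrain the components. I would write the non-constant components of $u_\infty$ as covers of distinct simple $J$-holomorphic spheres with underlying classes $A_i$ and multiplicities $m_i$, so that $A=\sum_i m_iA_i$ and hence $2=c_1(A)=\sum_i m_ic_1(A_i)$. For generic $J$ (\Cref{transversalityclosed}) every simple sphere is regular, so its index $2c_1(A_i)-2$ is non-negative, i.e.\ $c_1(A_i)\geq 1$. Together with $m_i\geq 1$ this leaves only two distributions: a single simple class with $c_1=2$ (which, after checking $\delta=0$ via adjunction, is the embedded-sphere case), or exactly two simple classes $A_1,A_2$ each with $c_1=1$ and multiplicity one. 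The remaining possibility $A=2A_1$ with $c_1(A_1)=1$ is excluded because adjunction forces $A_1\cdot A_1=c_1(A_1)-2+2\delta_1=-1+2\delta_1$, which is odd, while $A\cdot A=4\,A_1\cdot A_1=0$ forces $A_1\cdot A_1=0$. In the two-component case, writing $\delta_i\geq 0$ for the double-point counts and using positivity of intersections together with connectedness of the image to get $A_1\cdot A_2\geq 1$, the expansion $0=A\cdot A=A_1^2+2A_1A_2+A_2^2$ and $A_i^2=-1+2\delta_i$ forces $\delta_1=\delta_2=0$ and $A_1\cdot A_2=1$. Thus both components are embedded $(-1)$-spheres meeting transversally and positively at a single point.

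Finally I would establish compactness and $0$-dimensionality of the moduli space of these nodal configurations. By adjunction a $(-1)$-sphere has $c_1=1$, hence index $0$, so for generic $J$ each $\M^*(A_i,J)$ is a smooth $0$-manifold; moreover it cannot degenerate, since any splitting of $A_i$ would again distribute $c_1(A_i)=1$ among simple spheres with $c_1\geq 1$, leaving no room for more than one non-constant component. Hence each $\M^*(A_i,J)$ is already compact, so finite. A nodal configuration is then a pair of such rigid spheres, one in each class, and positivity of intersections with $A_1\cdot A_2=1$ guarantees that any such pair meets at exactly one point. The space of configurations is therefore a closed subset of the finite product $\M^*(A_1,J)\times\M^*(A_2,J)$, hence compact and $0$-dimensional.

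The hard part will be the bookkeeping of multiply-covered and ghost components in the Gromov limit: the clean numerical argument above relies on passing to the underlying simple curves and on knowing that for generic $J$ no simple sphere of negative index can occur, which is exactly where \Cref{transversalityclosed} and the dimension-$4$ positivity/adjunction package are indispensable. Ruling out the multiplicity-two case and confirming that no additional low-energy bubbles can hide in the limit is the delicate point; everything else reduces to the intersection arithmetic.
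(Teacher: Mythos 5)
Your argument is correct and follows exactly the route the paper indicates for this result (which it cites from \cite{wendlruled} with only a sketch): Gromov compactness, then an index/virtual-dimension count using genericity of $J$ to exclude negative-index simple spheres, combined with the adjunction formula and positivity of intersections to force either an embedded $A$-sphere or two embedded $(-1)$-spheres meeting transversely once. The only detail left implicit --- ruling out ghost components and the multiply-covered case --- is handled by your parity argument and by stability of the limit, so your proposal matches the paper's intended proof in both strategy and substance.
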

The proof of this result is based on an analysis of the virtual dimension, combined with the adjunction formula (in particular the fact that the index determines whether the curve is embedded). The genericity of $J$ is needed in order to prevent negative-index holomorphic curves to appear. The exact same proof applies to sequences of holomorphic curves in $X\mi Z$ converging to a nodal curve in $X\mi Z$. Hence, we can prove a log-symplectic version of \Cref{minusonecurves} as soon as we can prevent bubbling in a neighbourhood of $Z$.\\
Let us give a name to the $4$-manifolds with singular locus containing $S^1\times S^2$.
\begin{dfn}
A log-symplectic $4$-manifold $(X,\,Z,\,\omega)$ is called \textit{genus-$0$ log-symplectic} if one component of the singular locus is diffeomorphic to $S^1\times S^2$. 
\end{dfn}
We already know (\Cref{obstruction}) that if $(X,\,Z,\,\omega)$ is genus-$0$ then all of the components of $Z$ are $S^1\times S^2$.
Moreover if $(X,\,Z,\omega)$ contains a symplectic sphere with trivial normal bundle then $X$ is genus-$0$ (\Cref{obstruction0}), and conversely if $X$ is genus $0$ then the $S^2$-factor in the singular locus is an embedded symplectic sphere with trivial normal bundle. We first show the spheres in the singular locus are in fact all homologous (modulo double covers). In particular they all appear in the same moduli space.
\begin{prop}\label{preliminary}
Let $(X,\,Z,\,\omega)$ be a closed log-symplectic $4$-manifold. Assume there exists an embedded symplectic sphere representing a homology class $A$ with zero self-intersection. Then all components of the singular locus are diffeomorphic to $S^1\times S^2$. The double cover $\tilde{Z}$ is also diffeomorphic to $S^1\times S^2$, and for each component of $\tilde{Z}$ $[\{p\}\times S^2\subset \tilde{Z}]=A$.
\end{prop}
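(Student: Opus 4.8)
The plan is to treat the three assertions in turn, the first two being essentially formal and the third carrying all the geometric content.

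For the claim that every component of $Z$ is diffeomorphic to $S^1\times S^2$, I would simply invoke \Cref{obstruction0}: the hypothesis provides an embedded symplectic sphere of vanishing self-intersection, which is exactly the input of that corollary. To identify $\tilde Z$, recall (\Cref{simplenormalform}) that $\tilde Z$ is the unit sphere bundle of $NZ$, i.e. the double cover of $Z$ classified by $w_1(NZ)\in H^1(Z;\Z_2)$. On a component $Z_i\cong S^1\times S^2$ one has $H^1(Z_i;\Z_2)\cong\Z_2$, generated by the $S^1$-factor, so there are only two cases: if $w_1=0$ the cover is $Z_i\sqcup Z_i$, and if $w_1\neq 0$ it is the connected cover associated to $2\Z\subset\Z=\pi_1(Z_i)$, again $S^1\times S^2$ with $c(z,p)=(z^2,p)$ and $\sigma(z,p)=(-z,p)$. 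In either case each component of $\tilde Z$ is $S^1\times S^2$, and $c$ restricts to a diffeomorphism from each sphere fibre $\{p\}\times S^2\subset\tilde Z$ onto a leaf $F\subset Z$; hence the image of $[\{p\}\times S^2]$ in $H_2(X)$ is precisely the class $[F]$ of a symplectic leaf, and it suffices to prove $[F]=A$ for a leaf of each component.

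For the homological statement I would fix a component $Z_i$ and set $B:=[F]$ for the class of its leaves (all leaves are isotopic, being fibres of the mapping-torus fibration, so $B$ is well defined). A leaf is an embedded $J$-holomorphic sphere with trivial normal bundle in $X$, so $B\cdot B=0$ and, by adjunction, $c_1(B)=2$. I would then choose a generic cylindrical $J$ for which both the leaves of $Z$ and the given sphere $S$ are holomorphic: viewing $S$ as lying in the symplectic locus $X\mi Z$, it is disjoint from a small tubular neighbourhood $U$, so a compatible $J$ that is cylindrical on $U$ and compatible with $S$ on $X\mi U$ exists and may be perturbed away from $S$ to realise the transversality of \Cref{4dtrans}. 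Two facts then feed the argument: first, $A\cdot B=0$, since $S\subset X\mi Z$ and $F\subset Z_i$ have disjoint images; second, by \Cref{4dtrans} the moduli space of $B$-spheres is a smooth $2$-manifold whose part near $Z_i$ is the leaf family $\NN^*(B,J;U_i)\cong(-\varepsilon,\varepsilon)\times_{\Z_2}S^1$ of \Cref{localmodelN}.

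The key step is to produce a $B$-sphere through a generic point of $S$. I would do this by showing that the evaluation map $\mathrm{ev}\colon\M_{0,1}(B,J)\to X$ is surjective: it is proper onto $X$ after the compactifications of \Cref{compactnessclosedlog} and \Cref{minusonecurves}, and near $Z_i$ it is a local diffeomorphism, since through each point of a leaf cylinder there passes exactly one transverse leaf-curve; properness together with nonempty open-and-closed image over the connected manifold $X$ then gives surjectivity. Choosing $x_0\in S$ away from the finitely many nodal fibres yields a smooth $J$-holomorphic $B$-sphere $\Sigma_0\ni x_0$. If $\Sigma_0\neq S$, positivity of intersections forces $A\cdot B=[S]\cdot[\Sigma_0]\geq 1$, contradicting $A\cdot B=0$; hence $\Sigma_0=S$ and $A=B=[F]$. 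Running this over all components shows every leaf class equals $A$, which through $c$ gives $[\{p\}\times S^2\subset\tilde Z]=A$ on each component. I expect the surjectivity step to be the main obstacle: making precise that the $B$-spheres genuinely foliate $X\mi Z$ — using positivity to keep distinct fibres disjoint, and the compactness results to control both the ends near $Z$ and the nodal degenerations — is where the real work lies, whereas the final positivity contradiction and the bundle-theoretic identifications are routine.
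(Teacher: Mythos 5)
Your proposal is correct in substance, but it runs McDuff's degree argument in the opposite direction from the paper, so the two proofs decompose differently. The paper works with the moduli space of spheres in the class $A$ of the given sphere $S$: positivity of intersection together with $A\cdot A=0$ shows there is at most one $A$-curve through each point, the curve $S$ itself pins the degree of the evaluation map at $1$, and the maximum principle (\Cref{leaf}) then forces the $A$-curve through a point of $U\setminus Z$ to lie in, hence coincide with, a leaf sphere -- that is how the paper identifies $A$ with the leaf class. You instead work with the moduli space of spheres in the leaf class $B=[F]$: the degree of the evaluation is computed near $Z$, where \Cref{localmodelN} exhibits the $B$-curves as exactly the leaf family, surjectivity then produces a $B$-sphere through a point of $S$, and the final identification $\Sigma_0=S$, i.e.\ $B=A$, is done by positivity of intersection against $A\cdot B=0$ rather than by the maximum principle. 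The toolbox is identical (\Cref{4dtrans}, \Cref{compactnessclosedlog}, semipositivity, positivity of intersection, the maximum principle), but the two ingredients swap roles: in the paper positivity computes the degree and the maximum principle identifies the limit curve; in yours the leaf family computes the degree and positivity identifies the curve. Your direction buys something real: since your compactified moduli space contains the leaf curves and so crosses $Z$, the degree is constant over the connected manifold $X$ minus a codimension-$2$ set, and a single run of the argument simultaneously shows that \emph{every} component of $Z$ carries $B$-curves near it (hence has leaf class $B$) and that $B=A$; the paper's evaluation map lives over $X\setminus Z$, whose components the degree does not automatically connect, so strictly speaking it needs an iteration over adjacent components that is left implicit. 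Your explicit identification of $\tilde Z$ (trivial versus connected double cover along the $S^1$ factor) also fills in a step the paper only asserts.

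One point needs care, and you have implicitly made the right choice of citation but not justified it. Your selection of $x_0\in S$ ``away from the finitely many nodal fibres'' requires knowing that nodal degenerations of $B$-spheres form a finite set of curves whose components are $(-1)$-spheres, so that $S$ (which has self-intersection $0$) meets their images in at most finitely many points. The paper's result to this effect, \Cref{minusonecurveslog}, is proved \emph{after} and \emph{from} \Cref{preliminary}, so invoking it here would be circular; your citation of the closed-manifold theorem \Cref{minusonecurves} together with \Cref{compactnessclosedlog} is the non-circular route, but you must say why Wendl's analysis applies on the non-compact $X\setminus Z$: for the leaf class $B$ the maximum principle confines any $B$-curve meeting $U$ to a leaf, so all nodal degenerations occur in the compact set $X\setminus U$, where the closed-manifold argument goes through verbatim. (Alternatively, one can avoid the $(-1)$-sphere structure entirely: if $S$ were contained in the nodal locus, Baire's theorem and unique continuation would force $S$ to be the image of a component of a nodal $B$-curve, and comparing Chern numbers -- $c_1(B)=2=c_1(A)$ while every non-constant simple sphere has $c_1\geq 1$ for generic $J$ -- rules this out.) With that justification supplied, your proof is complete.
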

\begin{proof}
The proof goes as in \Cref{mcduff}. We already noted that all the components of the singular locus are $S^1\times S^2$'s, and the same is true for the components of $\tilde{Z}$. Take neighbourhood of $Z$ such that $U\cong(-\varepsilon,\,\varepsilon)\times_{\Z_2}( S^1\times S^2)$; the complement of $\{0\}\times_{\Z_2}( S^1\times S^2)$ is just $(-\varepsilon,\,0)\times S^1\times S^2$. Pick a cylindrical complex structure that makes the embedded sphere $S$ into a $J$-holomorphic sphere, and such that $\{(\lambda,\,t)\}\times S^2\subset (-\varepsilon,\,0)\times S^1\times S^2$ is holomorphic. By automatic transversality we can assume that $J$ is generic. The moduli space of simple spheres in the class $A$, together with the spheres in the singular locus, is then a smooth manifold (\Cref{4dtrans}). It has dimension $2$ by the adjunction formula. Let $N$ be the set of points belonging to a reducible nodal curve in the class $A$. The evaluation map
\[
\text{ev}:\M_{0,1}(A,J)\cap \text{ev}\1((X\mi Z)\mi N )\longrightarrow (X\mi Z)\mi N
\]
is a proper map. By positivity of intersection and the fact that $A\cdot A=0$, there is at most one $A$ curve through each point. The degree of the evaluation is $1$, hence the evalutation is a bijection (it is in fact a diffeomorphism). In particular there exists an $A$-curve through each point of $U\mi Z$. By the maximum principle it has to be contained in $\{(\lambda,\,t)\}\times S^2\subset \tilde{Z}$, and hence coincide with $\{(\lambda,\,t)\}\times S^2$.
\end{proof}

\begin{thm}\label{minusonecurveslog}
Let $(X,\,Z,\,\omega)$ be a closed log-symplectic $4$-manifold. Let $A$ be a homology class with $A\cdot A=0$. Let $J$ be a generic complex structure, and consider a sequence of embedded $J$-holomorphic spheres representing the class $A$. Then a convergent subsequence converges to either an embedded sphere (in the same class $A$), or to a nodal curve with two irreducible components, in classes $A_1$ and $A_2$, with self-intersection $-1$, and intersecting each other positively at exactly one point. Moreover the moduli space of such nodal curves is compact and $0$-dimensional, and the nodal curves only appear in the symplectic locus.
\end{thm}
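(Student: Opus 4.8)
The plan is to deduce this from Wendl's compactness result (\Cref{minusonecurves}) by confining any genuine nodal degeneration to the symplectic locus, with the maximum principle doing the essential work. First I would invoke \Cref{preliminary}: since $A\cdot A=0$ and the class $A$ is represented by an embedded symplectic sphere, every component of $Z$ is diffeomorphic to $S^1\times S^2$, the double cover $\tilde{Z}$ is a union of copies of $S^1\times S^2$, and the fiber spheres $\{(\lambda,\,t)\}\times S^2$ in a neighbourhood $U$ of $Z$ all represent $A$. I then fix a generic cylindrical $J$ as in \Cref{4dtrans}, so that these fiber spheres are holomorphic and every simple holomorphic sphere is Fredholm regular.

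Next I would apply the log-symplectic compactness theorem (\Cref{compactnessclosedlog}): a subsequence of the embedded spheres $u_k$ converges, inside the partial compactification $\NN_0(A,J)$, to a nodal curve $u_\infty$ whose components lie either in $X\mi Z$ or are $Z$-valued maps into symplectic leaves. The crucial point is to show that no node and no $Z$-valued component can arise near $Z$. This is exactly where the maximum principle (\Cref{leaf}) intervenes: any holomorphic sphere meeting the cylindrical end $U$ is entirely contained in a single leaf $\{\lambda\}\times F\cong\{(\lambda,\,t)\}\times S^2$, hence is an embedded fiber sphere in class $A$ with self-intersection $0$. Therefore, if a subsequence of the $u_k$ enters $\overline{U}\mi Z$, the whole subsequence is trapped in the leaves; a degree-one sphere in $S^2$ cannot split its homology class into two non-constant holomorphic pieces, so no bubbling occurs and the limit is again a single embedded fiber sphere in class $A$ (possibly realized as a $Z$-valued map), never a broken configuration of $(-1)$-curves.

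Consequently, a nodal limit with two irreducible components can only occur for a subsequence that stays in the compact region $X\mi U$, that is, strictly inside the symplectic locus $X\mi Z$. There the dichotomy is precisely Wendl's: the index and adjunction analysis underlying \Cref{minusonecurves} applies verbatim — the genericity of $J$ excludes negative-index components — yielding either an embedded sphere in $A$, or a two-component nodal curve in classes $A_1,\,A_2$ with $A_i\cdot A_i=-1$ meeting positively at a single point. For the final assertion I would observe that such a $(-1)$-nodal curve cannot touch $U$: a $(-1)$-sphere meeting $U$ would, by the maximum principle, lie in a leaf and hence have self-intersection $0$, a contradiction. Thus the nodal curves appear only in the symplectic locus, and the compactness and $0$-dimensionality of their moduli space transfer directly from the statement of \Cref{minusonecurves} applied in $X\mi Z$.

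The main obstacle is exactly the middle step: ruling out that the log-symplectic compactification contributes spurious $Z$-valued components or near-$Z$ degenerations to the limit, which would a priori break Wendl's clean dichotomy. This is resolved entirely by the leaf-confinement forced by \Cref{leaf}, which guarantees that everything happening near $Z$ consists of embedded fiber spheres of self-intersection $0$, so that the only place a $(-1)$-curve bubble can form is the symplectic interior.
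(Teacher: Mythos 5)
Your proposal is correct and follows essentially the same route as the paper: \Cref{preliminary} to force the singular locus to consist of $S^1\times S^2$'s carrying fiber spheres in class $A$, leaf-confinement near $Z$ to rule out any bubbling or $(-1)$-components in $U$, and then Wendl's \Cref{minusonecurves} applied in the symplectic locus. The only cosmetic difference is that you argue the no-bubbling step directly from the maximum principle (\Cref{leaf}) plus a degree count in the leaves, whereas the paper cites \Cref{localmodelN}, which packages exactly that same leaf-confinement into the statement that the moduli space of curves in $U$ is $(-\varepsilon,\,\varepsilon)\times_{\Z_2}S^1$.
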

\begin{proof}
We know from the above \Cref{preliminary} that the singular locus consists of $S^1\times S^2$'s in the same class. Hence, by \ref{localmodelN}, the moduli space of curves with values in $U$ looks like $(-\varepsilon,\,\varepsilon)\times_{\Z_2} \M_0(A,J; \tilde{Z})\cong (-\varepsilon,\,\varepsilon)\times_{\Z_2} S^1$. Hence no bubbling can occur in $U$. The statement now follows from \Cref{minusonecurves}
\end{proof}
This result, together with the fact that the nodal singularities can be seen as Lefschetz singularities (\cite{wendlruled}) implies the following characterization.
\begin{prop}\label{logruled}
Let $(X,\,Z,\,\omega)$ be a closed genus-$0$ log-symplectic $4$-manifold. Then $X$ supports a Lefschetz fibration $X\longrightarrow B$ with fibers of genus $0$, so that $Z$ is a union of regular fibers. Moreover, the singular fibers have exactly one singular point, and their irreducible components are $(-1)$-curves. In particular if $X\setminus Z$ is minimal, then $X\longrightarrow B$ has no critical points.
\end{prop}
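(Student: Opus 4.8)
The plan is to run McDuff's construction of a ruling (\cite{mcduffruled}, \cite{wendlruled}) on the moduli space of spheres in the class $A=[\{p\}\times S^2]$, taking care that the part of the moduli space living near $Z$ glues correctly onto the part living in the symplectic locus $X\mi Z$.

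First I would fix a generic cylindrical complex structure $J$, adapted so that the spheres $\{(\lambda,\,t)\}\times S^2$ in a neighbourhood $U$ of $Z$ are holomorphic; by \Cref{4dtrans} the partial compactification $\NN_0(A,J)$ is then smooth, and by \Cref{minusonecurveslog} its full compactification is compact, with the only degenerations being nodal curves with two $(-1)$-sphere components meeting once — and crucially these occur only in $X\mi Z$. Since $A\cdot A=0$, positivity of intersection forces any two distinct simple curves in the moduli space to be disjoint, while the degree-$1$ computation from \Cref{preliminary} shows that exactly one curve passes through each point of $X$. This produces a singular foliation of $X$ whose leaves are the (smooth or nodal) $A$-curves, singular precisely at the nodes.

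Next I would form the leaf space $B$, i.e. the compactified moduli space $\overline{\NN}_0(A,J)$, together with the projection $f:X\longrightarrow B$ sending a point to the unique curve through it. The content of McDuff's argument is that $B$ is a closed surface and $f$ is a Lefschetz fibration: the smooth $A$-curves are the regular $S^2$-fibres, and each nodal curve is a singular fibre. Invoking \cite{wendlruled}, the nodes are genuine Lefschetz singularities (local model $(z_1,\,z_2)\mapsto z_1z_2$), and the two irreducible components of a singular fibre are $(-1)$-curves as claimed. That $Z$ is a union of \emph{regular} fibres follows from the local model near the singular locus: by \Cref{localmodelN} the curves in $U$ are exactly the slices $\{(\lambda,\,t)\}\times S^2$, which at $\lambda=0$ sweep out $Z$, and no nodal curve ever appears in $U$.

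Finally, the minimal case is immediate: the irreducible components of the singular fibres are symplectic $(-1)$-spheres contained in $X\mi Z$, so if $X\mi Z$ is minimal no such spheres can exist, forcing $f$ to have no critical points and hence to be an honest $S^2$-fibration. The hard part is not any single step but the passage from the moduli-theoretic data to an actual Lefschetz fibration — proving $B$ is a smooth closed surface and that $f$ carries the correct local models — which is exactly the McDuff--Wendl theorem; the genuinely new point is checking that the local picture near $Z$ from \Cref{localmodelN} glues compatibly onto the symplectic part, so that $Z$ appears as regular fibres and all degenerations are confined to $X\mi Z$.
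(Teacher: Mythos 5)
Your proposal is correct and follows essentially the same route as the paper: the paper's (very terse) proof likewise takes the base $B$ to be $\overline{\NN}_0(A,J)$ and obtains the fibration from the evaluation map $\text{ev}:\overline{\NN}_{0,1}(A,J)\longrightarrow X$ being a diffeomorphism, with \Cref{preliminary}, \Cref{minusonecurveslog}, \Cref{localmodelN} and \cite{wendlruled} supplying exactly the ingredients you cite. Your write-up simply makes explicit the steps the paper leaves implicit, including the confinement of nodal degenerations to $X\mi Z$ and the minimality argument.
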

\begin{proof}{(see also \cite{wendlruled})}
The statement follows from the evaluation map 
\[
\text{ev}:\overline{\NN}_{0,1}(A,J)\longrightarrow X
\]
being a diffeomorphism. The base of the Lefschetz fibration is $\overline{\NN}_0(A,J)$. 
\end{proof}
\begin{coro}
A genus-$0$ log-symplectic $4$-manifold $X$ is diffeomorphic to a blow-up of one of the manifolds in the following list:
\begin{itemize}
    \item $S^2\times \Sigma_g$
    \item $S^2\times (\#^k\R P^2)$
    \item $S^2\widehat{\times}\Sigma_g$
    \item $S^2\widehat{\times}(\#^k\R P^2)$
\end{itemize}
where by $S^2\widehat{\times}B$ we mean the unique (up to diffeomorphism) non-trivial oriented sphere bundle over $B$.
\end{coro}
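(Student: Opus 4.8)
The plan is to deduce the classification from Proposition~\ref{logruled} together with the standard classification of $S^2$-bundles over a closed surface. By Proposition~\ref{logruled}, the closed genus-$0$ log-symplectic $4$-manifold $X$ carries a Lefschetz fibration $f\colon X\to B$ over a closed connected surface $B=\overline{\NN}_0(A,J)$, with fibers of genus $0$; each singular fiber is a nodal sphere whose two irreducible components are $(-1)$-spheres meeting transversally in a single point, and $Z$ is a union of regular fibers.

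First I would reduce to the fibration without critical points by blowing down. If $C_1\cup C_2$ is a singular fiber with $C_1^2=C_2^2=-1$ and $C_1\cdot C_2=1$, then blowing down the exceptional sphere $C_1$ turns $C_2$ into an embedded sphere of self-intersection $C_2^2+(C_1\cdot C_2)^2=0$, i.e.\ a smooth fiber in the class $A$. Performing this once at each critical value yields a manifold $X_{\min}$ carrying a genuine smooth $S^2$-fibration $X_{\min}\to B$ with no singular fibers, and exhibits $X$ as a blow-up of $X_{\min}$. It therefore suffices to identify $X_{\min}$.

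Next I would classify the sphere bundle $X_{\min}\to B$. Its fibers are $J$-holomorphic spheres of class $A$, hence carry the complex (equivalently symplectic) orientation, which varies continuously over $B$; thus the structure group reduces to $\text{Diff}^+(S^2)$, homotopy equivalent to $SO(3)$ by Smale's theorem. Oriented $S^2$-bundles over $B$ are then classified by $[B,\,B\text{Diff}^+(S^2)]=[B,\,BSO(3)]$, and since $BSO(3)$ is $1$-connected with $\pi_2(BSO(3))\cong\pi_1(SO(3))\cong\Z_2$ while $B$ is a $2$-complex, obstruction theory (comparison with the second Postnikov stage $K(\Z_2,2)$) identifies this set with $H^2(B;\Z_2)$.

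Finally I would split into cases according to $B$, which is a closed connected surface, orientable exactly when $X_{\min}$ (equivalently $X$) is. If $B$ is orientable then $B\cong\Sigma_g$; otherwise $B\cong\#^k\R P^2$. In both cases $H^2(B;\Z_2)\cong\Z_2$, so there are exactly two oriented sphere bundles over $B$: the trivial $S^2\times B$ and the unique non-trivial one $S^2\widehat{\times}B$. This produces precisely the four possibilities in the list and shows that $X$ is a blow-up of one of them. The main point requiring care is the first step, namely verifying that blowing down the $(-1)$-components of all singular fibers globally converts $f$ into a smooth sphere bundle with consistently oriented fibers; once this and the $J$-orientation are in place, the bundle classification is routine, using only that $H^2(B;\Z_2)\cong\Z_2$ for every closed (orientable or non-orientable) surface.
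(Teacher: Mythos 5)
Your proposal is correct and takes essentially the approach the paper intends: the corollary is presented as an immediate consequence of \Cref{logruled}, obtained exactly as you do by blowing down one $(-1)$-component of each singular fiber to produce an honest fiberwise-oriented $S^2$-bundle $X_{\min}\longrightarrow B$, and then invoking the classification of oriented sphere bundles over a closed (possibly non-orientable) surface by $H^2(B;\,\Z_2)\cong\Z_2$. The paper leaves these steps implicit, and your write-up supplies the standard details (the proper-transform self-intersection computation, Smale's theorem $\text{Diff}^+(S^2)\simeq SO(3)$, and the obstruction-theoretic count) in a way consistent with the paper's argument.
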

In order to conclude the proof of \Cref{B}, we need to discuss uniqueness of the log-symplectic form. It is shown in \cite{wendlruled} that any two symplectic forms supported by the same Lefschetz fibration can be deformed one into the other. The proof is linear algebraic, as the non-trivial part is to interpolate with non-degenerate forms. For this reason, the same proof holds for \textit{Lie algebroid Lefschetz fibrations} (\cite{ralphgil2}).\\
The statement is the following. 
\begin{prop}
	Let $F:A_M^4\longrightarrow A_B^2$ be a Lie algebroid Lefschetz fibration, and $\omega_0$, $\omega_1$ be symplectic forms supported by the fibration and inducing the same orientation on $A_M$. Then there is a path $\omega_s$ of symplectic forms joining $\omega_0$ and $\omega_1$.
\end{prop}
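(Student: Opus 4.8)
The plan is to adapt the Thurston--Gompf inflation argument, exactly as in \cite{wendlruled}, and to observe that every step is a pointwise statement about the bundle $\bigwedge^2 A_M^*$ together with the algebroid pullback along $F$; since such statements are insensitive to whether the underlying bundle is an honest tangent bundle or a general Lie algebroid, the symplectic proof transfers verbatim. Concretely, I would first fix a base area form, i.e.\ a symplectic section $\omega_B$ of $\bigwedge^2 A_B^*$ compatible with the orientation, and set $\eta := F^*\omega_B$, a closed section of $\bigwedge^2 A_M^*$ of rank $\le 2$. The path $\omega_s$ would then be built in three concatenated segments: inflate $\omega_0$ to $\omega_0 + K\eta$, linearly interpolate $\omega_0 + K\eta \rightsquigarrow \omega_1 + K\eta$, and finally deflate $\omega_1 + K\eta$ back to $\omega_1$, for a single large constant $K$.

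The two facts making each segment symplectic are both elementary convexity statements. First, along the fibre $\ker F$ (a rank-$2$ subbundle away from the critical points) the restrictions $\omega_0|_{\ker F}$ and $\omega_1|_{\ker F}$ are positive area forms inducing the same orientation, so the convex combination $\omega_t|_{\ker F}=(1-t)\omega_0|_{\ker F}+t\,\omega_1|_{\ker F}$ is again a positive area form for every $t$. Second, near each Lefschetz critical point the definition of ``supported'' forces $\omega_0$ and $\omega_1$ to tame the standard complex structure $J_0$ of the local algebroid model; as the tame forms make up a convex set, $\omega_t$ tames $J_0$ and is in particular nondegenerate on a neighbourhood $N$ of the critical set.

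The inflation then rests on a single algebraic identity. Because $\eta$ has rank $\le 2$ one has $\eta\wedge\eta = 0$, whence
\[
(\omega_t + s\eta)\wedge(\omega_t + s\eta) = \omega_t\wedge\omega_t + 2s\,\omega_t\wedge\eta,
\]
which is \emph{affine} in $s$. The linear coefficient $\omega_t\wedge\eta$ equals the product of the fibre area and the base area wherever $\ker F$ is smooth, so by the first convexity fact it is strictly positive off $N$; near the critical set $\eta$ vanishes, giving $\omega_t\wedge\eta\ge 0$, while $\omega_t\wedge\omega_t>0$ by the second fact. A compactness argument over $[0,1]\times M$ produces uniform lower bounds $\omega_t\wedge\eta\ge b_0>0$ on $M\setminus N$ and $\omega_t\wedge\omega_t\ge\delta>0$ on $N$, together with a uniform lower bound on $\omega_t\wedge\omega_t$ over all of $M$; choosing $K$ large enough that $K b_0$ dominates the bounded negative part of $\omega_t\wedge\omega_t$ on $M\setminus N$ then makes $(\omega_t+K\eta)\wedge(\omega_t+K\eta)>0$ everywhere and for all $t$. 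The same affine identity, now with $t\in\{0,1\}$ fixed and $s$ running over $[0,K]$, shows that the two outer (de)inflation segments remain symplectic. Concatenating the three segments yields the desired path $\omega_s$.

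The one place requiring genuine care --- and the step I expect to be the main obstacle --- is the behaviour at the Lefschetz critical points, where the fibrewise convexity argument degenerates together with $\ker F$. This is handled entirely by the local normal form of a Lie algebroid Lefschetz fibration \cite{ralphgil2}: on it $F$ is algebroid-holomorphic, so $\eta=F^*\omega_B$ is a nonnegative section vanishing exactly along the critical set, and being ``supported'' reduces precisely to taming the model complex structure, which restores both convexity statements in a neighbourhood of that set. Granting this local model, no analytic input is needed and the argument is, as claimed, purely linear-algebraic and identical to the symplectic case.
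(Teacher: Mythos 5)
Your proposal is correct and coincides with the paper's own proof: the paper does not write out the argument but simply invokes the Thurston--Gompf inflation scheme of \cite{wendlruled}, observing that it is pointwise linear algebra on $\bigwedge^2 A_M^*$ and therefore transfers verbatim to Lie algebroid Lefschetz fibrations via the local normal form of \cite{ralphgil2} --- which is exactly the argument you spell out (inflate by $K\,F^*\omega_B$, interpolate linearly, deflate, using $\eta\wedge\eta=0$ and the two convexity facts). The only phrasing slip is ``near the critical set $\eta$ vanishes'': $\eta$ vanishes \emph{on} the critical set and is merely a nonnegative $(1,1)$-form with respect to the model complex structure nearby, so that $\omega_t\wedge\eta\ge 0$ there follows from taming, as your final paragraph in fact states correctly.
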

In particular, the statements applies to the fibrations constructed in \Cref{logruled}. For more details see \cite{mythesis}.

\section{Curves with boundary}\label{5}

The aim of this section is to study logarithmic holomorphic curves with boundary on the singular locus, as well as closed logarithmic holomorphic curves with non-empty singular locus. We will construct moduli spaces of the latter in a particular case, and apply our construction to the study of ruled surfaces.

\subsection{Moduli spaces of curves with boundary}

We consider here (moduli spaces of) holomorphic maps of a compact logarithmic Riemann surfaces with boundary. We know from \Cref{asympt} and \Cref{logequalpunctured} that such curves correspond to finite energy punctured holomorphic curves, as studied in SFT, and their boundary components are mapped to simple closed Reeb orbits.\\
We construct the moduli spaces using this correspondence. To be more precise, fix a tubular neighbourhood of $Z$ so that the resulting stable Hamiltonian structure is Morse-Bott, and pick a cylindrical complex structure. As in \Cref{subsectionmodulisft}, one can fix a set $\mathfrak{c}$ of asymptotic constraints, and consider the space $\M_g(A,J)^\mathfrak{c}$. Applying \Cref{generictransversalitysft} one obtains immediately the following generic transversality theorem:
\begin{thm}\label{thmtransversalitylogopen}
	Let $(X,\,Z,\,\omega)$ be log-symplectic. Fix a tubular neighbourhood $U$ of $Z$ such that the induced stable Hamiltonian structure is Morse-Bott, and pick a cylindrical complex structure $J_{\text{fix}}$ on $U$. Consider the set $\mathcal{J}_{\text{fix}}$ of compatible complex structures that coincide with $J_{\text{fix}}$ on $U$. Let $\mathfrak{c}$ be a set of asymptotic constraints. Then for a generic choice of $J\in \mathcal{J}_{\text{fix}}$ all curves in  $\M_g(A,J)^\mathfrak{c}$ are Fredholm regular, so that the moduli space is a smooth manifold of dimension $(n-3)(2-2g-\#\pi_0(\partial\Sigma))+2c_1^\tau(A)+\mu(\mathfrak{c})$.
\end{thm}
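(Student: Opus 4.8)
The plan is to reduce the statement to the Symplectic Field Theory transversality result \Cref{generictransversalitysft}, using the dictionary between log-holomorphic curves with boundary and finite-energy punctured holomorphic curves. First I would fix the symplectomorphism of \Cref{stablehamiltonianform} identifying $X\setminus Z$ with the completion $\widehat{W}$ of $W:=X\setminus U$. Under this identification the cylindrical structure $J_{\text{fix}}$ on $U$ is a cylindrical almost complex structure on the cylindrical end, and the Morse--Bott hypothesis on $U$ is precisely the Morse--Bott hypothesis required by \Cref{generictransversalitysft}. Taking the variable region to be $V:=\operatorname{int}(W)$, the complement $\widehat{W}\setminus V$ is (the completion of) $U$, so the class $\mathcal{J}_{\text{fix}}$ of the statement coincides with the perturbation class $\mathcal{J}(\omega,J_{\text{fix}})$ of \Cref{generictransversalitysft}.

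Next I would invoke \Cref{logequalpunctured}. A curve in $\M_g(A,J)^{\mathfrak{c}}$ has finite energy, since its energy depends only on $A$ and $\mathfrak{c}$ and agrees with the SFT energy by \Cref{energyequalsenergy}; hence its restriction to $\dot{\Sigma}$ is a genuine finite-energy punctured $J$-holomorphic curve in $\widehat{W}$, asymptotic at each puncture to the Reeb orbit prescribed by $\mathfrak{c}$ (using \Cref{asympt}). This restriction identifies $\M_g(A,J)^{\mathfrak{c}}$ with the corresponding SFT moduli space. Two further inputs make \Cref{generictransversalitysft} directly applicable: every log-holomorphic curve with non-empty boundary is simply covered, so the whole moduli space consists of simple curves; and every such curve meets $V=\operatorname{int}(W)$, which follows from the maximum principle \Cref{max} and its corollary, since a curve contained in $U$ would make $s\circ u=p_1\circ u$ a non-constant harmonic function attaining an interior minimum.

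With these identifications in place, \Cref{generictransversalitysft} yields a comeager subset of $\mathcal{J}_{\text{fix}}$ over which every curve is Fredholm regular and the moduli space is smooth of dimension $\text{vdim}\,\M_g(A,J;V)^{\mathfrak{c}}$. It then remains only to rewrite this virtual dimension. By \Cref{indexformulasft} it equals $(n-3)\chi(\dot{\Sigma})+2c_1^\tau(A)+\mu(\mathfrak{c})$, and since $\dot{\Sigma}$ is a closed genus-$g$ surface with one puncture per boundary component of $\Sigma$, one has $\chi(\dot{\Sigma})=2-2g-\#\pi_0(\partial\Sigma)$, which gives the asserted formula.

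I expect the only non-formal points to be the verification that each curve meets the variable region $V$, so that the ``somewhere in $V$'' hypothesis of \Cref{generictransversalitysft} is satisfied, and the bookkeeping identifying $\mathcal{J}_{\text{fix}}$ with the SFT perturbation class and transporting the relative invariants $c_1^\tau(A)$ and $\mu(\mathfrak{c})$ across the two descriptions. The first is handled by the maximum principle and the second by the already-established equivalence of the log and SFT setups, so no new analytic work is needed beyond what \Cref{generictransversalitysft} already supplies.
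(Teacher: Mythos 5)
Your proposal is correct and follows essentially the same route as the paper: the paper's proof likewise observes that curves with boundary are simply covered (being embedded near the boundary), uses the maximum principle to guarantee every curve meets $X\mi U$, where the complex structure is allowed to vary, and then applies \Cref{generictransversalitysft}. Your write-up simply makes explicit the bookkeeping (identification of $\mathcal{J}_{\text{fix}}$ with the SFT perturbation class via \Cref{stablehamiltonianform} and \Cref{logequalpunctured}, and the computation $\chi(\dot{\Sigma})=2-2g-\#\pi_0(\partial\Sigma)$) that the paper leaves implicit.
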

\begin{proof}
	All curves are embedded near the boundary, hence they are somewhere injective (or, equivalently, simply covered). Moreover, by the maximum principle all curves must intersect $X\mi U$, which is the locus where the complex structure is allowed to vary. Hence \Cref{generictransversalitysft} applies, yielding the result.
\end{proof}
In the same way the SFT compactness theorem \ref{thmsftcompactness} applies.
\begin{thm}\label{thmlogsftcompactness}
	With the same notation as above, the moduli space $\M_g(A,J)^\mathfrak{c}$ is compactified by the space $\overline{\M}_g(A,J)^\mathfrak{c}$ of stable holomorphic buildings.
\end{thm}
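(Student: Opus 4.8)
The plan is to reduce the statement to the SFT compactness theorem \Cref{thmsftcompactness} by means of the dictionary between log-holomorphic curves and finite-energy punctured holomorphic curves developed in the previous subsections. First I would use \Cref{stablehamiltonianform} to identify $X\mi Z$ with the completion $\widehat{W}$ of $W=X\mi U$, which carries a Morse-Bott stable Hamiltonian boundary of cosymplectic type (the Morse-Bott property being guaranteed by the choice of tubular neighbourhood, exactly as in \Cref{cosymplecticmorsebott}). By \Cref{asympt} and \Cref{logequalpunctured}, every element of $\M_g(A,J)^\mathfrak{c}$ restricts, over the associated punctured surface $\dot{\Sigma}$, to a genuine finite-energy punctured $J$-holomorphic curve in $\widehat{W}$ whose punctures are asymptotic to the Reeb orbits prescribed by $\mathfrak{c}$. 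Thus $\M_g(A,J)^\mathfrak{c}$ is realized as a subset of an SFT moduli space of punctured curves in $\widehat{W}$, and it suffices to produce a uniform energy bound and invoke the compactness theorem.

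The key step is this uniform energy bound. By \Cref{energyequalsenergy} the SFT energy satisfies $E_{SFT}(u)=\varepsilon E(u)$, where $E(u)$ is the log-energy of \Cref{energy}, and by the remark following \Cref{energy} the latter depends only on the relative homology class of $u$ and on the image $u(\partial\Sigma)$. Both quantities are pinned down by $A$ and $\mathfrak{c}$: the term $\int_\Sigma u^*\beta^\mu$ is fixed by the relative class $A\in H_2(W,\cdots)$, while the boundary term $\int_{\partial\Sigma}u^*\alpha$ equals the sum of the periods of the asymptotic Reeb orbits. These orbits are either the fixed orbits $\gamma_z$ (for $z\in\Gamma_C$) or orbits sliding within the connected families $\mathcal{R}_z$ (for $z\in\Gamma_U$); in the Morse-Bott setting the corresponding loci $\mathcal{S}_z\subset\partial W$ are closed submanifolds, hence compact, so the $\alpha$-periods are uniformly bounded in all cases. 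Therefore $E_{SFT}$ is bounded uniformly across the moduli space.

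With the energy bound established, \Cref{thmsftcompactness} applies directly: any sequence in $\M_g(A,J)^\mathfrak{c}$ admits a subsequence converging to a stable holomorphic building. Since the Morse-Bott limit preserves the genus, the relative homology class $A$, and the asymptotic constraints $\mathfrak{c}$, the limiting building defines an element of $\overline{\M}_g(A,J)^\mathfrak{c}$; translating back through the correspondence of \Cref{logequalpunctured} then exhibits $\overline{\M}_g(A,J)^\mathfrak{c}$ as the desired compactification.

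The only genuine subtlety, and the step I would treat most carefully, is the energy bound at the \emph{unconstrained} punctures, where the limiting orbit is free to move within the family $\mathcal{S}_z$: it is precisely the compactness of $\mathcal{S}_z$ (a consequence of the Morse-Bott hypothesis on the chosen tubular neighbourhood) that secures the uniform bound there. Everything else is a transcription of the standard SFT compactness statement through the identifications already proved in this section.
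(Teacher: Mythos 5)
Your proposal is correct and takes essentially the same route as the paper: the paper's entire proof is the remark that \Cref{thmsftcompactness} ``applies in the same way'' as in \Cref{thmtransversalitylogopen}, i.e.\ via the dictionary between log-holomorphic curves and finite-energy punctured curves (\Cref{asympt}, \Cref{logequalpunctured}, \Cref{energyequalsenergy}), and your write-up simply makes that reduction explicit, including the uniform energy bound coming from the fixed relative class and the constraint $\mathfrak{c}$.
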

In the special case when no two boundary components are mapped to the same component of the singular locus, the geometric setup allows to rule out certain buildings.
\begin{prop}\label{onepuncturecompactness}
	Let $u_k$ be a sequence of holomorphic curves with asymptotic constraint $\mathfrak{c}$ so that no two boundary components are mapped to the same component of the singular locus. Let $u_\infty$ be a limiting holomorphic building. Then all the levels other than the main one consist of disjoint unions of nodal curves with exactly two punctures (one for each component of the singular locus), asymptotic to simple Reeb orbits. The main level must be a nodal curve with as many punctures as the $u_k$'s, with at most one puncture asymptotic to each component of $Z$.
\end{prop}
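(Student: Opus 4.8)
The plan is to exploit the fact that the stable Hamiltonian structure induced on $\partial W$ is \emph{cosymplectic} (\Cref{stablehamiltonianform}), so that $\alpha$ is closed: this lets me equip each cylindrical end with a holomorphic projection to $\C^*$ and reduce the whole statement to an elementary degree count for meromorphic functions, organised as a downward induction over the levels of the building. Concretely, over a component $Z_i$ of $Z$ (equivalently over a component of the double cover $\partial W$) I would set $f_i=s+i\,h_i$ on the symplectization $\R\times(\partial W)_i$, where $s$ is the $\R$-coordinate and $h_i$ is a normalization of the leaf-space map $p$ of (\ref{fibration}) pulled back from $(\partial W)_i$. Using the defining properties of a cylindrical complex structure ($J\partial_s=c_iR$, $J(\ker\alpha)=\ker\alpha$, and $dh_i$ proportional to $\alpha$) one checks that $df_i\circ J=i\,df_i$, so that $\Pi_i:=\exp(2\pi f_i)$ is a holomorphic map $\R\times(\partial W)_i\to\C^*$, under which a simple Reeb orbit winds once and an $m$-fold cover winds $m$ times.

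Next I would carry out the degree count. Let $v$ be a connected (possibly nodal) component of a non-main level of $u_\infty$. By \Cref{thmsftcompactness} it maps to $\R\times\partial W$, and by connectedness into a single $\R\times(\partial W)_i$, so $\Pi_i\circ v$ is an honest holomorphic map to $\C^*$. By the Morse--Bott asymptotics underlying \Cref{asympt}, it extends meromorphically to the closed surface $\hat\Sigma_v$, with a pole (resp.\ zero) of order equal to the covering multiplicity at each positive (resp.\ negative) puncture; sphere bubbles contribute trivially, since $s$ is constant on a closed component and a holomorphic sphere cannot map nonconstantly into a circle. Counting poles and zeros gives
\[
\sum_{z\in\Gamma^+(v)} m_z \;=\; \deg(\Pi_i\circ v) \;=\; \sum_{z\in\Gamma^-(v)} m_z ,
\]
while the maximum principle (\Cref{max}) applied to $s=p_1$ forces every such $v$ to carry at least one positive and at least one negative puncture.

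Finally I would run the induction from the top down. The topmost positive orbits are precisely the asymptotics of the $u_k$, which are simple (\Cref{asympt}) and, by hypothesis, lie over pairwise distinct components of $Z$; hence each relevant component carries exactly one, simple, positive orbit. The displayed identity then forces each top-level component to be a single cylinder with one simple positive and one simple negative puncture, possibly decorated with sphere bubbles. Its unique (simple) negative orbit becomes the unique positive orbit of the next level down over that component, so the same argument applies verbatim one level lower. Iterating shows that over each relevant component every level is a single two-punctured nodal cylinder with simple asymptotics, and that exactly one orbit sits over each component at every gluing interface. Reading this off at the lowest interface identifies the positive punctures of the main level with one simple orbit per relevant component, giving the main level the same number of punctures as the $u_k$ and at most one per component of $Z$.

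I expect the main obstacle to be the meromorphic extension of $\Pi_i\circ v$ across punctures asymptotic to multiply covered orbits, together with the verification that nodal bubble components do not disturb the degree bookkeeping; the precise winding and asymptotic computations that make this rigorous are exactly those collected in the appendix.
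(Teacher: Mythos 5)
Your proof is correct and follows essentially the same route as the paper's: a downward induction over the levels of the building in which conservation of winding around the $S^1$-direction of each mapping-torus component of $Z$, together with the positivity of that winding forced by holomorphicity, reduces every non-main level over a relevant component to a nodal curve with one simple positive and one simple negative puncture, and identifies the punctures of the main level with those of the $u_k$. The paper phrases the conservation step homotopically (the sum of the negative asymptotics is homotopic in $Z$ to the simple positive orbit, and its projection to $\pi_1(S^1)$ is a sum of positive multiples of the generator), whereas you obtain the identical identity by counting zeros and poles of the meromorphic extension of $\exp(2\pi(s+ih_i))\circ v$ --- the same winding count, implemented somewhat more explicitly.
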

\begin{proof}
	For each component of the singular locus, the top level is a nodal curve with a single positive puncture asymptotic to a single simple curve. The sum of the negative punctures must be homotopic (in $Z$) to a simple curve. In particular, their projection to $S^1$ need to be homotopic. By holomorphicity, all the negative punctures are multiples of the positive generator of $\pi_1(S^1)$, which implies there must be a single simple negative puncture. Iterating the argument one sees that it has to hold for all lower levels (main exluded), and that the punctures of the main level are in one to one correspondence with the punctures of $u_k$.
\end{proof}
%

\subsection{Punctured spheres with $0$ self-intersection}

In this section we investigate the consequences of the existence of an embedded holomorphic punctured sphere with $0$ self-intersection in certain log-symplectic $4$-manifolds.\\

The intersection number of punctured holomorphc curves was introduced by \cite{siefring} for non-degenerate asymptotics, and by \cite{siefringwendl}, \cite{wendlpunctured} for Morse-Bott asymptotics). It is a homotopy invariant number $u\star v$ associated to pairs of holomorphic curves $u$ and $v$ in a $4$-manifold, with some asymptotic constraints. As is the case with closed holomorphic curves (see \cite{mcduffadjunction}), there is an adjunction formula relating the topology of a somewhere injective curve, the number of its singular points and its self-intersection number. \Cref{sectionintersectionpunctured} contains a brief introduction to the intersection theory of punctured holomorphic curves, with a focus on the results that we need in the present section. We also refer to \Cref{computationintersections} for the explicit computations of the intersection numbers and indices appearing in the present section.\\

Let $(X,\,Z,\,\omega)$ be an oriented log-symplectic $4$-manifold. Let $X_0$ be a component of $X\mi Z$. We know (\Cref{stablehamiltonianform}) that we can realize $X_0$ (non-uniquely) as a completion $\widehat{W}$ of a manifold $(W,\,\partial W)$ with stable Hamiltonian boundary, such that the $1$-form $\alpha$ is closed. We will study punctured spheres with $0$ self-intersection in the case when the stable Hamiltonian structure induces the trivial fibration $S^1\times \Sigma_{g_i}$ on each component $Z_i$ of $\partial W$. The goal is again to apply the results to the study of ruled surfaces (\Cref{sectionlogruled}).\\

The precise statement is the following.
\begin{thm}\label{theoremmoduliboundary0intersection}
	Let $(W,\,\partial W,\,\omega)$ be a symplectic $4$-manifold with an induced stable Hamiltonian structure on $\partial W$, of the form $(\alpha,\,\beta)$, with $d\alpha=0$. Assume that $(\alpha,\,\beta)$ induces a trivial symplectic fibration, so that $\partial W=\sqcup_i S^1\times \Sigma_{g_i}$.\\
	Let $J$ be a ``generic" cylindrical complex structure, and let $u:S^2\mi\{p_1,\,\dots,\,p_k\}\longrightarrow \widehat{W}$ be a $k$-punctured $J$-holomorphic sphere such that
	\begin{itemize}
		\item different punctures are asymptotic to Reeb orbits in different components of $[R,\,+\infty)\times \partial W$;
		\item $u\star u= 0$ as punctured holomorphic curves with unconstrained asymptotics.
	\end{itemize}
	Let $\M$ denote the component containing $u$ of the moduli space of $J$-holomorphic maps. Then $\M$ is a smooth $2$-dimensional manifold, all elements of which are embedded curves. Moreover $\M$ can be compactified to a smooth closed $2$-dimensional manifold $\overline{\M}$, by adding nodal curves consisting exactly of $2$ components: an embedded $k$-punctured sphere of self-intersection $-1$, an embedded sphere of self-intersection $-1$. The two components intersect exactly at one point.
\end{thm}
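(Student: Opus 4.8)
The plan is to transpose the proof of the closed-curve statement \Cref{minusonecurveslog} to the punctured setting, using the intersection theory of the appendix in place of the ordinary adjunction formula, and using \Cref{onepuncturecompactness} to control the SFT limit.

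First I would establish that $\M$ is a smooth $2$-manifold all of whose elements are embedded. Since $u$ is embedded near its boundary it is somewhere injective (by \Cref{asympt} and the discussion following \Cref{logequalpunctured}), so for generic $J$ the transversality theorem \Cref{thmtransversalitylogopen} makes $\M$ smooth, of dimension $(n-3)(2-2g-\#\pi_0(\partial\Sigma))+2c_1^\tau(A)+\mu(\mathfrak{c})$. Specializing to $n=2$, $g=0$ and $k$ punctures gives $\dim\M=(k-2)+2c_1^\tau(A)+\mu(\mathfrak{c})$. The hypothesis $u\star u=0$, fed into the punctured adjunction formula recalled in \Cref{sectionintersectionpunctured} (computed explicitly in \Cref{computationintersections}), pins down the relative Chern and Conley--Zehnder data so that this index equals $2$ and, simultaneously, forces the count of double points and critical points to vanish, i.e.\ every curve in $\M$ is embedded. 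This is the exact analogue of the dimension-and-embeddedness bookkeeping behind \Cref{minusonecurveslog}.

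Next I would compactify. The SFT compactness theorem \Cref{thmlogsftcompactness} produces a compactification $\overline{\M}$ by stable holomorphic buildings. Because different punctures of $u$ are asymptotic to Reeb orbits in different components of $\partial W$, \Cref{onepuncturecompactness} applies: every non-main level is a disjoint union of (possibly nodal) two-punctured curves in a symplectization $\R\times Z_i$ asymptotic to simple orbits, while the main level is a nodal curve in $\widehat{W}$ carrying all $k$ punctures, at most one per component of $Z$. The next step is to rule out genuine breaking, i.e.\ to show each non-main-level cylinder is trivial. Projecting such a cylinder to the fibre $\Sigma_{g_i}$ of $Z_i=S^1\times\Sigma_{g_i}$ yields a holomorphic map from a cylinder asymptotic at both ends to a single point; the energy and index bookkeeping of \Cref{computationintersections}, together with the maximum principle \Cref{max}, forces it to be a trivial cylinder. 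Hence no breaking occurs, and the only degeneration is interior bubbling of a closed sphere inside $\widehat{W}$.

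With breaking excluded, the limiting configuration is a nodal curve in $\widehat{W}$ with $k$ punctures, and the analysis reduces to the closed situation of \Cref{minusonecurveslog}. Conservation and additivity of the homotopy-invariant intersection number, positivity of intersections for punctured curves, and the punctured adjunction formula (\Cref{sectionintersectionpunctured}) then force exactly two components: an embedded $k$-punctured sphere of self-intersection $-1$ and an embedded closed sphere of self-intersection $-1$, meeting transversally at a single point (consistent with $(-1)+(-1)+2=0=u\star u$). As in \Cref{minusonecurveslog} the moduli space of such nodal configurations is compact and $0$-dimensional, so the exceptional-sphere gluing assembles $\overline{\M}$ into a closed smooth $2$-manifold. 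I expect the main obstacle to be the intersection-theoretic bookkeeping: distributing $u\star u=0$ across the building while correctly accounting for the asymptotic (winding and linking) contributions at the punctures, and thereby both excluding breaking and isolating the two-component nodal type --- this is precisely the content that \Cref{computationintersections} is set up to supply.
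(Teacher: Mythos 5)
Your overall architecture does match the paper's proof: transversality via \Cref{thmtransversalitylogopen}, index $2$ and embeddedness from the punctured adjunction formula, SFT compactness filtered through \Cref{onepuncturecompactness}, and a final two-component analysis via additivity of the $\star$-pairing, positivity of intersections, and adjunction. However, there is a genuine gap in your exclusion of breaking. You claim that projecting a non-main-level cylinder to the fibre $\Sigma_{g_i}$ gives a map asymptotic to points at both ends, and that this, together with the maximum principle and the appendix computations, forces the cylinder to be trivial. That is only correct when $g_i>0$. When $g_i=0$ (which the theorem allows, and which is precisely the case relevant to the ruled-surface applications), the projection extends over the punctures to a holomorphic map $S^2\longrightarrow S^2$ of arbitrary non-negative degree; non-trivial finite-energy cylinders in $\R\times S^1\times S^2$ asymptotic to simple Reeb orbits at both ends do exist, and neither the maximum principle \Cref{max} nor \Cref{computationintersections} (which supplies asymptotic winding numbers, Conley--Zehnder indices, and the constrained-versus-unconstrained comparison, not any level-wise index count) rules them out.

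What actually kills these levels in the paper is \Cref{lemmarecognisecylinders} --- the relative Chern number of such a cylinder is even, non-negative, and vanishes exactly for trivial cylinders --- combined with a stability-based index count: any non-main level, being stable, must contain either a non-trivial cylinder (so $2c_1\geq 4$), a non-constant sphere bubble (contributing $2c_1-(2-N)\geq 3$), or at least six nodes among constant bubbles, so each non-main level exceeds its index budget by at least $3$, contradicting $\text{ind}(u)=2$. The same counting also disposes of \emph{nodal} non-main levels (a trivial cylinder with bubbles attached), which your projection argument does not see at all, since stability permits such configurations a priori. Finally, your phrase ``the analysis reduces to the closed situation of \Cref{minusonecurveslog}'' is not accurate: the main level is a punctured nodal curve, and the paper argues directly with the punctured pairing ($0=u\star u=u_0\star u_0+v\cdot v+2\,u_0\cdot v$ with $u_0\cdot v\geq 1$, plus adjunction bounds on $\text{sing}$); your parenthetical $(-1)+(-1)+2=0$ is consistent with this, but the mechanism is the punctured intersection theory, not a reduction to the closed case.
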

The proof will consist of an index computation, and an application of the adjunction formula (\Cref{thmadjunctionpunctured}). We will start with an arbitrary holomorphic building, and compare the index of each level with the index of $u$. The genericity of $J$ will be used to rule out all such buildings containing levels with negative index. Using the adjunction formula, we can relate the indices of the remaining buildings with their self-intersection, and their embeddedness. As a result, we will prove that the only holomorphic buildings that an appear as limits of curves homotopic to $u$ are those described in the statement.\\

A useful tool in the proof is the following lemma, which allows to tell whether a cylinder is trivial.
\begin{lemma}\label{lemmarecognisecylinders}
	Let $Z$ be a symplectic mapping torus, isormorphic to $(S^1\times \Sigma_g,\,cdt,\,\text{vol}_{\Sigma_g})$, and let $J$ be a compatible almost complex structure on $\Re\times Z$. Let $u:\C^*\longrightarrow \R\times Z$ be a finite energy holomorphic cylinder. Then, if $g>0$, $u$ is a trivial cylinder. If $g=0$, then the relative Chern class $c_1(u)$, defined with respect to the isomorphism $Z\cong S^1\times S^2$, is a non-negative even number, and $c_1(u)=0\Leftrightarrow u$ is a trivial cylinder.
\end{lemma}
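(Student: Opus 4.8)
The plan is to exploit the fact that, because the mapping torus is trivial, a cylindrical $J$ on $\R\times Z=\R_s\times S^1_t\times\Sigma_g$ is forced to respect the product splitting, and then to read off the two components of $u$ separately. First I would check that $J$ is block-diagonal with respect to $T(\R\times Z)=\mathrm{span}(\partial_s,\partial_t)\oplus T\Sigma_g$. Indeed $\ker\alpha=T\Sigma_g$, which is $J$-invariant by \Cref{defcylindricalcomplex}, while $J\partial_s$ is a positive multiple of the Reeb field $R=\tfrac1c\partial_t$, so $\mathrm{span}(\partial_s,\partial_t)$ is $J$-invariant as well and $J$ acts there as an oriented rotation; on $T\Sigma_g$ it restricts to an ($s$-independent, $\mathrm{vol}_{\Sigma_g}$-compatible, a priori $t$-dependent) family $j_t$ of complex structures. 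Writing $u=(a,b,v)$ accordingly, complex-linearity of $du$ then splits: the projection $(a,b)\colon\C^*\to\R\times S^1$ is holomorphic for the standard cylinder structure, while $v\colon\C^*\to\Sigma_g$ satisfies a moving Cauchy--Riemann equation of the form $\partial_\tau v=j_{b}(v)\,\partial_\sigma v$. The key consequence is pointwise positivity: since $\mathrm{vol}_{\Sigma_g}$ is $j_t$-compatible, $v^*\mathrm{vol}_{\Sigma_g}\ge 0$, with equality exactly where $dv=0$.

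Next I would use the Morse--Bott asymptotics (\Cref{cosymplecticmorsebott}, \cite{bourgeoisthesis}): finite energy forces each puncture of $u$ to converge to a Reeb orbit $\{z_\pm\}\times S^1$, so that $v(\sigma,\cdot)\to z_\pm\in\Sigma_g$ uniformly as $\sigma\to\pm\infty$. Hence $v$ extends continuously to $\bar v\colon S^2\to\Sigma_g$, and $\int_{\C^*}v^*\mathrm{vol}_{\Sigma_g}=\deg(\bar v)\cdot\mathrm{Area}(\Sigma_g,\mathrm{vol}_{\Sigma_g})$. Combined with the pointwise positivity above this already yields $\deg(\bar v)\ge 0$, with equality if and only if $v$ is constant (equivalently $dv\equiv 0$, equivalently $u$ maps into a single orbit cylinder $\R\times S^1\times\{z_0\}$, i.e. $u$ is a trivial cylinder). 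As a cross-check, the maximum principle \Cref{max} independently shows the $\R$-component $a$ is harmonic, consistent with $(a,b)$ being (a cover of) the identity cylinder.

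For $g>0$ I would then finish immediately: $\pi_2(\Sigma_g)=0$, so $\bar v$ is null-homotopic and $\deg(\bar v)=0$, whence the area vanishes, $v$ is constant, and $u$ is a trivial cylinder. For $g=0$ I would compute the relative Chern number from the splitting $u^*T(\R\times Z)\cong\underline{\C}\oplus v^*TS^2$: the trivial summand contributes $0$, so $c_1(u)=c_1^\tau(v^*TS^2)$, and identifying the asymptotic trivialization with the one coming from $Z\cong S^1\times S^2$ gives $c_1^\tau(v^*TS^2)=c_1(\bar v^*TS^2)=2\deg(\bar v)$. This is a non-negative even integer, vanishing precisely when $\deg(\bar v)=0$, i.e. when $v$ is constant and $u$ is a trivial cylinder.

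The step I expect to be the main obstacle is this last one: pinning down that the relative first Chern number computed with the trivialization induced by $Z\cong S^1\times S^2$ matches the honest first Chern number $2\deg(\bar v)$ of $\bar v^*TS^2$ over the compactified domain. This requires checking that the chosen trivializations at the two punctures agree under the product identification (so that no boundary winding is lost) and that the $\underline{\C}$ summand genuinely contributes zero relative Chern number. The positivity with a genuinely $t$-dependent $j_t$ also deserves care, but it reduces to the pointwise compatibility of $\mathrm{vol}_{\Sigma_g}$ with each $j_t$; the asymptotic convergence of $v$ is standard SFT input (\cite{bourgeoisthesis}, \Cref{cosymplecticmorsebott}).
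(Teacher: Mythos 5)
Your proposal is correct and follows essentially the same route as the paper's proof: split $u$ into its $\R\times S^1$ and $\Sigma_g$ components via the product structure, extend the $\Sigma_g$-component over the punctures, use asphericity of $\Sigma_g$ for $g>0$, and identify $c_1(u)$ with $2\deg$ of the extended map for $g=0$. The only difference is that you fill in details the paper leaves implicit — the block-diagonality of a cylindrical $J$, the possibly $t$-dependent family $j_t$ (so that the $\Sigma_g$-component solves a perturbed Cauchy--Riemann equation rather than being honestly holomorphic, handled via pointwise positivity of $v^*\mathrm{vol}_{\Sigma_g}$), and the matching of asymptotic trivializations in the relative Chern number computation — all of which strengthen rather than alter the argument.
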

\begin{proof}
	Fix an isomorphism $Z=S^1\times\Sigma_g$ as symplectic mapping tori, and write $u=(u_1,\,u_2)$. The map $u_2:\C^*\longrightarrow\Sigma_g$ is holomorphic and has finite energy, thus extends to $u_2:S^2\longrightarrow\Sigma_g$. If $g>0$, this means that $u_2$ is a constant.\\
	
	If $g=0$, one has $u_2:S^2\longrightarrow S^2$, and $c_1(u_2)=2\text{deg}(u_2)\geq 0$ because $u_2$ is orientation preserving. In particular $u_2$ is constant if and only if $\text{deg}(u_2)=0$ (by holomorphicity). The fact that $c_1(u)=c_1(u_2)$ implies the statement.
\end{proof}

\begin{proof}[Proof of \Cref{theoremmoduliboundary0intersection}]
	First of all, using the adjunction formula one computes that $\text{ind}(u)=2$ (\Cref{indexfromintersection}). Since $J$ is a generic complex structure, there are no $J$-holomorphic curves of negative index.\\
	Let us assume now that $u:S^2\mi \{p_1,\,\dots,\,p_k\}\longrightarrow \widehat{W}$ is $J$-holomorphic; let $\gamma_i$ be the orbit to which $p_i$ is asymptotic. Consider the moduli space $\M:=\M_0(A,J)^\mathfrak{c}$ of $J$-holomorphic punctured spheres, with punctures $p_1,\,\dots,\,p_k$ asymptotic to simple orbits in the same components as $\gamma_1,\,\dots,\,\gamma_k$, respectively. By the transversality theorem \ref{thmtransversalitylogopen}, $\M$ is a smooth $2$-dimensional manifold.\\
	
	By \Cref{onepuncturecompactness}, we can compactify $\M$ by adding holomorphic buildings $\uu_\infty=(\uu_0,\uu_i)$, where $\uu_0$ is a nodal punctured sphere in $\widehat{W}$, and $\uu_i$, $i>0$ is a nodal cylinder in $\R\times \partial W$. In particular we can write each $\uu_i$ as a collection of curves: $\uu_i=(u_{i,0},\,u_{i,1},\,\dots,\,u_{i,m_i})$, where $u_{i,j}$ are spheres for $j>0$, and $u_{i,0}$ is either a $k$-punctured sphere (when $i=0$) or a cylinder (when $i>0$). We need to show that:
	\begin{itemize}
		\item $m_i=0$ for all $i>0$;
		\item $u_{i,0}$ is the trivial cylinder for all $i>0$;
		\item $m_0=1$;
		\item $\uu_0=(u_0,\,v_1)$, is such that $u_0\star u_0=-1=v_1\cdot v_1$, and $u_0\cdot v_1=1$.
	\end{itemize}
	We will do this by index counting, assuming that there are no holomorphic curves of negative index (which is true due to our assumption that $J$ is generic).\\
	
	Denote by $N_{i,j}$ the number of nodes of the curve $u_{i,j}$, and let $S_{i,j}$ be its domain. Let $S_i$ be the (topological) connected sum of the $S_{i,j}$'s at the nodes. Then \begin{equation}
	\chi(S_i)=\sum_{j\geq0}(\chi(S_{i,j})-N_{i,j})
	\end{equation} 
	and 
	\begin{equation}
	2-k=\chi(S^2\mi \{p_1,\,\dots,\,p_k\})=\sum_{i,j}\chi(S_i)=\chi(S_0)
	\end{equation}
	(the last equality follows from $S_i$ being a cylinder for $i>0$). Denote by $A_i$ the relative homology class of $u_{i,0}$, and $B_{i,j}$ the homology class of $u_{i,j}$.\\
	
	Define the following numbers:
	\begin{itemize}
		\item
		$\text{ind}(\uu_\infty):=(n-3)\chi(\dot{\Sigma})+2c_1^\tau(A)+\mu^\tau(\mathfrak{c})=-\sum_{i,j}(\chi(S_{i,j})-N_{i,j})+\sum_i 2c_1^\tau(A_i)+\sum_{i,j}2c_1(B_{i,j})+\mu^\tau(\mathfrak{c})$
		\item
		$\text{ind}(\uu_0):=-\sum_{j}(\chi(S_{0,j})-N_{0,j})+2c_1^\tau(A_0)+\sum_{j}2c_1(B_{0,j})+\mu^\tau(\mathfrak{c}')$
		\item
		$\text{ind}(\uu_i):=-\sum_{j}(\chi(S_{i,j})-N_{i,j})+2c_1^\tau(A_i)+\sum_{j}2c_1(B_{i,j})+\mu^\tau(\mathfrak{c}'), \text{ for } i>0$
	\end{itemize}
	One has 
	\begin{equation}
	2=\text{ind}(\uu_\infty)=\text{ind}(\uu_0)+\sum_{i\geq 1}(\text{ind}(\uu_i)-\mu^\tau(\mathfrak{c}'))=\text{ind}(\uu_0)+\sum_{i\geq 1}(\text{ind}(\uu_i)-2)
	\end{equation}
	We'd like to prove that $\text{ind}(\uu_0)$ and $\text{ind}(\uu_i)-2$ are all non-negative.\\
	
	It is immediate to check that 
	\begin{equation}
	\text{ind}(\uu_0)=\text{ind}(u_{0})+N_{0,0}+\sum (\text{ind}(u_{0,j})+N_{0,j})
	\end{equation}
	and all the summands are non-negative. Similarly, for $i>0$, 
	\begin{equation}
	\text{ind}(\uu_i)=(\text{ind}(u_{i,0})+N_{i,0}-2)+\sum_{j\geq 1}(\text{ind}(u_{i,j}+N_{i,j}))\geq \sum_{j\geq 0}(N_{i,j})-2
	\end{equation}
	
	We claim that $\text{ind}(\uu_i)-2\geq 0$.\\ 
	First of all, \Cref{lemmarecognisecylinders} ensures that $c_1(A_i)\geq 0$ and is even whenever $i>0$. Moreover, also $c_1(B_{i,j})\geq 0$, because by the maximum principle $u_{i,j}:S^2\longrightarrow \{\star\}\times S^2$. Finally, $c_1(A_i)=0$ if and only if $A_i$ represents a trivial cylinder (in which case $N_{i,0}>0$, by stability), while $c_1(B_{i,j})=0$ if and only if $B_{i,j}$ represents a constant bubble (in which case $N_{i,j}>2$, again by stability).\\
	
	Now, \begin{equation}
	\text{ind}(\uu_i)-2=2c_1^\tau(A_i)+\sum_{j}2c_1(B_{i,j})-\sum_{j}(\chi(S_{i,j})-N_{i,j})\geq 0+\text{ind}(u_{i,j})+\sum_j N_{i,j}\geq 0
	\end{equation}
	which proves the claim.\\
	
	In fact, we can prove that $\text{ind}(\uu_i)-2\geq 3$, which contradicts the estimate $2\geq \text{ind}(\uu_i)-2$. Indeed, 
	\begin{equation}
	\text{ind}(\uu_i)-2=2c_1^\tau(A_i)-(0-N_{i,0})+\sum_j 2c_1(B_{i,j})-(2-N_{i,j})
	\end{equation}
	It was already observed that the stability condition implies $2c_1^\tau(A_i)+N_{i,0}\geq 1$. If there are no bubbles, then $2c_1^\tau(A_i)\geq 4$. If there is a non-constant bubble, then $2c_1(B_{i,j})-(2-N_{i,j})\geq 4-2+1=3$ (a bubble has at least one node). If all bubbles are constant, then there must be at least $6$ nodes (by stability, and the fact that nodes come in pairs), hence $\ind(\uu_i)-2\geq 6-2=4$. All cases contradict $2\geq \text{ind}(\uu_i)-2$, implying that there cannot be any levels other than the main one.\\
	
	We are left to prove that the main level is either an embedded curve with $0$ self-intersection, or it consists of two components, one of which is a punctured sphere and the other a sphere, intersecting each other in a point, and with self-intersection $-1$. We know that $2=\ind(\uu_\infty)=\ind(\uu_{0,0})=\ind(u_0)+N_{0,0}+\sum_j \ind(u_{0,j})+N_{0,j}$.\\
	Since the orbits to which $u_0$ is asymptotic are simple and of minimal period, $u_0$ is a simply covered curve, hence the adjunction formula applies. Moreover, the index of $u_0$ is an even number. We can distinguish two cases:
	\begin{itemize}
		\item $\ind(u_0)=2$. Then there are no nodes, and no bubbles. Since $u_0$ is a simple curve, limit of curves with $0$ self-intersection, it has $0$ self-intersection. The adjunction formula then implies that it is embedded.
		\item $\ind(u_0)=0$. Then the only possibility is that there is a single bubble, say $v=u_{0,1}$, such that $\ind(v)=0$, and $N_{0,0}=N_{0,1}=1$. Moreover, $v$ is simply covered. Assume it is a $k$-fold cover of a simple curve $v'$, then $v'$ has to be a sphere. Further, $0=\ind(v)=2c_1(kB')-2=\ind(v')+2(k-1)c_1(B')$, where $[v]=B$, and $[v']=B'$. Since $\ind(v')\geq 0$, $c_1(B')\geq 1$, so that one needs to have $k=1$. 
	\end{itemize}
	Now, we would like to prove that $v\cdot v=u_0\star u_0=-1$, $v\cdot u_0=1$. We know (\Cref{thmintersectioneigenvalues}) that $u\star u=u\bullet_\tau u+i^\tau_\infty(u)$, where the first term is topological, and consists of the number of intersections of $u$ with a slight perturbation of $u$ along the direction of the trivialization $\tau$, and $i^\tau_\infty(u)$ (the number of ``hidden intersections at infinity"), only depends on the asymptotic trivialization and on the Reeb orbits. We know that the pair $(u_0,\,v)$ is a limit of curves with $0$ self-intersection, and that $v$ is closed. Hence, if $u$ is a curve in the moduli space, one has 
	$u\star u=0=(u_0+v)\bullet_\tau (u_0+v)+i^\tau_\infty(u)=u_0\bullet_\tau u_0+i^\tau_\infty(u_0)+v\cdot v+2u_0\cdot v=u_0\star u_0+v\cdot v+2$.
	Denote by $\text{sing}(w)$ the contribution of the singularity of a curve $w$ to the self intersection number. By the adjunction formula, $w\star w=\text{sing}(w)+\frac{1}{2}\ind(w)-1$. Moreover $u_0\cdot v\geq 1$. Thus $u_0\star u_0+v\cdot v+2\geq \text{sing}(u_0)+\text{sing}(v)\geq 0$. This implies that both curves are embedded, and their self intersection is $-1$. As a consequence, $u_0\cdot v=1$ (the single point of intersection is the node, and is transverse).\\
	
	This automatically implies that the moduli space is a smooth closed surface if the manifold is not a blow-up. If the manifold is a blow-up, the compactified moduli space in the Gromov topology is a topological surface, and it can be naturally given the structure of a smooth surface via the blow-down map.\\
	
	This concludes the proof of the theorem.
\end{proof}	
A version of \Cref{theoremmoduliboundary0intersection} for holomorphic curves with a marked point can be stated as follows.
\begin{thm}\label{thmmoduliboundary0intersectionmarked}
	Same hypotheses and notation as in \Cref{theoremmoduliboundary0intersection}. Let $\M_1$ be the component containing $u$ of the moduli space of $J$-holomorphic curves with one marked point. Then $\M_1$ is a smooth $4$-dimensional manifold, fibering over $\M$. Moreover $\M_1$ can be compactified to a manifold with boundary $\overline{\M}_1$, by adding:
	\begin{itemize}
		\item punctured nodal curves, with two irreducible components of self-intersection $-1$, intersecting in one point;
		\item holomorphic buildings of height $2$, where the main level is a curve of $\M$, and the second level is a trivial cylinder with a marked point.
	\end{itemize}
	The boundary of $\M_1$ consists of the latter elements. The evaluation map extends continuously to $\overline{\M}_1$, with values in the compactification $X$ of $\widehat{W}$.
\end{thm}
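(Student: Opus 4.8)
The plan is to realize $\overline{\M}_1$ as the compactified universal curve over $\overline{\M}$ and to identify its boundary with the locus where the marked point escapes into a cylindrical end, so that almost all of the analytic work is inherited from \Cref{theoremmoduliboundary0intersection}. First I would treat the open part $\M_1$. By \Cref{theoremmoduliboundary0intersection} every curve in $\M$ is embedded and simply covered, hence has trivial automorphism group, so the map $\M_1\longrightarrow\M$ that forgets the marked point is a genuine fiber bundle with fiber the domain $\dot{\Sigma}=S^2\mi\{p_1,\dots,p_k\}$. Since $\M$ is a smooth surface and $\dot{\Sigma}$ is a (non-compact) surface, $\M_1$ is a smooth $4$-manifold fibering over $\M$; equivalently, \Cref{thmtransversalitylogopen} applied with the extra marked point raises the virtual dimension by $2$.

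Next I would run the compactness analysis. Given a sequence $(u_k,z_k)\in\M_1$, the SFT compactness theorem (\Cref{thmlogsftcompactness}) and \Cref{onepuncturecompactness}, together with the building analysis already performed in the proof of \Cref{theoremmoduliboundary0intersection}, force $u_k$ (up to a subsequence) to converge to a building with no non-trivial symplectization levels: its main level is either an embedded $k$-punctured sphere in $\M$ or the two-component nodal curve of \Cref{theoremmoduliboundary0intersection}. The only new feature is the fate of $z_k$. Either $z_k$ converges to a point of the main-level domain---producing, over the finitely many nodal points of $\overline{\M}$, the punctured nodal curves listed in the statement---or $u_k(z_k)$ runs off into one of the ends $[R,\infty)\times\partial W$ with its $\R$-coordinate tending to $+\infty$. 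Since the underlying curve does not break, this escape is recorded by attaching a trivial cylinder over the asymptotic Reeb orbit $\gamma_i$ carrying the marked point; modulo $\R$-translation of the symplectization this is exactly a height-$2$ building whose second level is a trivial cylinder with a marked point.

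I would then assemble these limits into a manifold with boundary. The mechanism is the fiberwise compactification of $\dot{\Sigma}$ to the compact surface-with-boundary $\Sigma$: near a puncture $p_i$ the cylindrical coordinates identify a neighbourhood with $[s_0,\infty)\times S^1$, and adjoining $\{+\infty\}\times S^1$---the circle of positions of the marked point on the trivial cylinder, taken modulo $\R$-translation---produces a boundary circle. Thus $\overline{\M}_1\longrightarrow\overline{\M}$ is a bundle of compact surfaces (with nodal fibers over the nodal points of $\overline{\M}$); its interior is $\M_1$ together with the marked nodal curves, and its boundary is precisely the union of these boundary circles, i.e. the set of height-$2$ buildings. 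A transverse collar is provided by the length parameter $s\in[s_0,\infty]$.

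Finally I would extend the evaluation map. On $\M_1$ put $\text{ev}(u,z):=u(z)\in\widehat{W}$; on the boundary the marked point sits on a trivial cylinder over $\gamma_i$ at angle $\theta$, and letting $s\to+\infty$ corresponds to $\lambda\to 0$, so I would set $\text{ev}$ there equal to the point $\gamma_i(\theta)\in Z\subset X$. The main obstacle---and the only genuinely analytic point---is to verify that the trivial-cylinder stratum is glued in as a \emph{smooth} collar and that $\text{ev}$ is continuous across it. Both rest on the exponential asymptotic convergence of finite-energy punctured curves to their Reeb orbits (\cite{bourgeoisthesis}), which simultaneously controls the convergence $z_k\to p_i$ and the convergence $u_k(z_k)\to\gamma_i(\theta)$, yielding a continuous extension $\text{ev}\colon\overline{\M}_1\longrightarrow X$ with $\text{ev}(\partial\overline{\M}_1)\subset Z$.
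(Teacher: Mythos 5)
Your proposal is correct and follows essentially the same route as the paper: compactify via the SFT compactification with one marked point, observe that the stability condition now permits exactly one extra level consisting of a trivial cylinder carrying the marked point (besides the marked versions of the nodal curves from \Cref{theoremmoduliboundary0intersection}), identify this stratum as the codimension-$1$ boundary, and extend the evaluation map continuously. Your universal-curve/fiberwise-compactification picture and the appeal to exponential asymptotics are just more explicit renderings of what the paper settles by a dimension count on height-$2$ buildings and by the definition of the topology on the space of buildings.
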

\begin{proof}
	The proof follows from \Cref{theoremmoduliboundary0intersection} and the SFT compactness theorem. Indeed, we can compactify $\M_1$ using the space of stable holomorphic buildings with one marked point. The buildings of $\overline{\M}$ together with a marked point are examples of elements of the SFT compactification. They are not all, as there might be building with levels consisting entirely of trivial cylinders, as long as they have enough marked points to be stable. In the case of the present theorem, it is possible for a level to contain exactly one trivial cylinder, if the marked point belongs to it. From this consideration the first part of the statement is proved. It is easily computed that the dimension of the set of height $2$ buildings is $3$, i.e. it has codimension $1$. The evaluation map extends continuously, by the very definition of the topology on the space of buildings.
\end{proof}
As an application, we mention the following consequence of the analysis above.
\begin{prop}\label{spacespuncturedspheres}
	Let $W$ be symplectic with stable Hamiltonian boundary isomorphic to a union of $S^1\times \Sigma_{g_i}$. Let $J$ be a cylindrical complex structure, and assume there is an embedded punctured $J$-holomorphic sphere $S$, with at most one puncture for each boundary component, and $0$ self-intersection (with unconstrained asymptotics). Then:
	\begin{itemize}
		\item[(i)] the number of punctures is equal to the number of components of $\partial W$;
		\item[(ii)] there is a diffeomorphism between the moduli space of curves homotopic to $S$, and $\Sigma_{g_i}$. In particular $g_i=g_j$ for all $i,\,j$.
	\end{itemize}
\end{prop}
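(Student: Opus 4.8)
The plan is to deduce both statements from the two structural results \Cref{theoremmoduliboundary0intersection} and \Cref{thmmoduliboundary0intersectionmarked}. These apply directly: the hypothesis of at most one puncture per boundary component guarantees that different punctures are asymptotic to orbits in different components of $\partial W$, and $u\star u=0$ is assumed. So by \Cref{theoremmoduliboundary0intersection} the component $\M$ of the moduli space containing $S$ is a closed smooth surface all of whose elements are embedded. First I would record that any two distinct curves in $\M$ are disjoint with distinct asymptotic orbits: being homotopic, any $u,u'\in\M$ satisfy $u\star u'=u\star u=0$ by homotopy invariance of the intersection number, and positivity of intersections for punctured curves forces both the interior intersection count and the hidden asymptotic contribution to vanish.

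Next I would establish surjectivity of the evaluation. By \Cref{thmmoduliboundary0intersectionmarked} the one-marked-point space $\overline{\M}_1$ is a compact oriented $4$-manifold with boundary, and the evaluation extends continuously to $\mathrm{ev}\colon\overline{\M}_1\longrightarrow X$, sending $\partial\overline{\M}_1$ (the height-$2$ buildings) into $Z$. Hence $\mathrm{ev}$ restricts to a proper map over $\widehat{W}=X\mi Z$, and since a generic point of $\widehat{W}$ lies on exactly one of the pairwise disjoint embedded curves, this map has degree $\pm1$; in particular it is surjective, so the curves of $\M$ foliate $\widehat{W}$ and sweep out a full neighbourhood of every component $Z_i$ of $\partial W$.

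To prove (i), note that a curve reaching arbitrarily close to $Z_i$ must, by the maximum principle \Cref{max} together with the asymptotic description of \Cref{asympt}, carry a puncture asymptotic to a simple Reeb orbit in $Z_i$. Since every curve in $\M$ is homotopic to $S$ and therefore has the same asymptotic data, and since the foliation reaches every $Z_i$, the curve $S$ must have a puncture over each component. Combined with the hypothesis of at most one puncture per component, the number of punctures equals the number of components of $\partial W$.

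For (ii), fix $Z_i\cong S^1\times\Sigma_{g_i}$. By (i) each $u\in\M$ has a unique puncture over $Z_i$, asymptotic to an orbit $S^1\times\{z\}$, and I would let $\mathrm{ev}_i\colon\M\longrightarrow\Sigma_{g_i}$, $u\mapsto z$, be the resulting asymptotic evaluation. Injectivity is immediate from the first paragraph: two distinct curves cannot share an asymptotic orbit, as this would produce a strictly positive hidden intersection at infinity, contradicting $u\star u'=0$. Surjectivity follows from the filling of the preceding paragraph, since the orbits $S^1\times\{\mathrm{ev}_i(u)\}$ must cover all of $Z_i$. Thus $\mathrm{ev}_i$ is a continuous bijection between closed surfaces, hence a homeomorphism, so $\M\cong\Sigma_{g_i}$ for every $i$ and all the $g_i$ coincide. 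The hard part, and the step I expect to be the main obstacle, is upgrading this homeomorphism to a diffeomorphism: this requires using the Morse--Bott asymptotic convergence to show that $\mathrm{ev}_i$ is a smooth immersion, its differential being controlled by the extremal-winding asymptotic eigenfunction attached to each puncture.
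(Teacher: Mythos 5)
Your route is essentially the paper's: deduce everything from \Cref{theoremmoduliboundary0intersection} and \Cref{thmmoduliboundary0intersectionmarked}, get injectivity of the asymptotic evaluation from positivity of intersections, get surjectivity of $\mathrm{ev}$ by a properness-plus-degree argument, and read off (i) from surjectivity. Two remarks on steps you treat as immediate. First, your claim that two distinct curves sharing an orbit would have ``a strictly positive hidden intersection at infinity'' is correct but is not a formal consequence of positivity alone; it is exactly the constrained/unconstrained comparison (\Cref{comparisonconstrainedunconstrained}): constraining the shared puncture drops the pairing by $1$, so positivity of the \emph{constrained} count forces $u\star_U u'\geq 1$. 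This is precisely how the paper argues (it phrases it as: the constrained intersection number would be $-1$, contradicting positivity), so you should cite that lemma rather than bare positivity. Second, your degree-$\pm 1$ claim needs the observation that $\mathrm{ev}$ is globally injective away from nodal curves (pairwise disjointness plus embeddedness), so invariance of domain gives local degree $\pm 1$; the paper's doubling trick plays the same role.

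The genuine gap is in (ii): the proposition asserts a \emph{diffeomorphism} $\M\cong\Sigma_{g_i}$, and your argument only delivers a homeomorphism, with the smoothness step explicitly deferred as ``the main obstacle''. In the paper this step is not a delicate asymptotic-analysis problem but a soft consequence of what you have already established: at an embedded regular curve the tangent space to $\M$ is the space of holomorphic sections of the normal bundle, and $dr_i$ is the asymptotic evaluation of such a section at the $i$-th puncture; a nonzero section in $\ker dr_i$ decays at that puncture and hence produces (via the implicit function theorem, equivalently via the index count for the constrained problem, which has index $0$) a curve with the \emph{same} asymptotic orbit, contradicting the injectivity you proved. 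So $r_i$ is a local diffeomorphism, and being bijective, a diffeomorphism -- no control of asymptotic eigenfunctions beyond this is needed. A smaller point: your ``continuous bijection between closed surfaces'' uses that $\M$ itself is closed, which holds only when $W$ is minimal; in the non-minimal case the nodal curves of \Cref{theoremmoduliboundary0intersection} are present, $\M$ is only precompact, and one must run the degree argument over the complement of the codimension-two set swept by nodal curves and use the blow-down smooth structure on $\overline{\M}$, as the paper does.
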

\begin{proof}
	\textit{Part} $(i)$. Assume first that $W$ be minimal. View the domain of $S$ as a compact surface with boundary $(\Sigma,\,\partial \Sigma)$, with a logarithmic holomorphic structure. Consider the space $\M_1$ of holomorphic curves homotopic to $S$, with one marked point in $\Sigma$. By \Cref{thmmoduliboundary0intersectionmarked} and the minimality assumption this is a compact smooth manifold with boundary. 
	One can naturally compactify $\widehat{W}$ to a log-symplectic manifold $Y$ with boundary, such that $\partial Y$ is the singular locus. Since $\pi_0(\partial\M_1)=\pi_0(\partial \overline{S})$, by assumption $\text{ev}_*:\pi_0(\partial\M_1)\longrightarrow \pi_0(\partial Y)$ is injective. Hence, if it is surjective the first part of the statement is true.\\
	
	Consider the moduli space $\M_1$ of punctured $J$-holomorphic spheres homotopic to $S$, and its compactification $\overline{\M}_1$. By \Cref{thmmoduliboundary0intersectionmarked}, $\overline{\M}_1$ is a compact manifold with boundary, and the evaluation map $\text{ev}:\M_1\longrightarrow Y$ is continuous, sends the boundary to the boundary, and is smooth in the interior. If $\text{ev}$ is surjective, then the number of boundary components of $S$ needs to be equal to the number of boundary components of $Y$.\\ 
	
	Consider the doubles of $\M_1$ and $Y$, and extend the evaluation map in the obvious way. Its degree is well defined, and needs to be non-zero: for each point in the image of the evaluation, there is a unique curve in $\M$ mapping to it.\\
	
	In the nonminimal case one can run a similar argument noting that the limiting buildings form a codimension-$2$ space. Thus one can proceed as in \Cref{mcduff}.\\
	
	\textit{Part} $(ii)$. For each component of the boundary, one can define a map $r_i:\M\longrightarrow \mathcal{R}_i$, from the moduli space of curves homotopic to $S$ to the set $\mathcal{R}_i\cong\Sigma_{g_i}$ of simple unparametrized Reeb orbits in the $i$-th component of $\partial X$. The map assigns to each curve the Reeb orbit to which its $i$-th puncture is asymptotic. We will show that for all $i$ the map $r_i$ is a homeomorphism.\\
	
	The map $r_i$ is injective. Indeed, assume that there are two curves in $\M$ with a puncture asymptotic to the same Reeb orbit $\gamma$. Then their intersection, as curves with constrained asymptotic orbit $\gamma$ and the other punctures unconstrained, is equal to $-1$ (this follows from \Cref{comparisonconstrainedunconstrained}). Hence the curves must coincide, by positivity of intersection.\\
	
	The map $r_i$ is a local diffeomorphism. The tangent space to $\M$ at embedded curves consists of holomorphic sections of the normal bundle; the image of the differential is the restriction of the sections to the boundary. A non-zero section that vanishes at the boundary produces a new holomorphic curve with the same asymptotic orbit, which contradicts the injectivity. 
	If the manifold is minimal, this is enough to conclude that $r_i$ is a diffeomorphism. Since $\mathcal{R}_i\cong \Sigma_{g_i}$, and all boundary components are reached by holomorphic curves, we get $g_i=g_j$ for all $i,\,j$.\\
	In general, the same proof shows that the map $r_i$ is a homeomorphism. To turn it into a diffeomorphism we can define a smooth structure on the compactification by declaring the blow-down map to be a diffeomorphism, as in \Cref{theoremmoduliboundary0intersection}.
\end{proof}

\subsection{Logarithmic ruled surfaces}\label{sectionlogruled}

We use the results from the previous section to show that in certain log-symplectic $4$-manifolds, the presence of a holomorphic sphere with $0$ self-intersection implies that the manifold is (topologically) ruled, up to blow-up, proving \Cref{C} (\Cref{loglogruled}). The main ingredient is the construction of a smooth moduli space of log-holomorphic spheres.\\

Let us start with some observations on the combinatorics of the singular locus of a log-symplectic sphere. First of all, to any pair $(X,\,Z)$ consisting of a manifold and a codimension-$1$ submanifold (in particular to the pair determined by a log-symplectic structure), one can attach a finite planar unoriented graph $\Gamma(X,\,Z)$. The vertices are the components of $X\mi Z$. Each component of $Z$ determines an edge, joining the vertices corresponding to the components of $X\mi Z$ that it bounds. Here are some $2$-dimensional pictures.
\begin{figure}[h]
	\centering
	\begin{subfigure}{0.5\columnwidth}
		\centering
		\includegraphics{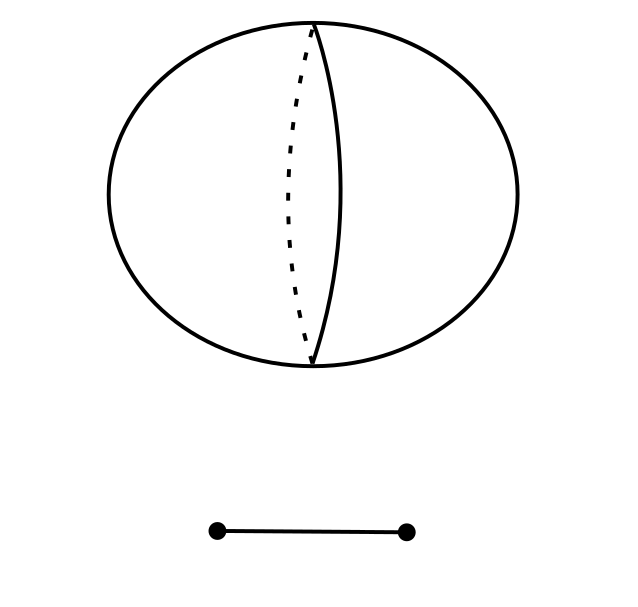}
	\end{subfigure}%
	\begin{subfigure}{0.5\columnwidth}
		\includegraphics{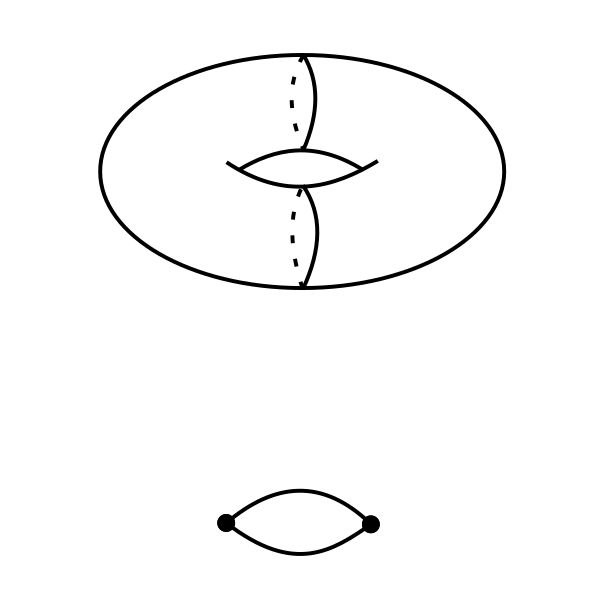}
	\end{subfigure}
	\caption{Manifolds with hypersurface, and corresponding graph}
\end{figure}

\begin{figure}[h]
	\centering
	\begin{subfigure}{0.5\columnwidth}
		\centering
		\includegraphics{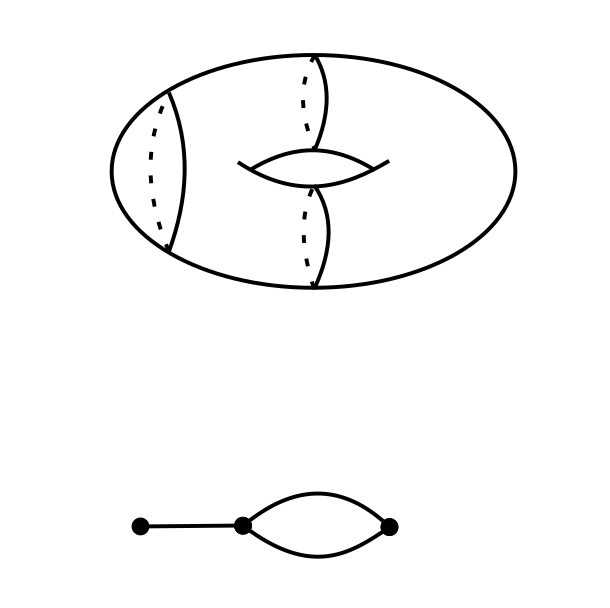}
	\end{subfigure}%
	\begin{subfigure}{0.5\columnwidth}
		\includegraphics{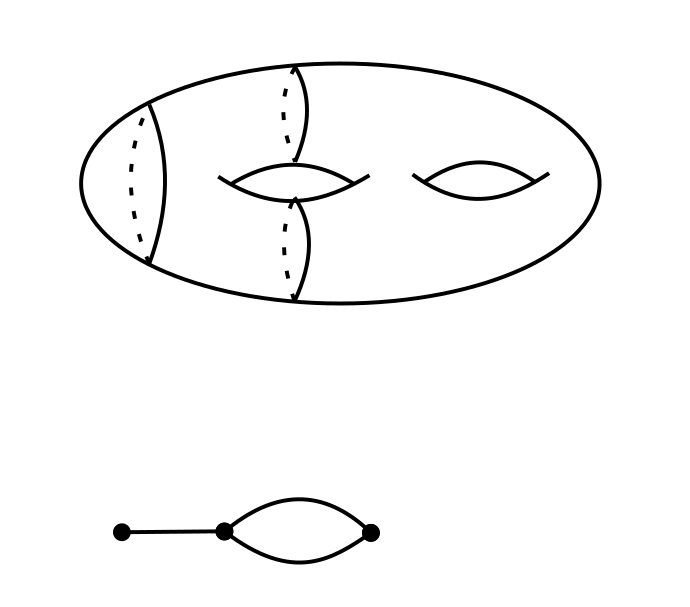}
	\end{subfigure}
	\caption{Different manifolds may give rise to the same graph}
\end{figure}

Clearly $\pi_0(\Gamma(X,\,Z))=\pi_0(X)$, in particular the graph is connected if and only if $X$ is. Let us introduce some terminology that is going to be useful.
\begin{dfn}
	Let $\Gamma$ be a finite connected graph. We call a \textit{circle} an edge with both endpoints on the same vertex. A \textit{loop} is a connected subgraph, with no circles, such that each vertex is connected to exactly two edges. We
	say that a vertex is an \textit{extreme} if all the edges connected to it, except at most one, are circles. We say that a graph is \textit{$1$-connected} if it contains no loops, and \textit{orientable} if it contains no circles.
\end{dfn}
\begin{lemma}
	Let $(X,\,Z,\,\omega)$ be log-symplectic. Then $X$ is orientable if and only if $\Gamma(X,\,Z)$ is orientable. 
\end{lemma}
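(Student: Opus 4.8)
The plan is to translate orientability of $X$ into a combinatorial $2$-colouring condition on $\Gamma=\Gamma(X,Z)$, the mechanism being that crossing the singular locus reverses the symplectic orientation. The starting point is \Cref{remarkcoorientation}: the locus $Z$ is the transverse zero set of the section $\pi^{\wedge n}$ of the real line bundle $\bigwedge^{\text{top}}TX$, and $NZ\cong\bigwedge^{\text{top}}TX|_Z$, so that $w_1(X)=w_1(\bigwedge^{\text{top}}TX)$ is Poincaré dual to $[Z]\bmod 2$. Concretely, in the normal form (\ref{localnormal}) one has $\omega^{\wedge n}\sim\tfrac1x\,dx\wedge(\cdots)$, so the symplectic orientation $\omega^{\wedge n}$ of $X\mi Z$ changes sign across each component of $Z$. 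Hence a loop in $X$ in general position with respect to $Z$ is orientation-reversing exactly when it meets $Z$ in an odd number of points.

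First I would orient every vertex of $\Gamma$, i.e. every component $V$ of $X\mi Z$, by $\omega^{\wedge n}|_V$. A global orientation of $X$ is then the same datum as a choice, for each $V$, of whether to keep or reverse $\omega^{\wedge n}|_V$, subject to a gluing condition along each component $Z_i$. By the orientation flip above, the germs of orientation on the two local sides of $Z_i$ match as orientations of $X$ if and only if the keep/reverse choices on the two adjacent vertices are opposite. Writing $\epsilon\colon\{\text{vertices}\}\to\Z_2$ for these choices, $X$ is orientable if and only if there is an $\epsilon$ with $\epsilon_v\neq\epsilon_w$ at the two endpoints $v,w$ of every edge, i.e. if and only if $\Gamma$ admits a proper $2$-colouring. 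The forward implication of the lemma is then immediate: a circle is an edge with both endpoints at a single vertex $V$, forcing $\epsilon_V\neq\epsilon_V$; equivalently, a circle produces a loop that stays in $V$ except for one transverse crossing of the corresponding $Z_i$, hence is orientation-reversing, so $X$ cannot be orientable.

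For the converse I would build an orientation by propagating $\epsilon$ along a spanning tree of $\Gamma$ (which is connected, since $\pi_0\Gamma=\pi_0X$): fix $\epsilon$ at a root and flip it across each tree-edge. This extends to a global orientation unless some non-tree edge closes up a cycle along which the parities are inconsistent. I would control these remaining closure conditions using the coorientation supplied by \Cref{normalformaroundz}, which canonically pairs the two local sides of each $Z_i$ and thereby tracks how $\epsilon$ is forced to change around a closed walk, reducing the obstruction to the presence of a circle.

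The hard part is exactly this last step. The invariant that topology alone produces is the parity of closed walks, so the clean statement coming out of $w_1(X)=\mathrm{PD}[Z]_2$ is that $X$ is orientable iff $\Gamma$ has no odd closed walk, i.e. iff $\Gamma$ is bipartite; a single self-loop is the length-one instance of this, which is why it is the decisive and most visible obstruction. The content of the lemma, beyond the homological computation, is the geometric claim that for a genuine log-symplectic pair the coorientation carried by $\omega$ organises the edges of $\Gamma$ so that the relevant obstruction is detected by circles. I expect most of the work to go into making this coorientation bookkeeping around a cycle precise, rather than into the (routine) identification of orientation-reversing loops with odd crossings of $Z$.
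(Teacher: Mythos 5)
Your reduction of orientability to a combinatorial condition on $\Gamma(X,Z)$ is correct, and it is the right way to think about the problem: in the normal form one has $\omega^n=n\,\frac{dx}{x}\wedge\alpha\wedge\beta^{n-1}$, so the symplectic orientation of $X\mi Z$ flips across every coorientable component of $Z$, while a non-coorientable component produces both a circle and a non-orientable neighbourhood. Hence $X$ is orientable if and only if $\Gamma(X,Z)$ admits a proper $2$-colouring, i.e.\ is bipartite, and your proof of the implication (circle $\Rightarrow$ $X$ non-orientable) is complete. The gap is the converse, which you defer to ``coorientation bookkeeping around a cycle'': that step cannot be carried out, because the implication (no circles $\Rightarrow$ $X$ orientable) is false, for exactly the reason you suspected --- bipartiteness is strictly stronger than the absence of circles, and nothing in the log-symplectic geometry excludes odd loops in the graph.

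Concretely, let $X=(\R\times S^1)/\Z$, where the $\Z$-action is generated by $\phi(t,\theta)=(t+3,-\theta)$, with the Poisson bivector induced by $f(t)\,\partial_t\wedge\partial_\theta$, $f(t)=\sin(\pi t)$; this bivector is invariant because $f(t+3)=-f(t)$ compensates the sign in $\phi_*\partial_\theta=-\partial_\theta$, and it vanishes transversally. The quotient is the Klein bottle, the singular locus consists of the three two-sided circles $\{t=0\},\,\{t=1\},\,\{t=2\}$, the complement consists of three cylinders, and $\Gamma(X,Z)$ is a triangle: it contains a loop but no circles, so it is orientable in the paper's sense, yet $X$ is non-orientable. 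Thus the lemma is false as stated, and the paper's own proof breaks at the same point: it asserts that if $X$ is not orientable then some component of $Z$ is not coorientable, which this example refutes. The statement (and your spanning-tree argument) does become correct under the additional hypothesis that $\Gamma(X,Z)$ is $1$-connected --- a $1$-connected graph with no circles is a forest, hence bipartite --- and this is the only situation the paper actually uses, since the graphs of nice spheres and projective planes are $1$-connected by \Cref{lemma1connected}.
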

\begin{proof}
	If $X$ is orientable, then $Z=\{f=0\}$ for some globally defined function vanishing linearly. $X\mi Z$ splits as $\{f>0\}\cup \{f<0\}$, and each component of $Z$, being coorientable, bounds both a component of $\{f>0\}$ and a component of $\{f<0\}$, which are necessarily different.\\
	If $X$ is not orientable, then at least one component $Z_i$ of $Z$ is not coorientable. Then there exists a neighbourhood $U_i$ of $Z_i$ such that $u_i\mi Z_i$ is connected. Hence the edge of $\Gamma(X,\,Z)$ corresponding to $Z_i$ has both extremes on the same vertex.
\end{proof}
\begin{figure}[t]
	\centering
	\includegraphics{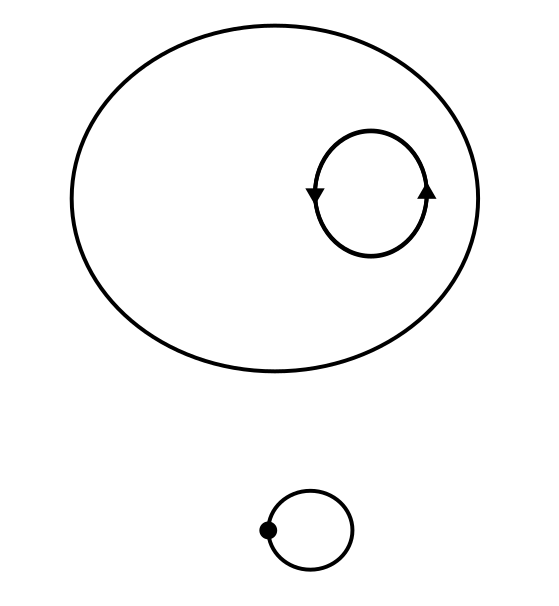}
	\caption{A circle with a single vertex, representing $\R P^2$ with the nontrivial loop as singular locus.}
	\label{figrp2}
\end{figure}
\begin{lemma}\label{lemma2extremes}
	A $1$-connected graph has at least $2$ extremes.
\end{lemma}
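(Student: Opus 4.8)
The plan is to reduce the statement to the classical fact that a finite tree with at least two vertices has at least two leaves. The first step is to build from $\Gamma$ the graph $T$ on the same vertex set, obtained by deleting every circle (self-edge). Deleting self-edges does not change which pairs of vertices are joined by a path, so $T$ inherits connectedness from $\Gamma$. I claim $T$ is acyclic: if $T$ contained a cycle, a \emph{shortest} one would be a simple closed path $v_1,\dots,v_m,v_1$ with distinct vertices, no self-edges, and each of its vertices incident to exactly two of its edges — that is, it would be a \emph{loop} in the sense of the definition, already present in $\Gamma$, contradicting $1$-connectedness. (One checks the small cases: a length-two "cycle" coming from a double edge is a loop on two vertices, and lengths $\geq 3$ are immediate.) Hence $T$ is a tree.

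The second step is to identify the extremes of $\Gamma$ with the leaves of $T$. By definition a vertex $v$ is an extreme exactly when at most one of the edges of $\Gamma$ incident to $v$ fails to be a circle; since $T$ is obtained by discarding precisely the circles, this says $\deg_T(v)\leq 1$, i.e. $v$ is a leaf or an isolated vertex of $T$. Thus counting the extremes of $\Gamma$ amounts to counting the vertices of degree $\leq 1$ in the tree $T$.

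Finally I would invoke the leaf count. Assuming $\Gamma$ has at least two vertices (the single-vertex case, realised for instance by $\R P^2$ in \Cref{figrp2}, is degenerate and must be treated on its own), the tree $T$ has $n\geq 2$ vertices and $n-1$ edges, so $\sum_v \deg_T(v)=2(n-1)$; were there at most one vertex of degree $\leq 1$, the remaining $n-1$ vertices would each have degree $\geq 2$ while connectedness forces the last to have degree $\geq 1$, giving a degree sum $\geq 2n-1 > 2(n-1)$, a contradiction. Hence $T$ has at least two leaves, and $\Gamma$ at least two extremes. The only real obstacle is bookkeeping: matching the paper's notions of \emph{circle}, \emph{loop}, and \emph{extreme} to the standard graph-theoretic vocabulary, and isolating the single-vertex exception.
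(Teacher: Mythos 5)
Your proof is correct, and it takes a genuinely different route from the paper's. Both arguments begin with the same reduction — discard the circles, which changes neither connectedness nor the set of extremes; your identification of extremes of $\Gamma$ with vertices of degree $\leq 1$ in $T$ makes precise what the paper dismisses with the phrase ``the statement and proof are insensitive of circles.'' After that the mechanisms differ. The paper runs a greedy non-backtracking walk: start at any vertex, keep moving to a new neighbour; finiteness plus the absence of loops forces the walk to terminate at a vertex of degree one, i.e.\ an extreme, and re-running the walk from that extreme yields a second, distinct one. You instead observe that $T$ is a tree (a shortest cycle of $T$ would be a loop of $\Gamma$, including the double-edge case) and then invoke the edge count $n-1$ together with the degree-sum identity to force two vertices of degree $\leq 1$. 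The paper's walk is constructive — it exhibits the two extremes, and the same path structure rooted at an extreme is what organizes the gluing in the proof of \Cref{thmmodulilogspheres} — while your counting argument is shorter, more standard, and makes the underlying tree structure explicit. One further point in your favour: your bookkeeping isolates the single-vertex case, where the lemma as stated is actually \emph{false} — the one-vertex graph with a circle of \Cref{figrp2} is $1$-connected but has only one extreme. The paper's proof silently breaks there as well (its second walk cannot end at a ``different'' vertex), so this is a defect of the lemma's statement rather than of your argument, and it is harmless in the applications, which only ever use the existence of a single extreme.
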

\begin{proof}
	The statement and proof are insensitive of circles, we might assume there are none. Pick any vertex $v_0$. Construct a sequence $v_k$ simply by selecting at each step a vertex (different from $v_k$) which is connected to $v_{k-1}$ by an edge. There are two possibilities. Case one, the process must stop, as we get to a vertex connected by only one edge. In this case such vertex is an extreme. Case two, the process can continue indefinitely. But then since the graph is finite, there must be a loop, so the graph is not $1$-connected.\\ 
	In order to find the second extreme, one can repeat the process above, taking as $v_0$ the extreme found with the procedure above. The process must end to a different extreme than $v_0$.
\end{proof}
For surfaces, we can reconstruct $(\Sigma,\,\sigma)$ given $\Gamma(\Sigma,\,\sigma)$ and the genus of the component associated to each vertex. In particular we have the following lemma.
\begin{lemma}\label{lemma1connected}
	Let $(\Sigma,\,\sigma)$ be a surface with a codimension-$1$ submanifold. If $\Sigma$ is a sphere, then $\Gamma(\Sigma,\,\sigma)$ is orientable and $1$-connected. If $\Sigma$ is $\R P^2$, then $\Gamma(\Sigma,\,\sigma)$ has exactly one circle, and is $1$-connected. 
\end{lemma}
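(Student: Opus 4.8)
The plan is to translate the combinatorial assertions about $\Gamma(\Sigma,\,\sigma)$ into statements about the embedded circles $\sigma_i\subset\Sigma$. Recall that the edges of $\Gamma$ are the components $\sigma_i$ of $\sigma$, and that such an edge is a \emph{circle} (self-loop) exactly when both local sides of $\sigma_i$ lie in the same component of $\Sigma\mi\sigma$, i.e.\ when $\sigma_i$ is one-sided, or two-sided but non-separating. The orientability halves of both statements are immediate from the orientability lemma above ($\Sigma$ orientable $\Leftrightarrow$ $\Gamma$ orientable): the sphere is orientable, so its graph has no circles, whereas $\R P^2$ is non-orientable, so its graph has at least one circle. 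Thus the real content is the sharp count ``exactly one circle'' in the $\R P^2$ case, together with $1$-connectedness (absence of loops) in both cases.

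For $\Sigma=S^2$ I would argue as follows. By the Jordan curve theorem every embedded circle on $S^2$ is separating and two-sided, so no edge is a self-loop — recovering orientability — and each edge joins two \emph{distinct} vertices. To see that $\Gamma$ is a tree, I count components: if $\sigma$ has $n$ components then $S^2\mi\sigma$ has exactly $n+1$ components, by an easy induction on $n$ (each new circle, being separating, splits exactly one existing region into two). Hence $V=n+1$ and $E=n$, so $V=E+1$; since $\Gamma$ is connected (because $\pi_0(\Gamma)=\pi_0(\Sigma)$), it is a tree. A tree has no cycles at all, in particular no loops, which is $1$-connectedness.

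For $\Sigma=\R P^2$ the key geometric input is that there is \emph{exactly one} one-sided component. In $\R P^2$ a two-sided circle is automatically separating (the nontrivial class in $H_1(\R P^2;\Z_2)$ is the one-sided one), so there are no two-sided non-separating circles; consequently every self-loop of $\Gamma$ comes from a one-sided curve, and counting circles reduces to counting one-sided components. There is at least one, by non-orientability and the orientability lemma. There is at most one: if $C_1,\,C_2$ were two disjoint one-sided circles, then cutting $\R P^2$ along $C_1$ yields a disc, inside which $C_2$ survives with an unchanged tubular neighbourhood, and would therefore still be one-sided — impossible in the orientable disc. Hence $\Gamma$ has exactly one circle.

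Finally, for $1$-connectedness of $\Gamma(\R P^2,\,\sigma)$ I would again cut along the unique one-sided curve $C$: the result is a disc $\bar D$ whose interior contains the remaining (separating, two-sided) components $\sigma\mi C$. By the disc version of the sphere argument these form a tree $T=\Gamma(\bar D,\,\sigma\mi C)$, and regluing $\bar D$ into $\R P^2$ only reidentifies the outer region with itself across $C$, i.e.\ it attaches the single self-loop $C$ at one vertex of $T$. Thus $\Gamma(\R P^2,\,\sigma)$ is a tree with one self-loop adjoined, which has no loops (a self-loop is a circle, explicitly excluded from the definition of a loop), giving $1$-connectedness. The step I expect to be most delicate is this last cut-and-reglue bookkeeping: one must verify that passing to the cut-open disc neither merges nor splits the complementary regions away from $C$ — which uses precisely that $C$ is non-separating — so that the dual graph of $\bar D$ is genuinely $\Gamma(\R P^2,\,\sigma)$ with the edge $C$ removed, and that gluing $C$ back as a self-loop creates no new cycle.
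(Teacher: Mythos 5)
Your argument is correct where the paper's own proof is correct, and it takes a genuinely different route. The paper disposes of loops homotopy-theoretically: a loop in $\Gamma(\Sigma,\,\sigma)$ would produce an element of infinite order in $\pi_1(\Sigma)$, impossible since $\pi_1(S^2)=1$ and $\pi_1(\R P^2)=\Z_2$; and it counts circles by observing that a circle edge amounts to a connected sum with $\R P^2$, invoking the classification of surfaces. You instead argue directly with the curves: for $S^2$, the Jordan curve theorem makes every component of $\sigma$ two-sided and separating, and the count $V=E+1$ plus connectedness forces $\Gamma$ to be a tree; for $\R P^2$ you classify embedded circles (two-sided $\Rightarrow$ null-homotopic $\Rightarrow$ separating, essential $\Rightarrow$ one-sided), show two disjoint one-sided curves cannot coexist, and cut along the one-sided curve to reduce to the disc case. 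Your route is more elementary and self-contained; the paper's loop-exclusion argument is shorter and works verbatim for any surface whose fundamental group has no infinite-order elements. Two small remarks: your preamble criterion for a self-loop (``one-sided, or two-sided but non-separating'') is imprecise in general -- two parallel essential circles on a torus are each non-separating, yet their edges join two distinct vertices and form a loop, not self-loops; the correct criterion is that both local sides lie in the same component of $\Sigma\mi\sigma$. This is harmless here, since on $S^2$ and $\R P^2$ every two-sided embedded circle separates. Also, the final cut-and-reglue step you flagged as delicate is actually automatic: $\R P^2\mi C$ is canonically the interior of the cut-open disc, so the components of $\R P^2\mi\sigma$ \emph{are} the components of the disc minus $\sigma\mi C$, with no bookkeeping needed.

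The one step that does not go through as written is ``there is at least one one-sided component, by non-orientability and the orientability lemma.'' That lemma is stated -- and its proof only works -- for log-symplectic pairs $(X,\,Z,\,\omega)$, where $Z$ is the zero set of a global section of $\bigwedge^{\text{top}}TX$; the present lemma hypothesizes only a surface with a codimension-$1$ submanifold, and for such bare pairs the direction you need fails. Indeed the very statement being proved fails there: take $\sigma\subset \R P^2$ to be a single null-homotopic circle, so that $\R P^2\mi\sigma$ is a disc together with an open M\"obius band, and $\Gamma$ is a single edge joining two distinct vertices -- no circles at all. So ``exactly one circle'' requires the implicit hypothesis under which the lemma is actually applied later, namely that $\sigma$ is the singular locus of a log-symplectic structure on $\Sigma$: then $\Sigma\mi\sigma$ carries an area form, hence is orientable, which rules out M\"obius-band components and forces $\sigma$ to contain the essential one-sided circle; after that, your cutting argument finishes the proof correctly. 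To be fair, this defect is shared by the paper's own proof, whose connected-sum argument likewise yields only ``at most one'' circle, and the part of the lemma used to establish niceness of spheres and projective planes (namely $1$-connectedness) is unaffected. Your proof at least isolates the missing input in a single, clearly identified claim.
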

\begin{proof}
	If $\Sigma$ is a sphere then every component of $\Sigma\mi \sigma$ is a punctured sphere, while if $\Sigma\cong \R P^2$, then $\Sigma\mi \sigma$ consists of punctured spheres and punctured M\"obius bands. Either way, a loop in $\Gamma(\Sigma,\,\sigma)$ would determine a(t least one) element of infinite order in $\pi_1(\Sigma)$ (see \Cref{figtorusloop}), which cannot happen. Finally, adding a circle to the graph corresponds with taking connected sum with $\R P^2$, and this can be done exactly once by the classification of surfaces.
\end{proof}
\begin{figure}[t]
	\includegraphics{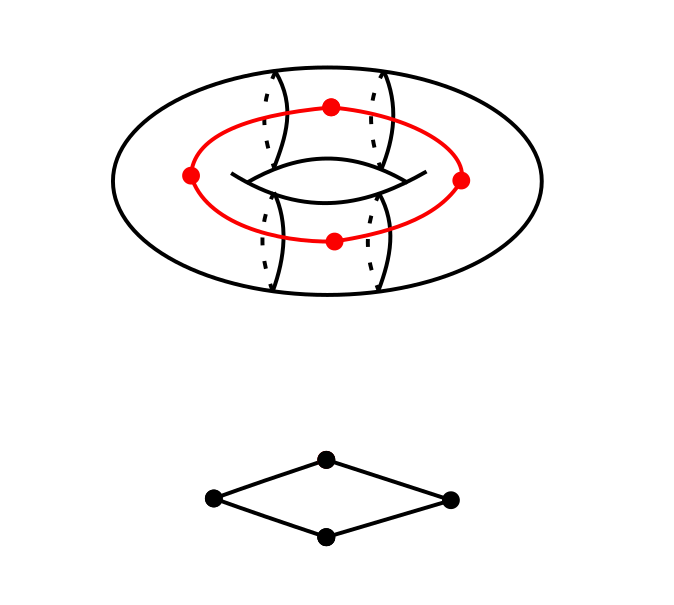}
	\centering
	\caption{A loop in the graph producing an infinite order element in $\pi_1$}
	\label{figtorusloop}
\end{figure}
\begin{dfn}
	Let $(X,\,Z,\,\omega)$ be a log-symplectic $4$-manifold. A closed embedded surface $S\subset X$ is \textit{nice} if
	\begin{itemize}
		\item $S\pitchfork Z$;
		\item the map $\pi_0(S\cap Z)\longrightarrow \pi_0(Z)$ induced from the inclusion is injective;
		\item the graph $\Gamma(S,\,S\cap Z)$ is $1$-connected, and has at least one edge.
	\end{itemize}
\end{dfn}
\begin{lemma}
	Embedded spheres and embedded real projective planes, intersecting $Z$ transversely, and intersecting each component of $Z$ in a connected set, are \textit{nice}. 
\end{lemma}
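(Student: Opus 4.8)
The plan is to verify the three defining conditions of a \emph{nice} surface one at a time, for $S$ an embedded sphere or an embedded $\R P^2$, and to reduce the only substantive condition to the already-established \Cref{lemma1connected}. The first condition, $S\pitchfork Z$, is part of the hypothesis, so there is nothing to prove. For the second condition, I would decompose $Z=\bigsqcup_i Z_i$ into its connected components and note that $S\cap Z=\bigsqcup_i (S\cap Z_i)$; by hypothesis each nonempty $S\cap Z_i$ is connected and hence constitutes a single component of $S\cap Z$. The components of $S\cap Z$ are therefore in bijection with the indices $i$ for which $S\cap Z_i\neq\emptyset$, and the induced map $\pi_0(S\cap Z)\to\pi_0(Z)$ sends the component $S\cap Z_i$ to $Z_i$; distinct components land in distinct $Z_i$, so the map is injective.

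For the third condition I would first observe that, since $S$ is a closed surface meeting $Z$ transversally, $S\cap Z$ is a closed $1$-dimensional submanifold of $S$, so the pair $(S,\,S\cap Z)$ is exactly a surface equipped with a codimension-$1$ submanifold. This lets me invoke \Cref{lemma1connected} directly: if $S\cong S^2$ then $\Gamma(S,\,S\cap Z)$ is orientable and $1$-connected, and if $S\cong \R P^2$ then $\Gamma(S,\,S\cap Z)$ is $1$-connected and contains exactly one circle. In both cases $1$-connectedness holds, which is the main clause of the third condition.

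The one point requiring care is the remaining requirement that the graph carry \emph{at least one edge}, i.e.\ that $S\cap Z\neq\emptyset$. For $S\cong\R P^2$ this is automatic, since the circle produced by \Cref{lemma1connected} is already an edge. For $S\cong S^2$ it is precisely the content of the assumption that $S$ meets $Z$: a sphere disjoint from $Z$ would satisfy the first two conditions vacuously but produce the edgeless one-vertex graph, so it would fail to be nice. I would therefore read the nonemptiness of the intersection as part of the hypothesis "intersecting $Z$", and make this explicit in the spherical case.

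In summary, the lemma is essentially a corollary of \Cref{lemma1connected} together with the elementary bijection in the second step; no new geometric input is needed. The only (modest) obstacle is the bookkeeping around the edge count in the spherical case, which I expect to dispatch by spelling out the nonemptiness convention rather than by any further argument.
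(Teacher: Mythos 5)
Your proposal is correct and follows the same route as the paper, whose entire proof is the one-line citation of \Cref{lemma1connected}; you have simply filled in the bookkeeping (injectivity of $\pi_0(S\cap Z)\to\pi_0(Z)$ from the connectedness hypothesis, and the nonemptiness of $S\cap Z$ needed for the at-least-one-edge clause) that the paper leaves implicit. Your explicit treatment of the edge-count convention in the spherical case is a reasonable clarification rather than a departure from the paper's argument.
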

\begin{proof}
	This is a direct consequence of \Cref{lemma1connected}.
\end{proof}

\begin{thm}\label{thmmodulilogspheres}
	Let $(X,\,Z,\,\omega)$ be a log-symplectic $4$-manifold. Assume that there is a neighbourhood of $Z$ realizing $Z$ as a union of $S^1\times \Sigma_{g_i}$, with the trivial stable Hamiltonian structure. Let $J$ be a cylindrical complex structure with respect to such trivialization. Then the moduli space $\M$ of nice $J$-holomorphic spheres with $0$ self-intersection is a smooth closed $2$-dimensional manifold.
\end{thm}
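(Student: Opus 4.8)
The plan is to cut a nice sphere along the singular locus, apply the single-end result \Cref{spacespuncturedspheres} to each resulting piece, and then reassemble the pieces using the combinatorics of the graph $\Gamma(S,\,S\cap Z)$. First I would set up the decomposition. Let $S$ be a nice $J$-holomorphic sphere with $S\cdot S=0$. As $S\pitchfork Z$, the intersection $S\cap Z$ is a disjoint union of circles, one per edge of $\Gamma:=\Gamma(S,\,S\cap Z)$, which by \Cref{lemma1connected} is orientable and $1$-connected, i.e.\ a tree. Cutting $S$ along these circles produces punctured spheres $S_v$, one for each vertex $v$, each lying in a single component of $X\mi Z$ and hence, by \Cref{logequalpunctured}, a finite-energy punctured $J$-holomorphic sphere in the corresponding completion $\widehat{W}_v$. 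Each $S_v$ is embedded (being a piece of the embedded $S$), and by \Cref{asympt} each puncture is asymptotic to a simple Reeb orbit $S^1\times\{p\}$. The injectivity of $\pi_0(S\cap Z)\to\pi_0(Z)$ in the definition of \emph{nice} forces the punctures of each $S_v$ to lie on distinct components of $\partial W_v$, which is exactly the puncture hypothesis of \Cref{spacespuncturedspheres}.

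The technical heart is to show that each piece has vanishing self-intersection. Using the additivity of the punctured intersection number (\Cref{sectionintersectionpunctured}) and the computations of \Cref{computationintersections}, cutting a generic normal perturbation of $S$ along the circles $S\cap Z$ expresses $0=S\cdot S$ as $\sum_v S_v\star S_v$ (with unconstrained asymptotics), the asymptotic correction at each edge vanishing because the orbits are simply covered of the form $S^1\times\{p\}$, for which the natural trivialization has zero extremal winding. Each $S_v$ is embedded, so adjunction gives $S_v\star S_v=\tfrac12\ind(S_v)-1$; the evenness of these orbits (\Cref{computationintersections}) together with genericity of $J$ and, in the minimal case, the absence of exceptional curves force $S_v\star S_v\ge 0$, whence the vanishing of the sum yields $S_v\star S_v=0$ for all $v$ (the non-minimal case reduces to this by the blow-down of \Cref{theoremmoduliboundary0intersection}). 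Now \Cref{spacespuncturedspheres} applies to every $S_v$: the moduli space $\M_v$ of curves homotopic to $S_v$ is diffeomorphic to $\Sigma_g$, the genera at the punctures of a given vertex all agree, and for each puncture the asymptotic-orbit map $r$ into the space $\mathcal{R}\cong\Sigma_g$ of simple orbits is a diffeomorphism. As $\Gamma$ is connected, a single $g$ occurs throughout.

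It then remains to assemble and collapse. A nice sphere near $S$ is precisely a tuple $(S_v)_v\in\prod_v\M_v$ whose curves match at every edge (the punctures of $S_v$ and $S_w$ over an edge $e=\{v,w\}$ being asymptotic to the same orbit of $Z_e\cong S^1\times\Sigma_g$); the SFT gluing theorem underlying \Cref{theoremmoduliboundary0intersection} and the compactness theorem \Cref{thmlogsftcompactness} identify
\[
\M\;\cong\;\Big\{(S_v)_v\in\textstyle\prod_v\M_v \;:\; r^v_e(S_v)=r^w_e(S_w)\ \text{for every edge}\ e=\{v,w\}\Big\}.
\]
Since every matching map $r^v_e\colon\M_v\to\Sigma_g$ is a diffeomorphism, the matching conditions are transverse, so $\M$ is smooth of dimension $2\,\#V-2\,\#E=2(\#V-\#E)=2$ because $\Gamma$ is a tree. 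Inducting on the number of edges and repeatedly deleting an extreme vertex $v_0$ (which exists by \Cref{lemma2extremes}) with its unique edge $e_0$, at each step one forms the fibered product of the moduli space of the remaining tree with the leaf factor $\M_{v_0}\cong\Sigma_g$ along the diffeomorphism $r^{v_0}_{e_0}$; forming a fibered product against a diffeomorphism does not change the diffeomorphism type, so $\M\cong\Sigma_g$, a smooth closed surface (with smooth structure supplied by the blow-down in the non-minimal case).

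The main obstacle is the self-intersection bookkeeping of the second step: one must verify that the asymptotic contributions cancel so that the punctured self-intersections genuinely sum to $S\cdot S$, and that each summand is non-negative in the minimal case. Granting this, the conclusion follows formally from \Cref{spacespuncturedspheres} and the tree combinatorics encoded in \Cref{lemma1connected} and \Cref{lemma2extremes}.
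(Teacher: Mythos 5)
Your assembly step is essentially the paper's own proof: cut the nice sphere along $Z$ into embedded punctured spheres (niceness giving at most one puncture per boundary component of each piece), apply \Cref{spacespuncturedspheres} to each piece, and glue the resulting moduli spaces along the tree $\Gamma(S,\,S\cap Z)$ using the orbit-evaluation diffeomorphisms; your fibered-product/extreme-vertex induction is the same device the paper uses via \Cref{lemma2extremes}. The divergence is your ``technical heart'', and the paper never needs it: in this paper ``$0$ self-intersection'' for a log-holomorphic curve is a \emph{component-wise} condition --- see the definition stated immediately after this theorem for $\R P^2$ maps (``each component of $u|_{\R P^2\mi\sigma}$ has $0$-self intersection as a punctured holomorphic curve''), which is exactly how the paper's proof invokes \Cref{spacespuncturedspheres}. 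So the per-component vanishing is part of the hypothesis, not something to be extracted from a homological count.

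Read as a derivation from the homological condition $S\cdot S=0$, your argument has a genuine gap. Adjunction plus regularity give, for each embedded piece, $S_v\star S_v=\frac12\ind(S_v)-1\ge -1$ with even index, so the vanishing of the sum only says a multiset of integers $\ge -1$ sums to zero; a configuration with one piece at $-1$ and another at $+1$ is not excluded. Your appeal to ``absence of exceptional curves'' does not close this: minimality of $X\mi Z$ forbids closed embedded symplectic $(-1)$-spheres, not punctured spheres of unconstrained self-intersection $-1$ and index $0$, so there is no contradiction to extract. Moreover, the theorem carries no minimality hypothesis at all (minimality enters only in \Cref{loglogruled}), and the proposed reduction of the non-minimal case ``by the blow-down'' is not an argument --- blowing down changes $X$, $J$ and the curves, and nothing is said about what happens to $S$ or its pieces. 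Two smaller points: the additivity $S\cdot S=\sum_v S_v\star S_v$ is plausible but appears nowhere in the paper and needs the check that cross terms vanish (the pieces are disjoint and asymptotic to orbits in distinct components of $Z$, so both the geometric and asymptotic contributions are zero); and the punctures here are \emph{odd}, not even --- that oddness is precisely what yields $\ind(u)=2+2(u\star u)$ in \Cref{indexfromintersection}.
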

\begin{proof}
	A log $J$-holomorphic curve $u:(\Sigma,\,\sigma)\longrightarrow (X,\,Z)$ determines several punctured holomorphic curves $u_i:\dot{\Sigma}_i\longrightarrow X_i$, $i=1,\,\dots,\,N$, where $\dot{\Sigma}_i$ and $X_i$ are components respectively of $\Sigma\mi\sigma$ and $X\mi Z$. In the case at hand, each $\dot{\Sigma}_i$ is a punctured sphere. By automatic transversality (\cite{wendlpunctured}), we may assume that $J$ is regular (see also). Then, each $u_i$ satisfies the hypotheses of \Cref{spacespuncturedspheres}. Thus there are moduli spaces $\M_i$, one for each component, and they are all diffeomorphic to each other. 
	
	More precisely, recall that for each component $X_i$ of $X\mi Z$, and each component $Z_{i,j}$ bounding $X_i$, one can define a map $r_{i,j}:\M_i\longrightarrow \mathcal{R}_{i,j}$, where $\mathcal{R}_{i,j}$ is the set of simple unparametrized Reeb orbits in $Z_{i,j}$. The map is defined so that $r_{i,j}(v)$ is the Reeb orbit to which the $j$-th puncture of $v$ in $X_i$ is asymptotic; we showed in \Cref{spacespuncturedspheres} that $r_{i,j}$ is a diffeomorphism.\\
	
	In order to glue the moduli spaces $\M_i$, we exploit our understanding of the graph $\Gamma(\Sigma,\,\sigma)$. Consider first the case $\Sigma\cong S^2$. First of all we know that to each vertex there corresponds a punctured holomorphic sphere, and a corresponding moduli space. Denote $\M_x$ the moduli space corresponding to the vertex $x$. Since there are no loops, each pair of vertices determines at most one edge. To the edge $xy$ between vertices $x$ and $y$ one can associate the manifold $R_{xy}$ of Reeb orbits on the corresponding component of $Z$. Moreover, to each edge one can associate two diffeomorphisms $r_{xy}:\M_x\longrightarrow R_{xy}$ and $r_{yx}:\M_y\longrightarrow R_{xy}$.\\
	
	By \Cref{lemma2extremes} we know that there exists one extreme, which for $S^2$ has no circles. We can fix one of the extremes, say $x_0$, and partition the set of vertices by their distance from $x_0$, i.e. the number of edges that it takes to reach $x_0$. This is well defined as there are no loops. In particular, for each vertex there is exactly one set of edges connecting it to $x_0$.\\
	
	For each $x$, let $x_0x_1,\,x_1x_2,\,\dots,\,x_i x$ be the sequence of edges joining $x$ with $x_0$. Define the map $r_x=r_{x_0x_1}\1\circ r_{x_1x_0}\circ r_{x_1x_2}\1\circ r_{x_2x_1}\circ\dots\circ r_{x x_i}:\M_x\longrightarrow \M_{x_0}$ ($r_{x_0}=\text{id}$). It is a diffeomorphisms for each vertex $x$.\\ 
	
	Finally, denoting with $V(\Sigma,\,\sigma)$ the set of vertices of $\Gamma(\Sigma,\,\sigma)$, we can define the moduli space as $\M=\{(v_x)_{x\in V(\Sigma,\,\sigma)}:\,v_x\in\M_x,\,r_x(v_x)=v_{x_0}\}$. The map $(v_x)_x\mapsto v_x$ induces a diffeomorphism $\M\cong\M_x$ for all $x\in V(\Sigma,\,\sigma)$.
\end{proof}
In fact, in the proof we only used the properties of the graph $\Gamma(\Sigma,\,\sigma)$, and the fact that $\Sigma\mi\sigma$ is a collection of punctured spheres. This suggests that if $\Sigma\cong\R P^2$, a similar statement holds.\\

Let $(\R P^2,\,\sigma)$ be a real projective plane admitting a log-symplectic structure with singular locus $\sigma$. Let $j$ be a complex structure on $T\R P^2(-\log(\sigma))$, and let $u:(\R P^2,\,\sigma,\,j)\longrightarrow (X,\,Z,\,J)$ be holomorphic.   
\begin{dfn}
	We say that a holomorphic map $u:(\R P^2,\,\sigma,\,j)\longrightarrow (X,\,Z,\,J)$ has $0$-self intersection if each component of $u|_{\R P^2\mi\sigma}$ has $0$-self intersection as a punctured holomorphic curve.
\end{dfn}

\begin{thm}\label{thmmodulilogproj}
	Let $(X,\,Z,\,\omega)$ be a log-symplectic $4$-manifold. Assume that there is a neighbourhood of $Z$ realizing $Z$ as a union of $S^1\times \Sigma_{g_i}$, with the trivial stable Hamiltonian structure. Let $J$ be a cylindrical complex structure with respect to such trivialization. Then the moduli space $\M$ of nice $J$-holomorphic projective planes with $0$ self-intersection is a smooth closed $2$-dimensional manifold.
\end{thm}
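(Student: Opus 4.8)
The plan is to run the proof of \Cref{thmmodulilogspheres} essentially verbatim, replacing the combinatorial input about $S^2$ by the corresponding statement for $\R P^2$ supplied by \Cref{lemma1connected}. First I would record that $\Gamma(\R P^2,\,\sigma)$ is $1$-connected and has exactly one circle; deleting that circle leaves a loop-free, circle-free graph, i.e.\ a tree, and by \Cref{lemma2extremes} there is an extreme vertex at which to root the later gluing. Decomposing $\R P^2\mi\sigma$ along $\sigma$, every vertex other than the one carrying the circle corresponds to a punctured sphere, while the distinguished vertex carries the non-orientable topology (a once-punctured disc or Möbius band). For each non-circle vertex $x$ the restricted curve $u_x$ is a punctured sphere meeting the hypotheses of \Cref{spacespuncturedspheres}, so I obtain a smooth moduli space $\M_x$ together with the Reeb-orbit diffeomorphisms $r_{xy}:\M_x\longrightarrow R_{xy}$ exactly as in the spherical case.

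The new content is the circle vertex $x_c$, which corresponds to a \emph{non-coorientable} component $Z_c$ of $Z$. Here I would realize the adjacent piece $X_c$ as a completion $\widehat W_c$ whose end is the \emph{connected} double cover $\tilde Z_c$ of $Z_c$, as provided by \Cref{simplenormalform} and \Cref{stablehamiltonianform}. The crucial local computation is that the puncture of $u_{x_c}$ at the circle edge wraps twice around $Z_c$, so that by \Cref{asympt} and \Cref{logequalpunctured} it lifts to a puncture asymptotic to a \emph{simple} Reeb orbit of $\tilde Z_c$. When the complementary piece is already a punctured sphere, \Cref{spacespuncturedspheres} applies on the nose and yields a smooth $\M_{x_c}\cong\Sigma_{g_c}$. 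In the genuinely non-orientable case I would pass to the orientation double cover of the domain, which in the $\tilde Z_c$-picture turns $u_{x_c}$ into a simply covered punctured sphere, equivariant under the involution $\sigma$ of \Cref{simplenormalform}; applying \Cref{spacespuncturedspheres} upstairs and descending then produces $\M_{x_c}$.

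With all the vertex moduli spaces in hand, I would glue them exactly as in \Cref{thmmodulilogspheres}: fixing the extreme $x_0$, the tree edges identify every $\M_x$ with $\M_{x_0}$ through compositions $r_x=\cdots\circ r_{xx_i}$ of the $r_{xy}$, and the single circle edge at $x_c$ is then closed up by the self-identification coming from its two half-edges. The moduli space is realized as the fibre product $\M=\{(v_x)_x:\,r_x(v_x)=v_{x_0}\}$, which the projection $(v_x)_x\mapsto v_{x_0}$ identifies with $\M_{x_0}\cong\Sigma_{g}$; hence $\M$ is a smooth closed surface.

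I expect the circle vertex to be the main obstacle, and two points there require care. First, the reduction must genuinely yield a simply covered punctured sphere to which \Cref{spacespuncturedspheres} applies, rather than a multiply covered curve: this is why one double-covers the \emph{target} end $\tilde Z_c$ and uses that the doubly wrapped orbit becomes simple upstairs, instead of merely doubling the domain. Second, one must verify that the descent to $\M_{x_c}$ is smooth, i.e.\ that the relevant $\Z_2$-action has no exceptional locus, which is precisely the free-action statement of \Cref{freeaction}. For spherical fibers this is \Cref{freeaction} verbatim; for positive-genus fibers the argument that $\sigma$ has no invariant leaf must be replaced by one using positivity of intersection together with the injectivity of the Reeb-orbit map established in part (ii) of \Cref{spacespuncturedspheres}, since an exceptional curve would force two punctures to share an asymptotic orbit, contradicting that injectivity.
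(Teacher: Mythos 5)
Your strategy coincides with the paper's: cut along $\sigma$, build one moduli space per vertex of $\Gamma(\R P^2,\,\sigma)$ via \Cref{spacespuncturedspheres}, and glue along the tree obtained by deleting the circle. Your handling of the circle vertex in the planar case is also exactly the paper's mechanism: the cut boundary is a single circle double covering $\sigma_c$, the adjacent component of $X\mi Z$ has cylindrical end the \emph{connected} double cover $\tilde Z_c$ (\Cref{stablehamiltonianform}), and the puncture is asymptotic to a \emph{simple} orbit upstairs, so \Cref{spacespuncturedspheres} applies there with no modification and the circle edge imposes no matching condition.

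The genuine issue is your case distinction at the circle vertex. The ``genuinely non-orientable case'' never occurs, and your proof neither shows this nor treats it convincingly. A nice $J$-holomorphic projective plane carries a complex structure on $T\R P^2(-\log\sigma)$, so this rank-$2$ bundle is orientable; since $w_1$ of its determinant equals $w_1(\R P^2)+\mathrm{PD}_{\Z_2}[\sigma]$, the class $[\sigma]$ is forced to be dual to $w_1(\R P^2)$. Hence $\sigma$ contains exactly one one-sided circle (two disjoint one-sided circles cannot coexist in $\R P^2$), cutting along it gives a disc, and the remaining components of $\sigma$ are two-sided circles inside that disc: \emph{every} component of $\R P^2\mi\sigma$, including the circle vertex, is a punctured sphere. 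This is precisely the fact the paper invokes when it splits a holomorphic projective plane ``as a union of punctured holomorphic spheres'', and it is what makes the proof ``almost exactly the same'' as \Cref{thmmodulilogspheres}. As written, your Möbius-band branch is dead code whose details do not stand on their own: the equivariant reduction is unproven, \Cref{spacespuncturedspheres} would not apply ``upstairs'' (the orientation double cover of the piece has two punctures over the same end), and \Cref{freeaction} concerns the $\Z_2$-action on \emph{closed} leafwise spheres near $Z$, not these punctured moduli spaces. Since the branch is vacuous your argument does cover all actual instances, but the gap should be closed by the $w_1$ observation above rather than by that machinery. A smaller point: there is no ``self-identification coming from the two half-edges'' of the circle --- there is only one puncture and one Reeb-evaluation map at that edge, so the circle prescribes no condition whatsoever; your fibre-product formula (which indeed contains no such condition) and the paper's remark both reflect this.
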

\begin{proof}
	The proof is almost exactly the same as \Cref{thmmodulilogspheres}. We can split a holomorphic projective plane as a union of punctured holomorphic spheres, and apply automatic transversality to make sure $J$ is generic. One then constructs a moduli space for each components, and glues along the singular locus. The only difference with the above is that the graph $\Gamma(\Sigma,\,\sigma)$ contains a circle. However, the circle does not prescribe any glueing condition between moduli spaces (moduli spaces are associated to vertices, and two vertices are glued using the edges joining them). 
\end{proof}
One can construct a moduli space of curves with one marked point using a similar gluing construction. A delicate point is that the compactification of the spaces of curves with a marked point is a manifold with boundary, and we can only construct a $C^0$ structure at the boundary points. This reflects the fact that we are constructing the moduli space of holomorphic curves by gluing punctured spheres along their asymptotic orbits. However we don't know in general how to control the infinite jet of the punctured spheres along their asymptotic orbits; hence we can only glue their images as topologically embedded surfaces. 
\begin{thm}\label{thmmodulimarkedlogcurves}
	Same assumptions as \Cref{thmmodulilogspheres} or \Cref{thmmodulilogproj}. There exist moduli spaces $\M_1$ of log-holomorphic spheres and projective planes with one marked point, which is a closed topological $4$-dimensional manifold. It comes with a canonical smooth structure on the complement of a codimension-$1$ submanifold $\M_Z$. Moreover there is a well defined evaluation map $\text{ev}:\M_1\longrightarrow X$, such that $\text{ev}\1(Z)=\M_Z$, which is continuous, and smooth on $\M_1\mi\M_Z$. 
\end{thm}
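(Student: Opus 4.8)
The plan is to assemble $\M_1$ by gluing together the \emph{marked} moduli spaces of the individual punctured components, in exact parallel to the way $\M$ was built in \Cref{thmmodulilogspheres} and \Cref{thmmodulilogproj} from the unmarked component moduli spaces. For each vertex $x$ of the graph $\Gamma(\Sigma,\,\sigma)$ the associated punctured curve lives in a component $X_x$ of $X\mi Z$, and \Cref{thmmoduliboundary0intersectionmarked} furnishes a compact smooth $4$-manifold with boundary $\overline{\M}_{1,x}$, fibering over the $2$-dimensional base $\M_x$, with a continuous evaluation map that is smooth on the interior and sends the boundary into $Z$. Its boundary splits into one stratum per puncture, i.e. per edge $e$ incident to $x$: this stratum is the height-$2$ building whose extra level is a trivial cylinder over a Reeb orbit $\gamma=r_{x,e}(v)$ carrying the marked point. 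First I would record that, after quotienting by the $\R$-translation of that level, the stratum is parametrized by a pair $(v,\,\theta)$ with $v\in\M_x$ and $\theta\in\gamma$, that it evaluates to the point $\theta\in\gamma\subset Z_e$, and that it is therefore a $3$-manifold (a circle bundle over the $2$-dimensional $\M_x$), so each $\overline{\M}_{1,x}$ is genuinely a $4$-manifold with $3$-dimensional boundary.

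Next I would glue. For an edge $e$ joining distinct vertices $x$ and $y$, the diffeomorphisms $r_{x,e}$, $r_{y,e}$ of \Cref{spacespuncturedspheres} identify the orbit spaces, and I identify the corresponding boundary strata of $\overline{\M}_{1,x}$ and $\overline{\M}_{1,y}$ by matching $(v_x,\,\theta)$ with $(v_y,\,\theta')$ precisely when $v_x$ and $v_y$ already glue inside $\M$ and $\theta=\theta'$ as points of $Z_e$. A self-edge, present exactly once in the $\R P^2$ case by \Cref{lemma1connected}, joins a vertex to itself; its two punctures are asymptotic to orbits in the same component $Z_e$ related by the deck involution $\sigma$, and I glue its two boundary strata to one another through $\sigma$. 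Since each edge has exactly two incidences, this pairs up the entire boundary of every $\overline{\M}_{1,x}$, with no leftover; by the collar neighbourhood theorem the result $\M_1$ is then a closed topological $4$-manifold, which fibers topologically over the $2$-dimensional $\M$ with fiber the closed domain surface $\Sigma$. The locus $\M_Z$ where the marked point sits on $\sigma$ is exactly the union of the glued boundary strata; it fibers over $\M$ with $1$-dimensional fiber $\sigma$, hence is a $3$-dimensional, i.e. codimension-$1$, submanifold.

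I would then handle the smooth structure and the evaluation map together. Away from $\M_Z$ the marked point lies in an open piece $\dot{\Sigma}_x$, so $\M_1\mi\M_Z$ is the disjoint union of the smooth interiors of the $\overline{\M}_{1,x}$; this provides the canonical smooth structure, and the evaluation map is smooth there by \Cref{thmmoduliboundary0intersectionmarked}. The evaluation maps of adjacent pieces agree along the glued strata because those strata were matched precisely so that both sides evaluate to the same point of $Z_e$; hence they descend to a globally continuous map $\text{ev}:\M_1\longrightarrow X$, and by construction $\text{ev}\1(Z)=\M_Z$.

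The main obstacle is the smoothness transverse to $\M_Z$, and I expect it to be genuinely irreparable by these methods, which is why the statement claims only a $C^0$ structure there. Gluing $\overline{\M}_{1,x}$ to its neighbour along a $3$-dimensional boundary stratum yields a topological manifold and even a smooth structure on $\M_Z$ itself, but to match the two collars smoothly one would need to compare how the image of the marked point approaches $Z$ from the two sides, that is, to control the full asymptotic (infinite-jet) expansion of the punctured curves along their Reeb orbits. As flagged in the paragraph preceding the statement, this expansion is not available, so the smooth structures on the two sides need not be compatible across $\M_Z$ and one can only glue the images as $C^0$-embedded surfaces. I would therefore present the transverse matching as purely topological, exhibiting $\M_Z$ as the codimension-$1$ locus across which smoothness is lost.
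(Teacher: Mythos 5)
Your construction is the paper's own: for each vertex $x$ of $\Gamma(\Sigma,\sigma)$ take the compactified marked moduli space $\overline{\M}_{1\,x}$ furnished by \Cref{thmmoduliboundary0intersectionmarked}, identify its boundary as one stratum $\cong \overline{\M}_x\times S^1$ (the marked trivial cylinder modulo translation) per incident edge, glue across edges through the maps $r$ of \Cref{spacespuncturedspheres}, take the smooth structure and smooth evaluation from the interiors, get continuity of $\text{ev}$ from the matching of strata, and accept only a $C^0$ structure across $\M_Z$ because the infinite jets of the punctured curves along their asymptotic orbits are not controlled. All of this, including the reason why smoothness transverse to $\M_Z$ is not claimed, agrees with the paper.

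There is, however, a genuine error in your local model at a self-edge, which is exactly the new feature of the $\R P^2$ case. For the domains covered by the statement ($S^2$ and $\R P^2$) every two-sided simple closed curve separates, so by \Cref{lemma1connected} a circle of $\Gamma(\Sigma,\sigma)$ can only arise from a \emph{one-sided} component $\sigma_e$ of $\sigma$. Such a $\sigma_e$ has a M\"obius-band neighbourhood, so the adjacent component of $\Sigma\mi\sigma$ has a \emph{single} end, whose circle at infinity double covers $\sigma_e$; correspondingly the vertex carries one puncture for this edge (not two), asymptotic to a single orbit in $\tilde{Z}_e$ that is invariant under the deck involution and double covers an orbit of $Z_e$. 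Hence $\overline{\M}_{1\,x}$ has a single boundary stratum $\cong \M_x\times S^1$ for this edge, and there is no second stratum to pair it with: it must instead be glued to \emph{itself} by the free involution $(v,\theta)\mapsto (v,\sigma(\theta))$ (half-period rotation along each invariant orbit), which again produces a closed topological manifold whose $\M_Z$-stratum is $\M_x\times (S^1/\sigma)\cong \M\times\sigma_e$. Your prescription "its two punctures are asymptotic to orbits \dots glue its two boundary strata to one another through $\sigma$" describes a non-separating two-sided curve, which does not exist in $S^2$ or $\R P^2$, so that gluing step cannot be carried out as written. The repair is purely local, and the rest of your argument goes through unchanged.
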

\begin{proof}
	For each $x\in V(\Sigma,\,\sigma)$, define $\overline{\M}_{1\,x}$ as the compactification of the space of holomorphic curves (in the appropriate component) with one marked point. This is a manifold whose boundary can be identified with a union of copies of $\overline{\M}_x\times S^1$, one for each edge having $x$ as a vertex. Using $r_{yx}\1 \circ r_{xy}$ to identify $\overline{\M}_x\times S^1$ with $\overline{\M}_y\times S^1$, define $\overline{\M}_1$ as the space obtained by glueing the manifolds $\overline{\M}_{1\,x}$ along their boundaries, using the identifications just explained, as dictated by the graph $\Gamma(\Sigma,\,\sigma)$, in the same fashion as in the proof of \Cref{thmmodulilogspheres}. The resulting space is a topological manifold, with the properties claimed in the statement.
\end{proof}
The following corollary is what was described in the Introduction as \Cref{C}.
\begin{coro}\label{loglogruled}
	Let $(X,\,Z,\,\omega)$ be a log-symplectic $4$-manifold satisfying the assumptions from \Cref{thmmodulilogspheres}. Assume further that $X$ is minimal, and that there exists a cylindrical complex structure $J$ (with respect to the trivialization in the assumption) and a $J$ holomorphic sphere with $0$ self-intersection, or a $J$-holomorphic projective plane with $0$ self intersection. Then there is a continuous map $f:X\longrightarrow B$, with $B$ a smooth closed orientable surface, such that
	\begin{itemize}
		\item $f|_{X\mi Z}$ is a smooth submersion;
		\item $Z$ is a union of copies of $S^1\times B$, and $f|_Z$ is the projection to $B$;
		\item the fibers of $f$ are spheres if $X$ is orientable, projective planes if $X$ is not orientable;
		\item the fibers are symplectic on $X\mi Z$, and the closures in $X$ of the components of $f\1(p)\cap X\mi Z$ are log-symplectic surfaces with boundary, of which the boundary is the singular locus.
	\end{itemize}
\end{coro}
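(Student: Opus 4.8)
The plan is to use the moduli spaces constructed above as the total space and base of the desired fibration, and to realize $f$ as the composition of a forgetful map with the inverse of the evaluation map. First I would set $B:=\M$, the moduli space of nice $J$-holomorphic curves of the appropriate type (spheres if $X$ is orientable, projective planes otherwise) with $0$ self-intersection. By \Cref{thmmodulilogspheres} (respectively \Cref{thmmodulilogproj}), $B$ is a smooth closed $2$-dimensional manifold, and by \Cref{spacespuncturedspheres}(ii) the maps $r_i$ identify $B$ diffeomorphically with each fiber $\Sigma_{g_i}$; in particular all $g_i$ agree, each $\Sigma_{g_i}$ is orientable (being a symplectic surface), so $B$ is a closed orientable surface and $Z=\bigsqcup_i S^1\times B$. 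This already delivers the first half of the second bullet.

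Next I would bring in the one-marked-point moduli space $\M_1$ of \Cref{thmmodulimarkedlogcurves}, together with its evaluation map $\text{ev}:\M_1\longrightarrow X$ and the forgetful projection $\pi:\M_1\longrightarrow\M=B$, and define
\[
f:=\pi\circ\text{ev}^{-1}.
\]
Everything then hinges on showing that $\text{ev}$ is a homeomorphism. Continuity is part of \Cref{thmmodulimarkedlogcurves}, and since $\M_1$ is compact and $X$ is Hausdorff it suffices to prove that $\text{ev}$ is a bijection. Injectivity comes from positivity of intersection: two distinct curves $u_b,u_{b'}$ have distinct asymptotic orbits on every component of $Z$ (because each $r_i$ is a bijection), so they carry no asymptotic intersection contribution and $u_b\star u_{b'}=u_b\star u_b=0$ forces them to be disjoint, both in the interior and along $Z$; as each curve is embedded (\Cref{theoremmoduliboundary0intersection}), the marked point is recovered from its image, so $\text{ev}$ is injective.

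The main obstacle is surjectivity, which I would establish by an open--closed argument. The image $\text{ev}(\M_1)$ is closed by compactness. Restricting to $\M_1\mi\M_Z$, the map $\text{ev}$ is a continuous injection between $4$-manifolds, so by invariance of domain $\text{ev}(\M_1\mi\M_Z)$ is open in $X\mi Z$; a short limiting argument using compactness of $\M_1$ (any interior limit of images of interior marked curves is again such) shows it is also closed in $X\mi Z$, hence a union of connected components of $X\mi Z$. Since each $r_i$ is onto, every Reeb orbit bounds a curve, so $Z\subseteq\text{ev}(\M_Z)$; and because a nice curve meets every component of $Z$ exactly once and therefore spans every component of $X\mi Z$ (this is the content of the gluing in \Cref{thmmodulilogspheres}, where $\Gamma(\Sigma,\sigma)$ matches $\Gamma(X,Z)$, together with \Cref{spacespuncturedspheres}(i)), every component of $X\mi Z$ meets the image and is thus contained in it. This proves $\text{ev}$ is onto. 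The delicate point throughout is that $\M_1$ is only $C^0$ along $\M_Z$, so one cannot argue smoothly across $Z$; this is exactly why $f$ is claimed merely continuous there.

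Finally I would read off the stated properties. Off $Z$ the curves are Fredholm regular, embedded and pairwise disjoint, so they foliate $X\mi Z$ and $\text{ev}$ is a local diffeomorphism on $\M_1\mi\M_Z$; hence $\text{ev}^{-1}$ is smooth there and, $\pi$ being a submersion, $f|_{X\mi Z}$ is a smooth submersion whose fibers $f^{-1}(b)=\text{ev}(\pi^{-1}(b))$ are precisely the images of the curves $u_b$ --- spheres or projective planes according to the type chosen. These fibers are symplectic on $X\mi Z$ because they are $J$-holomorphic, and the closures of the components of $f^{-1}(b)\cap(X\mi Z)$ are the compact logarithmic Riemann surfaces underlying the punctured spheres, which are complex and hence log-symplectic submanifolds (\Cref{submanifold}) with boundary on $Z$. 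Lastly, for $z\in Z_i=S^1\times\Sigma_{g_i}$ the unique curve through $z$ is the one whose $i$-th asymptotic orbit is the Reeb orbit $S^1\times\{q\}$ containing $z$; under the identification $B\cong\Sigma_{g_i}$ given by $r_i$ this curve is sent to $q$, so $f|_{Z_i}$ is the projection $S^1\times\Sigma_{g_i}\to\Sigma_{g_i}$, completing the verification.
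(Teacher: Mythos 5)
Your proposal is correct and follows essentially the same route as the paper's own proof: the paper likewise defines $f$ by composing the forgetful map $\M_1\longrightarrow\M$ with $\text{ev}^{-1}$, and reduces everything to showing that $\text{ev}:\M_1\longrightarrow X$ is a homeomorphism which is a diffeomorphism away from $\M_Z$, justified by invoking ``the same arguments as in \Cref{preliminary}'' (positivity of intersection for injectivity, properness and a degree count for surjectivity). The only real deviation is that where the paper's cited argument gets surjectivity from a degree-one evaluation map, you use invariance of domain plus an open--closed argument on $X\mi Z$ together with the graph-matching/connectedness observation --- an equivalent and equally valid substitute that fills in details the paper leaves implicit.
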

\begin{proof}
	There is an obvious map $g:\M_1\longrightarrow \M$, which can be described as follows. Let $(u,\,z)\in\M_1\mi \M_Z$, and let $x\in V(\Sigma,\,\sigma)$ correspond to the component that contains the point $z$. Let $u_x$ be the restriction of $u$ to that component. This defines a unique element $(u_x)_x\in \M$ (via the canonical isomorphism $\M_x\cong \M$). This map extends continuously to $\M_Z$, by the way we chose the gluing maps.\\
	
	In order to prove the theorem, now, it suffices to show that the evaluation $\text{ev}:\M_1\longrightarrow X$ is a homeomorphism, which is a diffeomorphism on $\M_1\mi Z$. This follows from the same arguments as in \Cref{preliminary}. 
\end{proof}

\begin{rk}
	In \Cref{logruled} we were able to produce a moduli space of holomorphic curves starting from a symplectic submanifold. In the setting of \Cref{loglogruled} instead we need to start from a holomorphic curve. This is because we know how to construct good moduli spaces of $J$-holomorphic curves for a certain type of cylindrical complex structure. If one starts from, say, a log-symplectic submanifold, it is guaranteed that one will find a log-complex structure $J$ making it holomorphic, but $J$ won't be cylindrical in general.\\
	If $Z\cong S^1\times S^2$ one can in fact deform any log-symplectic curve to an asymptotically cylindrical one. If it was possible to construct a moduli space of such spheres with a \textit{smooth} evaluation map, then one could mimic Gromov's argument (\cite{gromov85}, see also \cite{wendlruled}) to show that all genus-$0$ log-symplectic structures on $S^2\times S^2$ are symplectomorphic to products. Unfortunately at present we do not know how to ensure smoothness, and the genus-$0$ case does not yield to stronger results.
\end{rk}

\appendix
\section{Intersection theory of punctured holomorphic curves}\label{sectionintersectionpunctured}

In this appendix we briefly introduce some aspects of the intersection theory of punctured holomorphic curves. This is developed in \cite{siefring} in the non-degenerate case, and \cite{wendlpunctured} and \cite{siefringwendl} in the Morse-Bott case.

\subsection{Intersection number}

Let $(W,\,\partial W,\,\omega)$ be a $4$-dimensional symplectic manifold, and assume $\partial W$ inherits a Morse-Bott stable Hamiltonian structure. Let $u$ be a punctured holomorphic curve, with set of punctures $\Gamma$.\\
We assume for simplicity that all Reeb orbits are simply covered.
\begin{thm}\label{thmdefpuncturedintersection}
	Given two distinct holomorphic curves $u,\,u'$, with asymptotic constraints $\mathfrak{c},\,\mathfrak{c}'$, there exists a number $i_\infty(u,\,u')$ such that the pairing
	\begin{equation}
	u\star u':=u\cdot u'+i_\infty(u,\,u')\in \Z
	\end{equation}
	(where $u\cdot u'$ is the number of intersection points, with multiplicity) depends only on the components of the moduli spaces $\M^\mathfrak{c}$, $\M^\mathfrak{c'}$ containing respectively $u$ and $u'$.
\end{thm}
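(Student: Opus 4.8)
The plan is to build $u \star u'$ from two ingredients that are each only well-defined after trivializing the asymptotic data, and then to check that their combination is intrinsic and homotopy invariant. The geometric count $u \cdot u'$ supplies the ``interior'' contribution, while $i_\infty(u,u')$ is engineered precisely to absorb the intersection points that can escape through the punctures. First I would record that $u \cdot u'$ is well-defined: since $u$ and $u'$ are distinct finite-energy curves whose images share no irreducible component, positivity of intersections for pseudoholomorphic curves (Micallef--White) shows that every interior intersection is isolated and counts positively; combined with the exponential convergence of each curve to its asymptotic Reeb orbit near the punctures, this forces the intersection set to be finite, so $u \cdot u' \in \Z$ with $u \cdot u' \geq 0$. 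This number is, however, sensitive to the positions of $u$ and $u'$ in their moduli spaces: under a homotopy an intersection point may be carried out to a puncture, or appear from one, so $u \cdot u'$ alone cannot be an invariant.

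Next I would fix, for each Reeb orbit $\gamma$ occurring as an asymptotic limit, a unitary trivialization $\tau$ of $\gamma^*\xi$, and introduce the relative intersection number $u \bullet_\tau u'$: using $\tau$ one perturbs $u'$ near each puncture to a curve $u'_\tau$ that is asymptotically disjoint from $u$, and sets $u \bullet_\tau u' := u \cdot u'_\tau$. This count is a relative homology pairing between the classes of $u$ and $u'$ in $H_2(W,\bigcup \gamma)$, hence depends only on the homotopy classes of the curves relative to their asymptotic orbits, i.e.\ only on the moduli components; it does, on the other hand, depend on $\tau$. The asymptotic analysis of Hofer--Wysocki--Zehnder, and its Morse--Bott refinement, describes the approach $u \to \gamma$ at a puncture in terms of a leading eigenfunction of the asymptotic operator $A_\gamma$, whose winding relative to $\tau$ is constrained by the extremal winding invariants $\alpha_\pm^\tau(A_\gamma)$. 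For each pair of punctures $(z,z')$ of $u$ and $u'$ that are asymptotic to the same orbit, comparing the two leading eigenfunctions produces an asymptotic winding number counting the ``intersections hidden at infinity''; summing these defines $i_\infty(u,u')$, and a boundary computation near the ends shows that $u \cdot u'$ and $u \bullet_\tau u'$ differ by exactly $i_\infty(u,u')$ together with a term depending only on $\tau$ and the orbits.

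Finally I would assemble the pieces: the $\tau$-dependence of $u \bullet_\tau u'$ cancels that of the correction term, so $u \star u' := u \cdot u' + i_\infty(u,u')$ agrees with $u \bullet_\tau u'$ up to a quantity fixed by the asymptotic orbits alone. Consequently $u \star u'$ is independent of $\tau$ and is invariant under homotopies of $u$ and $u'$ within their moduli components, which is the assertion of the theorem.

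The main obstacle is the asymptotic step. One must invoke the precise exponential asymptotic formula for finite-energy curves and control the winding of the leading eigenfunctions in the Morse--Bott setting, where the governing eigenvalue of $A_\gamma$ need not be simple and the orbit itself can move within a Morse--Bott family; there one has to argue that the winding contribution defining $i_\infty$ is locally constant along such families, so that the total pairing remains locally constant on moduli components. This is exactly the content imported from \cite{siefring}, \cite{wendlpunctured}, and \cite{siefringwendl}, and I would cite those asymptotic results rather than reprove them, using them as the black box that makes the cancellation of $\tau$-dependences and the homotopy invariance go through.
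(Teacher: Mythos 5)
Your proposal is sound, but note that the paper itself contains no proof of this statement: \Cref{thmdefpuncturedintersection} is presented as background imported wholesale from \cite{siefring}, \cite{wendlpunctured}, and \cite{siefringwendl}, which is why the appendix opens by deferring to those references. Your sketch faithfully reproduces the structure of the proof given there --- the decomposition into the trivialization-dependent relative count $u\bullet_\tau u'$ plus winding corrections determined by the spectra of the asymptotic operators is exactly what the paper records immediately afterwards as \Cref{thmintersectioneigenvalues}, and you defer the hard Morse--Bott asymptotic analysis (including local constancy of the winding contributions as orbits move in Morse--Bott families) to the same sources the paper cites --- so your treatment and the paper's are essentially aligned.
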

The number $u\star u'$ is the \textit{intersection number} between the two punctured holomorphic curves. The dependence only on the corresponding components of the moduli spaces can be stated as the invariance of the intersection number under homotopies of punctured holomorphic curves with asymptotics $\mathfrak{c}$, $\mathfrak{c'}$. The intersection number does depend on the choice of asymptotic constraint (i.e., whether we consider a punctured to be constrained or unconstrained), so it really should be seen as a pairing between pairs of curves together with constraints.\\

A way to compute the intersection number is by relating it to the spectrum of the asymptotic operators. Let us introduce some notation. 
Let $\tau$ denote a choice of asymptotic trivialization of the hyperplane distribution for each puncture. Consider the asymptotic operator associated to a Reeb orbit $\gamma$, which in the trivialization $\tau$ can be written as $A_\gamma^\tau=-J_0\frac{d}{dt}-S(t)$. For all $|\delta|$ small enough, the asymptotic operator $A_\gamma^\tau+\delta:=-J_0\frac{d}{dt}-S(t)+\delta\mathbb{I}$ is non-degenerate. Moreover, the extremal winding numbers $\alpha_\pm(A_\gamma^\tau+\delta)$ (see \Cref{equationextremalwinding}) are independent on $\delta$ if $|\delta|$ is small enough. We define the extremal winding numbers of a Reeb orbit as
\begin{equation}
\alpha_\pm^\tau(\gamma+\delta):=\alpha_\pm(A_\gamma^\tau+\delta)
\end{equation}
In order to write a computable formula for the $\star$ pairing, we introduce the following two numbers:
\begin{itemize}
	\item $u\bullet_\tau u'$ is the intersection number of $u$ and a perturbation of $u'$ ``in the direction of $\tau$". By definition, the perturbation is supported in a neighbourhood of infinity, and is of the following type. Assume $u'$ is asymptotic to $\gamma$ at $z$. We deform $u'$ so that it is asymptotic to $\exp_{\gamma(t)}\eta(t)$, for $\eta\in\Gamma(\gamma^*\xi)$ small, with $\text{wind}(\eta)=0$. One can ensure that $u$ and $u'$ intersect in a finite number of points.
	\item Let $z,\,z'$ be punctures of the domain of $u,\,u'$ respectively, asymptotic to $\gamma,\,\gamma'$. Define
	\begin{flalign}
	\Omega^\tau_\pm(\gamma+\delta,\,\gamma'+\delta)=
	\left\{\begin{aligned}
	& 0  &\text{if }\gamma\neq\gamma'\\
	& \mp\alpha^\tau_\mp(\gamma+\delta) &\text{if } \gamma=\gamma'
	\end{aligned}\right.
	\end{flalign} 
\end{itemize}
Now, given a puncture $z$, and $\varepsilon>0$ suitably small, define
\begin{flalign}
\delta_z=
\left\{\begin{aligned}
& \varepsilon  &\text{if }z\in\Gamma_C\\
-& \varepsilon &\text{if }z\in\Gamma_U
\end{aligned}\right.
\end{flalign} 
And we define the \textit{perturbed asymptotic operator} as $\{A_z\pm \delta_z\}_{z\in\Gamma^\pm}$
\begin{thm}\label{thmintersectioneigenvalues}
	\begin{equation}
	u\star u'=u\bullet_\tau u'-\sum_{(z,\,z')\in \Gamma^\pm\times \Gamma'^\pm}^{}\Omega^\tau_\pm(\gamma_z\pm\delta_z,\,\gamma_{z'}\pm\delta_{z'})
	\end{equation}
\end{thm}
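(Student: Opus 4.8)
The plan is to unwind the definition $u \star u' = u \cdot u' + i_\infty(u, u')$ from \Cref{thmdefpuncturedintersection} and to match its two summands against the right-hand side of the claimed formula. Here $u \cdot u'$ is a finite count of honest, isolated, positively-weighted intersection points (finiteness and positivity holding away from infinity by the similarity principle), while $i_\infty(u, u')$ is the asymptotic correction that upgrades the sum to a homotopy invariant. Since $u \cdot u'$ is supported in the interior, $i_\infty$ is supported near the punctures, and the $\tau$-perturbation defining $u \bullet_\tau u'$ alters $u'$ only in a neighbourhood of infinity, the entire identity localizes at the punctures. Concretely, I would reduce to proving, for each matched pair $(z, z') \in \Gamma^\pm \times {\Gamma'}^{\pm}$ asymptotic to the \emph{same} orbit $\gamma$, the local balance
\[
(u \bullet_\tau u' - u \cdot u')_{(z,z')} + i_\infty^{(z,z')}(u, u') = -\,\Omega^\tau_\pm(\gamma_z \pm \delta_z,\, \gamma_{z'} \pm \delta_{z'}),
\]
all contributions vanishing when $z, z'$ are asymptotic to distinct orbits, in which case the two ends are asymptotically disjoint and create neither hidden nor perturbed intersections.

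First I would fix asymptotic representatives. Near a positive puncture $z$ asymptotic to a simple orbit $\gamma$, one writes $u$ as a graph $\exp_{\gamma(t)}(\eta_u(s,t))$ over the orbit cylinder in coordinates $(s,t) \in [R, \infty) \times S^1$, with $\eta_u(s,t) = e^{\lambda_u s}(e_{\lambda_u}(t) + o(1))$, where $\lambda_u < 0$ is an eigenvalue of the trivialized asymptotic operator $A_\gamma^\tau$ and $e_{\lambda_u}$ an associated eigensection; the shift by $\delta_z$ is exactly the device that replaces the degenerate Morse-Bott operator by the nearby non-degenerate operators $A_\gamma^\tau \pm \delta$, fixing a definite decay rate and making $\alpha^\tau_\pm(\gamma + \delta)$ well-defined. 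The same applies to $u'$ with data $\lambda_{u'}, e_{\lambda_{u'}}$. The relative position of the two ends is governed by the leading (slowest-decaying) term of $\eta_u - \eta_{u'}$, an eigensection whose winding number relative to $\tau$ controls how many intersections are hidden at infinity.

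Then I would run the two local counts in parallel. For $u \bullet_\tau u' - u \cdot u'$: the $\tau$-pushoff displaces $u'$ along a winding-$0$ section, so near infinity $u$ meets the pushed copy once for each sign change of the relative configuration, and the local difference is read off by comparing the winding of the dominant relative eigensection against the winding $0$ of the pushoff. For $i_\infty^{(z,z')}$: the definition via extremal spectral data identifies the hidden-intersection count at a matched pair of positive punctures with $-\alpha^\tau_-(\gamma + \delta)$, and dually with $+\alpha^\tau_+(\gamma + \delta)$ at negative punctures, which is precisely $-\Omega^\tau_\pm$ by the definition of $\Omega^\tau_\pm$. Adding the two local counts, and using the trivialization-independence of $u \star u'$ from \Cref{thmdefpuncturedintersection} to pin down the constants, yields the displayed local balance; summing over all puncture pairs gives the theorem.

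The crux, and the main obstacle, is the relative asymptotic analysis in the Morse-Bott setting. One must establish that the relative approach of two finite-energy ends to a common orbit is genuinely dominated by a single eigensection of the asymptotic operator, so that a well-defined relative winding exists, and that passing to the $\delta$-perturbed non-degenerate operators neither changes the intersection count nor the winding of the dominant mode while rendering $\alpha^\tau_\pm(\gamma + \delta)$ stable. This relative asymptotic formula is the technical heart of Siefring's theory (\cite{siefring}, \cite{siefringwendl}, \cite{wendlpunctured}); once it is available, the sign bookkeeping between $\bullet_\tau$, $i_\infty$, and $\Omega^\tau_\pm$ is routine, the only subtlety being the interchanged roles of $\alpha^\tau_-$ and $\alpha^\tau_+$ at positive versus negative punctures, which is exactly the $\mp$ appearing in the definition of $\Omega^\tau_\pm$.
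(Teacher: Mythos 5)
First, a point of comparison: the paper does not actually prove \Cref{thmintersectioneigenvalues}. The appendix is a survey, and this statement is imported wholesale from Siefring's intersection theory (\cite{siefring} in the non-degenerate case, \cite{siefringwendl} and \cite{wendlpunctured} in the Morse-Bott case); in those references the right-hand side is essentially the \emph{definition} of $u\star u'$, and what gets proved is the content of \Cref{thmdefpuncturedintersection}, namely that the correction term $i_\infty$ localizes at the punctures and is non-negative. So your proposal must be judged as a reconstruction of the literature's argument, and its overall architecture --- localize at pairs of punctures, invoke Siefring's relative asymptotic formula to extract a dominant relative eigensection with a well-defined winding, and compare the pushoff count against the spectral bound --- is the right one, with the technical heart correctly attributed to the cited papers.

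However, the one step you carry out yourself, the reduction to a local balance at each pair of punctures, is mis-signed, and the error propagates into your identification of $i_\infty$. From $u\star u'=u\cdot u'+i_\infty(u,\,u')$ and the formula to be proved, the localized identity must read
\begin{equation*}
\bigl(u\bullet_\tau u'-u\cdot u'\bigr)_{(z,z')}-\Omega^\tau_\pm(\gamma_z\pm\delta_z,\,\gamma_{z'}\pm\delta_{z'})=i_\infty^{(z,z')}(u,\,u'),
\end{equation*}
whereas you wrote $(u\bullet_\tau u'-u\cdot u')_{(z,z')}+i_\infty^{(z,z')}=-\Omega^\tau_\pm$. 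Summing your version over all pairs and combining with \Cref{thmdefpuncturedintersection} yields $u\star u'=2\,u\cdot u'-u\bullet_\tau u'-\sum\Omega^\tau_\pm$, which is not the statement. Relatedly, the hidden count $i_\infty^{(z,z')}$ cannot be ``identified with $-\alpha^\tau_-(\gamma+\delta)$'' or any other fixed spectral quantity: it depends on the curves themselves and not only on their asymptotic orbits, since otherwise $u\cdot u'=u\star u'-i_\infty$ would be a homotopy invariant, which it is not. The correct mechanism is that, near a positive pair asymptotic to the same orbit, the algebraic count of intersections of the end of $u$ with the $\tau$-pushoff of the end of $u'$ is controlled by the winding of the dominant relative eigensection; since that eigensection belongs to a negative eigenvalue of the perturbed operator, its winding is at most $\alpha^\tau_-(\gamma+\delta)$ by \Cref{equationextremalwinding}, which bounds the count from below by $\Omega^\tau_+=-\alpha^\tau_-(\gamma+\delta)$, and $i_\infty^{(z,z')}$ is precisely the non-negative, curve-dependent excess over this bound (similarly at negative pairs, with the roles of $\alpha^\tau_+$ and $\Omega^\tau_-$). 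With that correction, and keeping the paper's standing assumption that all orbits are simple (which is what makes pairs asymptotic to distinct orbits contribute nothing), your outline does assemble into the argument of \cite{siefring} and \cite{wendlpunctured}.
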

Note that this allows to define the self-intersection number $u\star u$.

\subsection{Adjunction formula}

Before to state the adjunction formula, we need to introduce some notation. Define the \textit{normal Chern number} of a punctured holomorphic curve $u$ as
\begin{equation}
c_N(u)=c_1^\tau(u)-\chi(\dot{\Sigma})+\sum_{z\in\Gamma^+}\alpha^\tau_-(\gamma_z+\delta_z)-\sum_{z\in\Gamma^-}\alpha^\tau_+(\gamma_z-\delta_z)
\end{equation}
Since $A_{\gamma_z}\pm\delta_z$ is non-degenerate, \Cref{equationparity} holds. We can write $\Gamma=\Gamma_0\sqcup\Gamma_1$, with $z\in\Gamma_i\cap\Gamma^\pm$ if and only if $p(A_{\gamma_z}\pm \delta_z)=i$ ($\Gamma_0$ and $\Gamma_1$ are said the sets of \textit{even/odd punctures}). Using \Cref{indexformulasft}, one easily checks that
\begin{equation}
2c_N(u)=\text{ind}(u)-\chi(\dot{\Sigma})-\#\Gamma_1
\end{equation}
\begin{thm}[Adjunction formula]\label{thmadjunctionpunctured}
	For a holomorphic curve with constraint $\mathfrak{c}$, with value in a $4$-manifold with cylindrical ends, there exist a homotopy invariant number $\text{sing}(u;\,\mathfrak{c})\geq 0$, such that $u$ is embedded whenever $\text{sing}(u;\,\mathfrak{c})=0$, and
	\begin{equation}
	u\star u=2\text{sing}(u;\,\mathfrak{c})+c_N(u)=2\text{sing}(u;\,\mathfrak{c})+\frac{1}{2}(\text{ind}(u)-\chi(\dot{\Sigma})-\#\Gamma_1)
	\end{equation}
\end{thm}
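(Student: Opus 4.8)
The second equality is nothing but the identity $2c_N(u)=\text{ind}(u)-\chi(\dot\Sigma)-\#\Gamma_1$ derived just before the statement, so the plan is to concentrate on the first equality $u\star u=2\,\text{sing}(u;\mathfrak c)+c_N(u)$, to exhibit a non-negative quantity $\text{sing}(u;\mathfrak c)$, and to show that it vanishes exactly for embedded curves. I would model the whole argument on McDuff's proof in the closed case (\cite{mcduffadjunction}, \cite{mcsa}), decomposing the self-intersection into an interior contribution and an asymptotic contribution, and treating only somewhere injective curves (as adjunction requires).

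First I would use \Cref{thmintersectioneigenvalues} to write $u\star u=u\bullet_\tau u-\sum_{(z,z')\in\Gamma^\pm\times\Gamma^\pm}\Omega^\tau_\pm(\gamma_z\pm\delta_z,\gamma_{z'}\pm\delta_{z'})$, and interpret the topological term $u\bullet_\tau u$ as the algebraic count of intersections between $u$ and a generic perturbation of itself, pushed off along the framing $\tau$ near the punctures. By positivity of intersections of distinct $J$-holomorphic maps, every such intersection is positive, so $u\bullet_\tau u=e^\tau(N_u)+2\delta(u)$, where $e^\tau(N_u)$ is the Euler number of the (generalized) normal bundle relative to the $\tau$-framing and $\delta(u)\ge 0$ counts the interior double points together with the contributions of critical points, the latter being non-negative and governed by the local structure of $J$-holomorphic singularities (Micallef--White). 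Somewhere injectivity is used exactly here, to guarantee that the set of interior singular points is finite and that $\delta(u)$ is well defined.

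Next I would compute $e^\tau(N_u)$ in terms of $c_1^\tau(u)$ and $\chi(\dot\Sigma)$. For an immersed curve the splitting $c_1^\tau(u)=\chi(\dot\Sigma)+e^\tau(N_u)$ holds up to correction terms measuring, at each puncture, the discrepancy between the true asymptotic winding of the end and the extremal winding numbers $\alpha^\tau_\mp$ appearing in the definition of $c_N(u)$. Combining this with the $\Omega^\tau_\pm$ terms, I expect the winding discrepancies and the $\Omega$ terms to reassemble precisely into $c_N(u)$ plus twice a non-negative asymptotic defect $\delta_\infty(u;\mathfrak c)\ge 0$, so that setting $\text{sing}(u;\mathfrak c):=\delta(u)+\delta_\infty(u;\mathfrak c)$ yields the formula, with homotopy invariance inherited from \Cref{thmintersectioneigenvalues}. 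The non-negativity of $\delta_\infty$, namely the fact that the actual asymptotic winding cannot exceed (respectively fall below) the relevant extremal value, is the content of Siefring's relative asymptotic analysis in the Morse--Bott form of \cite{siefringwendl}, \cite{wendlpunctured}.

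Finally, for the embeddedness criterion I would argue that $\text{sing}(u;\mathfrak c)=0$ forces both $\delta(u)=0$ and $\delta_\infty(u;\mathfrak c)=0$: the first means $u$ is injective and immersed over the finite part, and the second means $u$ is asymptotically embedded, each end winding simply around its orbit with no hidden crossings. Siefring's asymptotic formula then upgrades these two facts to global embeddedness, since any would-be double point must either occur in a compact region (excluded by $\delta(u)=0$) or escape to infinity (excluded by $\delta_\infty=0$). The main obstacle throughout is this asymptotic step: identifying the winding-defect terms with honest, non-negative counts of intersections hidden at infinity, and proving that they are additive and homotopy invariant. This rests entirely on the exponential-decay description of finite-energy ends and the relative asymptotic formula of \cite{siefring}, \cite{siefringwendl}; the interior part of the argument is a routine transcription of the closed adjunction formula.
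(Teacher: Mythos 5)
The first thing to note is that the paper contains no proof of \Cref{thmadjunctionpunctured} for you to be compared against: the appendix explicitly presents it as a result recalled from \cite{siefring} (non-degenerate asymptotics) and \cite{wendlpunctured}, \cite{siefringwendl} (Morse--Bott asymptotics), and the only piece actually argued in the text is the second equality, i.e.\ the identity $2c_N(u)=\text{ind}(u)-\chi(\dot{\Sigma})-\#\Gamma_1$, which you likewise dispose of at the start. What you sketch is, in architecture, precisely the proof that lives in those references: split $u\star u$ via \Cref{thmintersectioneigenvalues} into $u\bullet_\tau u$ minus the $\Omega^\tau_\pm$-terms, use positivity of intersections together with the Micallef--White description of critical points to write the interior contribution as a relative Euler number of the (generalized) normal bundle plus $2\delta(u)$ with $\delta(u)\geq 0$, and use Siefring's relative asymptotic analysis to convert the winding discrepancies and the $\Omega^\tau_\pm$-terms into $c_N(u)$ plus twice a non-negative count $\delta_\infty(u;\,\mathfrak{c})$ of intersections hidden at infinity, finally setting $\text{sing}(u;\,\mathfrak{c}):=\delta(u)+\delta_\infty(u;\,\mathfrak{c})$. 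So your proposal is a correct reconstruction of the proof the paper outsources, modulo the deep inputs you correctly attribute to the literature.

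Two caveats. First, your ``reassembling'' step is only valid under the standing assumption, made at the very beginning of the appendix, that all Reeb orbits are simply covered: in general the adjunction formula carries an additional non-negative term, the spectral covering defect $\bar{\sigma}(u)-\#\Gamma$, and without knowing that each asymptotic eigenfunction has covering number one your bookkeeping would not close up to give exactly $c_N(u)+2\delta_\infty$. You never invoke this hypothesis, so in the generality in which you phrase the argument it has a gap; within the paper's setting it is harmless, but you should say where simplicity of the orbits enters. Second, for the embeddedness criterion, $\delta(u)=0$ alone already implies that $u$ is embedded (a finite-energy curve is proper, and an injective proper immersion is an embedding); $\delta_\infty(u;\,\mathfrak{c})=0$ plays no role in that implication --- a curve can be embedded while having hidden intersections at infinity. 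Your final paragraph therefore proves the right statement, but attributes to the asymptotic defect a role it does not have there.
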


\subsection{Computation of intersection numbers}\label{computationintersections}

In this section we compute the intersection numbers used in the proof of \Cref{theoremmoduliboundary0intersection}. Until the end of the section, we will be considering a symplectic manifold $W$ with boundary $\partial W$, inheriting a stable Hamiltonian structure $(\alpha,\,\beta)$ such that $(\partial W,\,\alpha,\,\beta)\cong (S^1\times \Sigma_g,\,cdt,\,\text{vol}_{\Sigma_g})$. In particular, the Reeb vector field is $\frac{1}{c}\frac{\partial}{\partial t}$, and the simple Reeb orbits are $\gamma(t)=(t,\,x_0)$. The pullback bundle $\gamma^*\xi:=\gamma^*\ker\alpha\cong S^1\times T_{x_0}\Sigma_g$, hence a trivialization $\tau$ can be given by a Hermitian isomorphism $T_{x_0}\Sigma_g\cong \C$. We will always assume that this choice of trivialization is fixed, unless otherwise stated.\\
Finally, we will denote the completion of $W$ (\Cref{paragraphstablehamiltonian}) with $\widehat{W}$.\\

Let us compute the asymptotic operator and its spectrum, in order to compute intersection numbers using \Cref{thmintersectioneigenvalues}. One sees immediately from \Cref{defasymptoticoperator} that $A_\gamma=-J_0\frac{d}{dt}$. As expected, the kernel is $2$-dimensional (i.e. equal to the dimension of the space of Reeb orbits), consisting of all constant complex functions. All eigenvalues are $2\pi k$, $k\in\Z$ with eigenfunctions $C\exp(2\pi kJ_0t)$, $C\in\C$.\\

Taking $\varepsilon$ small ($\varepsilon<2\pi$), consider $A_\gamma\pm\varepsilon$. This operator has eigenvalues $\lambda^\pm_k=2\pi k\pm \varepsilon$, with eigenfunctions $f=C\exp(2\pi k t J_0)=C\exp((\lambda^\pm_k\mp\varepsilon)tJ_0)$. The corresponding extremal winding numbers are
\begin{equation}\label{computationextremalwinding}
\begin{aligned}
\alpha_+(A+\varepsilon)&=\text{min}\{\text{wind}(\lambda^+_k):\,\lambda_k>0\}=\text{min}\{\text{wind}(\exp(2\pi ktJ_0))=k:\,k\geq 0\}=0\\
\alpha_-(A+\varepsilon)&=\text{max}\{\text{wind}(\lambda^+_k):\,\lambda_k<0\}=\text{max}\{\text{wind}(\exp(2\pi ktJ_0))=k:\,k<0\}=-1\\
\alpha_+(A-\varepsilon)&=\text{min}\{\text{wind}(\lambda^-_k):\,\lambda^-_k>0\}=\text{min}\{\text{wind}(\exp(2\pi ktJ_0))=k:\,k\geq 1\}=1\\
\alpha_-(A-\varepsilon)&=\text{max}\{\text{wind}(\lambda^-_k):\,\lambda^-_k<0\}=\text{max}\{\text{wind}(\exp(2\pi ktJ_0))=k:\,k\leq 0\}=0
\end{aligned}
\end{equation}
In particular this implies
\begin{lemma}
	All punctured holomorphic curves $u:\dot{\Sigma}=\Sigma\mi\Gamma\longrightarrow W$ with simple asymptotics have only odd punctures.
\end{lemma}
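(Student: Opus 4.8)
The plan is to reduce the claim to the parity computation \eqref{equationparity}, reading off the answer from the winding numbers already recorded in \eqref{computationextremalwinding}; no new analysis is needed. Recall that a puncture $z$ is \emph{odd} precisely when the parity $p(A_{\gamma_z}\pm\delta_z)$ of the perturbed asymptotic operator equals $1$, with the sign $+\delta_z$ for a positive puncture and $-\delta_z$ for a negative one, and with $\delta_z=\pm\varepsilon$ depending on whether $z$ is constrained. Since every orbit is simply covered, the only asymptotic operator occurring is $A_\gamma=-J_0\frac{d}{dt}$, so the only perturbed operators that can appear are $A_\gamma+\varepsilon$ and $A_\gamma-\varepsilon$.

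I would then simply verify that both of these have parity $1$. Substituting the extremal windings from \eqref{computationextremalwinding} into the defining formula \eqref{equationparity} gives
\[
p(A_\gamma+\varepsilon)=0-(-1)=1,\qquad p(A_\gamma-\varepsilon)=1-0=1.
\]
As these two perturbations exhaust all the cases, every puncture is odd, irrespective of its sign or of whether it is constrained or unconstrained.

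The hard part is, in truth, nonexistent: the content is entirely contained in the preceding eigenvalue computation, and the lemma is a formal corollary of it. The conceptual reason behind the equality $p=1$ is that the kernel of $A_\gamma$ is two-dimensional, its eigenvalues are $2\pi k$ with eigenfunctions $C\exp(2\pi k tJ_0)$ of winding number $k$, so the winding number jumps by exactly $1$ as the eigenvalue crosses $0$. Hence the extremal windings immediately above and below $0$ always differ by $1$, whichever direction one perturbs, which is precisely the assertion that the parity is odd.
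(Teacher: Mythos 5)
Your proposal is correct and follows exactly the paper's own argument: both reduce the claim to the parity formula \eqref{equationparity} and substitute the extremal winding numbers from \eqref{computationextremalwinding}, obtaining $p(A_\gamma\pm\varepsilon)=1$ in both cases. The only difference is presentational — you write out the two evaluations explicitly where the paper compresses them into ``either way'' — and your closing remark about the winding number jumping by $1$ across the spectral gap at $0$ is a correct conceptual gloss, not a deviation.
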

\begin{proof}
	For notational simplicity, we prove this for $z\in\Gamma^+$ a positive puncture (the computation is identical for negative punctures). Recall that the parity of a Morse-Bott puncture asymptotic to an orbit $\gamma$ is defined as $p(A_z)=\alpha_+(A_z\pm \varepsilon)-\alpha_-(A_z\pm\varepsilon)$, where the sign depends on whether we see the puncture as constrained or unconstrained. Either way, \Cref{computationextremalwinding} gives $p(A_z)=1$.
\end{proof}
\begin{coro}\label{indexfromintersection}
	Let $u:\dot{\Sigma}\longrightarrow \widehat{W}$ be an embedded punctured holomorphic sphere. Then
	\begin{equation}
	\text{ind}(u)=2+2(u\star u)
	\end{equation}
\end{coro}
\begin{proof}
	The equation follows immediately from the adjunction formula \Cref{thmadjunctionpunctured} and the previous lemma.
\end{proof}
Using formula \ref{thmconleyzehnderwinding} and (\ref{computationextremalwinding}) one sees immediately that
\begin{equation}
\mu_{CZ}(-J_0\frac{d}{dt}\pm \varepsilon\mathbb{I})=\mp 1
\end{equation}
As a result we can compute the total Conley-Zehnder index of a punctured holomorphic curve.
\begin{lemma}
	Let $(W,\,\partial W,\,\omega)$ be symplectic with stable Hamiltonian boundary $(\underset{i}{\sqcup} S^1\times \Sigma_{g_i},\,c_idt,\,\text{vol}_{\Sigma_{g_i}})$. Let $\mathfrak{c}$ be a set of constraints, consisting of simple orbits. Let $\Gamma$ be a set of punctures, split $\Gamma=\Gamma^\pm_C\cup\Gamma^\pm_U$. Then 
	\begin{equation}
	\mu(\mathfrak{c})=\#\Gamma_U-\#\Gamma_C
	\end{equation}
\end{lemma}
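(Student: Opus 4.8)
The plan is to reduce the claim entirely to the sign bookkeeping prepared in this section, since the only genuinely analytic input—the Conley–Zehnder indices of the perturbed asymptotic operators—has already been computed. First I would recall the definition of the total Conley–Zehnder index from \eqref{deftotalconleyzehnder}, namely $\mu^\tau(\mathfrak{c})=\sum_{z\in\Gamma^+}\mu^\tau_{CZ}(\gamma_z+\delta_z)-\sum_{z\in\Gamma^-}\mu^\tau_{CZ}(\gamma_z-\delta_z)$, together with the convention $\delta_z=\varepsilon$ for $z\in\Gamma_C$ and $\delta_z=-\varepsilon$ for $z\in\Gamma_U$. On each boundary component $S^1\times\Sigma_{g_i}$ the asymptotic operator of a simple orbit is $A_\gamma=-J_0\frac{d}{dt}$ independently of the period $c_i$, so the spectral computation and hence the identity $\mu_{CZ}(A_\gamma\pm\varepsilon\mathbb{I})=\mp1$ established just above apply verbatim, uniformly over all components.

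The computation then splits into the four possible types of puncture, and for each I would simply substitute the value $\mu_{CZ}(A_\gamma\pm\varepsilon\mathbb{I})=\mp1$ into \eqref{deftotalconleyzehnder}:
\begin{align*}
z\in\Gamma^+_C:&\quad +\,\mu_{CZ}(A_{\gamma_z}+\varepsilon)=-1,\\
z\in\Gamma^+_U:&\quad +\,\mu_{CZ}(A_{\gamma_z}-\varepsilon)=+1,\\
z\in\Gamma^-_C:&\quad -\,\mu_{CZ}(A_{\gamma_z}-\varepsilon)=-1,\\
z\in\Gamma^-_U:&\quad -\,\mu_{CZ}(A_{\gamma_z}+\varepsilon)=+1.
\end{align*}
In every line the net contribution is $-1$ for a constrained puncture and $+1$ for an unconstrained one, independently of whether the puncture is positive or negative. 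Summing over all punctures then yields $\mu(\mathfrak{c})=(\#\Gamma^+_U+\#\Gamma^-_U)-(\#\Gamma^+_C+\#\Gamma^-_C)=\#\Gamma_U-\#\Gamma_C$, which is the assertion.

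I expect the only point requiring care—rather than a genuine obstacle—to be keeping the two independent signs straight: the sign $\pm$ that appears inside the argument $A_{\gamma_z}\pm\delta_z$ (coming from the constrained/unconstrained perturbation) and the overall sign attached to the sums over $\Gamma^+$ and $\Gamma^-$ in \eqref{deftotalconleyzehnder}. These two effects conspire precisely so that the outcome depends only on the constrained/unconstrained dichotomy and not on the sign of the puncture, which is exactly what makes the final count symmetric in $\Gamma^\pm$. No analytic input beyond the spectrum of $A_\gamma=-J_0\frac{d}{dt}$ already recorded is needed, so the proof is a short substitution once the four cases are laid out as above.
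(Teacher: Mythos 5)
Your proposal is correct and follows essentially the same route as the paper: expand the definition \eqref{deftotalconleyzehnder} according to the four types of punctures, substitute the computed values $\mu_{CZ}(-J_0\tfrac{d}{dt}\pm\varepsilon\mathbb{I})=\mp 1$, and observe that the two signs cancel so each constrained puncture contributes $-1$ and each unconstrained one $+1$. Your explicit remark that the spectral computation is independent of the periods $c_i$, and hence uniform over all boundary components, is a point the paper leaves implicit but adds nothing beyond its argument.
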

\begin{proof}
	Recall that
	\begin{equation}
	\mu(\mathfrak{c}):=\sum_{z\in\Gamma^+}^{}\mu_{CZ}(\gamma_z+\delta_z)-\sum_{z\in\Gamma^-}\mu_{CZ}(\gamma_z-\delta_z)
	\end{equation}
	(see \Cref{deftotalconleyzehnder}), where
	\begin{flalign}
	\delta_z=
	\left\{\begin{aligned}
	& \varepsilon  &\text{if }z\in\Gamma^\pm_C\\
	-& \varepsilon &\text{if }z\in\Gamma^\pm_U
	\end{aligned}\right.
	\end{flalign} 
	Hence
	\[
	\mu(\mathfrak{c})=\sum_{z\in\Gamma^+_U}^{}\mu_{CZ}(\gamma_z-\varepsilon)+\sum_{z\in\Gamma^+_C}^{}\mu_{CZ}(\gamma_z+\varepsilon)-\sum_{z\in\Gamma^-_U}^{}\mu_{CZ}(\gamma_z+\varepsilon)-\sum_{z\in\Gamma^-_C}^{}\mu_{CZ}(\gamma_z-\varepsilon)=\]
	\[
	\sum_{z\in\Gamma^+_U}^{}1+\sum_{z\in\Gamma^+_C}^{}(-1)-\sum_{z\in\Gamma^-_U}^{}(-1)-\sum_{z\in\Gamma^-_C}^{}1=\#\Gamma_U-\#\Gamma_C
	\]
\end{proof}
Finally, we conclude by compputing the difference between self intersection numbers in the constrained and unconstrained case. Denote by $u\star_U u$ the self intersection number of a curve $u$ where all the asymptotic orbits are considered unconstrained. Analogously, $u\star_C u$ is the intersection number where all the orbits are considered constrained.
\begin{lemma}\label{comparisonconstrainedunconstrained}
	\begin{equation}
	u\star_U u=u\star_C u+\#\Gamma
	\end{equation}
\end{lemma}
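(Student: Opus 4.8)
The plan is to read both self-intersection numbers off the formula of \Cref{thmintersectioneigenvalues} and to isolate the single ingredient that is sensitive to the choice of constraint. Writing $u\star u = u\bullet_\tau u - \sum_{(z,z')}\Omega^\tau_+(\gamma_z+\delta_z,\,\gamma_{z'}+\delta_{z'})$ (all punctures here are positive, since $\partial W$ sits at the positive cylindrical end), I would first observe that the topological term $u\bullet_\tau u$ counts intersections of $u$ with a $\tau$-pushoff whose asymptotic winding is normalized to zero; this count depends on the trivialization $\tau$ but not at all on whether we declare the punctures constrained or unconstrained. Consequently $u\bullet_\tau u$ cancels in the difference $u\star_U u - u\star_C u$, and the whole discrepancy is carried by the $\Omega$-terms.

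Next I would substitute the extremal winding numbers already computed in (\ref{computationextremalwinding}). For positive punctures the relevant quantity is $\Omega^\tau_+(\gamma+\delta,\,\gamma'+\delta)=-\alpha^\tau_-(\gamma+\delta)$, which vanishes unless $\gamma=\gamma'$. Feeding in $\alpha^\tau_-(\gamma-\varepsilon)=0$ for the unconstrained choice $\delta=-\varepsilon$ and $\alpha^\tau_-(\gamma+\varepsilon)=-1$ for the constrained choice $\delta=+\varepsilon$ gives, per contributing pair, the values $\Omega^\tau_{+,U}=0$ and $\Omega^\tau_{+,C}=1$. Since under the standing hypotheses of this section distinct punctures are asymptotic to distinct (indeed differently located) simple Reeb orbits, the only pairs $(z,z')$ with $\gamma_z=\gamma_{z'}$ are the diagonal ones, so the sum contains exactly $\#\Gamma$ summands. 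Assembling the pieces yields $u\star_U u = u\bullet_\tau u$ and $u\star_C u = u\bullet_\tau u - \#\Gamma$, whence $u\star_U u = u\star_C u + \#\Gamma$, as claimed.

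The step needing the most care is the bookkeeping of the $\Omega$-sum: one must track which argument carries the perturbation $\delta$, confirm that only like-orbit pairs contribute, and invoke simplicity and distinctness of the asymptotic orbits to identify the number of contributing pairs with $\#\Gamma$. This last point is the genuine subtlety — were several punctures allowed to share one orbit $\gamma$, the off-diagonal pairs would also contribute and $\#\Gamma$ would be replaced by $\sum_\gamma n_\gamma^2$; the clean statement relies precisely on the one-puncture-per-orbit feature coming from the trivial-fibration hypothesis. I would therefore state explicitly at the outset that each simple orbit supports at most one puncture, so that the identification of the count is transparent, and then the remaining computation is the routine substitution of (\ref{computationextremalwinding}) described above.
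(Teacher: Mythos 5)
Your proof is correct and follows essentially the same route as the paper's: both apply \Cref{thmintersectioneigenvalues}, note that the constraint-independent term $u\bullet_\tau u$ cancels in the difference, and evaluate the $\Omega$-terms per puncture using the extremal winding numbers computed in (\ref{computationextremalwinding}), yielding $0$ in the unconstrained case and $1$ in the constrained case. The only differences are cosmetic: you specialize to positive punctures (legitimate, since all cylindrical ends of $\widehat{W}$ are positive in this setting, whereas the paper carries the $\pm$ notation), and you make explicit the one-puncture-per-orbit hypothesis that the paper's diagonal sum $\sum_{z\in\Gamma^\pm}$ leaves implicit — a worthwhile clarification, since otherwise the count would indeed be $\sum_\gamma n_\gamma^2$ rather than $\#\Gamma$.
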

\begin{proof}
	We use \Cref{thmintersectioneigenvalues}. Then \[u\star_U u-u\star_C u=-\sum_{z\in\Gamma^\pm}^{}\mp\alpha_\mp(\gamma_z\mp\varepsilon)+\sum_{z\in\Gamma^\pm}^{}\mp\alpha_\mp(\gamma_z\pm\varepsilon)=-\sum_{z\in\Gamma^\pm}^{}0+\sum_{z\in\Gamma^\pm}^{}1=\#\Gamma\]
\end{proof}

\printbibliography

\end{document}